\newtheorem{lemma}{Lemma}
\newtheorem{prop}{Proposition}
\newtheorem{theorem}{Theorem}
\newtheorem*{theorem*}{Theorem}
\theoremstyle{definition}
\newtheorem{example}{Example}
\newcommand{\rar}{\rightarrow}
\newcommand{\sMat}[4]{\bigl(\begin{smallmatrix}#1\mathstrut&#2\mathstrut\\#3\mathstrut&#4\mathstrut\end{smallmatrix}\bigr)}
\newcommand{\dotminus}{\mathbin{\dot{-}}}
\DeclareMathOperator\mat{M}
\DeclareMathOperator\gl{GL}
\DeclareMathOperator\unit{U}
\DeclareMathOperator\eunit{EU}
\DeclareMathOperator\gunit{GU}
\DeclareMathOperator\punit{PU}
\newcommand{\kunit}{\mathrm{KU}_1}
\DeclareMathOperator\Cent{C}
\DeclareMathOperator\tr{tr}
\DeclareMathOperator\Norm{N}
\DeclareMathOperator\Ker{Ker}
\DeclareMathOperator\herm{H}
\newcommand{\eps}{\varepsilon}
\newcommand{\leqt}{\trianglelefteq}
\newcommand{\inv}[1]{\!\;\overline{\!\!\:#1\vphantom !\!\!\:}\;\!}
\newcommand{\op}{{\mathrm{op}}}
\DeclareMathOperator{\id}{id}
\newcommand{\rrar}{\Rightarrow}
\DeclareMathOperator{\Hom}{Hom}
\DeclareMathOperator{\End}{End}
\DeclareMathOperator{\Aut}{Aut}
\DeclareMathOperator{\Heis}{Heis}
\newcommand{\sub}[2]{{_{#1\!}{#2}}}
\newcommand{\up}[2]{{^{#1}\!{#2}}}
\newcommand{\blank}{{}}
\newcommand{\bigperp}{\mathop{\mathpalette\bigp@rp\relax}\displaylimits}
\newcommand{\bigp@rp}[2]{\vcenter{\m@th\hbox{\scalebox{\ifx#1\displaystyle2.1\else1.5\fi}{\(#1\perp\)}}}}
\newcommand{\rMod}[2][]{\mathbf{Mod}_{#1}\text{-}#2}
\begin{document}

\title{Stability for odd unitary \(\mathrm K_1\)}


\author{
  Egor Voronetsky
  \thanks{Research is supported by the Russian Science Foundation grant 19-71-30002.} \\
  \small Chebyshev Laboratory, \\
  \small St. Petersburg State University, \\
  \small 14th Line V.O., 29B, \\
  \small Saint Petersburg 199178 Russia \\
  \small \texttt voronetckiiegor@yandex.ru
}

\maketitle

\begin{abstract}
We give a new purely algebraic approach to odd unitary groups using odd form rings. Using these objects, we prove the stability theorems for odd unitary \(K_1\)-functor without using the corresponding result from linear \(K\)-theory under the ordinary stable rank condition. Moreover, we give a natural stabilization result for projective unitary groups and various general unitary groups.
\end{abstract}

\section{Introduction}

The modern definition of odd unitary groups was given in \cite{OddDefPetrov} by Victor Petrov. His definition generalizes Anthony Bak's unitary groups and split odd orthogonal groups, hence all classical Chevalley groups over arbitrary commutative rings. In \cite{OddStrucVor} we introduced quadratic structures that may be used to construct odd unitary groups in the same way as Bak's unitary groups arise from so-called form parameters.

The natural problem in this context is to prove results from the unitary \(K\)-theory. For general linear groups this was done in paper \cite{LinStabBass} by Hyman Bass and also may be found in his book \cite{Bass}. For classical and Bak' unitary groups the stability was proved in \cite{UnitStabBak1, UnitStabBak2, UnitStabVaser, ClassicStabVaser} and in unpublished paper \cite{UnitStabSaliani}, the last one may be found on Max-Albert Knus's homepage. Finally, for Petrov's unitary groups the surjectivity was already proved in \cite{OddDefPetrov}, and the injectivity in the main result of Yu Weibo's paper \cite{OddStabWeibo}. All these injectivity proofs for various unitary groups used the corresponding result for linear groups.

For unitary groups the stabilization is usually formulated in terms of the \(\Lambda\)-stable rank condition, starting from \cite{UnitStabBak1}. This condition is weaker than all the previous ones, in particular, \(\Lambda\)-stable rank may be bound by the absolute stable rank. For algebras \(R\) over a commutative ring \(K\) the absolute stable rank may be bound by the Bass --- Serre dimension of \(K\) (more precisely, \(\mathrm{asr}(R) \leq \mathrm{BS}(K) + 1\)).

On the other hand, in some cases the injectivity was proved using only the stable rank of \(R\). For Chevalley groups it was done in Stein's paper \cite{ChevStabStein}, and for even unitary groups it is the main result of S. Sinchuk's work \cite{UnitStabSin}.

In his paper \cite{LinNilpBak} Bak proved the nilpotence of linear \(K_1\)-groups using his localization-completion method. This result is much stronger that the stabilization and may be proved independently. The nilpotence for Bak's unitary groups was proved in the absolute case by Hazrat in \cite{UnitNilpHazrat} and in the relative case by Bak, Hazrat, and Vavilov in \cite{UnitNilpVav}. For Petrov's unitary groups it was done in recent Weibo and Tang's paper \cite{OddNilpWeibo}.

In paper \cite{OddStrucVor} we already implicitly used odd form rings in the definition of levels. A (unital special) odd form ring is a pair \((R, \Delta)\), were \(R\) is an involution ring and \(\Delta\) is Petrov's odd form parameter on the right \(R\)-module \(R\) with the canonical hermitian form. As was shown in that paper, unitary groups of arbitary regular quadratic bimodules may be considered as unitary groups of appropriate odd form rings. We will prove that the regularity condition is redundant.

In order to prove relative results for various groups, the common approach is Stein's relativization technique from \cite{Stein}. However, even for Bak's unitary groups it becomes complicated because the definition of Bak's form rings is not truly algebraic (from the point of view of universal algebra). This difficulty leads to the relativization with two parameters, see \cite{UnitElemVav} for Bak's unitary groups and \cite{OddDefPetrov} for Petrov's unitary groups. Our odd form rings will be defined using operations and axioms, hence the original Stein's variant may be applied.

In section \(2\) we will review odd quadratic modules from \cite{OddStrucVor}, and in sections \(3\)--\(7\) we will develop general theory of odd form rings. Section \(8\) contains all examples, it uses our main results from the last sections. Sections \(9\) and \(10\) contains definition and properties of the \(\Lambda\)-stable rank in our situation, as well as a direct proof that this value may be bounded by the Bass --- Serre dimension of the underlying commutative ring. The main result of section \(11\) is
\begin{theorem*}
Let \((R, \Delta)\) be an odd form ring with a free orthogonal family of rank \(n\). Then the map
\[\kunit(n - 1; R, \Delta) \rar \kunit(n; R, \Delta)\]
is surjective for \(\Lambda\mathrm{sr}(\eta_1; R, \Delta) \leq n - 1\) and injective for \(\mathrm{sr}(\eta_1; R, \Delta) \leq n - 2\). Also, if \((I, \Gamma) \leqt (R, \Delta)\) is an odd form ideal, then the map
\[\kunit(n - 1; R, \Delta; I, \Gamma) \rar \kunit(n; R, \Delta; I, \Gamma)\]
is surjective for \(\Lambda\mathrm{sr}(\eta_1; I \rtimes R, \Gamma \rtimes \Delta) \leq n - 1\) and injective for \(\mathrm{sr}(\eta_1; I \rtimes R, \Gamma \rtimes \Delta) \leq n - 2\).
\end{theorem*}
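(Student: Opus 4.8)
The plan is to prove the absolute statement directly and then to deduce the relative one from it by Stein's relativization, which applies here precisely because our odd form rings are defined by operations and axioms. Under this deduction, the relative groups \(\gunit(m; R, \Delta; I, \Gamma)\), \(\eunit(m; R, \Delta; I, \Gamma)\) and hence \(\kunit(m; R, \Delta; I, \Gamma)\) are obtained from the odd form ring \((I \rtimes R, \Gamma \rtimes \Delta)\) via the split retraction \((I \rtimes R, \Gamma \rtimes \Delta) \rar (R, \Delta)\); the free orthogonal family \(\eta_1, \dots, \eta_n\) of \((R, \Delta)\) lifts to one of rank \(n\) in \((I \rtimes R, \Gamma \rtimes \Delta)\), and the hypotheses in the relative part are exactly those of the absolute part for \((I \rtimes R, \Gamma \rtimes \Delta)\). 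Since all stabilization maps are compatible with the split retraction, a routine argument extracts the relative stabilization map as a direct summand of the absolute one for \((I \rtimes R, \Gamma \rtimes \Delta)\), so it inherits both surjectivity and injectivity. It therefore suffices to treat the absolute case, which I do now.

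\textbf{Surjectivity.} Let \(v_n\) be the distinguished isotropic element attached to \(\eta_n\), and let \(U_n \leq \gunit(n; R, \Delta)\) be the unipotent subgroup attached to it, namely the one generated by the root subgroups whose root has positive \(\eta_n\)-coefficient; then \(U_n \subseteq \eunit(n; R, \Delta)\), the group \(\eunit(n-1; R, \Delta)\) normalizes \(U_n\), and \(\gunit(n; R, \Delta) \cap \mathrm{Stab}(v_n) = \gunit(n-1; R, \Delta)\, U_n\) by the Levi decomposition of the stabilizer of \(v_n\). Given \(g \in \gunit(n; R, \Delta)\), the element \(g v_n\) is an isotropic \(\Lambda\)-unimodular vector, and since the members of a free orthogonal family are interchangeable, the bound \(\Lambda\mathrm{sr}(\eta_1; R, \Delta) \leq n-1\) is exactly what makes \(\eunit(n; R, \Delta)\) act transitively on such vectors: one clears the \(\eta_1, \dots, \eta_{n-1}\)-coordinates of \(g v_n\) by an elementary transvection supplied by the (\(\Lambda\)-)stable rank relation, then uses isotropy to clear the remainder. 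Picking \(\eps \in \eunit(n; R, \Delta)\) with \(\eps g\, v_n = v_n\) and writing \(\eps g = h\, u\) with \(h \in \gunit(n-1; R, \Delta)\) and \(u \in U_n\), the element \(h = \eps g\, u^{-1}\) lies in \(\gunit(n-1; R, \Delta)\) and has the same class as \(g\) in \(\kunit(n; R, \Delta)\). Hence \(\kunit(n-1; R, \Delta) \rar \kunit(n; R, \Delta)\) is surjective.

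\textbf{Injectivity.} Suppose \(g \in \gunit(n-1; R, \Delta)\) maps to the trivial class of \(\kunit(n; R, \Delta)\), i.e.\ \(g \in \eunit(n; R, \Delta)\); we must show \(g \in \eunit(n-1; R, \Delta)\). With \(v_n\) and \(U_n\) as above, the crucial lemma --- the injective companion of the transitivity statement --- is the normal form
\[\eunit(n; R, \Delta) \cap \mathrm{Stab}(v_n) = \eunit(n-1; R, \Delta)\, U_n,\]
valid once \(\mathrm{sr}(\eta_1; R, \Delta) \leq n-2\). Since \(g\) fixes \(v_n\), it gives \(g = \eps\, u\) with \(\eps \in \eunit(n-1; R, \Delta)\) and \(u \in U_n\). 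Now \(g\) also fixes the dual generator \(v_{-n}\) of the \(\eta_n\)-hyperbolic pair (it acts trivially on the whole \(\eta_n\)-corner), and so does \(\eps\) (it acts only on the corner of \(\eta_1, \dots, \eta_{n-1}\)); hence \(u\) fixes \(v_{-n}\). But an element of \(U_n\) is determined by its action on \(v_{-n}\), so \(u = 1\) and \(g = \eps \in \eunit(n-1; R, \Delta)\).

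\textbf{The main obstacle.} Everything of substance is in the two facts about \(\eunit(n; R, \Delta)\) isolated above: transitivity on isotropic \(\Lambda\)-unimodular vectors under \(\Lambda\mathrm{sr}(\eta_1; R, \Delta) \leq n-1\), and the normal form for the stabilizer under the stronger \(\mathrm{sr}(\eta_1; R, \Delta) \leq n-2\). Proving them in the odd form ring language will require the full set of commutator relations between the root subgroups of the odd unitary group --- in particular the ``long'' root subgroups carrying the odd form parameter --- an induction on the length of a word in the elementary generators that keeps precise track of which stable rank relation is invoked at each step, and the observation that a free orthogonal family genuinely conjugates the corner of \(\eta_1\) onto that of every \(\eta_i\), so that the single quantities \(\mathrm{sr}(\eta_1; R, \Delta)\) and \(\Lambda\mathrm{sr}(\eta_1; R, \Delta)\) control the reduction at all indices. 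A minor additional point is that the Levi decomposition used for surjectivity must absorb the unitary group of the \(\eta_n\)-corner into \(U_n\) together with \(\gunit(n-1; R, \Delta)\), leaving no extra factor.
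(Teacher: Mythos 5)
Your overall architecture is the right one and, for two of the three pieces, essentially the paper's: the relative case is obtained from the absolute case for \((I \rtimes R, \Gamma \rtimes \Delta)\) via the split exact sequences coming from Stein's relativization (lemma \ref{Relativization} and the diagram chase after theorem \ref{SurjectiveKU}), and your surjectivity argument --- make the last column \(\alpha_{*n}(g)\) standard by elementary transvections using the \(\Lambda\)-stable rank condition, then peel off one factor from each of the two Heisenberg groups \(H_{\pm n}(n; R, \Delta)\) to land in \(\unit(n-1; R, \Delta)\) --- is exactly the proof of theorem \ref{SurjectiveKU} (your sketch glosses over the separate uses of \(\mathrm{sr}(R_1) \leq n-1\) to handle the \(e_0\)-component before and after the \(\Lambda\)-stable rank step, but that is a matter of bookkeeping).

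The injectivity part, however, has a genuine gap. Your entire argument rests on the ``normal form'' \(\eunit(n; R, \Delta) \cap \mathrm{Stab}(v_n) = \eunit(n-1; R, \Delta)\, U_n\), and this lemma \emph{is} the theorem: it is not a companion statement one can defer, and ``an induction on the length of a word in the elementary generators that keeps track of which stable rank relation is invoked'' is not a proof strategy that is known to close --- a naive induction on word length does not terminate, because multiplying a normal form by a generator of negative \(\eta_n\)-weight destroys the normal form and the stable rank hypothesis must be used to restore it in a way that does not lengthen the word. What the paper actually does (lemmas \ref{ReducedA} and \ref{DecompositionA}) is exhibit five specific subgroups \(A_1, \ldots, A_5\), prove the inclusion \(A_1 A_2 A_5 \subseteq A_1 A_2 A_3 A_4\) by using \(\mathrm{sr}(R_1) \leq n-2\) to modify the ``diagonal'' factor \(D'_+(b)\) of an element of \(A_5\) so that a column becomes unimodular and a conjugating element \(v = u \cdot c\) with \(v \cdot e_n = \dot 0\) can be split off, and then verify that the product set \(A_1 A_2 A_5\) is stable under right multiplication by all generators of \(\eunit(n; R, \Delta)\); this choice of subgroups and the closure computation are the missing content. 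There is also a second, smaller omission at the end: even after the decomposition one only gets \(g = a_1 a_5\) with \(a_5 \in A_5 \cap \unit(n-1; R, \Delta)\), and \(A_5\) is \emph{not} contained in \(\unit(n-1; R, \Delta) \cdot H_{-n}(n;R,\Delta)\), so your step ``\(u\) fixes \(v_{-n}\), hence \(u = 1\)'' does not dispose of it; the paper must rewrite the word defining \(a_5\), replacing every occurrence of the index \(n\) by the index \(1\) via the freeness isomorphisms \(e_{1n}, e_{n1}\), to see that \(a_5 \in \eunit(n-1; R, \Delta)\). Your cleaner normal form would have to absorb both of these arguments, so as written the injectivity half is unproved.
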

The proof does not use the stability for linear groups and for these groups the theorem reduces to the classical result of Bass. Finally, from the last section we have the following
\begin{theorem*}
Let \((R, \Delta)\) be an odd form ring with a free orthogonal hyperbolic family of rank \(n \geq 1\) and \((I, \Gamma) \leqt (R, \Delta)\) be an odd form ideal. Then there is canonical isomorphism
\[\punit^*(n - 1; R, \Delta) / \unit(n; R, \Delta) \cong \punit^*(n; R, \Delta) / \unit(n; R, \Delta)\]
for \(\Lambda\mathrm{sr}(\eta_1; R, \Delta) \leq n - 2\)
and there is canonical isomorphism
\[\punit^*(n - 1; R, \Delta; I, \Gamma) / \unit(n - 1; I, \Gamma) \cong \punit^*(n; R, \Delta; I, \Gamma) / \unit(n; I, \Gamma)\]
for \(\Lambda\mathrm{sr}(\eta_1; I \rtimes R, \Gamma \rtimes \Delta) \leq n - 2\). In all classical cases over a local commutative ring or a PID
\[\punit^*(n; R, \Delta) = \punit(n; R, \Delta).\]
\end{theorem*}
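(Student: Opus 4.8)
The plan is to deduce both isomorphisms from a single stabilization-of-representatives statement, in the same spirit as the proof of the first displayed theorem. Recall that \(\punit^*(m; R, \Delta)\) is built from the general unitary group \(\gunit(m; R, \Delta)\) --- it is the part of the projective unitary group coming from genuine similitudes --- and that the free orthogonal hyperbolic family provides compatible embeddings of the rank-\(m\) data into the rank-\((m+1)\) data, hence a canonical comparison map between the two quotients in the statement. That \(\unit(n; R, \Delta)\) is a normal subgroup of \(\punit^*(n - 1; R, \Delta)\) at all --- so that the left-hand quotient even makes sense --- already uses the hypothesis, through the normality results for the stable unitary levels proved in the earlier sections (which need a bound of the form \(\Lambda\mathrm{sr}(\eta_1; R, \Delta) \leq n - 1\), implied by \(\leq n - 2\)). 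Granting this, injectivity of the comparison map is essentially formal, or else follows from the injective half of the first displayed theorem together with the remark that under our hypothesis the required \(\mathrm{sr}\)-bound is also available; so the whole content is surjectivity.

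For surjectivity I would take a representative \(g \in \gunit(n; R, \Delta)\) and correct it on the right by elements of \(\unit(n; R, \Delta)\) so as to bring it into the rank-\((n - 1)\) subgroup, keeping track throughout of the similitude multiplier, which is preserved under multiplication by isometries. Concretely, \(g\) moves the last hyperbolic pair \(H\) of the family to another hyperbolic pair; the crucial input is a stable transitivity (Witt-type) statement --- the same unimodular-column argument that underlies the surjective half of the first displayed theorem, valid precisely under \(\Lambda\mathrm{sr}(\eta_1; R, \Delta) \leq n - 2\) --- producing a \(u \in \unit(n; R, \Delta)\) after which \(ug\) stabilizes \(H\) and hence its orthogonal complement. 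One then absorbs the induced automorphism of \(H\) into the rank-\((n - 1)\) block (here \(n - 1 \geq 1\) is used, so that a hyperbolic pair realizing the same multiplier is available there) and passes to the quotient by the center, which places the class of \(g\) inside \(\punit^*(n - 1; R, \Delta)\). This step --- the stable transitivity input, together with the bookkeeping needed to see that all corrections really descend modulo \(\Cent\) and stay compatible with the multiplier --- is the one I expect to be the main obstacle.

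For the relative isomorphism I would run the identical argument for the odd form ring \(I \rtimes R\) with parameter \(\Gamma \rtimes \Delta\), the very object that occurs in the relative half of the first displayed theorem, now with \(\unit(n - 1; I, \Gamma)\) in the role of the normal subgroup and the transitivity input available under \(\Lambda\mathrm{sr}(\eta_1; I \rtimes R, \Gamma \rtimes \Delta) \leq n - 2\); Stein's relativization --- legitimate here because, as stressed in the introduction, odd form rings are defined by honest operations and axioms --- then converts the absolute statement over \((I \rtimes R, \Gamma \rtimes \Delta)\) into the stated relative one.

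For the final sentence, the equality \(\punit^*(n; R, \Delta) = \punit(n; R, \Delta)\) over a local commutative ring or a PID amounts to saying that every automorphism of the rank-\(n\) odd form object is induced, modulo \(\Cent\), by an element of \(\gunit(n; R, \Delta)\); the obstruction to this is of Picard / \(\mathrm{K}_0\)-type and vanishes over such bases. Using the case-by-case identification of the odd form ring attached to each classical group from Section~8, this comes down to the classical fact that over a local ring or a PID every nondegenerate form of the relevant kind --- symplectic, quadratic, hermitian, or the split odd-orthogonal Petrov form in characteristic \(2\) --- of a given rank is isometric to the standard hyperbolic one: over a local ring the underlying module is free and one invokes diagonalization and classification of these forms, while over a PID one first reduces to the free split case using the structure theorem for finitely generated modules and Witt cancellation. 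Hence \(\Aut\) of the rank-\(n\) object coincides with \(\gunit(n; R, \Delta)\), and the claimed equality follows.
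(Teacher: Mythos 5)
Your overall strategy --- normalize a representative by unitary elements so that it fixes the last hyperbolic pair, using a Witt-type transitivity statement under the \(\Lambda\)-stable rank hypothesis, then restrict to the orthogonal complement; handle the relative case over \((I \rtimes R, \Gamma \rtimes \Delta)\); and reduce the final equality to triviality of a Picard-type obstruction --- is the same as the paper's (Lemma \ref{UnitaryExtension}, Proposition \ref{HomDecomposition}, Theorem \ref{HomStability}). But there is a genuine gap in how you account for the two sides of the isomorphism, visible in your misplacement of the hypothesis: the transitivity statement at rank \(n\) (moving \(\up\sigma{\eta_n}\) back to \(\eta_n\) by an element of \(\eunit(n; R, \Delta)\)) needs only \(\Lambda\mathrm{sr}(\eta_1; R, \Delta) \leq n - 1\), not \(\leq n - 2\), and it only shows that every class in \(\punit^*(n; R, \Delta)/\unit(n; R, \Delta)\) has a representative in the stabilizer \(\punit_n(n; R, \Delta)\) of \(e_{|n|}\). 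The other half --- that restriction induces a bijection \(\punit_n(n; R, \Delta)/\unit'_{|n|} \rar \punit^*(n - 1; R, \Delta)/\unit(n - 1; R, \Delta)\) --- is what forces the bound \(n - 2\): to show that every class of \(\punit^*(n - 1; R, \Delta)\) lifts, one must first normalize it to fix \(\eta_{n-1}\) (the same transitivity statement one level down, hence \(\Lambda\mathrm{sr} \leq (n - 1) - 1 = n - 2\)) and then extend it explicitly to the \(|n|\)-block via \(e_{|n|, |n - 1|}\); and for injectivity one must check that an element of \(\punit_n(n; R, \Delta)\) acting trivially on the rank-\((n-1)\) part is inner on the \(|n|\)-block. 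Your phrase ``absorbs the induced automorphism of \(H\) into the rank-\((n-1)\) block'' hides exactly these two steps, and your suggestion that injectivity follows from the injective half of the \(\kunit\)-stability theorem is not right: that theorem concerns \(\eunit\)-cosets in \(\unit\), whereas here one needs the (short but different) observations of Theorem \ref{HomStability}(1)--(2) about \(\Hom_{mn}\) and \(\unit'_{|n|}\).

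On the final sentence: your first diagnosis (a Picard/\(K_0\)-type obstruction vanishing over a local ring or a PID) is exactly the paper's argument --- the class \([e_1 R\, \up\sigma{e_{\pm 1}}] \in \mathrm{Pic}_K(R_1)/\mathrm{Out}_K(R_1)\) is trivial when \(R_1 \cong K\) and \(\mathrm{Pic}(K) = 0\). Your subsequent elaboration via diagonalization and Witt cancellation of forms is off target: the forms here are split by construction, and the issue is not isometry classification of forms but whether an automorphism of the odd form algebra sends \(e_1\) to an idempotent generating an isomorphic projective module (compare Example \ref{Twisting}, where a non-principal ideal of a Dedekind domain produces an outer automorphism). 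Finally, normality of the image of \(\unit(n - 1; R, \Delta)\) in \(\punit(n - 1; R, \Delta)\) is automatic from Lemma \ref{ProjectiveUnitaryGroups} and needs no stable rank hypothesis.
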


\section{Quadratic and hermitian forms}

Every ring in this paper is associative but not necessarily with \(1\). All commutative rings have identity elements and trivial involutions, homomorphisms between them are unital. When we work with algebras over a fixed commutative ring \(K\), we always consider only those bimodules that have the same \(K\)-module structure from the left and from the right. Recall that if \(R\) is a non-unital \(K\)-algebra, then \(R \rtimes K\) is a unital \(K\)-algebra with an ideal \(R\), it is the \(K\)-module \(R \oplus K\) the multiplication on \(R \rtimes K\) is given by \((r, k1) (r', k'1) = (rr' + rk' + r'k, kk'1)\).

If \(R\) is arbitrary ring, then \(R^\bullet\) is the multiplicative semigroup of \(R\) (it is a monoid, if \(R\) is unital), i.e. the set \(R\) with the multiplication operation. By \(\Cent(G)\) and \(\Cent(R)\) we denote the center of a group \(G\) or of a ring \(R\). The subgroups \(\Cent_G(H)\) and \(\Norm_G(H)\) are the centralizer and the normalizer of a subgroup \(H\) in a group \(G\). We use the notations \(\up g h = ghg^{-1}\) and \([g, h] = ghg^{-1}h^{-1}\) for elements \(g\) and \(h\) of arbitrary group. Also, if \(f \colon A \rar B\) is a homomorphism of algebraic structures, then we sometimes will write \(\up f a\) instead of \(f(a)\). If a group \(G\) acts on a set \(X\) and \(x \in X\), then \(G_x = \{g \in G \mid gx = x\}\) is the stabilizer of \(x\).

If \(K\) is a commutative ring and \(\mathfrak p \leqt K\) is a prime ideal, then \(\kappa(\mathfrak p) = K_{\mathfrak p} / \mathfrak p K_{\mathfrak p}\) is the residue field of \(\mathfrak p\). The sets \(\mathrm{Spec}(K)\) and \(\mathrm{Max}(K)\) of all prime and maximal ideals of \(K\) are equipped with the Zariski topology. If \(R\) is arbitrary ring and \(n \geq 0\) is an integer, then \(\mat(n, R)\) is the ring of \((n \times n)\)-matrices with entries in \(R\).

If \(R\) is a ring, then \(R^\op = \{r^\op \mid r \in R\}\) is the opposite ring, \(r^\op (r')^\op = (r'r)^\op\). The same notation is used for the opposite modules (note that the opposite to a left module is a right modules and vice versa). Clearly, \((R^\op)^\op \cong R\), and similarly to modules. An involution on a ring \(R\) is an additive map \(\inv{(-)} \colon R \rar R\) such that \(\inv{rr'} = \inv{r'} \inv {r\vphantom{r'}}\), and \(\inv{\inv r} = r\) (if \(R\) is unital it follows that \(\inv 1 = 1\)). Finally, \(\herm(R) = \{r \in R \mid \inv r = r\}\) is the set of hermitian elements of a fixed involution on \(R\).

Until the end of this section all rings, ring homomorphisms, and modules are unital.

Recall the definitions from \cite{OddStrucVor}. Let \(R\) be arbitrary ring and \(\lambda \in R^*\). A map \(\inv{(-)} \colon R \rar R, r \mapsto \inv r\) is called a \(\lambda\)-involution (or a pseudo-involution), if it is additive, \(\inv 1 = 1\), \(\inv{rr'} = \inv{r'} \inv {r\vphantom{r'}}\), \(\inv{\inv r} = \lambda r \lambda^{-1}\), and \(\inv \lambda = \lambda^{-1}\). For example, if \(R\) is commutative, then the identity map on \(R\) is a \(1\)-involution (i.e. an involution) and a \((-1)\)-involution simultaneously. If \(R = S^\op \times S\), then \(\inv{(a^\op, b)} = (b^\op, a)\) is also an involution. Let \(M_R\) be a right module. A map \(B \colon M \times M \rar R\) is called a hermitian form, if it is biadditive, \(B(m, m'r) = B(m, m')r\), and \(B(m', m) = \inv{B(m, m')} \lambda\). The hermitian form \(B\) is called regular (or non-degenerate), if \(M_R\) is finitely generated projective and \(B\) induces an isomorphism \(M^\op \cong \Hom_R(M, R), m^\op \mapsto B(m, -)\). The module \(R_R\) has the canonical regular hermitian form \(B_1(r, r') = \inv rr'\).

A quadratic structure on a ring \(R\) with a \(\lambda\)-involution is a right \(R^\bullet\)-module \(A\) (i.e. an abelian group with a right action of monoid \(R^\bullet\)) with additive maps \(\varphi \colon R \rar A\) and \(\tr \colon A \rar R\) such that
\begin{align*}
\varphi(\inv rr' r) &= \varphi(r') \cdot r,\\
\tr(a \cdot r) &= \inv r \tr(a) r,\\
\tr(\varphi(r)) &= r + \inv r \lambda,\\
\tr(a) &= \inv{\tr(a)} \lambda,\\
\varphi(r) &= \varphi(\inv r \lambda),\\
a \cdot (r + r') &= a \cdot r + \varphi(\inv r' \tr(a) r) + a \cdot r'.
\end{align*}

For example, if \(\Lambda \leq R\) is a form parameter (that is, \(\{r - \inv r \lambda\} \leq \Lambda \leq \{r \mid r + \inv r \lambda = 0\}\) and \(\inv r \Lambda r \leq \Lambda\) for all \(r\)), then \(A = R / \Lambda\) is a quadratic structure with \(\varphi(r) = r + \Lambda\) and \(\tr(r + \Lambda) = r + \inv r \lambda\).

Let \((M_R, B)\) be a hermitian module and \(A\) be a quadratic structure on \(R\). A map \(q \colon M \rar A\) is called a quadratic form, if
\begin{align*}
q(mr) &= q(m) \cdot r,\\
\tr(q(m)) &= B(m, m),\\
q(m + m') &= q(m) + \varphi(B(m', m)) + q(m').
\end{align*}
In the case of Bak's quadratic forms, \(A = R / \Lambda\) for a form parameter \(\Lambda\), \(B(m, m') = Q(m, m') + \inv{Q(m', m)} \lambda\) for a sesquilinear map \(Q\), and \(q(m) = \varphi(Q(m, m))\). The unitary group of a quadratic module \((M, B, q)\) is \(\unit(M, B, q) = \{g \in \Aut_R(M) \mid B(gm, gm') = B(m, m'), q(gm) = q(m) \text{ for all } m, m'\}\).

As was shown in \cite{OddStrucVor}, Petrov's quadratic forms are almost the same as our quadratic forms if \(A\) is generated by the images of \(\varphi\) and \(q\). Indeed, recall that the Heisenberg group of \((M, B)\) is \(\Heis(M, B) = M \times R\) with the operation \((m, r) \dotplus (m', r') = (m + m', r - B(m, m') + r')\) (the identity element is \(\dot 0 = (0, 0)\) and the inverses are \(\dotminus (m, r) = (-m, -B(m, m) - r)\)). The monoid \(R^\bullet\) acts on the group \(\Heis(M, B)\) from the right via \((m, r) \cdot r' = (mr', \inv{r'} rr')\), and there are natural maps \(\varphi \colon R \rar \Heis(M, B), r \mapsto (0, r)\), \(\tr \colon \Heis(M, B) \rar R, (m, r) \mapsto B(m, m) + r + \inv r \lambda\), and \(q \colon M \rar \Heis(M, B), m \mapsto (m, 0)\). A Petrov's odd form parameter is an \(R^\bullet\)-subgroup \(\mathcal L \leq \Heis(M, B)\) such that \(\{(0, r - \inv r \lambda)\} \leq \mathcal L \leq \Ker(\tr)\). Then \(\phi\), \(\tr\), and \(q\) are well-defined on \(A = \Heis(M, B) / \mathcal L\). Conversely, if a quadratic structure \(A\) with a fixed quadratic form \(q\) is generated by the images of \(\varphi\) and \(q\), then \(A\) is isomorphic to \(\Heis(M, B) / \mathcal L\) for a unique \(\mathcal L\).

Recall also that if \((P, B, q)\) and \((P', B', q')\) are quadratic modules over a ring \(R\) with \(\lambda\)-involution and quadratic structure \(A\), then \((P, B, q) \perp (P', B', q') = (P \oplus P', B \perp B', q \perp q')\) is also a quadratic modules, where
\[(B \perp B')(p_1 \oplus p'_1, p_2 \oplus p'_2) = B(p_1, p_2) + B'(p'_1, p'_2)\]
and
\[(q \perp q')(p \oplus p') = q(p) + q'(p').\]
Conversely, if a quadratic module \((P, B, q)\) splits as a direct sum of orthogonal submodules \(P = P_1 \oplus P_2\) (that is, \(B(P_1, P_2) = 0\)), then \(P = P_1 \perp P_2\).

Let \((P, B_P, q_P)\) be a quadratic module, \(P \in \rMod R\), \(A\) be the quadratic structure over \(R\). Suppose that \(P_R\) is finitely generated projective. Then \(Q = \Hom_R(P, R)^\op\) is also an \(R\)-module and the map \(\langle -, = \rangle \colon Q \times P \rar R, (x^\op, p) \mapsto x(p)\) is the canonical coupling. Now the module \(\mathrm M(P) = Q \oplus P\) has a hermitian form \(B(x^\op \oplus p, x'{}^\op \oplus p') = B_P(p, p') + \langle x^\op, p' \rangle + \langle x'{}^\op, p \rangle\) and a quadratic form \(q(x^\op \oplus p) = q_P(p) + \varphi(\langle x^\op, p \rangle)\). The module \(\mathrm M(P)\) is called a metabolic space with the lagrangian \(Q\) (clearly, the hermitian form of \(\mathrm M(P)\) is regular). Conversely, if \(M\) is a quadratic space and there is a direct summand \(Q \leq M\) such that \(B|_{Q \times Q} = 0\), \(q|_Q = 0\), and \(B|_{Q \times M / Q}\) induces an isomorphism \(M / Q \cong \Hom_R(Q, R)^\op\), then \(M\) is a metabolic space (constructed via the direct complement to \(Q\)). If \(B_P = 0\) and \(q_P = 0\), then \(\mathrm M(P)\) is called a hyperbolic space and is denoted by \(\mathrm H(P)\).

Note that if \((P, B|_{P \times P}, q|_P) \leq (M, B, q)\) is a regular submodule of a quadratic module, then \(M\) canonically splits as an orthogonal sum of \(P\) and its orthogonal complement \(P^\perp = \{m \in M \mid B(m, P) = 0\}\). We will usually consider the situation when a given quadratic module \((M, B, q)\) has several pairwise orthogonal hyperbolic submodules \(M_i\), \(1 \leq i \leq n\), with the lagrangians \(P_i\) and \(P_{-i}\) (i.e. \(M_i = P_i \oplus P_{-i}\), \(B|_{P_i \times P_i} = 0\), \(B|_{P_{-i} \times P_{-i}} = 0\), and \(B\) induces an isomorphism \(P_{-i} \cong \Hom_R(P_i, R)^\op\)). The orthogonal complement to all \(M_i\) will be denoted by \(M_0\). If \(P_i = R\) for \(0 < |i| \leq n\), then \(M\) is exactly an odd hyperbolic space of rank \(n\) from \cite{OddDefPetrov}.

\section{Odd form rings}

Let \((M, B, q)\) be a quadratic module over a unital ring \(R\) with a \(\lambda\)-involution and a quadratic structure \(A\). Consider the ring \(T = \{(x^\op, y) \in \End_R(M)^\op \times \End_R(M) \mid B(xm, m') = B(m, ym') \text{ for all } m, m'\}\), then clearly \(T\) possess an involution \(\inv{(x^\op, y)} = (y^\op, x)\). If \(B\) is regular, then \(T \cong \End_R(M), (x^\op, y) \mapsto y\). Note that \(\unit(T, B_1) = \{a \in T^* \mid a^{-1} = \inv a\} \cong \{g \in \Aut_R(M) \mid B(gm, gm') = B(m, m')\}\). Next, there is the odd form parameter
\begin{align*}
\Xi = \{&((x^\op, y), (z^\op, w)) \mid \\
&q(ym) + \varphi(B(m, wm)) = 0 \text{ for all } m, xy + z + w = 0\} \leq \Heis(T, B_1).
\end{align*}
It is easy to see that \(\unit(M, B, q) \cong \unit(T, B_1, q_\Xi)\), where \(q_\Xi(a) = (1, 0) \cdot a \dotplus \Xi \in \Heis(T, B_1) / \Xi\).

Conversely, let \(T\) be a unital involution ring and \(\Xi \leq \Heis(T, B_1)\) be an odd form parameter, \(\unit(T, B_1, q_\Xi)\) be the unitary group of \((T, B_1, q_\Xi)\). We will call the pair \((T, \Xi)\) a special unital odd form ring. Clearly, there are natural maps \(\pi \colon \Xi \rar T, (a, b) \mapsto a\), \(\rho \colon \Xi \rar T, (a, b) \mapsto b\), and \(\phi \colon T \rar \Xi, a \mapsto \varphi(a - \inv a) = (0, a - \inv a)\). For any \(g \in \unit(T, B_1)\) (i.e. if \(g \in T^*\) and \(g^{-1} = \inv g\)) let \(\gamma(g) = q_\Xi(g) \dotminus q_\Xi(1) = (g - 1, \inv g - 1)\), then \(\unit(T, B_1, q_\Xi) = \{g \in T^* \mid g^{-1} = \inv g, \gamma(g) \in \Xi\}\). Note that \(\pi(\gamma(g)) = g - 1\) and \(\rho(\gamma(g)) = \inv g - 1\).

Now we will define odd form rings using axioms. A pair \((R, \Delta)\) will be called an odd form ring, if \(R\) is a ring with involution (non-unital in general), \(\Delta\) is a group with a right \(R^\bullet\)-action, and there are maps \(\phi \colon R \rar \Delta\), \(\pi \colon \Delta \rar R\), \(\rho \colon \Delta \rar R\) such that
\begin{itemize}
\item \(\pi\) is a group homomorphism, \(\pi(u \cdot x) = \pi(u) x\);
\item \(\phi\) is a group homomorphism, \(\phi(\inv x y x) = \phi(y) \cdot x\), \(\phi(x) = 0\) for all \(x \in \herm(R)\);
\item \(u \dotplus \phi(x) = \phi(x) \dotplus u\), \(u \dotplus v = v \dotplus u \dotplus \phi(-\inv{\pi(u)} \pi(v))\);
\item \(\rho(u \dotplus v) = \rho(u) - \inv{\pi(u)} \pi(v) + \rho(v)\), \(\inv{\rho(u)} = \rho(\dotminus u)\), \(\rho(u \cdot x) = \inv x \rho(u) x\);
\item \(\pi(\phi(x)) = 0\), \(\rho(\phi(x)) = x - \inv x\);
\item \(u \cdot (x + y) = u \cdot x \dotplus \phi(\inv y \rho(u) x) \dotplus u \cdot y\).
\end{itemize}
Note that in every odd form ring we have \(\phi(\inv x) = \dotminus \phi(x)\), \(\rho(\dot 0) = 0\), \(\rho(\dotminus u) + \rho(u) + \inv{\pi(u)} \pi(u) = 0\), \(u \cdot 0 = \dot 0\) (the last one holds only for unital \(R\)). An odd form ring \((R, \Delta)\) is called unital if \(R\) is unital and \(u \cdot 1 = u\) for all \(u \in \Delta\), for such odd form rings the identity \(u \cdot (-1) \dotplus u = \varphi(\rho(u))\) holds.

Clearly, any special unital odd form ring is an odd form ring. Conversely, if \((R, \Delta)\) is an odd form ring and \((\pi, \rho) \colon \Delta \rar R \times R\) is injective, then \((R, \Delta)\) is called special (and if \(R\) is unital, then \((R, \Delta)\) is special unital odd form ring as in the definition above). Our odd form rings are be preferable in comparison with the special ones since they behave slightly better under the Stein's relativization and some examples are more natural in the context of odd form rings. In examples \ref{OddOrthOFA} and \ref{StableOFA} we will give natural families of odd form rings without units.

An odd form ideal of an odd form ring \((R, \Delta)\) is a pair \((I, \Gamma)\) such that \(I \leqt R\) is a two-sided ideal, \(\Gamma \leqt \Delta\) is a normal subgroup, \(I = \inv{\,I\,}\), \(\Gamma \cdot R \subseteq \Gamma\), and \(\Gamma_{\mathrm{min}} \leq \Gamma \leq \Gamma_{\mathrm{max}}\), where \(\Gamma_{\mathrm{min}} = \Delta \cdot I \dotplus \phi(I)\) and \(\Gamma_{\mathrm{max}} = \{u \in \Delta \mid \pi(u), \rho(u) \in I\}\) (clearly, \((I, \Gamma_{\mathrm{min}})\) and \((I, \Gamma_{\mathrm{max}})\) are odd form ideals). If \((I, \Gamma) \leqt (R, \Delta)\) is an odd form ideal, then \((R, \Delta) / (I, \Gamma) = (R / I, \Delta / \Gamma)\) is an odd form ring. This does not always holds for special odd form rings, since the factor may not be special. Conversely, if \(f \colon (R, \Delta) \rar (S, \Theta)\) is a morphism of odd form rings (in the obvious sense), then \(\Ker(f) \leqt (R, \Delta)\) is an odd form ideal and \((R, \Delta) / \Ker(f) \cong \mathrm{Im}(f)\).

We say that an odd form ring \((R, \Delta)\) is an odd form algebra over a commutative ring \(K\), if \(R\) is an involution \(K\)-algebra and \((R \rtimes K, \Delta)\) is a unital odd form ring (i.e. the action of \(K^\bullet\) on \(\Delta\) is defined and satisfies appropriate identities). Any odd form algebra \((R, \Delta)\) naturally becomes an ideal in a unital odd form algebra \((R \rtimes K, \Delta)\). Any odd form ring is an odd form \(\mathbb Z\)-algebra.

Let \((R, \Delta)\) be an odd form ring. The unitary group \(\unit(R, \Delta)\) consists of all elements \(g = (\beta(g), \gamma(g)) \in R \times \Delta\) such that \(\alpha(g)^{-1} = \inv{\alpha(g)}\), \(\pi(\gamma(g)) = \beta(g)\), and \(\rho(\gamma(g)) = \inv{\beta(g)}\), where \(\alpha(g) = \beta(g) + 1 \in R \rtimes \mathbb Z\) (we may consider \(\alpha(g)\) as an element of \(R \rtimes K\) if \((R, \Delta)\) is an odd form \(K\)-algebra, or even as an element of \(R\) itself if \(R\) is unital, this will make no difference). Note that the first equation may be written as \(\beta(g) \inv{\beta(g)} + \beta(g) + \inv{\beta(g)} = \inv{\beta(g)} \beta(g) + \beta(g) + \inv{\beta(g)} = 0\). The group operation is given by \(\alpha(gh) = \alpha(g) \alpha(h)\) and \(\gamma(gh) = \gamma(g) \cdot \alpha(h) \dotplus \gamma(h)\). Note that if \((R, \Delta)\) is special unital, then we may identify element \(g \in \unit(R, \Delta)\) with \(\alpha(g) \in R\).

\begin{lemma}\label{AlphaBetaGamma}
There are the following identities.
\begin{align*}
\beta(1) &=
  0;\\
\beta(gh) &=
  \beta(g) \alpha(h) + \beta(h) =
  \beta(g) \beta(h) + \beta(g) + \beta(h);\\
\beta(g^{-1}) &=
  \inv{\beta(g)};\\
\inv{\alpha(g)} x \alpha(g) - x &=
  \inv{\beta(g)} x \beta(g)
  + \inv{\beta(g)} x + x \beta(g);\\
\pi(\gamma(g)) &=
  \beta(g);\\
\rho(\gamma(g)) &=
  \inv{\beta(g)};\\
\gamma(1) &=
  \dot 0;\\
\gamma(gh) &=
  \gamma(g) \cdot \alpha(h) \dotplus \gamma(h) = \gamma(g) \cdot \beta(h)
  \dotplus \gamma(h) \dotplus \gamma(g);\\
\gamma(g^{-1}) &=
  \dotminus \gamma(g) \cdot \inv{\alpha(g)} =
  \dotminus \gamma(g)
  \dotminus \gamma(g) \cdot \inv{\beta(g)}
  \dotminus \phi(\inv{\beta(g)}^2);\\
\beta(\up g h) &=
  \up{\alpha(g)}{\beta(h)} =
  (\beta(g) \beta(h) + \beta(h)) \inv{\alpha(g)};\\
\gamma(\up g h) &=
  (\gamma(g) \cdot \beta(h) \dotplus \gamma(h))
  \cdot \inv{\alpha(g)};\\
\beta([g, h]) &=
  (\up{\alpha(g)}{\beta(h)}
  - \beta(h)) \inv{\alpha(h)} =
  (\beta(g) \beta(h) - \beta(h) \beta(g))
  \inv{\alpha(g)} \inv{\alpha(h)};\\
\gamma([g, h])
  &=
  \bigl(\gamma(g) \cdot \beta(h)
  \dotminus \gamma(h) \cdot \beta(g)
  \dotplus \phi(\inv{\beta(g)} \beta(h))\bigr)
  \cdot \inv{\alpha(g)} \inv{\alpha(h)}.
\end{align*}
\end{lemma}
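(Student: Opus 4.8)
The plan is to verify the listed identities by direct computation, working throughout in the unital odd form ring $(R \rtimes \mathbb Z, \Delta)$ so that $\alpha(g) = 1 + \beta(g)$ is a genuine ring element and the normalization $u \cdot 1 = u$ is available. The recurring tools will be the group law $\alpha(gh) = \alpha(g)\alpha(h)$, $\gamma(gh) = \gamma(g) \cdot \alpha(h) \dotplus \gamma(h)$; the defining relations $\pi(\gamma(g)) = \beta(g)$, $\rho(\gamma(g)) = \inv{\beta(g)}$ of $\unit(R,\Delta)$ together with the unitary conditions $\beta(g)\inv{\beta(g)} + \beta(g) + \inv{\beta(g)} = \inv{\beta(g)}\beta(g) + \beta(g) + \inv{\beta(g)} = 0$; the distributivity axiom $u \cdot (x + y) = u \cdot x \dotplus \phi(\inv y \rho(u) x) \dotplus u \cdot y$; the near-commutativity $u \dotplus v = v \dotplus u \dotplus \phi(-\inv{\pi(u)} \pi(v))$; and the facts that $\phi$ is central in $\Delta$, that $\phi$ vanishes on $\herm(R)$, and that $\phi(\inv x) = \dotminus \phi(x)$.

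I would take the identities in the order in which they build on one another. The $\beta$-identities come first and are immediate: expanding $\alpha(gh) = \alpha(g)\alpha(h)$ and $\alpha(g^{-1}) = \inv{\alpha(g)}$ through $\alpha = 1 + \beta$ gives the formulas for $\beta(gh)$ and $\beta(g^{-1})$, $\alpha(1) = 1$ gives $\beta(1) = 0$, and the remaining one is the expansion of $(1 + \inv{\beta(g)})\, x\, (1 + \beta(g))$; the identities for $\pi(\gamma(g))$ and $\rho(\gamma(g))$ are part of the definition. Applying $\gamma(gh) = \gamma(g)\cdot\alpha(h)\dotplus\gamma(h)$ to $1 \cdot 1 = 1$ with $u \cdot 1 = u$ forces $\gamma(1) = \gamma(1)\dotplus\gamma(1)$, hence $\gamma(1) = \dot 0$. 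The second form of $\gamma(gh)$ follows by expanding $\gamma(g)\cdot\alpha(h) = \gamma(g)\cdot(1 + \beta(h))$ with the distributivity axiom and moving the resulting central term $\phi(\inv{\beta(g)}\beta(h))$ past $\gamma(g)$ via near-commutativity. For $\gamma(g^{-1})$, the relation $gg^{-1} = 1$ and $\gamma(1) = \dot 0$ give $\gamma(g^{-1}) = \dotminus(\gamma(g)\cdot\inv{\alpha(g)}) = (\dotminus\gamma(g))\cdot\inv{\alpha(g)}$ (the $R^\bullet$-action being by group endomorphisms); expanding $\inv{\alpha(g)} = 1 + \inv{\beta(g)}$ by distributivity, using $\rho(\dotminus\gamma(g)) = \inv{\rho(\gamma(g))} = \beta(g)$ and $\phi(\inv{\beta(g)}^2) = \dotminus\phi(\beta(g)^2)$, yields the stated expression.

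For the conjugation identities I would apply the product and inverse formulas to $\up g h = ghg^{-1}$. On the $\beta$-side the cross terms collapse to $(\beta(g)\beta(h) + \beta(h))\inv{\alpha(g)} = \up{\alpha(g)}{\beta(h)}$ exactly because $\beta(g)\inv{\alpha(g)} + \inv{\beta(g)} = 0$, which is the unitary relation; on the $\gamma$-side the occurrences of $\gamma(g)\cdot\inv{\alpha(g)}$ and of $\gamma(g^{-1}) = \dotminus(\gamma(g)\cdot\inv{\alpha(g)})$ cancel, leaving $(\gamma(g)\cdot\beta(h)\dotplus\gamma(h))\cdot\inv{\alpha(g)}$. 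Finally $[g,h] = \up g h \cdot h^{-1}$, so one more application of the product formula — after rewriting $\dotminus(\gamma(h)\cdot\inv{\alpha(h)})$ as $(\dotminus(\gamma(h)\cdot\alpha(g)))\cdot\inv{\alpha(g)}\inv{\alpha(h)}$ so that a common right factor $\inv{\alpha(g)}\inv{\alpha(h)}$ can be pulled out — reduces $\gamma([g,h])$ to $(\gamma(g)\cdot\beta(h)\dotplus\gamma(h)\dotminus\gamma(h)\cdot\alpha(g))\cdot\inv{\alpha(g)}\inv{\alpha(h)}$; then expanding $\gamma(h)\cdot\alpha(g)$, discarding the arguments of $\phi$ that the unitary relations render hermitian, and rewriting the surviving correction term via $\phi(\inv{\beta(g)}\beta(h)) = \dotminus\phi(\inv{\beta(h)}\beta(g))$ gives the displayed formula, while $\beta([g,h])$ is a similar but purely ring-theoretic computation.

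I expect the only genuine difficulty to be \emph{bookkeeping}: tracking the order of factors in the non-abelian group $\Delta$ and the cascade of $\phi$-correction terms produced by the distributivity and near-commutativity axioms. This stays under control because $\phi$ takes values in the centre of $\Delta$, so correction terms may be freely transported and collected; because $\phi$ annihilates $\herm(R)$; and because the unitary relations turn most of the accumulated arguments of $\phi$ into hermitian elements, so that the seemingly long intermediate expressions collapse to the ones stated.
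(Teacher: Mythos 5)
Your proposal is correct: it is precisely the direct verification from the group law, the defining relations of \(\unit(R,\Delta)\), and the odd form ring axioms (carried out in the unital hull \(R \rtimes \mathbb Z\)) that the paper compresses into the single word ``Obvious,'' and I checked that the delicate steps — the \(\phi\)-corrections in \(\gamma(gh)\), the sign \(\phi(\inv{\beta(g)}^2) = \dotminus\phi(\beta(g)^2)\) in \(\gamma(g^{-1})\), and the collapse of \(\phi((\inv{\beta(h)}\beta(h)+\beta(h))\beta(g))\) to \(\dotminus\phi(\inv{\beta(h)}\beta(g)) = \phi(\inv{\beta(g)}\beta(h))\) in \(\gamma([g,h])\) via the unitary relation — all come out as you describe. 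The only implicit ingredient worth flagging is that \(\dotminus(u\cdot x) = (\dotminus u)\cdot x\), i.e.\ that the \(R^\bullet\)-action is by group endomorphisms, which you correctly note and which is part of what the paper means by a right \(R^\bullet\)-action on the group \(\Delta\).
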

\begin{proof}
Obvious.
\end{proof}

Now we formulate Stein's relativization in our context. Let \((I, \Gamma) \leqt (R, \Delta)\) be an odd form ideal. The double \((R, \Delta) \times_{(I, \Gamma)} (R, \Delta)\) of \((R, \Delta)\) with respect to \((I, \Gamma)\) is the fiber product of \((R, \Delta)\) with itself over \((R / I, \Delta / \Gamma)\). In other words,
\begin{align*}
R \times_I R &= \{(x, y) \in R \times R \mid x - y \in I\},\\
\Delta \times_\Gamma \Delta &= \{(u, v) \in \Delta \times \Delta \mid u \dotminus v \in \Gamma\}.
\end{align*}

From the point of view of abstract algebra, the double \((R, \Delta) \times_{(I, \Gamma)} (R, \Delta)\) is exactly the congruence on \((R, \Delta)\) induced by \((I, \Gamma)\) and \((R / I, \Delta / \Gamma)\) is the factor by this congruence. The projections from the double to \((R, \Delta)\) will be denoted by \(p_1\) and \(p_2\), the factor-map \((R, \Delta) \rar (R / I, \Delta / \Gamma)\) will be denoted by \(q\), and the diagonal map from \((R, \Delta)\) into the double will be denoted by \(d\) (so \(p_i \circ d = \id\)). The double is canonically isomorphic to \((I \rtimes R, \Gamma \rtimes \Delta)\), where \(I \rtimes R = I \oplus R\) as an abelian group with the multiplication \((x, y) (x', y') = (xx' + xy' + yx', yy')\), \(\Gamma \rtimes \Delta\) is the semi-direct product of groups with the action of \(\Delta\) on \(\Gamma\) via the conjugation (i.e. \((u, v) \dotplus (u', v') = (u \dotplus u' \dotplus \phi(-\inv{\pi(v)} \pi(u')), v \dotplus v')\)), \((u, v) \cdot (x, y) = (u \cdot x \dotplus u \cdot y \dotplus v \cdot x \dotplus \phi(\inv y (\rho(u) + \rho(v)) x), v \cdot y)\), \(\pi(u, v) = (\pi(u), \pi(v))\), \(\phi(x, y) = (\phi(x), \phi(y))\), and \(\rho(u, v) = (\rho(u) - \inv{\pi(u)} \pi(v), \rho(v))\). The isomorphism \((I \rtimes R, \Gamma \rtimes \Delta) \rar (R \times_I R, \Delta \times_\Gamma \Delta)\) is given by \((x, y) \mapsto (x + y, y)\) and \((u, v) \mapsto (u \dotplus v, v)\), and in terms of \((I \rtimes R, \Gamma \rtimes \Delta)\) we have \(p_1(x, y) = x + y\), \(p_1(u, v) = u \dotplus v\), \(p_2(x, y) = y\), \(p_2(u, v) = v\), \(d(x) = (0, x)\), and \(d(u) = (\dot 0, u)\).

Now if \((I, \Gamma) \leqt (R, \Delta)\) is an odd form ideal, then there is a left exact sequence
\[1 \rar \unit(I, \Gamma) \rar \unit(R, \Delta) \rar \unit(R / I, \Delta / \Gamma).\]
Obviously, \(\unit(I, \Gamma) \leqt \unit(R, \Delta)\). It is easy to see that the sequence
\[1 \rar \unit(I, \Gamma) \xrightarrow{p_1^{-1}} \unit(I \rtimes R, \Gamma \rtimes \Delta) \xrightarrow{p_2} \unit(R, \Delta) \rar 1\]
is split exact, i.e. is a semi-direct product (where \(p_1^{-1}\) takes values in \(\Ker(p_2)\)). It follows from the fact that the functor \((R, \Delta) \mapsto \unit(R, \Delta)\) commutes with fibered products.

\section{Idempotents}

Let \((I, \Delta) \leqt (R, \Delta)\) be an odd form ideal in a unital odd form ring and \(e \in R\) be a hermitian idempotent (i.e. \(e = \inv e\) and \(e^2 = e\)). It is easy to see that \((I_e, \Gamma^e_e) \subseteq (I, \Gamma)\) is an odd form subalgebra, where \(I_e = eIe\), \(\Gamma^e_e = \{u \in \Gamma_e \mid \pi(u) \in I_e\}\), and \(\Gamma_e = \Gamma \cdot e\). Usually we will apply this to arbitrary non-unital odd form \(K\)-algebra \((R, \Delta) \leqt (R \rtimes K, \Delta)\). If \((R, \Delta)\) is obtained from a quadratic module \((M, B, q)\), then there is a natural bijection between hermitian idempotents of \(R\) and orthogonal summands \(N \leq M\), under this bijection \((R_e, \Delta^e_e)\) are obtained from \((N, B_{N \times N}, q_N)\).

Let \((I, \Gamma) \leqt (R, \Delta)\) be an odd form ideal of a unital odd form ring, \(e\) and \(e'\) be hermitian idempotents in \(R\). The set \(\unit(e, e'; I, \Gamma)\) consists of pairs \(g = (\beta(g), \gamma(g)) \in I \times \Gamma\) such that \(\beta(g) \in e I e'\), \(\inv{\alpha(g)} \alpha(g) = e'\), \(\alpha(g) \inv{\alpha(g)} = e\), \(\gamma(g) \in \Gamma_{e'}\), \(\pi(\gamma(g)) = \beta(g)\), and \(\rho(\gamma(g)) = \inv{\beta(g)} e'\), where \(\alpha(g) = \beta(g) + e'\). If \(e''\) is the third hermitian idempotent, then there is a multiplication map \(\unit(e, e'; I, \Gamma) \times \unit(e', e''; I, \Gamma) \rar \unit(e, e''; I, \Gamma)\) given by \(\alpha(gg') = \alpha(g) \alpha(g')\) and \(\gamma(gg') = \gamma(g) \cdot \alpha(g') \dotplus \gamma(g')\). The inverses are given by \(\alpha(g^{-1}) = \inv{\alpha(g)}\) and \(\gamma(g^{-1}) = \dotminus \gamma(g) \cdot \inv{\alpha(g)}\). It is easy to see that hermitian idempotents and the sets \(\unit(e, e'; I, \Gamma)\) form a groupoid, \(\unit(e, e; I, \Gamma) \cong \unit(I_e, \Gamma^e_e)\). For example, if \(g \in \unit(I, \Gamma)\) and \(e \in R\) is a hermitian idempotent, then \((\alpha(g) e, \gamma(g) \cdot e) \in \unit(\up {\alpha(g)} e, e; I, \Gamma)\).

Now let us see what happens with quadratic modules \((M, B, q)\) that possess a family of orthogonal hyperbolic summands, \(M = M_0 \perp \mathrm H(P_1) \perp \ldots \perp \mathrm H(P_n)\) for some \(n \geq 0\). Let \(E_i 
\in \End(M)\) be the canonical projections on \(P_i\) for \(0 < |i| \leq n\) and on \(M_0\) for \(i = 0\), then \(E_i\) form a complete system of orthogonal idempotents. Let \(e_i = (E_{-i}^\op, E_i)\) for \(-n \leq i \leq n\), then \(e_i \in T = \{(x^\op, y) \in \End_R(M)^\op \times \End_R(M) \mid B(xm, m') = B(m, ym')\}\). Moreover, these \(e_i\) also form a complete system of orthogonal idempotents, and \(\inv{e_i} = e_{-i}\). Also \((e_i, 0) \in \Xi\) for all \(i \neq 0\), where \(\Xi\) is the odd form parameter considered above, since \(q|_{P_i} = 0\).

We say that \(\eta = (e_-, e_+, q_-, q_+)\) is a hyperbolic pair in an odd form ring \((R, \Delta)\), if \(e_-\) and \(e_+\) are orthogonal idempotents in \(R\), \(\inv{e_+} = e_-\), \(q_-\) and \(q_+\) lie in \(\Delta\), \(\pi(q_-) = e_-\), \(\rho(q_-) = 0\), \(q_- \cdot e_- = q_-\), \(\pi(q_+) = e_+\), \(\rho(q_+) = 0\), \(q_+ \cdot e_+ = q_+\) (it follows that \(q_+ \cdot e_- = \dot 0\) and \(q_- \cdot e_+ = \dot 0\)). Usually we will work with several hyperbolic pairs \(\eta_1, \ldots, \eta_n\), in this case we will use the notation \(\eta_i = (e_{-i}, e_i, q_{-i}, q_i)\) and \(e_{|i|} = e_{-i} + e_i\). Finally, \(e_{a'} = 1 - e_a \in R \rtimes \mathbb Z\) (they may be considered as elements of \(R \rtimes K\) if \((R, \Delta)\) is odd form \(K\)-algebra or as elements of \(R\) if \(R\) is unital), where \(a\) is arbitraty index (integer or of type \(|i|\)).

Hyperbolic pairs \(\eta_1\) and \(\eta_2\) are called orthogonal, if \(e_{|1|}\) and \(e_{|2|}\) are orthogonal idempotents. These pairs are called isomorphic, if there are \(e_{1 2} \in e_1 R e_2\) and \(e_{2 1} \in e_2 R e_1\) such that \(e_1 = e_{1 2} e_{2 1}\) and \(e_2 = e_{2 1} e_{1 2}\). For isomorphic hyperbolic pairs we have \((e_{12} + \inv{e_{21}}, q_{-1} \cdot \inv{e_{21}} \dotplus q_1 \cdot e_{12} \dotminus q_{-2} \dotminus q_2 + \phi(e_2)) \in \unit(e_{|1|}, e_{|2|}; R, \Delta)\).

Finally, \(\eta_1\) and \(\eta_2\) are called Morita equivalent if \(e_{|1|} R e_{|2|} R e_{|1|} = e_{|1|} R e_{|1|}\) and \(e_{|2|} R e_{|1|} R e_{|2|} = e_{|2|} R e_{|2|}\). Morita equivalence means exactly that the unital rings \(e_{|1|} R e_{|1|}\) and \(e_{|2|} R e_{|2|}\) are Morita equivalent with respect to the bimodules \(e_{|2|} R e_{|1|}\) and \(e_{|1|} R e_{|2|}\) under the multiplication maps
\begin{align*}
e_{|1|} R e_{|2|} \otimes_{e_{|2|} R e_{|2|}} e_{|2|} R e_{|1|} &\cong e_{|1|} R e_{|1|},\\
e_{|2|} R e_{|1|} \otimes_{e_{|1|} R e_{|1|}} e_{|1|} R e_{|2|} &\cong e_{|2|} R e_{|2|},
\end{align*}
see \cite{Bass}, chapter II for general Morita theory.

An odd form ring \((R, \Delta)\) has an orthogonal hyperbolic family of rank \(n \geq 0\), if there is a family \(\eta_1, \ldots, \eta_n\) of pairwise orthogonal and Morita equivalent hyperbolic pairs. In this case we set \(e_0 = 1 - (e_{-n} + \ldots + e_{-1} + e_1 + \ldots + e_n) \in R \rtimes \mathbb Z\). If \((R, \Delta)\) is special unital, this is equivalent to the existence of decomposition \(R = \bigoplus_{i = -n}^n e_i R\) into direct summands such that \(e_0 R\) is orthogonal to all \(e_{|i|} R\) for \(0 < i \leq n\), these \(e_{|i|} R\) are pairwise orthogonal and hyperbolic with lagrangians \(e_{-i} R\) and \(e_i R\), and also \(e_{|i|} R\) are isomorphic as right \(R\)-modules to direct summands in \((e_{|j|} R)^N\) for \(N\) big enough and for all \(1 \leq i, j \leq n\).

Let \((R, \Delta)\) be an odd form ring with an orthogonal hyperbolic family. If \(X \leq R\) is a subgroup closed under multiplications on all \(e_i\) from the left and from the right, then we will use the notations \(X_{i j} = e_i X e_j\) and \(X_i = e_i X e_i\). Similarly, if \(\Upsilon \leq \Delta\) is a subgroup closed under right multiplications on all \(e_i\), then we will use the notation \(\Upsilon_i = \Upsilon \cdot e_i\). Let \(e_+ = e_1 + \ldots + e_n\) and \(e_- = \inv{e_+} = e_{-n} + \ldots + e_{-1}\). The expressions \(X_{i'}\), \(X_{|i|}\), \(\Upsilon_+\), and so on have the obvious meaning. Also, \(X_{i *}\) and \(X_{* i}\) mean \(e_i X\) and \(X e_i\). Sometimes we will use the notation \(\alpha_{i j}(g) = e_i \alpha(g) e_j\), \(\beta_{i j}(g) = e_i \beta(g) e_j\), and \(\gamma_i(g) = \gamma(g) \cdot e_i\) for \(g \in \unit(R, \Delta)\).

An orthogonal hyperbolic family \(\eta_1, \ldots, \eta_n\) is called free, if \(\eta_i\) are pairwise isomorphic and these isomorphisms are coherent. In other words, there are \(e_{i j} \in R\) for \(1 \leq i, j \leq n\) such that \(e_{i i} = e_i\), \(e_{i j} e_{j k} = e_{i k}\), \(q_i \cdot e_{i j} = q_j\), and \(q_{-j} \cdot \inv{e_{i j}} = q_{-i}\). We set \(e_{-i, -j} = e_{j i}\) and \(e_{|i|, |j|} = e_{i j} + e_{-i, -j}\) for \(1 \leq i, j \leq n\). In this case there are canonical isomorphisms \(R_{0'} \cong \mat(n, R_{|1|})\), \(R_+ \cong \mat(n, R_1)\), and \(R_- \cong \mat(n, R_{-1})\).

The next proposition gives a construction of the free odd form algebras and shows that every odd form algebra is a factor of the special one.

\begin{prop}\label{FreeOFA}
Let \(K\) be a commutative ring, \(A\) and \(B\) be two abstract sets, \(n \geq 0\) be an integer. Then there is the universal odd form algebra \((R, \Delta)\) over \(K\) with set-theoretical maps \(A \rar R\) and \(B \rar \Delta\) and with a fixed family of \(n\) pairwise orthogonal hyperbolic pairs \(\{(e_{-i}, e_i, q_{-i}, q_i)\}_{i = 1}^n\). More explicitly, if we denote the generators by \(\{x_a\}_{a \in A}\) and \(\{u_b\}_{b \in B}\), then \(R\) is the free \(K\)-module with generators \(r_1 \ldots r_m\) for all \(m > 0\), where each \(r_s\) equals to one of \(x_a\), \(\inv x_a\), \(\pi(u_b)\), \(\inv{\pi(u_b)}\), \(\rho(u_b)\), \(\inv{\rho(u_b)}\), or \(e_i\) for some \(a \in A\), \(b \in B\), \(0 < |i| \leq n\), and there are no consecutive factors of type \(r_s = \inv{\pi(u_b)}\), \(r_{s + 1} = \pi(u_b)\) and of type \(r_s = e_i\), \(r_{s + 1} = e_j\). Similarly, \(\Delta\) has elements \(u_b \cdot r_1 \cdots r_m\) and \(q_i \cdot r_1 \cdots r_m\) for all \(b \in B\), \(0 < |i| \leq n\), \(m \geq 0\), where \(r_1 \cdots r_m\) is the \(K\)-module generator of \(R\) and \(r_1 = e_i\) in the case of \(q_i \cdot r_1 \cdots r_m\), such that they form a \(K\)-module basis of \(\Delta / \phi(R)\). This odd form algebra is special.
\end{prop}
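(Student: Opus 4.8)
The plan is to construct $(R,\Delta)$ explicitly as the free object claimed, verify the universal property, and then check that the presentation in terms of $K$-module bases is correct. First I would build $R$ directly: take the free $K$-module on the set of formal words $r_1\cdots r_m$ subject to the two reduction rules (no $\inv{\pi(u_b)}\pi(u_b)$ adjacency, no $e_ie_j$ adjacency), and define the multiplication on generators by concatenation followed by repeated application of the two reductions — using $\inv{\pi(u_b)}\,\pi(u_b)\mapsto 0$ and $e_ie_j\mapsto \delta_{ij}e_i$ — which terminates because each reduction shortens the word. One checks associativity by the usual diamond/confluence argument (the two reduction rules do not interfere since one involves a $\pi(u)$–$\inv{\pi(u)}$ pair and the other an $e_i$–$e_j$ pair, so overlaps are resolved uniquely). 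The involution on $R$ is the $K$-linear extension of word reversal combined with the symbol swaps $x_a\leftrightarrow\inv x_a$, $\pi(u_b)\leftrightarrow\inv{\pi(u_b)}$, $\rho(u_b)\leftrightarrow\inv{\rho(u_b)}$, $e_i\leftrightarrow e_{-i}$; it is compatible with the reductions, so it descends to $R$.

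Next I would construct $\Delta$. The subtle point is that $\Delta$ is only \emph{partly} free: it must be a group with $R^\bullet$-action, equipped with $\phi,\pi,\rho$, and the axioms force many relations (in particular $\phi$ is additive, central up to the commutator formula, $\phi(\inv x y x)=\phi(y)\cdot x$, $\phi(x)=0$ on $\herm(R)$, and the $R^\bullet$-action interacts with addition via $u\cdot(x+y)=u\cdot x\dotplus\phi(\inv y\rho(u)x)\dotplus u\cdot y$). The clean way is to define $\Delta$ as a central extension: first form the abelian group $\phi(R)\cong R/\{x-\inv x\}$ (this is forced: $\phi$ kills hermitian elements and $\rho\phi(x)=x-\inv x$, $\pi\phi=0$), then build $\Delta$ as the set of pairs "(leading term, $\phi$-correction)" where the leading terms range over the free $R^\bullet$-set on the symbols $u_b$ and $q_i$ (with $q_i$ only acted on by words starting with $e_i$, matching $q_i\cdot e_i=q_i$), and the group law $\dotplus$ is twisted by a $2$-cocycle read off from the axiom $u\dotplus v=v\dotplus u\dotplus\phi(-\inv{\pi(u)}\pi(v))$. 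Concretely: as a $K$-module, $\Delta/\phi(R)$ is free on the stated basis, and $\Delta$ is the pullback along $\rho-(-)^{\mathrm{op}}$... rather, one sets $\Delta = (\Delta/\phi(R)) \times R/\{x-\inv x\}$ with $\pi$, $\rho$ defined on the first coordinate by the obvious formulas and corrected on the second, and $\dotplus$ given by the cocycle. Then one must verify all the odd-form-ring axioms hold and that the $K^\bullet$-action extends to an $(R\rtimes K)^\bullet$-action making $(R\rtimes K,\Delta)$ unital.

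Then the universal property: given any odd form $K$-algebra $(S,\Theta)$ with maps $A\to S$, $B\to\Theta$, and a chosen orthogonal hyperbolic family of rank $n$, one extends to $R\to S$ by sending a word to the corresponding product (well-defined because the target satisfies $\inv{\pi(u)}\pi(u)\cdot(\text{stuff})$... no, actually $\inv{\pi(u_b)}\pi(u_b)$ need not vanish in $S$! — this is the main obstacle, see below), and $\Delta\to\Theta$ by sending $u_b\mapsto$ image, $q_i\mapsto q_i$, transporting the $R^\bullet$-action and the $\phi$-part. Uniqueness is clear since generators generate. Finally, specialness — injectivity of $(\pi,\rho)\colon\Delta\to R\times R$ — follows because $\Delta$ was literally built as a subobject of (a quotient related to) $R\times R$: the $\phi(R)$ part injects via $\rho$ into $\{x-\inv x\}$... wait, $\rho\phi(x)=x-\inv x$ is injective on $R/\{x\mid x=\inv x\}$, good, and the leading-term part injects via $\pi$, so together $(\pi,\rho)$ is injective. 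Hence $(R,\Delta)$ is special, and being unital-over-$K$ makes $(R\rtimes K,\Delta)$ a special unital odd form ring.

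\medskip

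The hard part, and the place where the "no consecutive $\inv{\pi(u_b)},\pi(u_b)$" rule is doing real work, is reconciling the reduction $\inv{\pi(u_b)}\pi(u_b)\mapsto 0$ with the universal property: in a general target $(S,\Theta)$ we have no reason for $\inv{\pi(u)}\pi(u)=0$. The resolution must be that in the \emph{free} object one does \emph{not} impose this as zero in $R$; rather the forbidden-adjacency condition is the wrong reading and instead what is meant is that words are reduced only using $e_ie_j\mapsto\delta_{ij}e_i$, while the clause about $\inv{\pi(u_b)},\pi(u_b)$ refers to a chosen \emph{normal form} coming from the identity $\rho(u)$ expressed via $\pi$ — i.e. one uses $\pi(u)\,?\,$... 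I would need to look carefully at which relations among $\pi(u),\rho(u),\inv{\pi(u)},\inv{\rho(u)}$ are forced by the axioms (none are, for a single $u$, beyond $\inv{\,\inv{\pi(u)}\,}=\pi(u)$), and conclude the list of generators is genuinely a free generating set for $R$ as a $K$-module with only the $e_i$-idempotent relations and the reversal-involution relations imposed. So the real content of the proof is (a) the confluence/termination of the rewriting system defining multiplication in $R$, and (b) the cocycle computation showing $\Delta$ with the twisted $\dotplus$ satisfies all six group-of-odd-form-ring axioms and the unitality identity $u\cdot(-1)\dotplus u=\varphi(\rho(u))$; everything else is bookkeeping. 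I expect (b) to be the most error-prone, since the axioms tie together $\dotplus$, the $R^\bullet$-action, $\phi$, $\pi$, $\rho$ very tightly, and one must check the cocycle identity (associativity of $\dotplus$) is implied by associativity in $R$ together with $\phi(\inv x y x)=\phi(y)\cdot x$.
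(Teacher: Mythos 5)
There is a genuine gap, and it sits exactly where you flagged uncertainty: the treatment of the forbidden adjacency \(\inv{\pi(u_b)}\,\pi(u_b)\). Your first reading (reduce \(\inv{\pi(u_b)}\pi(u_b)\mapsto 0\)) is wrong, as you yourself observe, because nothing in a general target forces \(\inv{\pi(u)}\pi(u)=0\). But your proposed fix --- that no relation at all is imposed and the adjacency clause is ``the wrong reading'' --- is also wrong, and it contradicts the explicit basis description in the statement. The axioms \emph{do} force a relation for a single \(u\): from \(\rho(u\dotplus v)=\rho(u)-\inv{\pi(u)}\pi(v)+\rho(v)\) with \(v=\dotminus u\), together with \(\rho(\dot 0)=0\) and \(\inv{\rho(u)}=\rho(\dotminus u)\), one gets \(\inv{\pi(u)}\pi(u)=-\rho(u)-\inv{\rho(u)}\) in \emph{every} odd form ring (the paper records this identity right after the axioms). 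The correct rewriting rule is therefore \(\inv{\pi(u_b)}\pi(u_b)\mapsto -\rho(u_b)-\inv{\rho(u_b)}\), which is what the paper uses; it strictly shortens words, so termination and confluence go through, and it is compatible with every target, so the universal property holds. Without this your construction of \(R\) is not the object described in the proposition and the map to a general \((S,\Theta)\) is not well defined.

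Apart from this, your architecture matches the paper's proof: \(R\) as the free \(K\)-module on reduced words with concatenation-plus-reduction multiplication and word-reversal involution; \(\Delta\) as a central extension of a free part (spanned by the leading terms \(u_b\cdot r_1\cdots r_m\) and \(q_i\cdot r_1\cdots r_m\)) by \(\phi(R)\cong R/\herm(R)\), with \(\dotplus\) twisted by the cocycle \(\sum_{v_1<v_2}\inv{\pi(v_1)}\pi(v_2)k_{v_1}k_{v_2}\) relative to a chosen linear order on the leading terms; and specialness from the injectivity of \(\pi\) on the leading-term part plus the injectivity of \(r\mapsto r-\inv r\) on \(R/\herm(R)\). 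So the only substantive repair needed is the identification of the forced relation above; once it is in place, your parts (a) and (b) are exactly the checks the paper performs.
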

\begin{proof}
Let \(R = \bigoplus_{r_1 \cdots r_m} K r_1 \cdots r_m\), where the direct sum is taken by all products as in the statement. It is a \(K\)-module with an obvious involution. In order to multiply two generators \(r_1 \cdots r_m\) and \(r'_1 \cdots r'_m\), we just concatenate them and use the relations \(e_i e_j = 0\) for \(i \neq j\), \(e_i e_i = e_i\), \(\inv{\pi(u_b)} \pi(u_b) = -\rho(u_b) - \inv{\rho(u_b)}\) in order to eliminate bad consecutive pairs of factors. Clearly, this gives a well-defined multiplication on \(R\), hence \(R\) is an involution \(K\)-algebra.

Now fix an arbitrary linear order on the set \(V = \{u_b \cdot r_1 \cdots r_m, q_i \cdot r_1 \cdots r_m\}\) (where \(r_1 \cdots r_m\) are \(K\)-module generators of \(R\) and \(r_1 = e_i\) for the second type of elements). For any \(v \in V\) we define elements \(\pi(v)\) and \(\rho(v)\) in \(R\) by
\begin{align*}
\pi(u_b \cdot r_1 \cdots r_m) &= \pi(u_b) r_1 \cdots r_m, & \pi(q_i \cdot r_1 \cdots r_m) &= r_1 \cdots r_m,\\
\rho(u_b \cdot r_1 \cdots r_m) &= \inv r_m \cdots \inv r_1 \rho(u_b) r_1 \cdots r_m, & \rho(q_i \cdot r_1 \cdots r_m) &= 0.
\end{align*}
By \(\phi(r)\) we mean the class of \(r\) in \(R / \herm(R)\). Let
\[\Delta = \bigl\{\sum^\cdot_{v \in V} v \cdot k_v \dotplus \phi(r) \mid r \in R, k_v \in K, \text{all but a finite number of }k_v \text{ equal }0\bigr\}.\]
The operations on \(\Delta\) are given by
\begin{align*}
\pi\bigl(\sum^\cdot_{v \in V} v \cdot k_v \dotplus \phi(r)\bigr) &= \sum \pi(v) k_v,\\
\rho\bigl(\sum^\cdot_{v \in V} v \cdot k_v \dotplus \phi(r)\bigr) &= \sum_{v \in V} \rho(v) k_v^2 - \blank\\
&- \sum_{v_1 < v_2} \inv{\pi(v_1)} \pi(v_2) k_{v_1} k_{v_2} + r - \inv r,\\
\bigl(\sum^\cdot_{v \in V} v \cdot k_v \dotplus \phi(r)\bigr) \dotplus \bigl(\sum^\cdot_{v \in V} v \cdot k'_v \dotplus \phi(r')\bigr) &= \sum^\cdot_{v \in V} v \cdot (k_v + k'_v) \dotplus \phi\bigl(r + r' + \blank \\
&+ \sum_{v_1 < v_2} \inv{\pi(v_1)} \pi(v_2) k'_{v_1} k_{v_2} - \sum_v \rho(v) k'_v k_v\bigr).
\end{align*}
The semigroup \(R^\bullet\) acts on \(\Delta\) from the right in the obvious way. It is easy to see that \(\Delta\) is a group and all axioms on the odd form parameter are satisfied. By construction, \((R, \Delta)\) has the universal property. Clearly, if \(\pi(u) = 0\) for some \(u \in \Delta\), then \(u \in \phi(R)\), hence \((R, \Delta)\) is special.
\end{proof}

Existence of an orthogonal hyperbolic family helps to somewhat simplify the odd form parameter \(\Delta\). Let \(\Delta^i = q_i \cdot R \cong R_{i *}\) for all \(i \neq 0\) and \(\Delta^0 = \{u \in \Delta \mid \pi(u) \in R_{0 *}\}\). The sets \(\Delta^i\) are actually subgroups, \(\phi(R) \leq \Delta^0\), \([\Delta^i, \Delta^j] = \dot 0\) for \(i \neq -j\) (where \([u, v] = u \dotplus v \dotminus u \dotminus v\)), and \([\Delta^i, \Delta^{-i}] \leq \Delta^0\). Moreover, any element \(u \in \Delta\) may be decomposed uniquely as \(u = u^{-n} \dotplus \ldots \dotplus u^n\) with \(u^i \in \Delta^i\). It is easy to see that \(u^i = q_i \cdot \pi(u)\) for all \(i \neq 0\), \((u \cdot x)^i = u^i \cdot x\) for all \(i\), \((u \dotplus v)^i = u^i \dotplus v^i\) for all \(i \neq 0\), and \((u \dotplus v)^0 = u^0 \dotplus v^0 \dotplus \phi(\inv{\pi(v)} e_+ \pi(u))\). For other operations we have \(\pi(u^i) = e_i \pi(u)\) for all \(i\), \(\rho(u^i) = 0\) for \(i \neq 0\), \(\rho(u^0) = \rho(u) + \inv{\pi(u)} e_+ \pi(u)\), \(\phi(x)^i = \dot 0\) for \(i \neq 0\), and \(\phi(x)^0 = \phi(x)\). In the special unital case \(\Delta^0\) is an odd form parameter in \(\Heis(R_{0 *}, B_1|_{R_{0 *} \times R_{0 *}})\). We may always work with \(\Delta^0\) instead of \(\Delta\), as we will now see. If \((I, \Gamma) \leq (R, \Delta)\) is an odd form ideal, then \(\Gamma = \sum^\cdot_i (\Gamma \cap \Delta^i)\) and \(\Gamma \cap \Delta^i = q_i \cdot I\) for \(i \neq 0\), hence \(\Gamma\) is determined by \(\Gamma^0 = \Gamma \cap \Delta^0\). Clearly, \(\Gamma^0 \cdot R \subseteq \Gamma^0\) and \(\Gamma^0_{\mathrm{min}} \leq \Gamma^0 \leq \Gamma^{\mathrm{max}}\) are necessary and sufficient conditions on \(\Gamma^0 \leq \Delta^0\) to generate an odd form ideal \((I, \Gamma)\) if \(I \leqt R\) is fixed, where \(\Gamma^0_{\mathrm{min}} = \Delta^0 \cdot I \dotplus \phi(I)\) and \(\Gamma^0_{\mathrm{max}} = \{u \in \Delta^0 \mid \pi(u), \rho(u) \in I\}\).

If \(g = (\alpha(g), \gamma(g)) \in \unit(R, \Delta)\), then \(\gamma^i(g) = q_i \cdot \beta(g)\) for \(i \neq 0\), hence \(g\) is determined by \(\alpha(g)\) and \(\gamma^0(g)\). Let \(\gamma^{\circ}(g) = \gamma^0(g) \dotplus \phi(e_+ \beta(g))\), then (where, as usual, \(\alpha(g) = \beta(g) + 1\))
\begin{equation*}
\begin{split}
\unit(R, \Delta) \cong \{g = (\beta(g), \gamma^{\circ}(g)) \in R \times \Delta^0 \mid \phantom{} &\alpha(g)^{-1} = \inv{\alpha(g)}, \pi(\gamma^{\circ}(g)) = e_0 \beta(g), \\
&\rho(\gamma^{\circ}(g)) = \inv{\alpha(g)} e_+ \alpha(g) - e_+ + \inv{\beta(g)} e_0\}.
\end{split}
\end{equation*}
In the special unital case \(\gamma^\circ(g) = (e_0, e_+) \cdot \alpha(g) \dotminus (e_0, e_+)\). The group operation is given by
\begin{align*}
\alpha(gh) &= \alpha(g) \alpha(h),\\
\gamma^0(gh) &= \gamma^0(g) \cdot \alpha(h) \dotplus \gamma^0(h) + \phi(\inv{\beta(h)} e_+ \beta(g) \alpha(h)),\\
\gamma^{\circ}(gh) &= \gamma^{\circ}(g) \cdot \alpha(h) \dotplus \gamma^{\circ}(h).
\end{align*}

\section{Elementary transvections}

In this section \((R, \Delta)\) is an odd form ring with an orthogonal hyperbolic family of rank \(n\). Then there are elements in the unitary group \(\unit(R, \Delta)\) of a particularly simple structure. An elementary transvection of a short root type is an element \(T_{i j}(x) \in \unit(R, \Delta)\) such that
\begin{align*}
\beta(T_{i j}(x)) &= x - \inv x, &
\gamma^0(T_{i j}(x)) &= \dotminus \phi(x), &
\gamma^{\circ}(T_{i j}(x)) &= \begin{cases}
\phi(\inv x), &\text{if } i < 0 < j;\\
\phi(x), &\text{if } j < 0 < i;\\
\dot 0, &\text{if } 0 < ij
\end{cases}
\end{align*}
for any \(i \neq 0\), \(j \neq 0\), \(i \neq \pm j\), and \(x \in R_{i j}\). An elementary transvection of an ultrashort root type is an element \(T_i(u) \in \unit(R, \Delta)\) such that
\begin{align*}
\beta(T_i(u)) &= \rho(u) + \pi(u) - \inv{\pi(u)}, \\
\gamma^0(T_i(u)) &= u \dotminus \phi(\rho(u) + \pi(u)), \\
\gamma^{\circ}(T_i(u)) &= \begin{cases}
u, &\text{if } i < 0;\\
u \dotminus \phi(\rho(u) + \pi(u)), &\text{if } 0 < i
\end{cases}
\end{align*}
for any \(i \neq 0\) and \(u \in \Delta^0_i = \Delta^0 \cdot e_i\) (an elementary transvection of a long root type is \(T_i(\phi(x))\) for some \(x \in R_{-i, i}\)). It can be easily seen that all these elements are indeed in the unitary group. The elementary unitary group is
\[\eunit(R, \Delta) = \langle T_{i j}(x), T_k(u) \mid i \neq \pm j; i, j, k \neq 0; x \in R_{i j}; u \in \Delta^0_k \rangle.\]

An elementary dilation is an element \(D_i(a) \in \unit(R, \Delta)\) such that
\begin{align*}
\beta(D_i(a)) &= a + \inv a^{-1} - e_{|i|}, & \gamma^0(D_i(a)) &= \phi(e_i - a), &
\gamma^\circ(D_i(a)) &= \dot 0
\end{align*}
for any \(i \neq 0\) and \(a \in R_i^*\). Also, \(D_0(g) = g\) for \(g \in \unit(R_0, \Delta_0^0)\).

\begin{lemma}\label{TransvectionRelations}
Elementary transvections and dilations satisfy the following relations:
\begin{itemize}
\item \(T_{i j} \colon R_{i j} \rar \unit(R, \Delta)\), \(T_i \colon \Delta^0_i \rar \unit(R, \Delta)\), \(D_i \colon R_i^* \rar \unit(R, \Delta)\) for \(i \neq 0\), and \(D_0 \colon \unit(R_0, \Delta_0^0) \rar \unit(R, \Delta)\) are group homomorphisms;
\item \(T_{i j}(x) = T_{-j, -i}(-\inv x)\), \(D_i(a) = D_{-i}(\inv a^{-1})\) for \(i \neq 0\);
\item \([D_i(a), D_j(b)] = 1\) for \(i \neq \pm j\);
\item \([D_i(a), T_{j k}(x)] = 1\) for \(j \neq \pm i \neq k\)
\item \([D_i(a), T_j(u)] = 1\) for \(0 \neq i \neq \pm j\);
\item \(\up{D_i(a)}{T_{ij}(x)} = T_{ij}(ax)\);
\item \(\up{D_i(a)}{T_{-i}(u)} = T_{-i}(u \cdot \inv a)\);
\item \(\up{D_0(g)}{T_i(u)} = T_i(\gamma(g) \cdot \pi(u) \dotplus u) = T_i((\gamma(g) \cdot \pi(u) \dotplus u) \cdot \inv{\alpha(g)})\);
\item \([T_{i j}(x), T_{k l}(y)] = 1\) for \(i \neq l \neq -j \neq -k \neq i\);
\item \([T_{i j}(x), T_{j k}(y)] = T_{i k}(xy)\) for \(i \neq \pm k\);
\item \([T_{-i, j}(x), T_{j i}(y)] = T_i(\phi(xy))\);
\item \([T_i(u), T_j(v)] = T_{-i, j}(-\inv{\pi(u)} \pi(v))\) for \(i \neq \pm j\);
\item \([T_i(u), T_{j k}(x)] = 1\) for \(j \neq i \neq -k\);
\item \([T_i(u), T_{i j}(x)] = T_{-i, j}(\rho(u) x)\, T_j(\dotminus u \cdot (-x))\).
\end{itemize}
\end{lemma}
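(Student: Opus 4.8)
The plan is to verify each relation by comparing the two sides inside \(\unit(R,\Delta)\) using the faithful parametrization of an element \(g\) by the pair \((\beta(g),\gamma^{\circ}(g))\in R\times\Delta^0\) recorded at the end of Section 4: two unitary elements coincide as soon as their \(\beta\)-components and their \(\gamma^{\circ}\)-components agree (equivalently their \(\gamma^0\)-components, since \(\gamma^{\circ}(g)=\gamma^0(g)\dotplus\phi(e_+\beta(g))\)). So every relation in the list splits into one identity in the ring \(R\rtimes\mathbb Z\) and one identity in the group \(\Delta^0\). For the bookkeeping I would use Lemma \ref{AlphaBetaGamma} (with \(\alpha=\beta+1\)), which gives closed formulas for \(\beta\) and \(\gamma\) of products, inverses, conjugates, and commutators, together with the Section 4 identities \(\gamma^i(g)=q_i\cdot\beta(g)\) for \(i\neq0\), \((u\dotplus v)^0=u^0\dotplus v^0\dotplus\phi(\inv{\pi(v)}e_+\pi(u))\), \(\rho(u^0)=\rho(u)+\inv{\pi(u)}e_+\pi(u)\), \(\gamma^0(gh)=\gamma^0(g)\cdot\alpha(h)\dotplus\gamma^0(h)\dotplus\phi(\inv{\beta(h)}e_+\beta(g)\alpha(h))\), \(\gamma^{\circ}(gh)=\gamma^{\circ}(g)\cdot\alpha(h)\dotplus\gamma^{\circ}(h)\), and \(\gamma(\up gh)=(\gamma(g)\cdot\beta(h)\dotplus\gamma(h))\cdot\inv{\alpha(g)}\). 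After this, each commutator or conjugate of transvections is an explicit expression in \(\pi,\rho,\phi\) of the generators, the \(e_i\), and the structure maps; the ring-level part reduces to computations with \(e_ie_j=\delta_{ij}e_i\), \(\inv{e_i}=e_{-i}\) and the location constraints (\(x\in R_{ij}\), \(\pi(u)\in R_{0i}\), etc.), and the \(\Delta^0\)-level part to the odd form ring axioms, notably \(\phi(\inv x)=\dotminus\phi(x)\), centrality of \(\phi(R)\) in \(\Delta\), \(\phi(\inv xyx)=\phi(y)\cdot x\), \(\pi(q_i)=e_i\), \(\rho(q_i)=0\), and the distributivity axiom \(u\cdot(x+y)=u\cdot x\dotplus\phi(\inv y\rho(u)x)\dotplus u\cdot y\). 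A small preliminary is to confirm that the three displayed data for \(T_{ij}(x)\), \(T_i(u)\), \(D_i(a)\) are mutually consistent and do lie in \(\unit(R,\Delta)\); this is exactly where orthogonality of the \(e_i\) and the hypotheses \(i\neq\pm j\), \(\inv{e_i}=e_{-i}\) are used.

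I would then dispatch the items in increasing order of difficulty. The homomorphism property of \(T_{ij},T_i,D_i\) and the symmetries \(T_{ij}(x)=T_{-j,-i}(-\inv x)\), \(D_i(a)=D_{-i}(\inv a^{-1})\) follow at once from additivity of \(\phi\) and of \(\dotplus\) on \(\Delta^0\) and from \(\phi(\inv x)=\dotminus\phi(x)\). All the vanishing commutators — \([D_i(a),D_j(b)]\) for \(i\neq\pm j\), \([D_i(a),T_{jk}(x)]\) for \(j\neq\pm i\neq k\), \([D_i(a),T_j(u)]\), \([T_{ij}(x),T_{kl}(y)]\) for \(i\neq l\neq-j\neq-k\neq i\), and \([T_i(u),T_{jk}(x)]\) for \(j\neq i\neq-k\) — come down to the observation that every cross term \(e_a(\cdots)e_b\) produced by the commutator formulas of Lemma \ref{AlphaBetaGamma} has \(a\) orthogonal to \(b\) and hence is zero in both the \(\beta\)- and the \(\gamma^{\circ}\)-component. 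The conjugations \(\up{D_i(a)}{T_{ij}(x)}=T_{ij}(ax)\), \(\up{D_i(a)}{T_{-i}(u)}=T_{-i}(u\cdot\inv a)\) and the ``short'' Steinberg-type commutators \([T_{ij}(x),T_{jk}(y)]=T_{ik}(xy)\) for \(i\neq\pm k\), \([T_{-i,j}(x),T_{ji}(y)]=T_i(\phi(xy))\), and \([T_i(u),T_j(v)]=T_{-i,j}(-\inv{\pi(u)}\pi(v))\) for \(i\neq\pm j\) reduce on the \(\beta\)-level to the classical matrix identities for unitary groups, and on the \(\Delta^0\)-level to short manipulations with the \(\phi\)-corrections via \(\phi(\inv xyx)=\phi(y)\cdot x\) and \(\pi(q_i)=e_i\), \(\rho(q_i)=0\).

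The main obstacle will be the last three relations — \(\up{D_0(g)}{T_i(u)}=T_i(\gamma(g)\cdot\pi(u)\dotplus u)=T_i((\gamma(g)\cdot\pi(u)\dotplus u)\cdot\inv{\alpha(g)})\) and, above all, \([T_i(u),T_{ij}(x)]=T_{-i,j}(\rho(u)x)\,T_j(\dotminus u\cdot(-x))\) — because there the non-abelian part of \(\Delta^0\) and the full distributivity axiom enter: one must expand \(u\cdot(-x)\) as \(\phi(\inv x\rho(u)x)\dotminus(u\cdot x)\), carefully track the asymmetry of \(\gamma^{\circ}\) between the cases \(i<0\) and \(i>0\) built into the definitions of \(T_{ij}\) and \(T_i\), and reconcile the correction \(\phi(\inv{\beta(h)}e_+\beta(g)\alpha(h))\) in the product formula with the term \(\phi(\rho(u)+\pi(u))\) occurring in the definition of \(T_i(u)\). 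I would handle these by computing \(\gamma^{\circ}\) of the left-hand side first, via \(\gamma^{\circ}(gh)=\gamma^{\circ}(g)\cdot\alpha(h)\dotplus\gamma^{\circ}(h)\) and \(\gamma(\up gh)=(\gamma(g)\cdot\beta(h)\dotplus\gamma(h))\cdot\inv{\alpha(g)}\), then projecting to \(\Delta^0\) and matching term by term against the right-hand side, which itself has to be expanded using the \(\gamma^0\)-product formula. As an independent check on the purely ring-theoretic content, one may reduce to the special unital case via Proposition \ref{FreeOFA}: any odd form ring with an orthogonal hyperbolic family is a quotient — preserving the family and the relevant generators — of a free, hence special, one, in which \(\unit(R,\Delta)\) embeds into \(R^*\) via \(g\mapsto\alpha(g)\) and every relation becomes an explicit matrix-type identity, so the relations descend to the original \((R,\Delta)\).
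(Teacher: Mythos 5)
Your plan is correct and would work, but your primary route is the long way around, and the observation you relegate to a closing ``independent check'' is in fact the whole of the paper's proof. The paper dismisses the dilation formulas as trivial and then reduces all remaining relations to the free odd form ring of Proposition \ref{FreeOFA}: the universal property yields a morphism from the free odd form algebra with \(n\) hyperbolic pairs onto the given configuration carrying generators to the relevant \(e_i x e_j\) and \(u \cdot e_i\), and since morphisms preserve both sides of each relation, it suffices to prove the relations in the free case. The decisive point, which your plan underuses, is that the free odd form ring is \emph{special}: for \(g \in \unit(R, \Delta)\) one has \(\pi(\gamma(g)) = \beta(g)\) and \(\rho(\gamma(g)) = \inv{\beta(g)}\), so injectivity of \((\pi, \rho)\) forces \(\gamma(g)\) to be determined by \(\beta(g)\), and two unitary elements coincide as soon as their \(\beta\)-components do. Consequently the entire \(\Delta^0\)-level verification --- which you rightly single out as the main obstacle, above all for \(\up{D_0(g)}{T_i(u)}\) and \([T_i(u), T_{i j}(x)]\) --- is never performed: one only evaluates \(\beta\) of both sides via \(\beta([g,h]) = (\beta(g)\beta(h) - \beta(h)\beta(g))\,\inv{\alpha(g)}\,\inv{\alpha(h)}\) from Lemma \ref{AlphaBetaGamma}, a routine matrix-style computation. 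Your two-component calculation buys self-containedness (it never leaves the given odd form ring and exercises the \(\Delta^0\) axioms directly), at the price of considerably more bookkeeping; if you pursue it, you should also record, as the paper does, that the relations hold without assuming the hyperbolic pairs Morita equivalent, which matters for later uses such as Lemma \ref{HyperbolicGluing}.
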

\begin{proof}
The formulas with dilations are trivial. We prove other relations without the assumption that the hyperbolic pairs are Morita equivalent. Without loss of generality we may assume that the odd form ring is free (say, over \(\mathbb Z\)), hence special by proposition \ref{FreeOFA}. But since our odd form ring is special, it suffices to check that values of \(\beta\) on both sides of each identity coincide. By lemma \ref{AlphaBetaGamma}, we have \(\beta([g, h]) = (\beta(g) \beta(h) - \beta(h) \beta(g)) \inv{\alpha(g)} \inv{\alpha(h)}\), hence the proof reduces to direct routine calculations.
\end{proof}

Let \((I, \Gamma) \leq (R, \Delta)\) be an odd form ideal. Clearly, \((R / I, \Delta / \Gamma)\) also has an orthogonal hyperbolic family such that the factor-morphism preserves the family. The relative elementary unitary group is
\[\eunit(R, \Delta; I, \Gamma) = \up{\eunit(R, \Delta)}{\langle T_{i j}(x), T_k(u) \mid x \in I_{i j}, u \in \Gamma^0_k \rangle}.\]
In other words, it is the smallest subgroup of \(\unit(R, \Delta)\) normalized by the elementary unitary group that contains all elementary transvections from \(\unit(I, \Gamma)\) (note that \((I, \Gamma)\) does not necessarily contains the orthogonal hyperblic family). Clearly, this is a subgroup of \(\unit(I, \Gamma)\).

\begin{lemma}\label{Perfect}
If \((I, \Gamma) \leqt (R, \Delta)\) is an odd form ideal and \((R, \Delta)\) has a fixed orthogonal hyperbolic family of rank \(n \geq 3\), then
\[[\eunit(R, \Delta), \eunit(R, \Delta; I, \Gamma)] = \eunit(R, \Delta; I, \Gamma).\]
In particular, \(\eunit(R, \Delta)\) is perfect.
\end{lemma}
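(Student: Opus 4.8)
The plan is to show that the subgroup \(N := [\eunit(R, \Delta), \eunit(R, \Delta; I, \Gamma)]\) already contains every generator \(T_{ij}(x)\), \(x \in I_{ij}\), and \(T_k(u)\), \(u \in \Gamma^0_k\), of the relative elementary group. Since \(\eunit(R, \Delta; I, \Gamma) \leqt \eunit(R, \Delta)\), the subgroup \(N\) is normal in \(\eunit(R, \Delta)\) and contained in \(\eunit(R, \Delta; I, \Gamma)\); hence, once these generators are in \(N\), so are all of their \(\eunit(R, \Delta)\)-conjugates, which forces \(\eunit(R, \Delta; I, \Gamma) \subseteq N\) and thus equality. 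Specializing to \((I, \Gamma) = (R, \Delta)\), for which \(\eunit(R, \Delta; R, \Delta) = \eunit(R, \Delta)\), then gives the perfectness statement. Two tools will be used repeatedly: the hypothesis \(n \geq 3\) lets one choose, for any two nonzero indices, a third index of distinct absolute value; and Morita equivalence of the hyperbolic pairs provides factorizations of the idempotents, e.g.\ \(e_j = \sum_s a_s b_s\) with \(a_s \in R_{j, \pm k}\), \(b_s \in R_{\pm k, j}\), which yield \(I_{ij} = \sum_{\pm k} I_{i, \pm k}\, R_{\pm k, j}\) and the analogous decompositions with \(I\) on the right factor or replaced by \(R\).

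First I would treat the short-root generators. Given \(x \in I_{ij}\), choose \(k\) with \(|k| \notin \{|i|, |j|\}\) and write \(x\) as a sum of products \(x'_s x''_s\) with \(x'_s \in I_{i, \pm k}\) and \(x''_s \in R_{\pm k, j}\). Since \(T_{ij}\) is an additive homomorphism and \([T_{i, \pm k}(x'_s), T_{\pm k, j}(x''_s)] = T_{ij}(x'_s x''_s)\) by Lemma \ref{TransvectionRelations}, with \(T_{i, \pm k}(x'_s) \in \eunit(R, \Delta; I, \Gamma)\) and \(T_{\pm k, j}(x''_s) \in \eunit(R, \Delta)\), we get \(T_{ij}(x) \in N\). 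The long-root generators \(T_k(\phi(z))\), \(z \in I_{-k, k}\), are handled the same way: choose \(m\) with \(|m| \neq |k|\), write \(z\) as a sum of products \(x_s y_s\) with \(x_s \in I_{-k, \pm m}\), \(y_s \in R_{\pm m, k}\), and use \([T_{-k, \pm m}(x_s), T_{\pm m, k}(y_s)] = T_k(\phi(x_s y_s))\) together with additivity of \(\phi\) and of \(T_k \circ \phi\).

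The essential case is the ultrashort-root generators \(T_k(u)\), \(u \in \Gamma^0_k\). Fix \(i\) with \(|i| \neq |k|\). For \(v \in \Gamma^0_i\) and \(x \in R_{ik}\), the relation \([T_i(v), T_{ik}(x)] = T_{-i, k}(\rho(v) x)\, T_k(\dotminus v \cdot (-x))\) of Lemma \ref{TransvectionRelations} has left-hand side in \(N\) (because \(T_i(v) \in \eunit(R, \Delta; I, \Gamma)\)); moreover \(\rho(v) \in I\) since \(v \in \Gamma^0_i \subseteq \Gamma^0_{\mathrm{max}}\), so \(T_{-i, k}(\rho(v) x)\) is a short relative generator, already in \(N\), and \(\dotminus v \cdot (-x) = (v \cdot x) \dotplus \phi(-\inv x \rho(v) x)\) with \(-\inv x \rho(v) x \in I_{-k, k}\), so \(T_k(\phi(-\inv x \rho(v) x)) \in N\) by the long-root step. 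Cancelling, \(T_k(v \cdot x) \in N\) for all \(v \in \Gamma^0_i\), \(x \in R_{ik}\) (and likewise with \(i\) replaced by \(-i\)). To handle an arbitrary \(u\), I would take a Morita factorization \(e_k = \sum_s c_s d_s\) with \(c_s \in R_{k, \pm i}\), \(d_s \in R_{\pm i, k}\), and expand \(u = u \cdot e_k\) via the action identity \(u \cdot (x + y) = u \cdot x \dotplus \phi(\inv y \rho(u) x) \dotplus u \cdot y\): this writes \(u\) as an ordered \(\dotplus\)-sum of the elements \(u \cdot (c_s d_s) = (u \cdot c_s) \cdot d_s\) together with a single correction \(\phi(w)\), where \(w\) is a sum of terms \(\inv{(c_t d_t)}\, \rho(u)\, (c_s d_s)\). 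Here \(\rho(u) \in I\) (again \(u \in \Gamma^0_{\mathrm{max}}\)) and \(\rho(u) \in R_{-k, k}\), so \(w \in I_{-k, k}\); the factors \(u \cdot c_s\) lie in \(\Gamma^0_{\pm i}\); hence \(T_k(u \cdot (c_s d_s)) \in N\) by the previous sentence and \(T_k(\phi(w)) \in N\) by the long-root step, giving \(T_k(u) \in N\).

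I expect the ultrashort-root case to be the main obstacle. The action \(\cdot \colon \Delta \times R \rar \Delta\) is not additive, and in the non-special setting \(u\) is not determined by \(\pi(u)\) and \(\rho(u)\); the delicate point is thus to verify that every \(\phi\)-correction appearing on the way — both the term in the commutator identity for \([T_i(v), T_{ik}(x)]\) and the term \(\phi(w)\) coming from the Morita expansion of \(u\) — lands in \(\phi(I_{-k, k})\) and so is a relative long-root generator. This is precisely where one uses that \(\rho(v), \rho(u) \in I\) for relative elements, together with the orthogonality relations \(e_a e_b = 0\) for \(a \neq b\), which confine those corrections to \(R_{-k, k}\).
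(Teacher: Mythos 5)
Your proof is correct and follows essentially the same route as the paper's: reduce to showing each relative generator lies in the commutator subgroup, use Morita factorizations of the idempotents for the short and long root cases, and handle the ultrashort case via the decomposition \(\Gamma^0_k = \Gamma^0_i \cdot R_{i k} \dotplus \Gamma^0_{-i} \cdot R_{-i, k} \dotplus \phi(I_{-k,k})\) combined with the relation \([T_i(v), T_{ik}(x)] = T_{-i,k}(\rho(v)x)\,T_k(\dotminus v\cdot(-x))\). Your version is simply more explicit, in particular in verifying that all \(\phi\)-corrections land in \(\phi(I_{-k,k})\), a point the paper leaves implicit.
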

\begin{proof}
Clearly, the left hand side is contained in the right one. It suffices to prove that every elementary transvection from \(\unit(I, \Gamma)\) lies in the commutant. But this follows from lemma \ref{TransvectionRelations} and from \(R_{|i|, |j|} R_{|j|, |i|} = R_{|i|}\) for \(i, j \neq 0\): we have \(T_{i j}(x) = \prod_{r_+} [T_{i k}(x p_{r_+}), T_{k j}(q_{r_+})]\, \prod_{r_-} [T_{i, -k}(x p_{r_-}), T_{-k, j}(q_{r_-})]\) if \(\sum_{r_+} p_{r_+} q_{r_+} + \sum_{r_-} p_{r_-} q_{r_-} = e_j\), \(k \neq 0\), and \(k\) is different from \(\pm i, \pm j\). Similarly for the long transvections, i.e. for \(T_i(\phi(I_{-i, i}))\). For the ultrashort transvections we have \(T_i(u) \in \langle T_{j i}(I_{j i}), T_{-j, i}(I_{-j, i}), T_i(\phi(I_{-i, i})), [T_j(\Gamma^0_j), T_{j i}(R_{j i})], [T_{-j}(\Gamma^0_{-j}), T_{-j, i}(R_{-j, i})] \rangle\) if \(j \neq 0\) and \(j \neq \pm i\), since \(\Gamma^0_i = \Gamma^0_j \cdot R_{j i} \dotplus \Gamma^0_{-j} \cdot R_{-j, i} \dotplus \phi(R_{-i, i})\).
\end{proof}

\begin{lemma}\label{Relativization}
Let \((R, \Delta)\) be an odd form ring with an orthogonal hyperbolic family, \((I, \Gamma) \leqt (R, \Delta)\). Then the sequence
\[1 \rar \eunit(R, \Delta; I, \Gamma) \xrightarrow{p_1^{-1}} \eunit(I \rtimes R, \Gamma \rtimes \Delta) \xrightarrow{p_2} \eunit(R, \Delta) \rar 1\]
is split exact, i.e. a semi-direct product, where \(p_1^{-1}\) takes values in \(\Ker(p_2)\). The section is given by \(d\).
\end{lemma}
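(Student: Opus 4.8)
The plan is to verify the three defining properties of a split short exact sequence: that $p_2$ restricted to the elementary unitary group of the double is surjective onto $\eunit(R,\Delta)$, that $d$ provides a splitting, and that the kernel of $p_2$ (intersected with the elementary group of the double) is exactly $p_1^{-1}(\eunit(R,\Delta;I,\Gamma))$ sitting inside $\Ker(p_2)$. Surjectivity and the splitting are the easy parts. Since $p_2 \colon (I \rtimes R, \Gamma \rtimes \Delta) \rar (R,\Delta)$ is a morphism of odd form rings compatible with the orthogonal hyperbolic family, it sends elementary transvections to elementary transvections of the same type with the corresponding argument ($p_2$ applied coordinatewise to $x \in (I \rtimes R)_{ij}$ or $u \in (\Gamma \rtimes \Delta)^0_k$), so $p_2(\eunit(I \rtimes R, \Gamma \rtimes \Delta)) \subseteq \eunit(R,\Delta)$; conversely $d$ is likewise a morphism preserving the family, hence $d$ carries generators of $\eunit(R,\Delta)$ to generators of $\eunit(I\rtimes R,\Gamma\rtimes\Delta)$, giving both the reverse inclusion and the section $d$ with $p_2 \circ d = \id$. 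Thus $\eunit(I\rtimes R,\Gamma\rtimes\Delta) = \Ker(p_2|_{\eunit}) \rtimes d(\eunit(R,\Delta))$ once we identify the kernel.

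The heart of the argument is the identification $\Ker(p_2) \cap \eunit(I \rtimes R, \Gamma \rtimes \Delta) = p_1^{-1}(\eunit(R,\Delta;I,\Gamma))$, where I write $p_1^{-1}$ for the map $\eunit(R,\Delta;I,\Gamma) \rar \eunit(I\rtimes R,\Gamma\rtimes\Delta)$ landing in $\Ker(p_2)$ (this uses that $\unit$ commutes with fibered products, as recalled after the definition of the double, so that $\Ker(p_2) \cong \unit(I,\Gamma)$ via $p_1$, and under this identification an element of $\Ker(p_2)$ is determined by its $p_1$-image). First I would check the inclusion $\supseteq$: the generating elementary transvections $T_{ij}(x)$ with $x \in I_{ij}$ and $T_k(u)$ with $u \in \Gamma^0_k$ have canonical lifts $T_{ij}(p_1^{-1}(x))$, $T_k(p_1^{-1}(u))$ in $\Ker(p_2)$ (here $p_1^{-1}(x) = (x,0)$, $p_1^{-1}(u) = (\dot 0, u)$ in $(I\rtimes R, \Gamma \rtimes \Delta)$), and conjugating these by $d(\eunit(R,\Delta))$ — which is legitimate since $d(\eunit(R,\Delta)) \subseteq \eunit(I\rtimes R,\Gamma\rtimes\Delta)$ — produces exactly $p_1^{-1}$ of the conjugates generating the relative elementary subgroup; because $p_1^{-1}$ is a homomorphism and $p_1 \circ d = \id$ on $R$, the conjugation formulas of Lemma \ref{AlphaBetaGamma} are respected, so $p_1^{-1}(\eunit(R,\Delta;I,\Gamma)) \subseteq \Ker(p_2) \cap \eunit(I\rtimes R,\Gamma\rtimes\Delta)$.

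For the reverse inclusion $\subseteq$, take $h \in \eunit(I\rtimes R,\Gamma\rtimes\Delta)$ with $p_2(h) = 1$. Write $h$ as a product of generators $T_{\bullet}(\cdot)$ of $\eunit(I\rtimes R,\Gamma\rtimes\Delta)$; applying $p_2$ and using that it is a homomorphism sending each generator to the corresponding generator of $\eunit(R,\Delta)$ over the second coordinate, we get that $p_2$ of this product is $1$. Then $h = h \cdot d(p_2(h))^{-1}$, and inserting $d(p_2(\text{prefix}))\, d(p_2(\text{prefix}))^{-1}$ between consecutive generators lets me rewrite $h$ as a product of $d(\eunit(R,\Delta))$-conjugates of generators $T_{ij}(\xi)$, $T_k(\upsilon)$ with arguments $\xi = (\xi_1,\xi_2)$, $\upsilon = (\upsilon_1,\upsilon_2) \in (I\rtimes R, \Gamma \rtimes \Delta)$; collecting these and comparing second coordinates, the total $p_2$-contribution telescopes to $1$, which forces the accumulated product of the second-coordinate parts to be trivial, so after further $d(\eunit)$-conjugation each factor may be taken with $\xi_2 = 0$, $\upsilon_2 = \dot 0$, i.e. in the image of $p_1^{-1}$ from $\unit(I,\Gamma)$; the resulting word then visibly lies in $p_1^{-1}(\eunit(R,\Delta;I,\Gamma))$. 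The main obstacle is precisely this rewriting step: one must argue carefully that the commutator relations of Lemma \ref{TransvectionRelations} are compatible with the semidirect-product structure of $(I\rtimes R,\Gamma\rtimes\Delta)$ so that moving the $d(\eunit)$-conjugations around and splitting off the second coordinate does not leave residual generators outside the relative elementary subgroup — equivalently, that the normal closure operation defining $\eunit(R,\Delta;I,\Gamma)$ matches the kernel under $p_1$; I expect this to follow formally from the fact that $d$ and $p_1$ are odd-form-ring morphisms and $p_1^{-1}$ is a group homomorphism, reducing everything to the already-proved fibered-product property for $\unit$, but it is where the care is needed.
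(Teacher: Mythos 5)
Your argument is correct and is essentially the paper's proof unwound: the paper sets \(N = p_1^{-1}(\eunit(R, \Delta; I, \Gamma))\) and \(G = d(\eunit(R, \Delta))\), observes that \(N \cap G = 1\), that \([G, N] \leq N\), and that every generator of \(\eunit(I \rtimes R, \Gamma \rtimes \Delta)\) lies in \(NG\), whence the semi-direct product decomposition; your telescoping rewriting is exactly the verification that these three facts suffice. The obstacle you flag at the end is not actually one: no commutator relations from Lemma \ref{TransvectionRelations} are needed, only that \(T_{ij}\) and \(T_k\) are group homomorphisms in their arguments (so each generator \(T_\bullet((x, y))\) of \(\eunit(I \rtimes R, \Gamma \rtimes \Delta)\) splits as \(T_\bullet((x, 0))\, T_\bullet((0, y)) \in NG\)) together with the fact that \(\eunit(R, \Delta; I, \Gamma)\) is normalized by \(\eunit(R, \Delta)\) by its very definition as a normal closure, which is precisely what makes each conjugated factor in your telescoped word land in \(N\).
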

\begin{proof}
Let \(N = p_1^{-1}(\eunit(R, \Delta; I, \Gamma))\) and \(G = d(\eunit(R, \Delta))\), then \(N \cap G = 1\), \([G, N] \leq N\) (since \(\eunit(R, \Delta; I, \Gamma)\) is normalized by \(\eunit(R, \Delta)\)), and every generator of \(\eunit(I \rtimes R, \Gamma \rtimes \Delta)\) lies in \(NG\). Hence \(\eunit(I \rtimes R, \Gamma \rtimes \Delta) = N \rtimes G\).
\end{proof}

The next lemma shows what happens when we change the number of hyperbolic pairs.

\begin{lemma}\label{HyperbolicGluing}
Let \((R, \Delta)\) be an odd form ring with an orthogonal hyperbolic family \(\eta_1, \ldots, \eta_n\), and \(T_{ij}(*)\), \(T_i(*)\), \(D_i(*)\) be the groups of corresponding elementary translations and dilations. If \(T'_{ij}(*)\), \(T'_i(*)\), \(D'_i(*)\) are the groups of elemenary translations and dilations obtained from the orthogonal hyperbolic family \(\eta_2, \ldots, \eta_n\), then
\begin{itemize}
\item \(T_{ij}(*) = T'_{ij}(*)\) and \(D_i(*) = D'_i(*)\) for \(2 \leq |i|, |j| \leq n\);
\item the multiplication map induces a bijection \(T_{1 i}(*) \times T_{-1, i}(*) \times T_i(*) \cong T'_i(*)\) for \(2 \leq |i| \leq n\);
\item \(\langle T_1(*), T_{-1}(*), D_1(*), D_0(*) \rangle \leq D'_0(*)\).
\end{itemize}
Similarly, if \(T''_{ij}(*)\), \(T''_i(*)\), \(D''_i(*)\) are the groups obtained from the orthogonal hyperbolic family \(\eta_1, \ldots, \eta_{n - 2}, \eta_{n - 1} + \eta_n\), where \(\eta_{n - 1} + \eta_n = (e_{1 - n} + e_{-n}, e_{n - 1} + e_n, q_{1 - n} \dotplus q_{-n}, q_{n - 1} \dotplus q_n)\), then
\begin{itemize}
\item \(T_{ij}(*) = T''_{ij}(*)\), \(T_i(*) = T''_i(*)\), and \(D_i(*) = D''_i(*)\) for \(|i|, |j| < n - 1\);
\item the multiplication map induces a bijection \(T_{i, n - 1}(*) \times T_{i n}(*) \cong T''_{i, n - 1}(*)\) for \(|i| < n - 1\);
\item \(\langle T_{n, n - 1}(*), T_{n - 1, n}(*), D_n(*), D_{n - 1}(*) \rangle \rangle \leq D''_{n - 1}(*)\).
\end{itemize}
\end{lemma}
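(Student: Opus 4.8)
The plan is to reduce all assertions to identities on the components $\beta,\pi,\rho$ in a free odd form ring, exactly as in the proof of Lemma~\ref{TransvectionRelations}. First note that a sub-family of an orthogonal hyperbolic family, and the merge $\eta_{n-1}+\eta_n$, are again orthogonal hyperbolic families (for the merge this uses that Morita equivalence of idempotents $e_a,e_b$ means $Re_aR=Re_bR$, which is transitive and behaves well under the sum $e''_{|n-1|}=e_{|n-1|}+e_{|n|}$), so all the groups $T'_{ij}(*)$, $T''_i(*)$, $D''_j(*)$, etc.\ are defined. By Proposition~\ref{FreeOFA} there is a free, hence special, odd form ring carrying the same hyperbolic family and surjecting onto $(R,\Delta)$; in a special odd form ring an element $g\in\unit(R,\Delta)$ is determined by $\beta(g)$, since $\pi(\gamma(g))=\beta(g)$, $\rho(\gamma(g))=\inv{\beta(g)}$, and $(\pi,\rho)$ is injective. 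The key observation is that the defining formulas for $T_{ij}(x)$, $T_i(u)$, and $D_i(a)$ express $\beta$ in terms of the parameter \emph{without reference to $e_0$}; thus in the special case $T_{ij}(x)$ and $D_i(a)$ do not depend on the ambient orthogonal hyperbolic family (only on $x$, resp.\ $a$), while $T_i(u)$ depends only on $u$, the family entering solely through the constraint $u\in\Delta^0_i$. Equalities of subgroups and containments then descend along the surjection from the free ring; the two bijectivity statements need in addition a direct injectivity argument.

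\emph{Passing from $\eta_1,\dots,\eta_n$ to $\eta_2,\dots,\eta_n$.} The new ``$0$''-idempotent is $e'_0=e_0+e_{-1}+e_1$, so for $2\le|i|,|j|\le n$ the corners $R_{ij}$, $R_i$ and the elements $e_{|i|}$ are unchanged; by the previous paragraph $T_{ij}(*)=T'_{ij}(*)$ and $D_i(*)=D'_i(*)$. For the second bullet, ${\Delta'}^0:=\{u\mid\pi(u)\in e'_0R\}$ contains $\Delta^0$ and $\Delta^{\pm1}$, and from $u=u^{-n}\dotplus\ldots\dotplus u^n$ with $u^k=q_k\cdot\pi(u)$ one sees that each $v\in{\Delta'}^0_i$ (for $2\le|i|\le n$) has a unique expression $v=(q_1\cdot x)\dotplus(q_{-1}\cdot y)\dotplus u$ with $x\in R_{1i}$, $y\in R_{-1,i}$, $u\in\Delta^0_i$ (here $x=e_1\pi(v)$, $y=e_{-1}\pi(v)$, and $u$ is then forced). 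A short $\beta$-computation gives $T_{1i}(x)=T'_i(q_1\cdot x)$, $T_{-1,i}(y)=T'_i(q_{-1}\cdot y)$ and $T_i(u)=T'_i(u)$, and since $T'_i$ is a homomorphism the triple product equals $T'_i(v)$; bijectivity follows from the uniqueness just noted together with injectivity of $T'_i$, which holds because $\pi(v)$ and $\rho(v)$ can be read off from $\beta(T'_i(v))$ by right multiplication with $e'_0$ and $e_i$. For the third bullet, $\beta$ of each of $T_1(u)$, $T_{-1}(u)$, $D_1(a)$, $D_0(g)$ lies in $e'_0Re'_0=R'_0$, and in the special case every unitary element whose $\beta$-value lies in $R'_0$ lies in $D'_0(*)=\unit(R'_0,{\Delta'}^0_0)$; this passes to quotients.

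\emph{Passing from $\eta_1,\dots,\eta_n$ to $\eta_1,\dots,\eta_{n-2},\eta_{n-1}+\eta_n$.} This case is parallel and easier: the ``$0$''-idempotent is now unchanged, $e''_0=e_0$, hence ${\Delta''}^0=\Delta^0$, which gives $T_{ij}(*)=T''_{ij}(*)$, $T_i(*)=T''_i(*)$, $D_i(*)=D''_i(*)$ for $|i|,|j|<n-1$ at once. For the second bullet, $R''_{i,n-1}=e_iR(e_{n-1}+e_n)=R_{i,n-1}\oplus R_{i,n}$ as $e_{n-1},e_n$ are orthogonal; since $T''_{i,n-1}$ is additive and $T''_{i,n-1}$ restricted to $R_{i,n-1}$, resp.\ $R_{i,n}$, agrees with $T_{i,n-1}$, resp.\ $T_{in}$ (same $\beta$-value $x-\inv x$), one gets $T''_{i,n-1}(x+x')=T_{i,n-1}(x)\,T_{in}(x')$, and the bijection follows from this direct sum decomposition and injectivity of $T''_{i,n-1}$. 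For the third bullet, one checks that $e''_{n-1}+x$ (for $x$ in $R_{n,n-1}$ or $R_{n-1,n}$), $e_{n-1}+a$ (for $a\in R_n^*$), and $a+e_n$ (for $a\in R_{n-1}^*$) are units of the ring $R''_{n-1}=e''_{n-1}Re''_{n-1}$, and that the corresponding dilations have the right $\beta$-values: $\beta(D''_{n-1}(e''_{n-1}+x))=x-\inv x$, equal to $\beta(T_{n,n-1}(x))$ when $x\in R_{n,n-1}$ and to $\beta(T_{n-1,n}(x))$ when $x\in R_{n-1,n}$; $\beta(D''_{n-1}(e_{n-1}+a))=a+\inv a^{-1}-e_{|n|}=\beta(D_n(a))$; and $\beta(D''_{n-1}(a+e_n))=a+\inv a^{-1}-e_{|n-1|}=\beta(D_{n-1}(a))$. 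Hence all four generating groups lie in $D''_{n-1}(*)$.

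The step I expect to require the most care is the injectivity half of the two bijection claims: being implications rather than equalities or inclusions, these do not formally descend from the free ring and must be argued directly — from the uniqueness of the decomposition of the relevant parameter set and the fact that $\pi$, $\rho$ (and, where these do not yet pin down the transvection's parameter, the $\gamma^\circ$-component) are recoverable from the element. Everything else is careful but routine bookkeeping with the orthogonal idempotents, in the spirit of Lemma~\ref{TransvectionRelations}.
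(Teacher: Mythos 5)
Your argument is correct, and it follows exactly the route the paper intends: the paper's own proof of this lemma is just ``Clear,'' relying on the same reduction to the free (hence special) odd form ring and comparison of \(\beta\)-values that it made explicit in the proof of Lemma \ref{TransvectionRelations}. Your additional care with the injectivity halves of the two bijection claims (recovering the parameter from \(\beta\) and, where needed, \(\gamma^\circ\)) fills in precisely the part that does not formally descend from the free case, so nothing is missing.
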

\begin{proof}
Clear.
\end{proof}

At the end of this section we translate notation from \cite{OddStrucVor} into our new general setting. In that paper every augmented level \(L\) is actually an odd form ideal of an augmented level \(\langle L, \lfloor L_0 \rfloor \rangle\), which itself is an odd form algebra (there we used symbols \(\tr\) and \(\varphi\) instead of \(\rho\) and \(\phi\)). The following proposition shows the converse: every odd form algebra \((R, \Delta)\) with an odd form ideal \((I, \Gamma)\) may be reduced preserving the elementary unitary group in such a way that the result satisfies all crucial properties of augmented levels.

\begin{prop}\label{OddFormRingReduction}
Let \((R, \Delta)\) be an odd form \(K\)-algebra. Then there is an odd form subalgebra \((R', \Delta') \subseteq (R, \Delta)\) containing the orthogonal hyperbolic family such that \(\unit(R', \Delta') = \unit(R, \Delta)\), \(R'_{0', 0'} = R_{0', 0'}\), \(R'_{0, 0'} = \pi({\Delta'}^0_{0'})\), and \({\Delta'}^0_{0'} = \Delta^0_{0'}\). If \((I, \Gamma) \leqt (R, \Delta)\) is an odd form ideal, then there are \((R', \Delta') \subseteq (R, \Delta)\) and odd form ideal \((I', \Gamma') \leqt (R', \Delta')\) such that \((I', \Gamma') \subseteq (I, \Gamma)\), \(\unit(I', \Gamma') = \unit(I, \Gamma)\), \(I'_{0', 0'} = I_{0', 0'}\), \(I'_{0, 0'} = \pi({\Gamma'}^0_{0'})\), and \({\Gamma'}^0_{0'} = \Gamma^0_{0'}\). Moreover, there is a functorial construction of \((R', \Delta'; I', \Gamma')\).

If \((R, \Delta)\) has an orthogonal hyperbolic family of rank \(n \geq 3\), then there is also a non-functorial odd form subalgebra \((R'', \Delta'')\) such that \((R'', \Delta'')\) contains the orthogonal hyperbolic family, \(\eunit(R'', \Delta'') = \eunit(R, \Delta)\), \(R''_{0', 0'} = R_{0', 0'}\), \(R''_{0, 0'} = \pi({\Delta''}^0_{0'})\), \(R''_{0, 0} = R''_{0, 0'} R''_{0', 0}\), \({\Delta''}^0_{0'} = \Delta^0_{0'}\), and \({\Delta''}^0_0 = {\Delta''}^0_{0'} \cdot R''_{0', 0} \dotplus \phi(R''_{0, 0})\). Similarly, if \((I, \Gamma) \leqt (R, \Delta)\), then there is \((I'', \Gamma'') \leqt (R'', \Delta'')\) such that \(\eunit(R'', \Delta''; I'', \Gamma'') = \eunit(R, \Delta; I, \Gamma)\), \(I''_{0', 0'} = I_{0', 0'}\), \(I''_{0, 0'} = \pi({\Gamma''}^0_{0'})\), \(I''_{0, 0} = I''_{0, 0'} R''_{0', 0} + R''_{0, 0'} I''_{0', 0}\), \({\Gamma''}^0_{0'} = \Gamma^0_{0'}\), and \({\Gamma''}^0_0 = {\Gamma''}^0_{0'} \cdot R''_{0', 0} \dotplus {\Delta''}^0_{0'} \cdot I''_{0', 0} \dotplus \phi(I''_{0, 0})\).
\end{prop}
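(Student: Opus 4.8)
The plan is to realize $(R', \Delta')$ as the smallest odd form $K$-subalgebra of $(R, \Delta)$ that contains the orthogonal hyperbolic family, the subgroup $R_{0', 0'}$, the subgroup $\Delta^0_{0'}$, and every $g \in \unit(R, \Delta)$ (viewed as an element of $R \times \Delta$); in the relative situation $(I', \Gamma')$ is the intersection with $(I, \Gamma)$ of the odd form ideal of $(R', \Delta')$ generated by $I_{0', 0'}$, $\pi(\Gamma^0_{0'})$, and every $g \in \unit(I, \Gamma)$. Because we insist that the hyperbolic family be preserved, each of these generating sets depends functorially on the input, so the whole construction is functorial. The equality $\unit(R', \Delta') = \unit(R, \Delta)$ is then immediate: $\subseteq$ holds because $(R', \Delta') \subseteq (R, \Delta)$, while $\supseteq$ holds because every unitary element was placed among the generators; likewise $\unit(I', \Gamma') = \unit(I, \Gamma)$, and $(I', \Gamma') \subseteq (I, \Gamma)$ by construction.

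The work lies in the minimality identities. Here $R'_{0', 0'} = R_{0', 0'}$ is automatic, the $e_{0'} \cdot e_{0'}$-block of $R$ being already among the generators and a subalgebra being unable to enlarge it. For the other two I would argue by a block-by-block closure. Using $\pi(u \cdot x) = \pi(u) x$ one sees that $\pi(\Delta^0_{0'})$ is stable under right multiplication by $R_{0', 0'}$; using the identity $\beta_{00}(g)\,\pi(u) = \pi(\gamma^0(g) \cdot \pi(u))$ with $\gamma^0(g) \cdot \pi(u) \in \Delta^0_{0'}$, it is also stable under left multiplication by the $e_0 \cdot e_0$-block of $R'$; moreover $\phi(R_{0', 0'}) \subseteq \Delta^0_{0'}$, while $\Delta^0_{0'}$ is stable under $\dotplus$, under the $R'$-action, and under the degree-zero projection $u \mapsto u^0$, the relevant formulas being $(u \dotplus v)^0 = u^0 \dotplus v^0 \dotplus \phi(\inv{\pi(v)} e_+ \pi(u))$, $(u \cdot x)^0 = u^0 \cdot x$, and $u \cdot (y + z) = u \cdot y \dotplus \phi(\inv z \rho(u) y) \dotplus u \cdot z$ applied to $x = e_0 x + e_{0'} x$. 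Consequently $\{x \in R' : e_0 x e_{0'} \in \pi(\Delta^0_{0'}),\ e_{0'} x e_0 \in \inv{\pi(\Delta^0_{0'})}\}$ is an involution-closed subring containing every generator --- the only nonobvious generators being $\beta(g)$ and $\rho(\gamma(g)) = \inv{\beta(g)}$, whose $e_0 \cdot e_{0'}$-component is $\pi(\gamma^0(g) \cdot e_{0'})$ with $\gamma^0(g) \cdot e_{0'} \in \Delta^0_{0'}$, and symmetrically for the $e_{0'} \cdot e_0$-component. This yields $R'_{0, 0'} = \pi(\Delta^0_{0'}) = \pi({\Delta'}^0_{0'})$. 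A parallel induction on a word in the generators of $\Delta'$, tracking its degree-zero, $e_{0'}$-fixed part, shows that no new elements of $\Delta^0_{0'}$ are produced, so ${\Delta'}^0_{0'} = \Delta^0_{0'}$. The relative identities follow verbatim inside $(I, \Gamma)$.

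For the second part let $(R'', \Delta'')$ be generated by the hyperbolic family, $R_{0', 0'}$, $\Delta^0_{0'}$ and the $\beta$'s and $\gamma$'s of the elementary transvections $T_{i j}(x)$, $T_k(u)$ (their products are then automatic). Now $\beta_{00}$ of each such transvection vanishes, while $e_0 \beta(T) \in R_{0, 0'}$ and $\beta(T) e_0 \in R_{0', 0}$; hence the $e_0 \cdot e_0$-block of $\beta$ of any product of elementary transvections factors through $e_{0'}$, which gives $R''_{0, 0} = R''_{0, 0'} R''_{0', 0}$. Similarly $\gamma^0$ of an elementary transvection has trivial $e_0$-part, so ${\Delta''}^0_0$ is generated by $\phi(R''_{0, 0})$ and the corrections ${\Delta''}^0_{0'} \cdot R''_{0', 0}$ coming from the group law of $\unit(R, \Delta)$, giving ${\Delta''}^0_0 = {\Delta''}^0_{0'} \cdot R''_{0', 0} \dotplus \phi(R''_{0, 0})$. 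The remaining point, $\eunit(R'', \Delta'') = \eunit(R, \Delta)$, is where $n \geq 3$ enters: the commutator identities of Lemmas~\ref{TransvectionRelations} and~\ref{Perfect} rewrite $T_{i j}(x)$, the long transvections and the ultrashort $T_i(u)$ as products of commutators routed through a third index, so that $\eunit(R, \Delta)$ is already generated by elementary transvections supported on $R_{0', 0'}$, $R_{0, 0'}$ and $\Gamma^0_{0'}$, all of which lie in our subalgebra. The relative statement for $(I'', \Gamma'')$ is proved the same way, replacing ``lies in $R_{0, 0'}$'' by ``lies in $I_{0, 0'}$'' at the appropriate factor of each product.

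I expect the closure arguments of the second and third paragraphs to be the main obstacle: one must verify, for each operation of an odd form ring --- multiplication, the involution, the group law $\dotplus$, the $R^\bullet$-action, $\phi$, and the decomposition $u = u^{-n} \dotplus \cdots \dotplus u^n$ --- that it carries the candidate ``small'' sets into themselves, and that no generator already violates these constraints. Each step is elementary, but there are many of them, and a few (the stability of $\pi(\Delta^0_{0'})$ under the $e_0 \cdot e_0$-block, and of $\Delta^0_{0'}$ under $\cdot x$ once $x$ is split across $e_0 + e_{0'}$) rest on a genuine identity of odd form rings rather than on formal symbol pushing.
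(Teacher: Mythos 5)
Your construction is essentially the paper's. The author likewise generates $(R',\Delta')$ from the data of the unitary group (there, from $\beta(g)$ and $\gamma(g)\cdot K$ for $g\in\unit(R,\Delta)$ together with $\phi(R')$) and then verifies the block identities; your variant of adding $R_{0',0'}$, $\Delta^0_{0'}$ and the hyperbolic family to the generating set trades the paper's unstated argument that these are already recovered from transvections for the closure computation you carry out, and the identity $\beta_{00}(g)\,\pi(u)=\pi(\gamma^0(g)\cdot\pi(u))$ is indeed the key point there. Two things need repair. First, in the relative case you generate the ideal from $I_{0',0'}$, $\pi(\Gamma^0_{0'})$ and $\unit(I,\Gamma)$ but not from $\Gamma^0_{0'}$ itself, so the required equality ${\Gamma'}^0_{0'}=\Gamma^0_{0'}$ has no reason to hold: the odd form ideal so generated contains only $\Delta'\cdot I'\dotplus\phi(I')$ together with the $\gamma(g)$ and their translates, which need not exhaust $\Gamma^0_{0'}$ once $\Gamma$ is strictly larger than $\Gamma_{\mathrm{min}}$. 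The fix is simply to put $\Gamma^0_{0'}$ among the generators of $\Gamma'$, exactly as you did with $\Delta^0_{0'}$ in the absolute case; the containments $\pi(\Gamma^0_{0'})\subseteq I$ and $\rho(\Gamma^0_{0'})\subseteq I_{0',0'}$ keep the rest of your closure argument intact. Second, your account of where $n\geq 3$ enters is off: with $R_{0',0'}$ and $\Delta^0_{0'}$ (resp.\ $I_{0',0'}$ and $\Gamma^0_{0'}$) among the generators one gets $R''_{ij}=R_{ij}$ and ${\Delta''}^0_k=\Delta^0_k$ for $i,j,k\neq 0$ by construction, so $\eunit(R'',\Delta'')=\eunit(R,\Delta)$ requires no commutator rerouting at all; no step of your argument actually uses the rank hypothesis, so you should not present the commutator identities as the place where it is consumed.
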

\begin{proof}
Let \(R' \subseteq R\) be the subalgebra generated by \(\beta(g)\) for all \(g \in \unit(R, \Delta)\) and \(\Delta' \leq \Delta\) be the subgroup generated by \(\phi(R')\) and \(\gamma(g) \cdot K\) for all \(g \in \unit(R, \Delta)\). The definition of the unitary group shows that \((R', \Delta')\) is an odd form \(K\)-subalgebra and \(\unit(R', \Delta') = \unit(R, \Delta)\). Moreover, since elementary transvections are in \(\unit(R, \Delta)\), the orthogonal hyperbolic family lies in \((R', \Delta')\) and the required identities hold.

Similarly, if \((I, \Gamma) \leqt (R, \Delta)\), then we may set \(I' = \langle \beta(g) \mid g \in \unit(I, \Gamma) \rangle_{R'} \leqt R'\) and
\[\Gamma' = \langle \gamma(g), \gamma(g) \cdot y, \phi(x), u \cdot x \mid g \in \unit(I, \Gamma), x \in I', u \in \Delta', y \in R' \rangle \leqt \Delta'.\]
Then \((I', \Gamma') \leqt (R', \Delta')\), \(\unit(I', \Gamma') = \unit(I, \Gamma)\), and the identities hold.

For the second part, the objects \(R''\), \(\Delta''\), \(I''\), and \(\Gamma''\) are uniquely determined by the equalities. It is easy to see that they satisfy all conditions.
\end{proof}

Odd form algebras and ideals as in the second part of the previous proposition will be called reduced. This notion clearly depends on the choice of orthogonal hyperbolic family.

\section{Finiteness conditions}

First of all, we say that \((R, \Delta)\) is a semi-local odd form ring if \(R\) is a unital semi-local ring, i.e. the factor of \(R\) by its Jacobson radical is a finite product of matrix algebras over division rings. Note that if \((R, \Delta)\) is semi-local and \((I, \Gamma) \leqt (R, \Delta)\), then \((I \rtimes R, \Gamma \rtimes \Delta)\) is also semi-local.

Next, \((R, \Delta)\) is semi-simple, if \(R\) is a unital semi-simple ring (and, in particular, unital semi-local) and \((R, \Delta)\) is special. If \((R, \Delta)\) is semi-local, then there is an odd form ideal \((I, \Gamma) \leqt (R, \Delta)\) such that \(I\) is the Jacobson radical of \(R\) and \(\Gamma = \Gamma_{\mathrm{max}}\), hence \((R / I, \Delta / \Gamma)\) is semi-simple. In general if \((R, \Delta)\) is arbitrary unital odd form ring and \(R = R' \times R''\), then there is unique decomposition \((R, \Delta) = (R', \Delta') \times (R'', \Delta'')\), where \(\Delta' = \Delta \cdot R' \dotplus \phi(R')\) and \(\Delta'' = \Delta \cdot R'' \dotplus \phi(R'')\). In particular, every semi-simple odd form ring is a product of simple artinian odd form rings. A special odd form ring \((R, \Delta)\) is simple artinian, if either \(R \cong \mat(n, D)^\op \times \mat(n, D)\) for some \(n > 0\) and division ring \(D\) (with an obvious involution) or \(R \cong \mat(n, D)\) with some involution for a division ring \(D\) and \(n > 0\).

If \(K \rar K'\) is a homomorphism of commutative rings and \((R, \Delta)\) is an odd form \(K\)-algebra, then \((R, \Delta) \otimes_K K' = (R \otimes_K K', \Delta \otimes_K K')\) is an odd form \(K'\)-algebra, where \(\Delta \otimes_K K'\) is the abstract group generated by elements \(u \otimes a\) and \(\phi(x)\) for \(u \in \Delta\), \(x \in R \otimes_K K'\), \(a \in K'\) with the relations
\begin{itemize}
\item \(\phi(x + y) = \phi(x) \dotplus \phi(y)\), \(\phi(x) = 0\) for \(x \in \herm(R \otimes_K K')\);
\item \(u \otimes a \dotplus \phi(x) = \phi(x) \dotplus u \otimes a\), \(u \otimes a \dotplus v \otimes b = v \otimes b \dotplus u \otimes a \dotplus \phi(-\inv{\pi(u)} \pi(v) \otimes a b)\);
\item \((u \dotplus v) \otimes a = u \otimes a \dotplus v \otimes a\), \((u \cdot k) \otimes a = u \otimes ka\) for \(k \in K\);
\item \(u \otimes (a + b) = u \otimes a \dotplus u \otimes b \dotplus \phi(\rho(u) \otimes ab)\);
\item \(\phi(x) \otimes a = \phi(x \otimes a^2)\) for \(x \in R\).
\end{itemize}
It can be directly checked that \((R, \Delta) \otimes_K K'\) is indeed an odd form algebra over \(K'\) if the operations are defined using the axioms for odd form rings and
\begin{align*}
(u \otimes a) \cdot (x \otimes b) &= (u \cdot x) \otimes ab, &
\rho(u \otimes a) &= \rho(u) \otimes a^2, &
\pi(u \otimes a) &= \pi(u) \otimes a.
\end{align*}

Clearly, \((R, \Delta) \rar (R, \Delta) \otimes_K K', x \mapsto x \otimes 1, u \mapsto u \otimes 1\) is a morphism of odd form rings (and of odd form \(K\)-algebras). Also, if \((R, \Delta)\) is the free odd form algebra over \(K\) as in proposition \ref{FreeOFA}, then \((R, \Delta) \otimes_K K'\) is the free odd form algebra over \(K'\) with the same generators. If \(K' = S^{-1} K\) for some multiplicative subset \(S \leq K^\bullet\), then \(S^{-1} (R, \Delta) = (R, \Delta) \otimes_K S^{-1} K\) is called the localization of \((R, \Delta)\) by \(S\).

There is an important class of quasi-finite odd form algebras. Recall that \(K\)-algebra \(R\) is called quasi-finite, if \(R\) is direct limit of finite \(K\)-algebras (i.e. that are finitely generated \(K\)-modules). This is equivalent to existence of  finitely generated commutative rings \((K_i)_{i \in I}\) and finite \(K_i\)-algebras \(R_i\) for some directed set \(I\) such that \((K, R) = \varinjlim_i (K_i, R_i)\) (i.e. \(K\) is the direct limit of rings \(K_i\), \(R\) is the direct limit of rings \(R_i\), and \(R_i \rar R_j\) are \(K_i\)-linear for all \(i \leq j\)). We say that odd form \(K\)-algebra \((R, \Delta)\) is quasi-finite if \(R\) is quasi-finite \(K\)-algebra (and finite, if \(R\) is a finite \(K\)-algebra and \(\Delta / \phi(R)\) is a finite \(K\)-module). Note that if \((R, \Delta)\) is quasi-finite over \(K\) and \((I, \Gamma) \leqt (R, \Delta)\), then \((I \rtimes R, \Gamma \rtimes \Delta)\) is also quasi-finite. Properties of being quasi-finite and finite are preserved under the extension of scalars. Also, a finite algebra over a semi-local commutative ring is semi-local.

\begin{lemma}\label{QuasiFinite}
Let \((R, \Delta)\) be an odd form \(K\)-algebra. Then the following are equivalent:
\begin{enumerate}
\item \((R, \Delta)\) is quasi-finite over \(K\);
\item \((R, \Delta)\) is a direct limit of odd form subalgebras \((R_i, \Delta_i)\) such that \(R_i\) are finite \(K\)-modules and \(\Delta_i / \phi(R_i)\) are finitely generated \(K\)-modules;
\item There are finitely generated commutative rings \((K_i)_{i \in I}\) and odd form \(K_i\)-algebras \((R_i, \Delta_i)\) such that \(I\) is a directed set, \((K, R, \Delta) = \varinjlim_i (K_i, R_i, \Delta_i)\), and \((R_i, \Delta_i)\) are finite odd form \(K_i\)-algebras;
\item \((R, \Delta)\) is quasi-finite over a subring \(K_0 \subseteq K\) such that the extension \(K / K_0\) is integral;
\item \((R \rtimes K, \Delta)\) is quasi-finite over \(K\).
\end{enumerate}
\end{lemma}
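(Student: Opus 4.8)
The whole statement rests on one reformulation, which I would establish first: a $K$-algebra $S$ is quasi-finite over $K$ if and only if every finitely generated $K$-subalgebra of $S$ is a finite $K$-module. One direction is trivial, since $S$ is the directed union of its finitely generated $K$-subalgebras. For the other, write $(K,S)=\varinjlim_i(K_i,S_i)$ with $K_i$ finitely generated (hence Noetherian) and $S_i$ a finite $K_i$-module, as in the characterization recalled just before the lemma; a finite generating set of a given $K$-subalgebra lifts into some $S_{i_0}$, the $K_{i_0}$-subalgebra it generates there is a submodule of the Noetherian module $S_{i_0}$ hence a finite $K_{i_0}$-module, and its image in $S$ is a finite $K$-module whose $K$-span is exactly the subalgebra we started with. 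I will refer to this as $(\star)$.

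For $(1)\Rightarrow(2)$: given finite subsets $X\subseteq R$ and $Y\subseteq\Delta$, let $R_0$ be the $K$-subalgebra of $R$ generated by $X\cup\inv X\cup\pi(Y)\cup\inv{\pi(Y)}\cup\rho(Y)\cup\inv{\rho(Y)}$, and let $\Delta_0\le\Delta$ be the subgroup generated by $Y$ and $\phi(R_0)$, closed under the $(R_0\rtimes K)^\bullet$-action. The identities $\pi(u\cdot x)=\pi(u)x$, $\rho(u\cdot x)=\inv x\rho(u)x$, $\pi(u\dotplus v)=\pi(u)+\pi(v)$, $\rho(u\dotplus v)=\rho(u)-\inv{\pi(u)}\pi(v)+\rho(v)$, $\pi(\phi(x))=0$ and $\rho(\phi(x))=x-\inv x$ give $\pi(\Delta_0),\rho(\Delta_0)\subseteq R_0$, so $(R_0,\Delta_0)$ is an odd form $K$-subalgebra. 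By $(\star)$ it is finite over $K$: $R_0$ is finitely generated over $K$, hence a finite $K$-module, say $R_0=\sum_pKw_p$, and expanding $u\cdot(\sum_pk_pw_p)$ via $u\cdot(x+y)=u\cdot x\dotplus\phi(\inv y\rho(u)x)\dotplus u\cdot y$ shows the $K$-module $\Delta_0/\phi(R_0)$ is generated by the finitely many classes of the $u$ and the $u\cdot w_p$ with $u\in Y$. Letting $(X,Y)$ range over all finite subsets yields $(R,\Delta)=\varinjlim(R_0,\Delta_0)$ as in $(2)$.

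The implication $(2)\Rightarrow(3)$ is the technical core. For each $i$ pick $K$-module generators $e_p$ of $R_i$ and elements $u_s\in\Delta_i$ generating $\Delta_i/\phi(R_i)$ over $K$, and let $K_i\subseteq K$ be a finitely generated subring containing the finitely many coefficients that occur when $e_pe_q$, $\inv{e_p}$, $\pi(u_s)$, $\rho(u_s)$, $u_s\cdot e_p$ and $u_s\dotplus u_t$ are expressed through these generators, and also chosen so that $K=\varinjlim_iK_i$. Put $R_i'=\sum_pK_ie_p\subseteq R_i$ and let $\Delta_i'\subseteq\Delta_i$ consist of the elements $\bigl(\dotplus_su_s\cdot k_s\bigr)\dotplus\phi(r)$ with $k_s\in K_i$ and $r\in R_i'$. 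Since all the recorded coefficients lie in $K_i$, the pair $(R_i',\Delta_i')$ is closed under the operations and is a finite odd form $K_i$-algebra, the axioms being inherited from $(R,\Delta)$; choosing the generators coherently along the index set (those used for $i$ contained in those for larger indices) makes the $(R_i',\Delta_i')$ into a directed system with $(R,\Delta)=\varinjlim_i(R_i',\Delta_i')$ and $K=\varinjlim_iK_i$, which is $(3)$. Then $(3)\Rightarrow(1)$ is immediate, since $(3)$ in particular presents $R$ as $\varinjlim R_i$ with $R_i$ finite over the finitely generated $K_i$ — this is the definition of being quasi-finite over $K$.

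For the remaining equivalences I would apply $(\star)$ throughout. If $R$ is quasi-finite over $K$, then $R\rtimes K$ is the directed union of the finite $K$-subalgebras $R_0\rtimes K$ where $R_0$ runs over the finitely generated $K$-subalgebras of $R$ (each finite by $(\star)$), giving $(1)\Rightarrow(5)$; conversely a finitely generated $K$-subalgebra $R_0\subseteq R$ gives the finitely generated $K$-subalgebra $R_0\rtimes K\subseteq R\rtimes K$, which is finite over $K$ by $(\star)$ applied to $R\rtimes K$, so its $K$-module direct summand $R_0$ is finite and $R$ is quasi-finite, giving $(5)\Rightarrow(1)$. Finally $(1)\Rightarrow(4)$ is trivial with $K_0=K$, and for $(4)\Rightarrow(1)$ a finitely generated $K$-subalgebra $R_0=K\langle y_1,\dots,y_t\rangle$ of $R$ equals $K\cdot R_0^{(0)}$ with $R_0^{(0)}=K_0\langle y_1,\dots,y_t\rangle$ a finite $K_0$-module by quasi-finiteness over $K_0$, so $R_0$ is a finite $K$-module and $(\star)$ yields quasi-finiteness over $K$ (the integrality of $K/K_0$ is not actually needed here). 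The one step requiring genuine care is $(2)\Rightarrow(3)$: one must verify that the descended pair $(R_i',\Delta_i')$ satisfies the full list of odd form axioms over $K_i$ — keeping the data inside the ambient $(R,\Delta)$ rather than using an abstract presentation is what makes this finite — and that the transition maps can be arranged so that the colimit is literally $(R,\Delta)$ with $K=\varinjlim K_i$.
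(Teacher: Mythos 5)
Your proof is correct, but it routes the equivalences differently from the paper. The paper declares $(3)\Rightarrow(2)\Rightarrow(1)$, $(1)\Leftrightarrow(4)$ and $(1)\Leftrightarrow(5)$ to be obvious and concentrates on a direct proof of $(1)\Rightarrow(3)$: starting from $(K,R)=\varinjlim_i(K_i,R_i)$ with $K_i\subseteq K$ and $R_i\subseteq R$ involution-closed, it takes for $\Delta_i$ the \emph{maximal} choice $\{u\in\Delta\mid\pi(u),\rho(u)\in R_i\}$ — which is automatically closed under all operations, so no verification is needed — observes $\Delta=\varinjlim_i\Delta_i$, and then exhausts each $\Delta_i/\phi(R_i)$ by finitely generated $R_i$-submodules $\Delta_{i,j}/\phi(R_i)$ to land in finite odd form $K_i$-algebras. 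You instead go $(1)\Rightarrow(2)\Rightarrow(3)$, first assembling finite odd form $K$-subalgebras from finite subsets and then descending the structure constants to finitely generated subrings $K_i$; the price is exactly the closure check for $\Delta_i'$ that you flag as the delicate point, and which the paper's maximal-parameter trick sidesteps entirely. On the other hand, your auxiliary reformulation $(\star)$ (quasi-finite $\Leftrightarrow$ every finitely generated $K$-subalgebra is a finite $K$-module, proved via Noetherianity of the $K_{i_0}$) turns $(1)\Leftrightarrow(4)$ and $(1)\Leftrightarrow(5)$ into genuinely transparent one-line arguments where the paper merely asserts them, and your explicit $(1)\Rightarrow(2)$ gives a concrete cofinal family of finite odd form subalgebras rather than obtaining $(2)$ only as a corollary of $(3)$. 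Both arguments are sound; the paper's is shorter where it matters, yours is more self-contained on the parts the paper leaves to the reader. (The paper's closing remark that the $(R_i,\Delta_{i,j})$ can be chosen special when $(R,\Delta)$ is special is not recovered by your construction of $\Delta_0$ as a generated subgroup, but it is not part of the statement.)
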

\begin{proof}
The implications \((3) \rrar (2) \rrar (1)\), \((4) \Leftrightarrow (1)\), and \((1) \Leftrightarrow (5)\) are obvious. Suppose that \((R, \Delta)\) is quasi-finite over \(K\), then \((K, R) = \varinjlim_{i \in I} (K_i, R_i)\) for some finitely generated commutative rings \(K_i\) and finite \(K_i\)-algebras \(R_i\). We may assume that \(K_i \subseteq K\) and \(R_i \subseteq R\) are closed under the involution. Let \(\Delta_i = \{u \in \Delta \mid \pi(u), \rho(u) \in R_i\}\), then \((R_i, \Delta_i)\) are odd form rings over \(K_i\) and \(\Delta = \varinjlim_i \Delta_i\). Clearly, right \(R_i\)-modules \(\Delta_i / \phi(R_i)\) are direct limits of finitely generated submodules \(\Delta_{i, j} / \phi(R_i)\) for some \(j \in J_i\) (they are also finitely generated \(K_i\)-modules), hence \((R_i, \Delta_{i, j})\) are finite odd form \(K_i\)-algebras and \((3)\) follows. Note that if \((R, \Delta)\) is special, then we may choose \((R_i, \Delta_{i, j})\) to be special too.
\end{proof}

Let \((R, \Delta)\) be an odd form algebra over \(K\). We say that an orthogonal hyperbolic family \(\eta_1, \ldots, \eta_n\) is Morita complete, if \(n \geq 1\) and for every orthogonal hyperbolic family \(\eta'_1, \ldots, \eta'_n\) in \((R, \Delta)\) we have \(e'_{|1|} R e_{|1|} R e'_{|1|} = e'_{|1|} R e'_{|1|}\). If \((R, \Delta)\) is an odd form algebra with a Morita complete orthogonal hyperbolic family and \(\sigma \in \Aut(R, \Delta)\), then \(\up\sigma{\eta_1}\) is Morita equivalent to \(\eta_1\). The next lemma gives several criteria for Morita completeness that work for semi-local odd form rings and for classical odd form algebras over commutative rings including their twisted forms, see examples below.

\begin{lemma}\label{MoritaCriteria}
Let \((R, \Delta)\) be an odd form algebra over a commutative ring \(K\) with an orthogonal hyperbolic family of rank \(n \geq 1\). Then:
\begin{enumerate}
\item If the family is Morita complete in \((R, \Delta)_{\mathfrak m}\) for every maximal \(\mathfrak m \leqt K\), then it is Morita complete in \((R, \Delta)\).
\item If the family is Morita complete in \((R, \Delta) \otimes_K K_j\) for an fppf-covering \((K_j)_{j \in J}\) of \(K\), then it is Morita complete in \((R, \Delta)\).
\item If \((I, \Gamma) \leqt (R, \Delta)\) is an odd form ideal, \(I\) is contained in the Jacobson radical of \((R, \Delta)\), and the family is Morita complete in \((R / I, \Delta / \Gamma)\), then it is Morita complete in \((R, \Delta)\).
\item If \((R, \Delta) = \prod_{j = 1}^N (R_j, \Delta_j)\), then the family is Morita complete in every \((R_j, \Delta_j)\) if and only if it is Morita complete in \((R, \Delta)\).
\item If \((R, \Delta)\) is simple artinian and the family is non-zero (equivalently, \(e_1 \neq 0\)), then the family is Morita complete.
\item If \((R, \Delta)\) is simple artinian with zero orthogonal hyperbolic family, \(R \cong \mat(k, D)^\op \times \mat(k, D)\), and \(k < n\), then the family is Morita complete.
\item If \((R, \Delta)\) is simple artinian with zero orthogonal hyperbolic family, \(R \cong \mat(k, D)\), and \(k < 2n\), then the family is Morita complete.
\end{enumerate}
\end{lemma}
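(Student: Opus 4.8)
The plan is to fix, for each competing orthogonal hyperbolic family $\eta'$, the unital corner ring $S = e'_{|1|} R e'_{|1|}$ (with identity $e'_{|1|}$) together with its two-sided ideal $M = e'_{|1|} R e_{|1|} R e'_{|1|}$, and to prove $M = S$ (the inclusion $M \subseteq S$ being automatic). Parts (1)--(4) I would handle by a uniform descent argument: the maps $(R,\Delta) \rar (R,\Delta)_{\mathfrak m}$, $(R,\Delta) \rar (R,\Delta)\otimes_K K_j$, the quotient $(R,\Delta) \rar (R,\Delta)/(I,\Gamma)$, and the projections out of a finite product are all morphisms of odd form rings, hence send $\eta$ and $\eta'$ to orthogonal hyperbolic families of rank $n$ downstairs and commute with the formation of $M$ and $S$; the hypothesis then gives the desired equality in the target, which is pulled back. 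Parts (5)--(7) I would do by explicit computation with the structure of simple artinian odd form rings.

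Concretely, for (1) I would use that localization is exact and that an inclusion of $K$-submodules is an equality as soon as it is so after localizing at every maximal ideal; applying Morita completeness of $\eta$ in $(R,\Delta)_{\mathfrak m}$ to the localized $\eta'$ gives $M_{\mathfrak m} = S_{\mathfrak m}$ for all $\mathfrak m$, hence $M = S$. Part (2) is identical with a faithfully flat fppf covering replacing the localizations. For (3) I would reduce modulo $(I,\Gamma)$: the hypothesis applied to the reduced $\eta'$ gives $M + e'_{|1|} I e'_{|1|} = S$, and since $I$ is contained in the Jacobson radical of $R$ the ideal $e'_{|1|} I e'_{|1|}$ lies in the Jacobson radical of the unital ring $S$, so Nakayama's lemma forces $M = S$. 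For (4) I would observe that an orthogonal hyperbolic family of rank $n$ in $\prod_{j=1}^N (R_j,\Delta_j)$ decomposes componentwise into rank-$n$ families in the factors, and conversely a family in one factor can be combined with (the restriction of) $\eta$ in the others; as $M$ and $S$ are formed componentwise this yields $M = S$ over $(R,\Delta)$ if and only if the analogous equalities hold in each $(R_j,\Delta_j)$, which gives both implications (zero pairs cause no trouble, their axioms and the Morita equivalence being vacuous).

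For (5) I would note that the family is non-zero, so $e_{|1|} \neq 0$, and that $e_{|1|} = \inv{e_{|1|}}$ is then nonzero in every simple factor of $R$ (whether $R \cong \mat(k,D)$ or $R \cong \mat(k,D)^\op \times \mat(k,D)$), so $R e_{|1|} R = R$ and $M = e'_{|1|}(R e_{|1|} R)e'_{|1|} = S$ for every $\eta'$ simultaneously. For (6) and (7) the fixed family is zero, so $M = 0$ and I must instead show that every orthogonal hyperbolic family $\eta'$ of rank $n$ is zero, i.e. $e'_{|1|} = 0$: the $2n$ idempotents $e'_{\pm 1},\dots,e'_{\pm n}$ are pairwise orthogonal with $\inv{e'_i} = e'_{-i}$, and Morita equivalence of the $n$ pairs forces all $e'_{|i|}$ to be simultaneously zero or nonzero; if they were nonzero then in (7) the $2n$ nonzero pairwise orthogonal idempotents $e'_{\pm i}$ in $\mat(k,D)$ would give $2n \leq k$, while in (6) the hermitian idempotents $e'_{|i|} = (c_i^\op,c_i)$ would give $n$ nonzero pairwise orthogonal idempotents $c_i$ in $\mat(k,D)$ and hence $n \leq k$, both contradicting the hypothesis.

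The main obstacle I anticipate is nothing deep but rather careful bookkeeping in two places: in (3), verifying that $M$ really is a two-sided ideal of the unital corner $S$ and that $e'_{|1|} I e'_{|1|} \subseteq \Jac(S)$ so Nakayama is legitimate; and in (6)--(7), translating the hyperbolic-pair data through the involution into a genuine set of pairwise orthogonal idempotents in the matrix ring and counting those against $k$. Everything else is routine exactness, faithfully flat descent, and Artin--Wedderburn theory.
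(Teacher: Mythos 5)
Your proof is correct and follows essentially the same route as the paper's (very terse) argument: recast Morita completeness as the identity \(e'_{|1|} R e_{|1|} R e'_{|1|} = e'_{|1|} R e'_{|1|}\), deduce (1)--(4) by transporting both families along the localization, base-change, quotient, and projection morphisms (with Nakayama in (3)), and settle (5)--(7) by counting pairwise orthogonal nonzero idempotents in \(\mat(k, D)\) against \(k\), using pairwise Morita equivalence to force all \(e'_{|i|}\) to be simultaneously zero or nonzero. The only imprecision is your parenthetical in (4) that Morita equivalence is ``vacuous'' for zero pairs --- a zero pair is never Morita equivalent to a nonzero one, which is in fact what forces the all-or-nothing dichotomy you use in (6)--(7) --- but since your combined family never mixes pairs from different source families within a single component, nothing breaks.
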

\begin{proof}
Almost all of these is obvious, since the condition for Morita completeness may be written as \(e'_{|1|} \in e'_{|1|} R e_{|1|} R e'_{|1|}\). In \((6)\) and \((7)\) note that if \(\eta'_1, \ldots, \eta'_n\) is non-zero orthogonal hyperbolic family, then \(e'_{0'} R e'_{0'}\) is a subring of type \(\mat(l, D)^\op \times \mat(l, D)\) or \(\mat(l, D)\) (up to an isomorphism) with a lot of orthogonal idempotents contradicting the bound on \(k\).
\end{proof}

\section{Projective unitary groups}

Let \((R, \Delta)\) and \((S, \Theta)\) be odd form \(K\)-algebras. The set of all homomorphisms from \((R, \Delta)\) to \((S, \Theta)\) will be denoted as \(\Hom(R, \Delta; S, \Theta)\). The action of \(f \in \Hom(R, \Delta; S, \Theta)\) on elements \(x \in R\) and \(u \in \Delta\) will often be written as \(\up f x\) and \(\up f u\).

The projective unitary group \(\punit(R, \Delta)\) of an odd form algebra \((R, \Delta)\) is the group of odd form algebra automorphisms, i.e. \(\punit(R, \Delta) = \Aut(R, \Delta)\). As we will show below, the projective unitary group coincides with the corresponding projective group scheme for all classical Chevalley groups. Note also that if \((R, \Delta)\) is reduced (with respect to some orthogonal hyperbolic family of rank \(n \geq 3\)), then \(\punit(R, \Delta) = \punit(R \rtimes K, \Delta)\). If \((I, \Gamma) \leqt (R, \Delta)\), then the relative projective unitary group is
\[\punit(R, \Delta; I, \Gamma) = \{\sigma \in \punit(R, \Delta) \mid \up \sigma x - x \in I \text{ and } \up \sigma u \dotminus u \in \Gamma \text{ for all } x \in R, u \in \Delta\}.\]
In other words, the relative projective unitary group is the centralizer of \((R / I, \Delta / \Gamma)\) in the stabilizer subgroup \(\punit(R, \Delta)_{(I, \Gamma)} = \{\sigma \in \punit(R, \Delta) \mid \up \sigma I = I, \up \sigma \Gamma = \Gamma\}\).

Finally, let
\begin{align*}
\punit^{\mathrm{split}}(R \times_I R, \Delta \times_\Gamma \Delta) = \{&\sigma \in \punit(R \times_I R, \Delta \times_\Gamma \Delta) \mid\\
& \sigma = \sigma_1 \times \sigma_2 \text{ for some } \sigma_i \in \punit(R, \Delta)\}.
\end{align*}
In the definition \(\sigma_i\) are uniquelly determined, \(\sigma_i = p_i(\sigma) = p_i \circ \sigma \circ d\). Also there is the splitting map \(d \colon \punit(R, \Delta) \rar \punit^{\mathrm{split}}(I \rtimes R, \Gamma \rtimes \Delta), \sigma \mapsto \sigma \times \sigma\).

\begin{lemma}\label{ProjectiveUnitaryGroups}
Let \((R, \Delta)\) and \((S, \Theta)\) be odd form \(K\)-algebras, \((I, \Gamma) \leqt (R, \Delta)\). Then:
\begin{enumerate}
\item If \(f \in \Hom(S, \Theta; R, \Delta)\), then \(\up f {\unit(S, \Theta)} \leq \unit(R, \Delta)\) and \(\up f {\unit(S, \Theta; f^{-1}(I), f^{-1}(\Gamma))} \leq \unit(I, \Gamma)\).
\item If \(\sigma \in \punit(R, \Delta)\), then \(\up \sigma {\unit(R, \Delta)} = \unit(R, \Delta)\), \(\up \sigma {\unit(I, \Gamma)} = \unit(\up \sigma I, \up \sigma \Gamma)\), and \(\up \sigma {\punit(R, \Delta; I, \Gamma)} = \punit(R, \Delta; \up \sigma I, \up \sigma \Gamma)\).
\item If \(\sigma \in \punit(R, \Delta; I, \Gamma)\) and \(g \in \unit(R, \Delta)\), then \([\sigma, g] = \up \sigma g\, g^{-1} \in \unit(I, \Gamma)\). In other words, \([\punit(R, \Delta; I, \Gamma), \unit(R, \Delta)] \leq \unit(I, \Gamma)\).
\item The embedding \(p_1^{-1} \colon \punit(R, \Delta; I, \Gamma) \rar \punit^{\mathrm{split}}(R \times_I R, \Delta \times_\Gamma \Delta), \sigma \mapsto \sigma \times \id\) is well-defined and the sequence
\[1 \rar \punit(R, \Delta; I, \Gamma) \xrightarrow{p_1^{-1}} \punit^{\mathrm{split}}(I \rtimes R, \Gamma \rtimes \Delta) \xrightarrow{p_2} \punit(R, \Delta) \rar 1\]
is short exact with the splitting \(d\).
\item There is a homomorphism \(\unit(R, \Delta) \rar \punit(R, \Delta)\) such that for all \(g \in \unit(R, \Delta)\), \(x \in R\), and \(u \in \Delta\) we have \(\up g x = \up{\alpha(g)} x\) and \(\up g u = (\gamma(g) \cdot \pi(u) \dotplus u) \cdot \inv{\alpha(g)}\). This homomorphism is stable under the action of \(\punit(R, \Delta)\) and maps \(\unit(I, \Gamma)\) into \(\punit(R, \Delta; I, \Gamma)\), image of any \(g \in \unit(R, \Delta)\) in \(\punit(R, \Delta)\) acts on \(\unit(R, \Delta)\) via the conjugation. Every element of \(\unit(R, \Delta)\) normalizes all subgroups \(\punit(R, \Delta; I, \Gamma) \leq \punit(R, \Delta)\). Finally, the image of \(\unit(I \rtimes R, \Gamma \rtimes \Delta)\) in \(\punit(I \rtimes R, \Gamma \rtimes \Delta)\) lies in \(\punit^{\mathrm{split}}(I \rtimes R, \Gamma \rtimes \Delta)\).
\item Suppose that we have odd form subalgebras \((S, \Theta) \subseteq (R, \Delta)\) and \((S', \Theta') \subseteq (R', \Delta')\), \((S, \Theta)\) has an orthogonal hyperbolic family of rank \(n \geq 3\) and is reduced. Then \(f \in \Hom(R, \Delta; R', \Delta')\) maps \((S, \Theta)\) into \((S', \Theta')\) (i.e. it induces an element of \(\Hom(S, \Theta; S', \Theta')\)) if and only if \(\up f {\eunit(S, \Theta)} \leq \unit(S', \Theta')\).
\end{enumerate}
\end{lemma}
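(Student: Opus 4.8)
The statement bundles several facts; I would prove them in order, (1)--(6), reusing earlier items and Lemma~\ref{AlphaBetaGamma} freely.

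\textbf{Parts (1)--(4): formal bookkeeping.} A homomorphism $f$ of odd form algebras commutes with $\alpha$, $\beta$, $\gamma$, $\pi$, $\rho$, $\phi$, the involution, the multiplication and the operations $\dotplus$, $\cdot$, so it carries the defining relations of $\unit(S,\Theta)$ (those of the definition and of Lemma~\ref{AlphaBetaGamma}) to the corresponding relations of $\unit(R,\Delta)$; that is (1), the ideal part being the same remark for $f^{-1}(I)$, $f^{-1}(\Gamma)$. Part (2) is (1) applied to $\sigma$, to $\sigma^{-1}$, to the isomorphism $(I,\Gamma)\rar(\up\sigma I,\up\sigma\Gamma)$ induced by $\sigma$, and to $\up{\sigma\tau\sigma^{-1}}{x}-x=\sigma(\up\tau{\sigma^{-1}(x)}-\sigma^{-1}(x))$ together with its analogue for $u$. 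For (3), write $\up\sigma{\beta(g)}=\beta(g)+y$ with $y\in I$; then $\beta([\sigma,g])=\beta(\up\sigma g\,g^{-1})$ computes via $\beta(gh)=\beta(g)\beta(h)+\beta(g)+\beta(h)$, $\beta(g^{-1})=\inv{\beta(g)}$ and $\beta(g)\inv{\beta(g)}+\beta(g)+\inv{\beta(g)}=0$ to $y\,\inv{\alpha(g)}\in I$, and the parallel computation with the $\gamma$-formulas of Lemma~\ref{AlphaBetaGamma} and $\Gamma\cdot R\subseteq\Gamma$ gives $\gamma([\sigma,g])\in\Gamma$, hence $[\sigma,g]\in\unit(I,\Gamma)$. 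Part (4) is the double formalism: $\sigma\times\id$ preserves $R\times_I R$ and $\Delta\times_\Gamma\Delta$ because $\sigma(x)-y=(\sigma(x)-x)+(x-y)\in I$, and conversely $\sigma_1\times\id\in\punit^{\mathrm{split}}$ forces $\sigma_1(x)-x\in I$ for all $x$ (take $y=x$), so $\Ker(p_2)$ is exactly the image of $p_1^{-1}$ while $d$ splits $p_2$; transport everything along the isomorphism with $(I\rtimes R,\Gamma\rtimes\Delta)$.

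\textbf{Part (5): the map $\unit\rar\punit$.} The heart is that $\varphi_g\colon x\mapsto\alpha(g)\,x\,\inv{\alpha(g)}$ on $R$, $u\mapsto(\gamma(g)\cdot\pi(u)\dotplus u)\cdot\inv{\alpha(g)}$ on $\Delta$, is a well-defined automorphism of $(R,\Delta)$. On $R$ it is the inner automorphism by the unit $\alpha(g)$ (legitimate as $\inv{\alpha(g)}=\alpha(g)^{-1}$), with inverse $\varphi_{g^{-1}}$ and compatible with the involution. On $\Delta$ one checks, from the odd form ring axioms and $\pi(\gamma(g))=\beta(g)$, $\rho(\gamma(g))=\inv{\beta(g)}$, that $\pi$, $\rho$, $\phi$ intertwine $\varphi_g$ with $\varphi_g|_R$ (for $\rho$ the term $\inv{\pi(u)}\,\inv{\beta(g)}\,\pi(u)$ cancels), that $\varphi_g(u\cdot r)=\varphi_g(u)\cdot\varphi_g(r)$, and — the one slightly long verification — that $\varphi_g$ respects $\dotplus$: expanding $\gamma(g)\cdot(\pi(u)+\pi(v))$ leaves a defect $\phi(\inv{\pi(v)}\,\inv{\beta(g)}\,\pi(u))$ which is precisely what the commutation law $u\dotplus v=v\dotplus u\dotplus\phi(-\inv{\pi(u)}\pi(v))$ and centrality of $\phi$ absorb. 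Then $g\mapsto\varphi_g$ is a homomorphism to $\punit(R,\Delta)$ by $\alpha(gh)=\alpha(g)\alpha(h)$, $\gamma(gh)=\gamma(g)\cdot\alpha(h)\dotplus\gamma(h)$. The rest is short: $\sigma\varphi_g\sigma^{-1}=\varphi_{\up\sigma g}$ since $\sigma(\alpha(g))=\alpha(\up\sigma g)$; $\varphi_g$ sends $\unit(I,\Gamma)$ into $\punit(R,\Delta;I,\Gamma)$ because $\varphi_g(x)-x=\beta(g)\,x\,\inv{\alpha(g)}+x\,\inv{\beta(g)}\in I$ and the $\gamma$-analogue lies in $\Gamma$ (using $\Gamma_{\mathrm{min}}\subseteq\Gamma$, $\Gamma\cdot R\subseteq\Gamma$); the action of $\varphi_g$ on $\unit(R,\Delta)$ is conjugation by $g$ by the formulas for $\beta(\up g h)$, $\gamma(\up g h)$; every $g$ normalizes $\punit(R,\Delta;I,\Gamma)$ since $\varphi_g(I)=I$ and $\varphi_g(\Gamma)=\Gamma$ (using $\pi(\Gamma)\subseteq I$, $\Gamma_{\mathrm{min}}\subseteq\Gamma$); and the final assertion uses that $\unit$ commutes with fibered products, so $g\in\unit(I\rtimes R,\Gamma\rtimes\Delta)$ corresponds to $(g_1,g_2)$ and $\varphi_g=\varphi_{g_1}\times\varphi_{g_2}$ is split.

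\textbf{Part (6): the criterion.} The forward implication is immediate from (1): a map sending $(S,\Theta)$ into $(S',\Theta')$ restricts to a morphism of those algebras, so $\up f{\eunit(S,\Theta)}\subseteq\up f{\unit(S,\Theta)}\subseteq\unit(S',\Theta')$. For the converse, put $\mathcal R=\{x\in R\mid f(x)\in S'\}$ and $\mathcal G=\{u\in\Delta\mid f(u)\in\Theta'\}$: $\mathcal R$ is an involution-closed subalgebra, $\mathcal G$ a subgroup, and the hypothesis with the group laws gives $\beta(g)\in\mathcal R$, $\gamma(g)\in\mathcal G$, $\gamma(g)\cdot\mathcal R\subseteq\mathcal G$, $\gamma(g)\cdot K\subseteq\mathcal G$ for $g\in\eunit(S,\Theta)$, and $\phi(\mathcal R)\subseteq\mathcal G$. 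I must deduce $S\subseteq\mathcal R$ and $\Theta\subseteq\mathcal G$ using that $(S,\Theta)$ is reduced. For $S$: from $\beta(T_{ij}(x))=x-\inv x$ ($i\neq\pm j$) and the Morita equalities $e_{|i|}Se_{|j|}Se_{|i|}=e_{|i|}Se_{|i|}$, products over a third index (possible since $n\geq 3$) put every off-diagonal block, then every $S_i$, into $\mathcal R$, so $e_{0'}\in\mathcal R$; next $\beta(T_i(u))=\rho(u)+\pi(u)-\inv{\pi(u)}$ and multiplication by $e_0$, $e_i$ extract $\pi(\Theta^0_i)\subseteq\mathcal R$, whence the reduced identities $S_{0,0'}=\pi(\Theta^0_{0'})$, $S_{0,0}=S_{0,0'}S_{0',0}$ give $S\subseteq\mathcal R$. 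For $\Theta$ I work on $\Theta=\sum^\cdot_i(\Theta\cap\Delta^i)$, $\Theta\cap\Delta^i=q_i\cdot S$ ($i\neq 0$): since distinct $\Delta^i$ commute I peel off the homogeneous pieces $\gamma^j(T)$ and $\gamma^0(T_i(u))=u\dotminus\phi(\rho(u)+\pi(u))$ from the $\gamma$'s of transvections, use the relation $\gamma(T_{-j}(u))\cdot e_0=\dotminus q_j\cdot\inv{\pi(u)}$, the reduced identity $\Theta^0_0=\Theta^0_{0'}\cdot S_{0',0}\dotplus\phi(S_{0,0})$, Morita factorizations $e_j=\sum p_rs_r$ and $S\subseteq\mathcal R$ to conclude $\Theta\subseteq\mathcal G$; then $f$ carries $(S,\Theta)$ into $(S',\Theta')$.

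\textbf{The main obstacle.} Parts (1)--(5) are mechanical, the only slightly heavy point being that $\varphi_g$ respects $\dotplus$. The real difficulty is the converse of (6): turning "$(S,\Theta)$ is reduced" into the concrete statement that $S$, and — far more delicately — $\Theta$, is generated by the $\beta$- and $\gamma$-data of elementary transvections. For $\Theta$ one has to disentangle the $\Delta^i$-grading from the $\gamma$'s and recover every graded piece $q_i\cdot S$ together with $\Theta^0$; it is here that $n\geq 3$ (enough hyperbolic indices for the Morita factorizations) and reducedness (control of $S$ and of $\Theta^0_0$) are genuinely used, and checking that nothing in $\Theta$ escapes this reconstruction is the crux of the lemma.
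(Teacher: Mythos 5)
Your proposal is correct and follows essentially the same route as the paper: parts (1)--(5) are the routine verifications the paper dismisses as trivial/straightforward, and for the converse of (6) you extract the blocks \(S_{ij}\) from \(\beta(T_{ij}(x)) = x - \inv x\) via third-index Morita factorizations (using \(n \geq 3\)), recover \(q_i\), \(\pi(\Theta^0_{0'})\) and \(\Theta^0_{0'}\) from the \(\beta\)- and \(\gamma\)-data of short and ultrashort transvections, and then invoke reducedness to generate all of \(S\) and \(\Theta\) — exactly the paper's argument. The only cosmetic difference is that the paper isolates the needed pieces directly from the displayed expressions rather than through your auxiliary subalgebra \(\mathcal R\) and subgroup \(\mathcal G\), which changes nothing of substance.
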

\begin{proof}
The claims \((1)\), \((2)\), \((3)\), \((4)\) are trivial and \((5)\) may be checked through straightforward computations. For the last claim, let \(f \in \Hom(R, \Delta; R', \Delta')\) be such that \(\up f {\eunit(S, \Theta)} \leq \unit(S', \Theta')\). Then \(\up f (x - \inv x) \in S'\) for all \(x \in S_{i j}\), where \(i \neq \pm j\) and \(i, j \neq 0\). Since \(n \geq 3\), it follows that \(\up f S_{0'} \subseteq S'\). Similarly, \(\up f {(q_i \cdot x \dotplus q_{-j} \cdot (-\inv x) \dotminus \phi(x))} \in \Theta'\) for all \(x \in S_{i j}\), hence \(\up f {q_i} \in \Theta'\).

Now we will use ultrashort elementary transvections. For all \(i \neq 0\) and \(u \in \Theta^0_i\) we have
\[\up f {(\rho(u) + \pi(u) - \inv{\pi(u)})} \in S'\]
and
\[\up f {(u \dotminus \phi(\rho(u) + \pi(u)) \dotplus q_{-i} \cdot (\rho(u) - \inv{\pi(u)}))} \in \Theta'.\]
The first statement implies that \(\up f S \subseteq S'\) since \(S\) is generated by \(S_{0'}\) and \(\pi(\Theta^0_{0'})\) as an involution \(K\)-algebra. The second one implies that \(\up f {\Theta^0_{0'}} \subseteq \Theta'\), hence \(\up f \Theta \subseteq \Theta'\).
\end{proof}

One can also describe elements of \(\Hom(R, \Delta; S, \Theta)\) in terms of \(\Delta^0\) and \(\Theta^0\). Indeed, we have
\begin{lemma}\label{ProjectiveEquations}
Let \(f \in \Hom(R, \Delta; S, \Theta)\) and suppose that both \((R, \Delta)\) and \((S, \Theta)\) have orthogonal hyperbolic families (we will denote elements of these families as \(e_i\) and \(q_i\) for both odd form rings simultaneously). Let \(\delta(f) = (\sum_{i \neq 0}^\cdot \up f {q_i})^0 \in \Theta^0\) and \(\sub f u = (\up f u)^0 \in \Theta^0\) for all \(u \in \Delta^0\). Then
\begin{itemize}
\item \(\pi(\sub f u) = e_0 \up f {\pi(u)}\), \(\rho(\sub f u) = \up f {\inv{\pi(u)}} e_+ \up f {\pi(u)} + \up f \rho(u)\), \(\sub f {\phi(x)} = \phi(\up f x)\);
\item \(\sub f {(u \cdot x)} = \sub f u \cdot \up f x\), \(\sub f {(u \dotplus v)} = \sub f u \dotplus \sub f v \dotplus \phi(\up f {\inv{\pi(v)}} e_+ \up f {\pi(u)})\);
\item \(\pi(\delta(f)) = e_0 \up f{e_{0'}}\), \(\rho(\delta(f)) = \up f{e_{0'}} e_+ \up f {e_{0'}} - \up f {e_+}\), \(\delta(f) \cdot \up f {e_0} = \dot 0\).
\end{itemize}
Conversely, if there are a homomorphism of involution \(K\)-algebras \(R \rar S, x \mapsto \up f x\), an element \(\delta(f) \in \Theta^0\), and a map \(\Delta^0 \rar \Theta^0, u \mapsto \sub f u\) satisfying these identities, then there exists unique corresponding morphism \(f \colon (R, \Delta) \rar (S, \Theta)\).

The action of \(f\) on \(\unit(R, \Delta)\) in these terms is given by
\begin{align*}
\alpha(\up f g) &= \up f \alpha(g), \\
\gamma^0(\up f g) &= \delta(f) \cdot \up f \beta(g) \dotplus \sub f {(\gamma^0(g))} \dotplus \phi(\up f {\inv{\beta(g)}}\, \up f {e_0} e_+ \up f {e_{0'}} \up f {\beta(g)}), \\
\gamma^\circ(\up f g) &= \delta(f) \cdot \up f \beta(g) \dotplus \sub f {(\gamma^\circ(g))} \dotplus \phi((e_+ - \up f {e_+})\, \up f {\beta(g)} + \up f {\inv{\beta(g)}}\, \up f {e_0} e_+ \up f {e_{0'}} \up f {\beta(g)}).
\end{align*}

For \(g \in \unit(R, \Delta)\) we have
\begin{align*}
\delta(g) &= \gamma^0_{0'}(g) \cdot \inv{\alpha(g)} \dotplus \phi(\alpha(g) e_+ \beta(g) e_{0'} \inv{\alpha(g)}) = \gamma^\circ_{0'}(g) \cdot \inv{\alpha(g)} \\
\sub g u &= (\gamma^0(g) \cdot \pi(u) \dotplus u) \cdot \inv{\alpha(g)} = (\gamma^\circ(g) \cdot \pi(u) \dotplus u) \cdot \inv{\alpha(g)}.
\end{align*}

For \(f = \id\) we have \(\delta(\id) = \dot 0\) and \(\sub f u = u\). Finally, if \(g \colon (T, \Xi) \rar (R, \Delta)\) and \(f \colon (R, \Delta) \rar (S, \Theta)\) are morphisms and all three odd form algebras have orthogonal hyperbolic families, then
\begin{align*}
\delta(fg) &= \delta(f) \cdot \up{fg}{e_{0'}} \dotplus \sub f {\delta(g)} \dotplus \phi(\up{fg}{e_{0'}} \up f {e_0} e_+ \up f {e_{0'}} \up{fg}{e_{0'}}),\\
\sub{fg} u &= \delta(f) \cdot \up{fg}{\pi(u)} \dotplus \sub f{(\sub g u)} \dotplus \phi(\up{fg}{\inv{\pi(u)}}\, \up f {e_0} e_+ \up f {e_{0'}} \up {fg} {\pi(u)}).
\end{align*}
\end{lemma}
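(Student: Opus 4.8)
The plan is to translate the group $\Delta$ and its operations through the decomposition $u = u^{-n}\dotplus\cdots\dotplus u^n$ of Section~4, extract the data $(\delta(f),\sub f{-})$ from a morphism, and then recover the action, the $\id$, and the composition formulas as specializations of one closed expression.

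\emph{From a morphism to the data.} Given $f\in\Hom(R,\Delta;S,\Theta)$, its restriction to $R$ is a homomorphism of involution $K$-algebras (unital once one passes to $R\rtimes K\rar S\rtimes K$) and $f$ commutes with $\pi$, $\rho$, $\phi$, and the $R^\bullet$-action. With $\delta(f)$ and $\sub f u$ defined as in the statement, each listed identity is a one-line consequence of the Section~4 decomposition rules ($u^i = q_i\cdot\pi(u)$, $(u\cdot x)^i = u^i\cdot x$, $\pi(u^i) = e_i\pi(u)$, $\rho(u^0) = \rho(u)+\inv{\pi(u)}e_+\pi(u)$, $(u\dotplus v)^0 = u^0\dotplus v^0\dotplus\phi(\inv{\pi(v)}e_+\pi(u))$) together with $f\pi = \pi f$, etc.; for example $\pi(\sub f u) = e_0\up f{\pi(u)}$ and $\sub f{(u\cdot x)} = (\up f u\cdot\up f x)^0 = \sub f u\cdot\up f x$. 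Only the identities for $\rho(\delta(f))$ and $\sub f{(u\dotplus v)}$ need a short computation collecting the cross-terms of a $\dotplus$-sum, using $\rho(\up f{q_i}) = \up f{\rho(q_i)} = 0$ and $\sum_{i\neq 0}\up f{e_i} = \up f{e_{0'}}$. Feeding these into the presentation $u = \bigl(\sum^\cdot_{i\neq 0}q_i\cdot\pi(u)\bigr)\dotplus u^0$ (legitimate since $[\Delta^0,\Delta^i] = \dot 0$), and using $e_0 = \inv{e_0}$ and the distributivity $(v\dotplus w)\cdot x = v\cdot x\dotplus w\cdot x$ of the action over $\dotplus$, one obtains the closed formula
\[(\up f u)^0 = \delta(f)\cdot\up f{\pi(u)}\dotplus\sub f{(u^0)}\dotplus\phi\bigl(\up f{\inv{\pi(u)}}\,\up f{e_0}\,e_+\,\up f{e_{0'}}\,\up f{\pi(u)}\bigr).\]
Specializing $u = \gamma(g)$, so that $\pi(u) = \beta(g)$, $u^0 = \gamma^0(g)$, $\gamma(\up f g) = \up f{\gamma(g)}$, and $\alpha(\up f g) = \up f{\alpha(g)}$ (Lemma~\ref{ProjectiveUnitaryGroups}(1)), yields the formula for $\gamma^0(\up f g)$; adding $\phi(e_+\up f{\beta(g)})$ according to $\gamma^\circ(g) = \gamma^0(g)\dotplus\phi(e_+\beta(g))$ yields the one for $\gamma^\circ(\up f g)$.

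\emph{From the data to a morphism.} The map $\Theta\rar S\times\Theta^0$, $w\mapsto(\pi(w),w^0)$, is injective with image $\{(p,w_0)\mid\pi(w_0) = e_0 p\}$, since $w^i = q_i\cdot\pi(w)$ for $i\neq 0$ and $w = w^{-n}\dotplus\cdots\dotplus w^n$. Given data $(x\mapsto\up f x,\ \delta(f),\ u\mapsto\sub f u)$ satisfying the stated identities, I would define $\up f u\in\Theta$ to be the unique element with $\pi(\up f u) = \up f{\pi(u)}$ and ${}^0$-part the right-hand side of the displayed formula; consistency, i.e.\ that the $\pi$ of that right-hand side equals $e_0\up f{\pi(u)}$, follows from $\pi(\delta(f)) = e_0\up f{e_{0'}}$, $\pi(\sub f{(u^0)}) = e_0\up f{e_0}\up f{\pi(u)}$, and $\up f{e_{0'}}+\up f{e_0} = 1$. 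One then checks that $\up f{-}$ respects $\dotplus$, the $R^\bullet$-action, and $\pi,\rho,\phi$; uniqueness is automatic because $\Delta$ is generated under $\dotplus$ and the action by $\Delta^0$ and the $q_i$, so any morphism is determined by $f|_R$, $\delta(f)$, and $\sub f{-}$. For the auxiliary formulas: for $g\in\unit(R,\Delta)$ acting through Lemma~\ref{ProjectiveUnitaryGroups}(5) (so $\up g x = \up{\alpha(g)}x$ and $\up g u = (\gamma(g)\cdot\pi(u)\dotplus u)\cdot\inv{\alpha(g)}$), substituting into the definitions of $\delta$ and $\sub{}{}$ and using $(w\cdot x)^0 = w^0\cdot x$, $\gamma(g)^0 = \gamma^0(g)$, $\inv{\pi(u)}e_+ = 0$ for $u\in\Delta^0$, and Lemma~\ref{AlphaBetaGamma} gives the stated expressions for $\delta(g)$ and $\sub g u$; the case $f = \id$ is immediate since $q_i\in\Delta^i$ forces $\delta(\id) = \dot 0$ and $\sub{\id}u = u^0 = u$; and the composition formulas follow from $\delta(fg) = \bigl(\sum^\cdot_{i\neq 0}\up f{(\up g{q_i})}\bigr)^0$ and $\sub{fg}u = \bigl(\up f{(\up g u)}\bigr)^0$ by applying the forward formula for $\up f{(-)}$ and simplifying the $\phi$-terms.

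\emph{Main obstacle.} The only genuinely delicate point is the verification, in the converse direction, that the element $\up f u$ assembled from its $\pi$- and ${}^0$-parts is compatible with the group law and with the action: one must check that the $\phi$-corrections produced by $(v\dotplus w)^0 = v^0\dotplus w^0\dotplus\phi(\inv{\pi(w)}e_+\pi(v))$ and by $u\cdot(x+y) = u\cdot x\dotplus\phi(\inv y\rho(u)x)\dotplus u\cdot y$ are cancelled exactly by those arising from the identities $\sub f{(u\dotplus v)} = \sub f u\dotplus\sub f v\dotplus\phi(\up f{\inv{\pi(v)}}e_+\up f{\pi(u)})$ and $\sub f{(u\cdot x)} = \sub f u\cdot\up f x$, the mismatch between $e_+$ and $\up f{e_+}$ being absorbed by the $\phi$-term built into $(\up f u)^0$. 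Everything else is routine bookkeeping with the Section~4 decomposition identities.
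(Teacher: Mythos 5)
Your proposal is correct and follows essentially the same route as the paper's own (very terse) proof: everything is reduced to the Section~4 decomposition identities for \(u \mapsto u^0\), the closed formula for \((\up f u)^0\) is the computation the paper leaves implicit in ``the remaining formulas may be derived directly,'' and the converse is handled exactly as the paper's remark that the listed relations ``exactly mean that \(f\) is a morphism.'' No discrepancies to report.
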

\begin{proof}
The relations on \(\delta(f)\) and \(u \mapsto \sub f u\) easily follow from the properties of \(u \mapsto u^0\). Clearly, \(f\) is uniquely determined by \(\delta(f)\) and \(u \mapsto \sub f u\), the relations exactly mean that \(f\) is a morphism.

The remaining formulas may be derived directly.
\end{proof}

Now we a ready to define general unitary groups as in \cite{OddStrucVor}. Let \((R, \Delta) \subseteq (T, \Xi)\) be an odd form \(K\)-subalgebra and \((I, \Gamma) \leqt (R, \Delta)\) be an odd form ideal. Then
\[\gunit(T, \Xi; R, \Delta) = \unit(T, \Xi)_{(R, \Delta)} = \{g \in \unit(T, \Xi) \mid \up g {(R, \Delta)} = (R, \Delta)\}\]
is the general unitary group. There is the obvious homomorphism \(\gunit(T, \Xi; R, \Delta) \rar \punit(R, \Delta)\), and the relative general unitary group \(\gunit(T, \Xi; R, \Delta; I, \Gamma)\) is the preimage of \(\punit(R, \Delta; I, \Gamma)\) under this homomorphism. Lemma \ref{ProjectiveUnitaryGroups} also holds for \(\gunit(T, \Xi; R, \Delta)\) instead of \(\punit(R, \Delta)\) and similarly for the relative groups. Also the sets of elementary transvections in \(\gunit(T, \Xi; R, \Delta; I, \Gamma)\), \(\unit(I, \Gamma)\), and \(\eunit(R, \Delta; I, \Gamma)\) coincide, if \((R, \Delta)\) has an orthogonal hyperbolic family of rank \(n \geq 3\) (this easily follows from the definition of the general unitary group).

The next proposition shows that, conversely, the projective unitary group may be reduced to the general unitary one (if we set \(G = \punit(R, \Delta)\)).

\begin{prop}\label{ProjectiveLifting}
Let \((R, \Delta)\) be an odd form \(K\)-algebra and \(G \rar \punit(R, \Delta)\) be a group homomorphism. Then there is a functorial odd form \(K\)-overalgebra \((R, \Delta) \subseteq (R[G], \Delta[G])\) such that \(G \rar \punit(R, \Delta)\) naturally lifts to a homomorphism \(G \rar \gunit(R[G], \Delta[G]; R, \Delta)\).
\end{prop}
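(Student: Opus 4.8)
The plan is to imitate the skew (crossed) group algebra construction, carrying the odd form parameter along with it. Throughout write $\sigma_g \in \punit(R,\Delta) = \Aut(R,\Delta)$ for the image of $g \in G$; since every automorphism of an odd form $K$-algebra extends uniquely to an involution-preserving, $K$-fixing automorphism of $\widehat R := R \rtimes K$, I get an action of $G$ on $\widehat R$, also written $\sigma_g$. First I would build the ring $R[G]$ as $\Ker(\widehat R * G \rar K)$, where $\widehat R * G = \bigoplus_{g \in G} \widehat R\,[g]$ is the skew group algebra with product $(a[g])(b[h]) = (a\,\sigma_g(b))[gh]$ and involution $\inv{(a[g])} = \sigma_{g^{-1}}(\inv a)\,[g^{-1}]$, and the augmentation restricts to $\widehat R \rar K$ on each summand (it is $G$-invariant, hence multiplicative). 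Then $R[G]$ is an involution ideal with $R[G] \rtimes K \cong \widehat R * G$, the map $r \mapsto r[1]$ embeds $R$ into $R[G]$ as a $K$-module direct summand, and for $g \in G$ the element $\bar g = 1[g]$ satisfies $\inv{\bar g} = \bar g^{-1}$, $\bar g - 1 \in R[G]$, and $x \mapsto \bar g x \bar g^{-1}$ restricts to $\sigma_g$ on $R[1] \cong R$. In the purely linear situation this already solves the problem.

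Next I would construct $\Delta[G]$, assuming first that $(R,\Delta)$ is special, so that $\Delta$ embeds via $(\pi,\rho)$ into $\Heis(\widehat R, B_1)$ and hence into $\Heis(R[G] \rtimes K, B_1)$. I would define $\Delta[G]$ to be the $(R[G]\rtimes K)^\bullet$-subgroup of $\Heis(R[G]\rtimes K, B_1)$ generated by the image of $\Delta$, by $\phi(R[G]) = \{(0, x - \inv x) \mid x \in R[G]\}$, and by $\gamma_g = (\bar g - 1, \inv{\bar g} - 1)$ for $g \in G$. A direct check shows that $(R[G], \Delta[G])$ is a special odd form $K$-algebra and that $(R,\Delta) \subseteq (R[G], \Delta[G])$ is an odd form $K$-subalgebra. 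Putting $\tilde g := (\bar g - 1, \gamma_g)$, we get $\tilde g \in \unit(R[G], \Delta[G])$ because $\bar g^{-1} = \inv{\bar g}$ and $\gamma_g \in \Delta[G]$, and $g \mapsto \tilde g$ is a group homomorphism (in the special case $\tilde g\tilde h = \tilde{gh}$ is forced by $\bar g\bar h = \overline{gh}$). By Lemma \ref{ProjectiveUnitaryGroups}(5) conjugation by $\tilde g$ is an odd form automorphism of $(R[G],\Delta[G])$, and the computation $\up{\tilde g}u = (\sigma_g(\pi u), \sigma_g(\rho u)) = \sigma_g(u)$ for $u \in \Delta$ shows that it preserves $(R,\Delta)$ and restricts there to $\sigma_g$. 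Hence $g \mapsto \tilde g$ takes values in $\gunit(R[G], \Delta[G]; R, \Delta)$ and lifts $G \rar \punit(R,\Delta)$.

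For general $(R,\Delta)$ I would reduce to the special case. Applying Proposition \ref{FreeOFA} (with no hyperbolic pairs) to the underlying sets of $R$ and $\Delta$ produces a special odd form $K$-algebra $(\widetilde R, \widetilde\Delta)$ carrying a $G$-action (permuting the generators through the $\sigma_g$) together with a $G$-equivariant surjection onto $(R,\Delta)$ with $G$-stable kernel $(J, \Xi)$; I would then define $(R[G], \Delta[G])$ to be the quotient of $(\widetilde R[G], \widetilde\Delta[G])$ by the odd form ideal generated by $(J,\Xi)$, after checking that this ideal meets $(\widetilde R, \widetilde\Delta)$ precisely in $(J, \Xi)$, so that $(R,\Delta) \hookrightarrow (R[G], \Delta[G])$ survives and the lift $g \mapsto \tilde g$ descends. (Equivalently, one can present $\Delta[G]$ by generators $\Delta$, $\gamma_g$, $\phi(R[G])$ subject to the odd form ring axioms for $(R[G]\rtimes K, -)$, the relations of $(R\rtimes K, \Delta)$, the equations $\pi(\gamma_g) = \bar g - 1$, $\rho(\gamma_g) = \inv{\bar g} - 1$, $\gamma_1 = \dot 0$, $\gamma_{gh} = \gamma_g \cdot \bar h \dotplus \gamma_h$, and $\gamma_g \cdot \pi(u) \dotplus u = \sigma_g(u) \cdot \bar g$ for $u \in \Delta$, with a normal-form argument modeled on the proof of Proposition \ref{FreeOFA}.) The construction is canonical, hence functorial in $(R,\Delta)$ and in $G \rar \punit(R,\Delta)$; functoriality also follows from its universal property, namely that any homomorphism $\psi \colon G \rar \gunit(S,\Theta; R,\Delta)$ over $\punit(R,\Delta)$, for an odd form $K$-overalgebra $(R,\Delta) \subseteq (S,\Theta)$, extends uniquely to a morphism $(R[G], \Delta[G]) \rar (S,\Theta)$ under $(R,\Delta)$ via $r[g] \mapsto r\,\alpha(\psi(g))$ and $\gamma_g \mapsto \gamma(\psi(g))$.

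The only genuinely delicate step is the non-collapse claim $(R,\Delta) \hookrightarrow (R[G], \Delta[G])$. In the special case this is immediate, because $\Delta[G]$ lives inside the honest Heisenberg group $\Heis(R[G]\rtimes K, B_1)$; so the real obstacle is the general case — equivalently, checking that the skew group construction commutes with quotients of odd form algebras, i.e. that the normal form for $\widetilde\Delta[G]$ restricts correctly along the $G$-stable ideal $(J,\Xi)$. Everything else — that $\tilde g$ is unitary, that conjugation by it induces $\sigma_g$ on $(R,\Delta)$, the homomorphism property of $g \mapsto \tilde g$, and the universal property — is a routine computation with the identities of Lemmas \ref{AlphaBetaGamma} and \ref{ProjectiveUnitaryGroups}.
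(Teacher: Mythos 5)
Your construction of the ring coincides with the paper's: the kernel of the augmentation \(\widehat R * G \rar K\) is exactly \(R \oplus \bigoplus_{1 \neq g \in G} (R \rtimes K)\beta(g)\) with \(\beta(g) = [g] - [1]\), and your skew-group-algebra product reproduces the paper's three multiplication formulas verbatim. Where you diverge is in the handling of \(\Delta[G]\), and this is where the gap sits. The paper does not split into a special and a general case: it defines \(\Delta[G]\) once by the presentation you relegate to a parenthesis (generators \([u]\), \(\phi(r)\), \(\gamma(g)\cdot x\) with explicit commutation relations), and the whole content of the non-collapse claim is then the single displayed isomorphism identifying \(\Delta[G]/\phi(R[G])\) with a direct sum having the image of \(\Delta\) as a summand — that is the normal form, and it immediately gives both that \([\,\cdot\,]\colon \Delta \rar \Delta[G]\) is injective and that the operations are well defined. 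This is exactly the step you flag as ``genuinely delicate'' and then do not carry out. Your proposed detour through Proposition \ref{FreeOFA} does not make it easier: you would have to show that the odd form ideal of \((\widetilde R[G], \widetilde\Delta[G])\) generated by \((J,\Xi)\) meets \(\widetilde\Delta\) in exactly \(\Xi\), and since \(\Xi\) is only constrained to lie between \(\Xi_{\mathrm{min}}\) and \(\Xi_{\mathrm{max}}\) this again requires a normal form for \(\widetilde\Delta[G]\); nothing is gained over writing down the normal form for \(\Delta[G]\) directly, which is what the paper does.

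Two smaller remarks. First, your special-case model of \(\Delta[G]\) as the \((R[G]\rtimes K)^\bullet\)-subgroup of \(\Heis(R[G]\rtimes K, B_1)\) generated by \((\pi,\rho)(\Delta)\), \(\phi(R[G])\) and the \(\gamma_g\) is correct and genuinely shorter when \((R,\Delta)\) is special (one checks all generators lie in \(\Ker(\tr)\), and your verification that \(\tilde g\) is unitary and conjugates \((R,\Delta)\) by \(\sigma_g\) is fine); it would be worth noting this as a cleaner path for the special case. Second, the universal property you invoke for functoriality is not automatic for that Heisenberg-subgroup model: a morphism out of a special odd form ring into a non-special \((S,\Theta)\) is not determined by \((\pi,\rho)\), and the subgroup of the Heisenberg group may satisfy relations not forced by your listed generators and relations. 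It does hold for the presented version — but establishing the presented version is again the normal-form computation you deferred.
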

\begin{proof}
Let \(R[G] = R \oplus \bigoplus_{1 \neq g \in G} (R \rtimes K) \beta(g)\) for formal symbols \(\beta(g)\) with the operations
\begin{align*}
x (y \beta(g)) &= (xy)\beta(g), \\
(x \beta(g)) y &= x\, \up g y \beta(g) + x\, \up g y - xy, \\
(x \beta(g)) (y \beta(h)) &= x\, \up g y \beta(gh) - x\, \up gy \beta(g) - x y \beta(h), \\
\inv{x \beta(g)} &= \beta(g^{-1}) \inv x = \up{g^{-1}}{\inv x} \beta(g^{-1}) + \up{g^{-1}}{\inv x} - \inv x.
\end{align*}
Then \(R[G]\) is an involution \(K\)-overalgebra of \(R\) and \(\up g x = \alpha(g) x \alpha(g)^{-1}\) for all \(x \in R\), \(g \in G\), where \(\alpha(g) = \beta(g) + 1 \in R[G] \rtimes K\). The group \(\Delta[G]\) is generated by symbols \([u]\), \(\phi(r)\), and \(\gamma(g) \cdot x\) for \(u \in \Delta\), \(x \in R \rtimes K\), \(r \in R[G]\), and \(g \in G\). Relations are the following:
\begin{itemize}
\item \(\phi(r + s) = \phi(r) \dotplus \phi(s)\), \(\phi(r) = 0\) for \(r \in \herm(R[G])\);
\item \([u \dotplus v] = [u] \dotplus [v]\), \(\phi(x) = [\phi(x)]\);
\item \(\gamma(1) = \dot 0\), \(\gamma(g) \cdot (x + y) = \gamma(g) \cdot x \dotplus \gamma(g) \cdot y \dotplus \phi(\inv y \inv{\beta(g)} x)\);
\item \(\phi(r) \dotplus [u] = [u] \dotplus \phi(r)\), \(\phi(r) \dotplus \gamma(h) \cdot y = \gamma(h) \cdot y \dotplus \phi(r)\);
\item \(\gamma(g) \cdot x \dotplus \gamma(h) \cdot y = \gamma(h) \cdot y \dotplus \gamma(g) \cdot x \dotplus \phi(-\inv x \beta(g^{-1}) \beta(h) y)\), \([u] \dotplus \gamma(g) \cdot x = \gamma(g) \cdot x \dotplus [u] \dotplus \phi(-\inv{\pi(u)} \beta(g) x)\).
\end{itemize}
It is easy to see that \(\phi(R[G]) \leqt \Delta[G]\) is a central subgroup and \(\Delta / \phi(\Delta) \oplus \bigoplus_{1 \neq g \in G} (R \rtimes K) \cong \Delta[G] / \phi(R[G])\) via \((u \dotplus \phi(\Delta)) \oplus \bigoplus_{1 \neq g} x_g \mapsto u \dotplus \sum^\cdot_{1 \neq g} \gamma(g) \cdot x_g \dotplus \phi(R[G])\). The operations are given by
\begin{itemize}
\item \(\pi([u]) = \pi(u)\), \(\pi(\gamma(g) \cdot x) = \beta(g) x\), \(\pi(\phi(r)) = 0\);
\item \(\rho([u]) = \rho(u)\), \(\rho(\gamma(g) \cdot x) = \inv x \inv{\beta(g)} x\), \(\rho(\phi(r)) = r - \inv r\);
\item \([u] \cdot x = [u \cdot x]\), \((\gamma(g) \cdot x) \cdot y = \gamma(g) \cdot xy\), \(\phi(r) \cdot x = \phi(\inv x r x)\);
\item \([u] \cdot \alpha(g) = \gamma(g) \cdot \up{g^{-1}}{\pi(u)} \dotplus [\up{g^{-1}} u]\), \((\gamma(g) \cdot x) \cdot \alpha(h) = \gamma(gh) \cdot \up{h^{-1}} x \dotminus \gamma(h) \cdot \up{h^{-1}} x\), \(\phi(r) \cdot \alpha(g) = \phi(\inv{\alpha(g)} r \alpha(g))\).
\end{itemize}
Then \((R[G], \Delta[G])\) is an odd form ring and \(\Delta \rar \Delta[G], u \mapsto [u]\) induces an embedding \((R, \Delta) \rar (R[G], \Delta[G])\). Moreover, \(G \rar \unit(R[G], \Delta[G]), g \mapsto (\beta(g), \gamma(g))\) is also an embedding lifting \(G \rar \punit(R, \Delta)\). Since its image lies in \(\gunit(R[G], \Delta[G]; R, \Delta)\), the proposition follows.
\end{proof}

\section{Examples}

In this section we will use all results from the rest of the paper, including theorem \ref{HomStability} at the end.

Now we consider odd form algebras arising from certain quadratic modules \((M, B, q)\) over rings \(R\) with \(\lambda\)-involutions and quadratic structures \(A\). By construction, all these algebras will be special unital with free orthogonal families of rank \(n\).

\begin{example}\label{LinearNoncommutativeOFA}
Let \(R\) be arbitrary ring and \(M = R^n\) be the free right module. Consider new ring \(\widetilde R = R^\op \times R\) with the involution \(\inv{(x^\op, y)} = (y^\op, x)\), the quadratic structure \(A = R\), and the operations \(x \cdot (y^\op, z) = yxz\), \(\varphi(x^\op, y) = x + y\), \(\tr(x) = (x^\op, x)\). Then \(M \oplus \Hom_R(M, R)\) becomes a quadratic module over \(\widetilde R\) with the multiplication \((m \oplus u) (x^\op, y) = my \oplus xu\) and the forms \(B(m_1 \oplus u_1, m_2 \oplus u_2) = ((u_2 m_1)^\op, u_1 m_2)\), \(q(m \oplus u) = u m\). This module is hyperbolic and its unitary group is isomorphic to \(\Aut_R(M) = \gl(n, R)\). The corresponding odd form ring is called linear odd form ring, it is \((T, \Xi)\), where \(T = \bigoplus_{i, j = 1}^n Re_{i j} \oplus \bigoplus_{i, j = -n}^{-1} R^\op e_{i j}\), \(\Xi^0_i = \dot 0\), \(e_0 = 0\), and \(e_i = e_{i i}\) for \(0 < i \leq |n|\) with the operations
\begin{align*}
(x e_{i j}) (y e_{k l}) &= 0 \text{ for } j \neq k, &
(r e_{i j}) (r' e_{j k}) &= rr' e_{i k} \text{ for } i, j, k > 0, \\
\inv{r e_{i j}} &= r^\op e_{-j, -i} \text{ for } i, j > 0, &
(r^\op e_{i j}) ({r'}^\op e_{j k}) &= (r'r)^\op e_{i k} \text{ for } i, j, k < 0.
\end{align*}
In this case \(\unit(T, \Xi) \cong \gl(n, R)\). If \(R\) is semi-local and \(n \geq 1\), then the orthogonal hyperbolic family is Morita complete (this family is always free).
\end{example}

If \((M, B, q)\) is arbitrary quadratic module over a ring \(R\) with a \(\lambda\)-involution and a quadratic structure \(A\), then the unitary group \(\unit(M, B, q)\) is naturally a subgroup in \(\gl(M)\). This embedding corresponds to an embedding of odd form rings. Indeed, \(T = \{(x^\op, y) \in \End_R(M)^\op \times \End_R(M) \mid B(xm, m') = B(m, ym')\}\) is an involution subring of \(\widetilde T = \End_R(M)^\op \times \End_R(M)\), and the corresponding special odd form parameter \(\Xi\) lies in \(\widetilde \Xi = \widetilde \Xi_{\mathrm{max}} \leq \Heis(\widetilde T, B_1)\). Clearly, \(\unit(\widetilde T, \widetilde \Xi) = \unit(M, 0, 0) = \gl(M)\). If \(M\) is free, then \((\widetilde T, \widetilde \Xi)\) is a linear odd form ring.

In the following examples we need to check that the projective unitary group coincides with the split classical projective group. Recall their definitions: \(\mathrm G_{\mathrm m}(-) \rightarrowtail \mathrm{GL}(n, -) \twoheadrightarrow \mathrm{PGL}(n, -)\), \(\mathrm G_{\mathrm m}(-) \rightarrowtail \mathrm{GSp}(2n, -) \twoheadrightarrow \mathrm{PGSp}(2n, -)\), and \(\mathrm G_{\mathrm m}(-) \rightarrowtail \mathrm{GO}(n, -) \twoheadrightarrow \mathrm{PGO}(n, -)\) are short exact sequences of group functors in the fppf-topology (even in the Zariski topology) for \(n \geq 1\). Here \(\mathrm G_{\mathrm m}(K) = K^*\) is diagonally embedded into \(\gl(n, K)\), \(\mathrm{GSp}(2n, K) = \{g \in \gl(2n, K) \mid \text{exists } \lambda \in K^* \text{ such that } B(gx, gy) = \lambda B(x, y)\}\) for a split symplectic module \(K^{2n}\), and \(\mathrm{GO}(n, K) = \{g \in \gl(n, K) \mid \text{exists } \lambda \in K^* \text{ such that } q(gx) = \lambda q(x)\}\) for a split quadratic module \(K^n\). Also, \(\mathrm{PGO}(2n + 1, K) \cong \mathrm{SO}(2n + 1, K)\) since \(\mathrm{GO}(2n + 1, K) = \mathrm{SO}(2n + 1, K) \times \mathrm G_{\mathrm m}(K)\). It is trivial to construct the sequence in all examples and to prove the exactness in all terms but the right one. The surjectivity in the sequence usually follows from the well-known fact that every automorphism of the matrix algebra over a commutative ring is inner locally in the Zariski topology. Equivalently, every automorphism of the algebra \(\mat(n, K)\) is inner, if \(K\) is a local commutative ring. We will prove the surjectivity using our theorem \ref{HomStability} since it also may be applied in the odd orthogonal case.

\begin{example}\label{LinearOFA}
For the linear odd form algebra of rank \(n \geq 1\) over a commutative ring \(K\) we have \(R = \bigoplus_{i, j = 1}^n Ke_{i j} \oplus \bigoplus_{i, j = -n}^{-1} Ke_{i j}\), \(\Delta^0_i = \dot 0\), \(e_0 = 0\), and \(e_i = e_{i i}\) for \(0 < |i| \leq n\) with the operations
\begin{align*}
e_{i j} e_{j k} &= e_{i k}, &
\inv{e_{i j}} &= e_{-j, -i}, &
e_{i j} e_{k l} &= 0 \text{ for } j \neq k.
\end{align*}
Clearly, the orthogonal hyperbolic family is free and Morita complete. Moreover, \(\unit(R, \Delta) \cong \gl(n, K)\) and \(\punit(R, \Delta) \cong \mathrm{PGL}(n, K) \rtimes (\mathbb Z / 2 \mathbb Z)(K)\), where \((\mathbb Z / 2 \mathbb Z)(K)\) is the group of idempotents of \(K\) with the operation \((e, f) \mapsto e + f - 2ef\). In order to prove that the obvious injective map \(\mathrm{PGL}(n, K) \rtimes (\mathbb Z / 2 \mathbb Z)(K) \rar \punit(R, \Delta)\) is also surjective, we need only to consider the case of local \(K\). Using theorem \ref{HomStability}, it suffices to prove surjectivity for \(n = 1\), but this case is obvious. Note for \(g \in \punit(R, \Delta)\) we have \(\delta(g) = \dot 0\) if and only if \(g \in \mathrm{PGL}(n, K)\).
\end{example}

Recall that a regular bilinear form \(B\) on a finitely generated projective \(K\)-module \(M\) is called symplectic if \(B(m, m) = 0\) for all \(m\). Locally in the Zariski topology any symplectic module is isomorphic to the split symplectic module \(M = K^{2n}\) with the form \(B(x, y) = \sum_{i > 0} (x_i y_{-i} - x_{-i} y_i)\) (we use the numeration \(-n, \ldots, -1, 1, \ldots, n\) for the coordinates on \(M\)).

\begin{example}\label{SymplecticOFA}
Let \(K\) be a commutative ring, \(n \geq 1\), \(M = K^{2n}\) be the split symplectic module of rank \(2n\), and \(B\) be the symplectic form. Then \(K\) has a trivial \((-1)\)-involution and a quadratic structure \(A = 0\), the zero map \(q \colon M \rar A\) is a quadratic form associated with \(B\). Applying our general construction of odd form algebras to \((M, B, q)\) we obtain the symplectic odd form algebra \((R, \Delta)\) of rank \(n\). More explicitly, \(R = \bigoplus_{0 < |i|, |j| \leq n} K e_{i j}\), \(\Delta^0_i = Ku_i\) for \(0 < |i| \leq n\), \(e_0 = 0\), and \(e_i = e_{i i}\) for \(0 < |i| \leq n\) with the operations
\begin{align*}
e_{i j} e_{k l} &= 0 \text{ for } j \neq k, &
\phi(x e_{-i, i}) &= 2 x u_i, &
xu_i \dotplus yu_i &= (x + y) u_i, \\
e_{i j} e_{j k} &= e_{i k}, &
\pi(x u_i) &= 0, &
(x u_i) \cdot (y e_{i, j}) &= \eps_i \eps_j xy^2 u_j, \\  
\inv{e_{i j}} &= \eps_i \eps_j e_{-j, -i}, &
\rho(x u_i) &= x e_{-i, i}.
\end{align*}
where \(\eps_i = 1\) for \(i > 0\) and \(\eps_i = -1\) for \(i < 0\). Here the orthogonal hyperbolic family is free and Morita complete. Also, \(\unit(R, \Delta) \cong \mathrm{Sp}(2n, K)\) is the split symplectic group, \(\punit(R, \Delta) \cong \mathrm{PGSp}(2n, K)\), and \(\gunit(T, \Xi; R, \Delta) \cong \mathrm{GSp}(2n, K)\), where \((T, \Xi)\) is the enveloping linear odd form algebra (of rank \(2n\)). This follows from theorem \ref{HomStability} and from easy case \(n = 1\) and local \(K\), where \(\mathrm{Sp}(2, K) = \mathrm{SL}(2, K)\) and \(\mathrm{GSp}(2, K) = \mathrm{GL}(2, K)\).
\end{example}

Now we consider classical quadratic forms. A map \(q \colon M \rar K\) from a finitely generated projective \(M\) over a commutative ring \(K\) is called quadratic form (in the classical sense) if \(q(mx) = q(m) x^2\) and \(B(x, y) = q(x + y) - q(x) - q(y)\) is bilinear. The form \(q\) is called regular of even rank if \(M\) is locally of even rank and \(B\) is regular (equivalently, the determinant of \(B\) is invertible). Locally in the \'etale topology every regular (classical) quadratic module \((M, q)\) of even rank is isomorphic to the split (classical) quadratic module \((K^{2n}, q)\) of rank \(2n\), where \(q(x) = \sum_{i > 0} x_i x_{-i}\) (again, we use the numeration \(-n, \ldots, -1, 1, \ldots, n\)).

\begin{example}\label{EvenOrthOFA}
Let \(K\) be a commutative ring, \(n \geq 1\), \(M = K^{2n}\) be the split (classical) quadratic module of rank \(2n\), \(q\) and \(B\) be the corresponding forms. Then \(K\) has a trivial \(1\)-involution and a quadratic structure \(A = K\) with the maps \(x \cdot y = xy^2\), \(\varphi(x) = x\), \(\tr(x) = 2x\). Hence \((M, B, q)\) is a quadratic module over \((K, A)\). Applying our general construction we obtain the even orthogonal odd form algebra \((R, \Delta)\) of rank \(n\). More explicitly, \(R = \bigoplus_{0 < |i|, |j| \leq n} K e_{i j}\), \(\Delta^0_i = \dot 0\) for \(0 < |i| \leq n\), \(e_0 = 0\), and \(e_i = e_{i i}\) for \(0 < |i| \leq n\) with the operations
\begin{align*}
\inv{e_{i j}} &= e_{-j, -i}, &
e_{i j} e_{k l} &= 0 \text{ for } j \neq k, &
e_{i j} e_{j k} &= e_{i k}.
\end{align*}
Here the orthogonal hyperbolic family is free and Morita complete. Also, \(\unit(R, \Delta) \cong \mathrm{O}(2n, K)\) is the split even orthogonal group, \(\punit(R, \Delta) \cong \mathrm{PGO}(2n, K)\), and \(\gunit(T, \Xi; R, \Delta) \cong \mathrm{GO}(2n, K)\), where \((T, \Xi)\) is the enveloping linear odd form algebra (of rank \(2n\)). This follows from theorem \ref{HomStability} and from the case \(n = 1\) and local \(K\), where \(\mathrm O(2, K) \cong K^* \rtimes \mathbb Z / 2 \mathbb Z\) and \(\mathrm{GO}(2, K) \cong (K^* \times K^*) \rtimes \mathbb Z / 2 \mathbb Z\).
\end{example}

It remains to consider classical quadratic modules of odd rank. Let \(M\) be a finitely generated projective module over a commutative ring, \(q \colon M \rar K\) be a classical quadratic form, and suppose that \(M\) is of constant odd rank. In this case the discriminant of \(B\) is divisible by \(2\) as an abstract polynomial with integer coefficients on entries of \(B\) (if \(M\) is free). Hence one can define the so-called half-discriminant of \(B\) over arbitrary ring \(K\). The form \(q\) is called semi-regular, if the half-discriminant of \(B\) is invertible. If \(2 \in K^*\), then semi-regularity is equivalent to regularity, but if \(2 = 0 \in K\), then \(B\) always satisfies \(B(x, x) = 0\), hence it cannot be regular. Locally in the \'etale topology every semi-regular (classical) quadratic module of odd rank is isomorphic to a module \(M = K^{2n + 1}\) with the form \(q(x) = \sum_{i > 0} x_i x_{-i} + a x_0^2\) for some \(a \in K^*\) (with the numeration \(-n, \ldots, n\) for coordinates on \(M\)). This element \(a\) coincides with the half-discriminant of \(B\) and can be made \(1\) locally in the fppf-topology.

\begin{example}\label{OddOrthOFA}
Let \(K\) be a commutative ring, \(n \geq 1\), \(M = K^{2n + 1}\) be a (classical) quadratic module with \(q(x) = \sum_{i > 0} x_i x_{-i} + x_0^2\). Then \(K\) has a trivial \(1\)-involution and a quadratic structure \(A = K\) with the maps \(x \cdot y = xy^2\), \(\varphi(x) = x\), \(\tr(x) = 2x\). Hence \((M, B, q)\) is a quadratic module over \((K, A)\). In principle we may apply our general construction and obtain some odd form algebra. This algebra even may be reduced using proposition \ref{OddFormRingReduction}, but the result will still be slightly cumbersome if \(2 \notin K^*\). Instead we define the odd orthogonal odd form algebra \((R, \Delta)\) of rank \(n\) directly as \(R = \bigoplus_{-n \leq i, j \leq n} K e_{i j}\), \(\Delta^0_i = Ku_i\) for \(-n \leq i \leq n\), \(e_i = e_{i i}\) for \(0 < |i| \leq n\) with the operations
\begin{align*}
e_{i j} e_{k l} &= 0 \text{ for } j \neq k, &
xu_i \cdot y e_{j k} &= \dot 0 \text{ for } i \neq j, &
\phi(xe_{-i, i}) &= \dot 0, \\
e_{i j} e_{j k} &= e_{i k} \text{ for } j \neq 0, &
xu_i \cdot ye_{i j} &= xyu_j \text{ for } i \neq 0, &
\pi(xu_i) &= xe_{0 i}, \\
e_{i 0} e_{0 j} &= 2e_{i j}, &
xu_0 \cdot ye_{0 i} &= 2xyu_i, &
\rho(xu_i) &= -x^2 e_{-i, i}, \\
\inv{e_{i j}} &= e_{-j, -i}, &
xu_i \dotplus yu_i &= (x + y) u_i. &
\end{align*}
In general \((R, \Delta)\) is special, but non-unital. If \(2 \in K^*\), then \(R \cong \mat(2n + 1, K)\). If \(K\) is a field of characteristic \(2\), then \(R\) is not semi-simple and its factor by the Jacobson radical is isomorphic to \(\mat(2n, K)\). Hence for arbitrary commutative \(K\) the orthogonal hyperbolic family is free and Morita complete. The homomorphism \(\mathrm{rep} \colon R \rar \End(M) = \mat(2n + 1, K)\) is given by \(e_{i j} \mapsto e_{i j}\) for \(j \neq 0\) and \(e_{i 0} \mapsto 2 e_{i 0}\). One can check that \(\unit(R, \Delta)\) acts on \(M\) by elements of \(\mathrm O(2n + 1, K)\). Explicit expressions for \(\unit(R, \Delta)\) and \(\punit(R, \Delta)\) are given in the next proposition.
\end{example}

\begin{prop}\label{OddOrthGroups}
If \((R, \Delta)\) is the odd orthogonal odd form algebra of rank \(n \geq 1\) over \(K\), then \(\unit(R, \Delta) \cong (\mathbb Z / 2 \mathbb Z)(K) \times \mathrm{SO}(2n + 1, K)\) and \(\punit(R, \Delta) \cong \mathrm{PGO}(2n + 1, K)\).
\end{prop}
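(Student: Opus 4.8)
The plan is to write down explicit injective homomorphisms of the classical groups into $\unit(R,\Delta)$ and $\punit(R,\Delta)$, compatibly with the map $\unit(R,\Delta)\rar\punit(R,\Delta)$ of Lemma~\ref{ProjectiveUnitaryGroups}(5) and with $\mathrm{PGO}(2n+1,K)\cong\mathrm{SO}(2n+1,K)$, and then to prove surjectivity by a descending induction on $n$ via Theorem~\ref{HomStability}. Recall that, $(R,\Delta)$ being special, an element $g\in\unit(R,\Delta)$ is pinned down by $\alpha(g)\in(R\rtimes K)^*$ together with the (unique, when it exists) companion $\gamma^\circ(g)\in\Delta^0$; so constructing a lift amounts to exhibiting a suitable $\alpha$ and checking that the companion $\gamma^\circ$ exists.

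For the injective maps I would proceed as follows. To an idempotent $e\in K$ attach the element $z_e$ with $\alpha(z_e)=1-e\bigl(2\sum_{0<|i|\le n}e_{ii}+e_{00}\bigr)$; using the presentation of Example~\ref{OddOrthOFA} (in particular $\phi(xe_{-i,i})=\dot 0$ and the formulas for $\pi$, $\rho$ on the $u_i$) one checks $\inv{\alpha(z_e)}=\alpha(z_e)$, $\alpha(z_e)^2=1$, and the existence of $\gamma^\circ(z_e)$, so $z_e\in\unit(R,\Delta)$; then $e\mapsto z_e$ is an injective homomorphism with central image and $z_ez_f=z_{e+f-2ef}$, realizing $(\mathbb Z/2\mathbb Z)(K)$. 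Next, an element $\sigma\in\mathrm{SO}(2n+1,K)$ acting on $M$ lifts to $\unit(R,\Delta)$: the entries of $\alpha$ that $\mathrm{rep}$ does not see (the $(i,0)$-entries) are forced by $\inv\alpha\alpha=1$ --- this is exactly the point of the factor $2$ in $\mathrm{rep}(e_{i0})=2e_{i0}$ --- and the companion $\gamma^\circ$ exists because $q|_{M_0}$ is preserved. Combining these gives an injective homomorphism $(\mathbb Z/2\mathbb Z)(K)\times\mathrm{SO}(2n+1,K)\rar\unit(R,\Delta)$, and an analogous (and simpler, since there is no centre to account for) construction gives an injective homomorphism $\mathrm{PGO}(2n+1,K)\rar\punit(R,\Delta)$, compatible with the previous one. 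It remains to show both maps are onto.

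Surjectivity is a local question: $(R,\Delta)$, $\unit$, $\punit$ and the classical groups are functorial in $K$ and commute with localization and faithfully flat base change (cf.\ the base-change construction for odd form algebras given earlier, and the smoothness of $\mathrm{SO}_{2n+1}$, $\mathrm{PGO}_{2n+1}$), so we may assume $K$ local. Then the orthogonal hyperbolic family of $(R,\Delta)$ is Morita complete (Example~\ref{OddOrthOFA}) and the relevant $\Lambda$-stable ranks are small enough that the surjectivity assertions of the stability theorem for $\kunit$ and of Theorem~\ref{HomStability} apply for all ranks $\ge 2$ and present $\unit(R,\Delta)$, resp.\ $\punit(R,\Delta)$, in terms of elementary transvections and the rank-one groups (note $\punit^*(n;R,\Delta)=\punit(n;R,\Delta)$ over a local ring). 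This reduces both isomorphisms to $n=1$, where $R$ is the explicit $9$-dimensional algebra above, $\mathrm{SO}(3,K)\cong\mathrm{PGL}(2,K)$, $\mathrm{PGO}(3,K)\cong\mathrm{SO}(3,K)$, and one argues directly using that $R\cong\mat(3,K)$ if $2\in K^*$, that the factor of $R$ by its Jacobson radical is $\mat(2,K)$ otherwise, and that automorphisms of matrix algebras over a local ring are inner; pulling the rank-one result back up the chain of ranks finishes the proof.

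The step I expect to be the main obstacle is characteristic $2$. There $\mathrm{rep}$ is far from injective, $\mathrm{SO}(2n+1,-)$ is not cut out by $\det=1$, and $\unit(R,\Delta)$ genuinely differs from $\mathrm{O}(2n+1,K)$ --- the centre $\mu_2$ of the latter being replaced by the idempotents $(\mathbb Z/2\mathbb Z)(K)$. Consequently the lifts $z_e$ and the lift of $\mathrm{SO}(2n+1,K)$, and above all the verification that the companion $\gamma^\circ$ exists, must be carried out with the odd form data (the elements $u_i$, with $\pi(u_i)=e_{0i}$ and $\rho(xu_i)=-x^2e_{-i,i}$) rather than with matrix formulas; this is where the argument has real content. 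Secondarily, one must confirm that the hypotheses of Theorem~\ref{HomStability} --- Morita completeness and the $\Lambda$-stable rank bound --- really hold for the odd orthogonal odd form algebra over a local ring, so that the reduction to $n=1$ is legitimate. Everything else is routine computation in the explicit presentation of $(R,\Delta)$.
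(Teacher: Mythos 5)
Your overall architecture --- identify the central factor $(\mathbb Z/2\mathbb Z)(K)$ explicitly, reduce surjectivity to $n=1$ by localization/faithfully flat descent together with Theorems \ref{SurjectiveKU} and \ref{HomStability}, and treat $n=1$ by hand --- is exactly the paper's. But you run the key comparison map in the opposite direction, and the two places you flag as ``where the argument has real content'' are precisely where your outline has no content. First, the splitting of the idempotent factor: you give the central elements $z_e$ and assert the product map $(\mathbb Z/2\mathbb Z)(K)\times\mathrm{SO}(2n+1,K)\rar\unit(R,\Delta)$ is injective and onto, but you have no device for extracting the central component of an arbitrary $g\in\unit(R,\Delta)$, which is needed for surjectivity even in the base case $n=1$. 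The paper manufactures this device: a polynomial invariant $\mathrm D\colon R\rar K$ with $1-\det(\mathrm{rep}(x)+1)=2\,\mathrm D(x)$ (well defined over $\mathbb Z$ because $1-\det$ is divisible by $2$ as an integer polynomial), shown to land in $(\mathbb Z/2\mathbb Z)(K)$ on $\unit(R,\Delta)$ using smoothness of the group scheme and torsion-freeness of its affine ring; $\mathrm D$ restricts to an isomorphism $Z(K)\cong(\mathbb Z/2\mathbb Z)(K)$ and its kernel $G(K)$ gives the direct complement. Without some such invariant your decomposition $g=z_e\sigma$ is not established.

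Second, your section $\mathrm{SO}(2n+1,K)\rar\unit(R,\Delta)$. The assertion that the $(i,0)$-entries of $\alpha$ ``are forced by $\inv\alpha\alpha=1$'' amounts to the claim that for $\sigma\in\mathrm{SO}(2n+1,K)$ the entries $\sigma_{i0}$ ($i\neq 0$) are canonically divisible by $2$; this is true, but it needs the same flat-over-$\mathbb Z$/characteristic-$2$ degeneration argument as $\mathrm D$, plus the construction of $\gamma^\circ$ in $\Delta^0$ and the verification of the homomorphism property --- none of which you supply. The paper avoids building a section altogether: it maps $G(K)\rar\mathrm{SO}(2n+1,K)$ via $\mathrm{rep}$ (trivially well defined and injective) and proves surjectivity by fppf descent, reducing via Theorem \ref{SurjectiveKU} to showing every $g\in\mathrm{SO}(3,K)$ becomes elementary after a faithfully flat extension (obtained by adjoining a root of a monic quadratic so that $g_{11}=1$). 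Relatedly, your $n=1$ base case only addresses $\punit$ (inner automorphisms of matrix algebras over a local ring); the $n=1$ case of the $\unit$-statement, which is where the descent and the quadratic extension actually enter, is left unargued. These are genuine gaps, not routine computations.
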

\begin{proof}
The group \(\unit(R, \Delta)\) is given by the equations
\begin{align*}
\sum_{k \neq 0} \beta_{-k, -i} \beta_{k, j} + 2 \beta_{0, -i} \beta_{0, j} + \beta_{i, j} + \beta_{-j, -i} &= 0 \text{ for } i + j > 0, \\
\sum_{k > 0} \beta_{-k, i} \beta_{k, i} + \beta_{0, i}^2 + \beta_{-i, i} &= 0
\end{align*}
on the entries \(\{\beta_{i, j}\}_{i, j = -n - 1}^{n + 1}\). By the Jacobian criterion, this group scheme is smooth of relative dimension \(n(2n + 1)\) over \(K\) (these equations are transversal near the identity section by direct calculations, hence the scheme is smooth over every field and the equations are transversal near every point). Let \(Z(K) = \Ker(\unit(R, \Delta) \rar \punit(R, \Delta))\), then using lemma \ref{ProjectiveEquations} it is easy to see that \(Z(K) = \{g \in \unit(R, \Delta) \mid \beta(g) = e_{00} x + 2 \sum_{i \neq 0} xe_{ii}, x^2 + x = 0\} \cong (\mathbb Z / 2 \mathbb Z)(K)\). Now let \(\mathrm{Det}(x) = \det(\mathrm{rep}(x) + 1)\) for \(x \in R\), then \(\mathrm{Det}\) is a polynomial map and \(\mathrm{Det}(xy + x + y) = \mathrm{Det}(x) \mathrm{Det}(y)\). Also, \(1 - \mathrm{Det}(x)\) is always divisible by \(2\), hence there is unique polynomial map \(\mathrm D \colon R \rar K\) with integer coefficients such that \(1 - \mathrm{Det}(x) = 2 \mathrm D\). In particular, \(\mathrm D(xy + x + y) = \mathrm D(x) \mathrm D(y) + \mathrm D(x) + \mathrm D(y)\). Since \(\mathrm{Det}(\beta(g))^2 = 1\) for all \(g \in \unit(R, \Delta)\) and the affine ring of \(\unit(R, \Delta)\) is flat (hence torsion-free for \(K = \mathbb Z\)), we have \(\mathrm D(\unit(R, \Delta)) \leq (\mathbb Z / 2 \mathbb Z)(K)\). Clearly, \(\mathrm D\) induces an isomorphism between \(Z(K)\) and \((\mathrm Z / 2 \mathbb Z)(K)\). Let \(G(K) = \Ker(\mathrm D \colon \unit(R, \Delta) \rar (\mathbb Z / 2 \mathbb Z)(K))\).

It follows that \(\unit(R, \Delta) = Z(K) \times G(K)\). We have a homomorphism \(f \colon G(K) \rar \mathrm{SO}(2n + 1, K)\), and it is clearly injective. If for any \(g \in \mathrm{SO}(2n + 1, K)\) we will find a faithfully flat extension \(K' / K\) such that \(g \otimes 1\) lies in the image of \(G(K')\), then by faithfully flat descent \(g\) lies in the image of \(G(K)\). Note that \(\eunit(R, \Delta) \leq G(K)\) and \(f\) maps elementary transvections into elementary transvections (with the same parameter), hence by theorem \ref{SurjectiveKU} applied to \(\mathrm{O}(2n + 1, K)\) it suffices to consider the case \(n = 1\). We claim that every \(g \in \mathrm{SO}(3, K)\) lies in the elementary subgroup after a suitable faithfully flat base change. Indeed, we may assume that \(K\) is local, then \((g_{-1, 1}, 2g_{0 1}, g_{1 1})\) is unimodular and there is \(t \in K\) such that \((T_1(t) g)_{-1, 1}\) is invertible (where \(T_1(t)\) is the image of \(T_1(tu_1)\)). Then \((T_{-1}(s) T_1(t) g)_{1 1} = 1\) if and only if \(s\) satisfies some monic quadratic equation, hence after a faithfully flat base change we may assume that \(g_{1 1} = 1\). Multiplying by elementary transvections as in the proof of theorem \ref{SurjectiveKU}, we may further assume that \(g\) may differ from the identity matrix only in the middle column. Since \(g \in \mathrm{SO}(3, K)\), it follows that \(g = 1\) and the claim is proved. In other words, \(\unit(R, \Delta) \cong (\mathbb Z / 2 \mathbb Z)(K) \times \mathrm{SO}(2n + 1, K)\).

It remains to prove that \(G(K) \rar \punit(R, \Delta)\) is an isomorphism. This map is injective, hence it suffices to prove that every \(\sigma \in \punit(R, \Delta)\) lies in the image after suitable faithfully flat base change, hence we may assume that \(K\) is local. By theorem \ref{HomStability} it suffices to consider only the case \(n = 1\). This can be done similarly to lemma \ref{UnitaryExtension} and proposition \ref{HomDecomposition}. There is \(g \in \unit(\up \sigma {e_1}, e_1; R, \Delta)\), \(x = \alpha(g)\) is unimodular, hence as above after a base change and multiplication by an element of \(\eunit(R, \Delta)\) we may assume that \(x = e_1\). In other words, \(\up \sigma{e_1} = e_1\) and \(\up \sigma{e_{-1}} = e_{-1}\). It follows that \(\up{\sigma}{e_{i j}} = c_{i j} e_{i j}\) for some \(c_{i j} \in K^*\). But then the relations on \(e_{i j}\) and \(u_i\) show that \(\sigma\) lies in the image of \(\mathrm{SO}(2, K) \leq \mathrm{SO}(3, K)\).
\end{proof}

The remaining examples show various properties of general odd form rings. Non-special odd form algebras may be used to describe affine groups:

\begin{example}\label{Affine}
Let \((R, \Delta)\) be an odd form \(K\)-algebra and \(X\) be a right \((R \rtimes K)\)-module. Then \((R, \Delta \times X)\) is also an odd form \(K\)-algebra with the operations \(\pi(x) = \rho(x) = 0\) and \(x \cdot r = xr\) for \(x \in X\) and \(r \in R \rtimes K\). There is canonical isomorphism \(\unit(R, \Delta \times X) \cong X \rtimes \unit(R, \Delta)\) with the left action \(gx = x\inv{\alpha(g)}\) of \(\unit(R, \Delta)\) on \(X\). In particular, \(K^n \rtimes \gl(n, K)\), \(K^{2n} \rtimes \mathrm{Sp}(2n, K)\), and \(K^n \rtimes \mathrm O(n, K)\) are all unitary groups for appropriate non-special odd form algebras. Clearly, the embedding \((R, \Delta) \subseteq (R, \Delta \times X)\) preserves freeness and Morita completeness of orthogonal hyperbolic families.
\end{example}

Also, stable unitary groups now are also unitary groups of certain non-unital odd form algebras.

\begin{example}\label{StableOFA}
Let \((R[n], \Delta[n])\) be a family of odd form algebras with free orthogonal hyperbolic families of ranks \(n\) and suppose that \((R[n - 1], \Delta[n - 1]) = (R[n]_{|n|'}, \Delta_{|n|'}^{|n|'})\) for all \(n\) (so their hyperbolic pairs with indices \(1, \ldots, n - 1\) coincide). Then \((R, \Delta) = \varinjlim_n (R[n], \Delta[n])\) is an odd form algebra with infinite free orthogonal hyperbolic family, it unitary group is the direct limit of \(\unit(R[n], \Delta[n])\).
\end{example}

Finally, we give two counter-examples.

\begin{example}\label{SmallElementary}
In general the elementary unitary groups depends on the orthogonal hyperbolic family. Let \((R, \Delta) = (R_1, \Delta_1) \times (R_2, \Delta_2)\) be the product of linear odd form \(\mathbb Q\)-algebras of ranks \(n, m \geq 2\). Using hyperbolic pairs from different factors we obtain different elementary unitary groups \(\mathrm E(n, \mathbb Q) = \mathrm{SL}(n, \mathbb Q)\) and \(\mathrm E(m, \mathbb Q) = \mathrm{SL}(m, \mathbb Q)\). Clearly, the smallest of these families is not Morita complete.
\end{example}

\begin{example}\label{Twisting}
Even in the case of linear odd form algebra over a commutative ring it is not true that the map \(\unit(R, \Delta) \rar \punit(R, \Delta)\) is surjective. Let \(K = \mathbb Z[i \sqrt 5]\) and \(\mathfrak a = (2, 1 + i \sqrt 5) \leqt K\), then \(\mathfrak a^2 = (2)\) and \(\mathfrak a\) is a non-principal prime ideal (it is an element of order \(2\) in \(\mathrm{Pic}(K)\), the ring \(K\) is a Dedekind domain). The conjugation by the matrix \(g = \sMat 2 {1 + i \sqrt 5} {1 - i \sqrt 5} 2\) is an automorphism of \(\mat(2, K)\) since \(\frac g {\sqrt 2} \in \gl(2, K[\sqrt 2])\) and \(K\) is integrally closed. It is easy to see that \(\up g {e_{1 1}} {\mat(2, K)} e_{1 1}\) is isomorphic to \(\mathfrak a\) as a \(K\)-module, hence this automorphism is outer. Moreover, there is an automorphism \(\sigma\) on \(\mat(n, K)\) with \(\up \sigma {e_{1 1}} {\mat(n, K)} e_{1 1} \cong \mathfrak a\) if and only if \(n\) is even, since \((\up \sigma {e_{1 1}} {\mat(n, K)} e_{1 1})^{\oplus n} \cong \mat(n, K) e_{1 1} \cong K^{\oplus n}\) and \(\mathfrak b^{\oplus n} \cong K^{\oplus (n - 1)} \oplus \mathfrak b^{\otimes n}\) for any fraction ideal \(\mathfrak b\) over Dedekind domain.
\end{example}

\section{Stable rank}

Until the end of this section \((R, \Delta)\) is an odd form ring with a free orthogonal hyperbolic family of rank \(n \geq 1\). In the unitary group \(\unit(n; R, \Delta) = \unit(R, \Delta)\) there is a subgroup
\begin{align*}
\unit(n - 1; R, \Delta) &= \{g \in \unit(n; R, \Delta) \mid \beta_{*, \pm n}(g) = \beta_{\pm n, *}(g) = 0, \gamma_{\pm n}^\circ(g) = \dot 0\} =\\
&= \{g \in \unit(n; R, \Delta) \mid \beta_{*, \pm n}(g) = \beta_{\pm n, *}(g) = 0, \gamma_{\pm n}^0(g) = \dot 0\} =\\
&= \{g \in \unit(n; R, \Delta) \mid \beta_{*, \pm n}(g) = \beta_{\pm n, *}(g) = 0, \gamma_{\pm n}(g) = \dot 0\}.
\end{align*}
Clearly, \(\unit(n - 1; R, \Delta) \cong \unit(R_{|n|'}, \Delta_{|n|'}^{|n|'}) \cong \unit(e_{|n|'}, e_{|n|'}; R, \Delta)\). The elementary subgroup \(\eunit(n - 1; R, \Delta) \leq \unit(n - 1; R, \Delta)\) is generated by all transvections \(T_{i j}(x), T_j(u) \in \eunit(R, \Delta)\) with \(i, j \neq \pm n\). If \(n \geq 2\), then we may define similarly \(\unit(n - 2; R, \Delta)\) and so on.

Conversely, if the orthogonal hyperbolic family is free, then we may define \(\unit(n + 1; R, \Delta)\). Indeed, in this case \((R, \Delta)\) may be embedded into an odd form algebra \((\widetilde R, \widetilde \Delta)\) with a free orthogonal hyperbolic family of rank \(n + 1\) (unique up to the canonical isomorphism) such that \(R = \widetilde R_{|n + 1|'}\) and \(\Delta = \widetilde \Delta_{|n + 1|'}^{|n + 1|'}\). In this case we set \(\unit(n + 1; R, \Delta) = \unit(n + 1; \widetilde R, \widetilde \Delta) = \unit(\widetilde R, \widetilde \Delta)\).

In order to prove stabilization for \(\kunit\), we will use the standard approach with stable ranks and \(\Lambda\)-stable ranks. First of all, let \((R^{\mathrm{ev}}, \Delta^{\mathrm{ev}}) = (R_{0'}, \rho(\Ker(\pi|_{\Delta_{0'}})))\) be the even part of \((R, \Delta)\), then \(R^{\mathrm{ev}}\) is isomorphic to the matrix algebra \(\mat(n, R_{|1|})\) and \(\Delta^{\mathrm{ev}} \leq R^{\mathrm{ev}}\) is a form parameter (in the classical sense of Bak). Note that \(\Delta^{\mathrm{ev}}\) is also a special odd form parameter with the maps \(\rho^{\mathrm{ev}}(u) = u\), \(\pi^{\mathrm{ev}}(u) = 0\), and \(\phi^{\mathrm{ev}}(x) = x - \inv x\). There is a canonical bijection between form parameters on \(R_{|1|}\) and on \(\mat(n, R_{|1|})\) since these involution rings are Morita equivalent in the bicategory of hermitian bimodules, see \cite{OddStrucVor}. Explicitly, \(\Lambda \leq R_{|1|}\) corresponds to \(\bigoplus_{1 \leq i \leq n} e_{|i|, |1|} \Lambda e_{|1|, |i|} \oplus \bigoplus_{1 \leq i < j \leq n} \{x - \inv x \mid x \in R_{|i|, |j|}\}\). Every elementary transvection in \(\unit(R^{\mathrm{ev}}, \Delta^{\mathrm{ev}})\) may be lifted into \(\unit(R, \Delta)\).

Recall that the condition \(\mathrm{sr}(R_1) \leq k - 1\) means that for any right unimodular sequence \(x_1, \ldots, x_k \in R_1\) of length \(k\) (i.e. if there exist \(y_i \in R_1\) with \(\sum_i y_i x_i = 1\)) there are \(a_1, \ldots, a_{k - 1} \in R_1\) such that \(x_1 + a_1 x_k, \ldots, x_{k - 1} + a_{k - 1} x_k\) is right unimodular of length \(k - 1\).

If there are \(e_{-1, 1} \in R_{-1, 1}\) and \(e_{1, -1} \in R_{1, -1}\) such that \(e_{-1, 1} e_{1, -1} = e_{-1}\) and \(e_{1, -1} e_{-1, 1} = e_1\), then we may use the standard definition of \(\Lambda\)-stable rank in our situation. In general we say that \(\Lambda \mathrm{sr}(\eta_1; R, \Delta) \leq k - 1\) if \(\mathrm{sr}(R_1) \leq k - 1\) and for any right unimodular \(x_{-k}, \ldots, x_{-1}, x_1, \ldots, x_k\) with \(x_i \in R_1\) for \(i > 0\) and \(x_i \in R_{-1, 1}\) for \(i < 0\) (i.e. there are \(y_{-k}, \ldots, y_{-1}, y_1, \ldots, y_k\) such that \(\sum_i y_i x_i = e_1\)) there exists a matrix \(\{a_{i j} \in R_{1, -1}\}_{i = 1, j = -k}^{k, -1}\) such that \(a_{i j} = -\inv{a_{-j, -i}}\), \(a_{i, -i} \in \Delta^{\mathrm{ev}}\), and the sequence \(x_1 + \sum_{i = -k}^{-1} a_{1 i} x_i, \ldots, x_k + \sum_{i = -k}^{-1} a_{k i} x_i\) in \(R_1\) is right unimodular.

We need some basic properties of stable ranks and \(\Lambda\)-stable ranks. For ordinary stable ranks all proofs may be found in Bass's book \cite{Bass}, hence they will be omitted. In paper \cite{UnitStabBak1} all properties of the usual \(\Lambda\)-stable rank were proved, though we still have to modify them using our definition. Let \(\unit^-(R^{\mathrm{ev}}, \Delta^{\mathrm{ev}}) = \{g \in \unit(R^{\mathrm{ev}}, \Delta^{\mathrm{ev}}) \mid \alpha_{i j}(g) = 0 \text{ for } i < 0 < j\}\), it is isomorphic to a semi-direct product of \(\gl(n, R_1)\) and the abelian group \(\Delta^{\mathrm{ev}}_{+, -}\).

\begin{lemma}\label{EquivalentLSR}
The condition \(\Lambda\mathrm{sr}(\eta_1; R, \Delta) \leq n - 1\) is equivalent modulo \(\mathrm{sr}(R_1) \leq n - 1\) to the following: for every unimodular \(x \in R^{\mathrm{ev}}_{* 1}\) there exists \(g \in \unit^-(R^{\mathrm{ev}}, \Delta^{\mathrm{ev}})\) such that \(e_+gx\) is unimodular.
\end{lemma}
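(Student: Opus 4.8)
The plan is to reformulate both sides of the claimed equivalence in terms of the action of $\unit^-(R^{\mathrm{ev}}, \Delta^{\mathrm{ev}})$ on the standard column $R^{\mathrm{ev}}_{* 1} = e_{0'} R e_1$, and then to verify the two implications by unwinding definitions; throughout, everything is read modulo the standing hypothesis $\mathrm{sr}(R_1) \le n - 1$, which occurs on both sides.

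First I would set up the dictionary. Using the free orthogonal hyperbolic family, identify $R^{\mathrm{ev}}_{* 1} = e_{0'} R e_1$ with the set of pairs $(x_+, x_-)$, where $x_+ = (x_1, \dots, x_n) \in R_1^n$ and $x_- = (x_{-1}, \dots, x_{-n}) \in R_{-1, 1}^n$ are obtained by decomposing $x = e_+ x + e_- x$ and transporting each component $e_i x \in e_i R e_1$ to $R_1$ (for $i > 0$) or to $R_{-1, 1}$ (for $i < 0$) through the isomorphisms furnished by the matrix units $e_{i j}$; under this identification $x$ is unimodular exactly when $x_{-n}, \dots, x_{-1}, x_1, \dots, x_n$ is right unimodular in the sense used in the definition of $\Lambda\mathrm{sr}$, $e_+ x$ corresponds to $x_+$, and the $\gl(n, R_1)$-action on $e_+ R e_1 \cong R_1^n$ is the usual matrix-times-column action, so it preserves unimodularity. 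Next I would recall the semidirect product decomposition $\unit^-(R^{\mathrm{ev}}, \Delta^{\mathrm{ev}}) \cong \gl(n, R_1) \ltimes \Delta^{\mathrm{ev}}_{+, -}$: every $g$ factors as $g = h u$, with $\alpha(h) = p + \inv p^{-1}$ for some $p = e_+ \alpha(h) e_+ \in \gl(n, R_1)$ and $\alpha(u) = e_{0'} + A$ for a unique $A = e_+ A e_- \in \Delta^{\mathrm{ev}}_{+, -}$. Since $e_+ e_- = 0 = e_- e_+$ (so in particular $A^2 = 0$), a short computation then gives
\[e_+(g x) = p\bigl(x_+ + A x_-\bigr) .\]
Finally, by the explicit description of the matrix form parameter $\Delta^{\mathrm{ev}}$ recalled before the lemma, the assignment $A \mapsto (a_{i j})$ is a bijection from $\Delta^{\mathrm{ev}}_{+, -}$ onto the set of matrices $\{a_{i j} \in R_{1, -1}\}_{i = 1, j = -n}^{n, -1}$ satisfying $a_{i j} = -\inv{a_{-j, -i}}$ and $a_{i, -i} \in \Delta^{\mathrm{ev}}$, and under it $A x_- = \bigl(\sum_{i = -n}^{-1} a_{j i} x_i\bigr)_{j = 1}^n$, so $x_+ + A x_-$ is exactly the sequence $x_1 + \sum_{i < 0} a_{1 i} x_i, \dots, x_n + \sum_{i < 0} a_{n i} x_i$ appearing in the definition of $\Lambda\mathrm{sr}$.

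With the dictionary in place both directions are immediate. For $(\Rightarrow)$, assume $\Lambda\mathrm{sr}(\eta_1; R, \Delta) \le n - 1$ and take a unimodular $x$; applying the definition to the right unimodular sequence $x_{-n}, \dots, x_n$ yields a matrix $A = (a_{i j})$ with the stated constraints and with $x_+ + A x_-$ right unimodular, and then the transvection-type element $g \in \unit^-$ with $\alpha(g) = e_{0'} + A$ (i.e. $p = e_+$) satisfies $e_+ g x = x_+ + A x_-$, which is unimodular. For $(\Leftarrow)$, given a right unimodular $x_{-n}, \dots, x_n$, form the corresponding unimodular $x$, pick $g \in \unit^-$ with $e_+ g x$ unimodular, factor $g = h u$ as above, and conclude from $e_+ g x = p(x_+ + A x_-)$ with $p$ invertible that $x_+ + A x_-$ is unimodular; writing $A = (a_{i j})$ this is precisely the conclusion required for $\Lambda\mathrm{sr}(\eta_1; R, \Delta) \le n - 1$.

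I expect the only genuine work to lie in the first step: making the three identifications mutually compatible --- the column module $R^{\mathrm{ev}}_{* 1}$ with $2n$-tuples, the left-multiplication action of $\unit^-$ on it, and the explicit matrix form parameter with the data $(a_{i j})$ of the definition --- and in verifying the formula $e_+(g x) = p(x_+ + A x_-)$. All of this is mechanical once the Morita identification $R^{\mathrm{ev}} \cong \mat(n, R_{|1|})$, its form parameters, and the semidirect product description of $\unit^-(R^{\mathrm{ev}}, \Delta^{\mathrm{ev}})$ are imported from the preceding discussion; in particular, choosing the factorization $g = h u$ with the $\gl(n, R_1)$-factor on the left makes that factor act on the column by $p(-)$ directly, so that no conjugation bookkeeping (and no use of the closure property of $\Delta^{\mathrm{ev}}$) is needed.
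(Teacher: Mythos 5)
Your proposal is correct and follows exactly the route the paper intends: the paper's entire proof is the one-line observation that multiplication by $\gl(n, R_1)$ preserves unimodularity, and your writeup is a careful unwinding of precisely that, via the semidirect decomposition $\unit^-(R^{\mathrm{ev}}, \Delta^{\mathrm{ev}}) \cong \gl(n, R_1) \ltimes \Delta^{\mathrm{ev}}_{+,-}$ and the identification of $\Delta^{\mathrm{ev}}_{+,-}$ with the matrices $(a_{ij})$ in the definition of $\Lambda\mathrm{sr}$. The dictionary and the computation $e_+(gx) = p(x_+ + Ax_-)$ are the implicit content of the paper's ``clear''.
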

\begin{proof}
This is clear, since multiplication by elements from \(\gl(n, R_1)\) preserves unimodularity.
\end{proof}

\begin{lemma}\label{IncreasingSR}
If \(\mathrm{sr}(R_1) \leq n - 2\), then \(\mathrm{sr}(R_1) \leq n - 1\). Moreover, suppose that \(\mathrm{sr}(R_1) \leq m \leq n - 1\) and a sequence \(x_1, \ldots, x_n \in R_1\) is unimodular. Then there are \(a_1, \ldots, a_m \in R_1\) such that \(x_1 + a_1 x_n, \ldots, x_m + a_m x_n, x_{m + 1}, \ldots, x_{n - 1}\) is unimodular.
\end{lemma}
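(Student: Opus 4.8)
The plan is to derive both assertions directly from the definition of the stable rank of the unital ring \(R_1 = e_1 R e_1\), with no induction and no input from linear \(K\)-theory; in fact the first assertion will drop out as the special case \(m = n-2\) of the second.

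For the second assertion I would argue as follows. Let \(x_1, \ldots, x_n \in R_1\) be a right unimodular sequence, fix \(y_i \in R_1\) with \(\sum_{i=1}^n y_i x_i = 1\), and set \(t = \sum_{i=m+1}^n y_i x_i \in R_1\). Then \(\sum_{i=1}^m y_i x_i + t = 1\), so the length-\((m+1)\) sequence \(x_1, \ldots, x_m, t\) is right unimodular, and \(\mathrm{sr}(R_1) \le m\) produces \(a_1, \ldots, a_m \in R_1\) with \(x_1 + a_1 t, \ldots, x_m + a_m t\) right unimodular; appending \(x_{m+1}, \ldots, x_{n-1}\) keeps it right unimodular. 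It remains to trade the perturbation \(a_i t\) for one by \(x_n\) only: writing \(a_i t = (a_i y_n) x_n + \sum_{j=m+1}^{n-1} (a_i y_j) x_j\), the tail sum lies in the left ideal \(R_1 x_{m+1} + \cdots + R_1 x_{n-1}\), so replacing each \(x_i + a_i t\) by \(x_i + (a_i y_n) x_n\) leaves the left ideal generated by the whole \((n-1)\)-term sequence unchanged. Hence \(x_1 + (a_1 y_n) x_n, \ldots, x_m + (a_m y_n) x_n, x_{m+1}, \ldots, x_{n-1}\) is right unimodular, which is the claim with \(a_i y_n\) renamed \(a_i\).

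For the first assertion I would apply the second with \(m = n-2\) (legitimate since \(\mathrm{sr}(R_1) \le n-2\) and \(n-2 \le n-1\)): every right unimodular \(x_1, \ldots, x_n\) then admits \(a_1, \ldots, a_{n-2}\) with \(x_1 + a_1 x_n, \ldots, x_{n-2} + a_{n-2} x_n, x_{n-1}\) right unimodular, and viewing \(x_{n-1}\) as \(x_{n-1} + 0\cdot x_n\) this is exactly the shortening required by \(\mathrm{sr}(R_1) \le n-1\).

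I do not anticipate a genuine obstacle: this is essentially the classical Bass monotonicity argument (see \cite{Bass}). The only step that needs care is the noncommutative bookkeeping — verifying that the cross terms \((a_i y_j) x_j\) with \(m < j < n\) already belong to the left ideal generated by \(x_{m+1}, \ldots, x_{n-1}\) and may therefore be discarded without affecting unimodularity.
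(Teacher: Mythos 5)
Your proof is correct: the reduction to the length-\((m+1)\) sequence \(x_1, \ldots, x_m, t\) with \(t = \sum_{i=m+1}^n y_i x_i\), the shortening via \(\mathrm{sr}(R_1) \le m\), and the observation that the discarded cross terms \(\sum_{j=m+1}^{n-1}(a_i y_j)x_j\) lie in the left ideal generated by the retained entries \(x_{m+1}, \ldots, x_{n-1}\) (so the left ideal generated by the full \((n-1)\)-term sequence is unchanged) is exactly the classical Bass--Vaserstein argument. The paper omits this proof entirely, deferring to \cite{Bass}, so your write-up in fact supplies the standard argument the author had in mind; there is nothing to correct.
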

\begin{proof}
Omitted.
\end{proof}

The next lemma shows that the number \(\Lambda\mathrm{sr}(\eta_1; R, \Delta)\) in \(\{0, 1, \ldots, \infty\}\) is well-defined (for \(\mathrm{sr}(R_1)\) this follows from the previous lemma).

\begin{lemma}\label{IncreasingLSR}
If \(\Lambda\mathrm{sr}(\eta_1; R, \Delta) \leq n - 2\), then \(\Lambda\mathrm{sr}(\eta_1; R, \Delta) \leq n - 1\).
\end{lemma}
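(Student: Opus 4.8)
The plan is to verify the two halves of the definition of $\Lambda\mathrm{sr}(\eta_1; R, \Delta) \le n - 1$ separately. The inequality $\mathrm{sr}(R_1) \le n - 1$ is immediate from Lemma \ref{IncreasingSR}, since $\Lambda\mathrm{sr}(\eta_1; R, \Delta) \le n - 2$ already includes $\mathrm{sr}(R_1) \le n - 2$. For the rest I would argue through the reformulation of Lemma \ref{EquivalentLSR}: it suffices to show that for every unimodular $x \in R^{\mathrm{ev}}_{* 1}$ there is $g \in \unit^-(R^{\mathrm{ev}}, \Delta^{\mathrm{ev}})$ with $e_+ g x$ unimodular. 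I would simultaneously apply Lemma \ref{EquivalentLSR} to the rank-$(n - 1)$ odd form subalgebra $(R_{|n|'}, \Delta_{|n|'}^{|n|'})$, whose even part is the corner $\mat(n - 1, R_{|1|}) \subseteq \mat(n, R_{|1|}) = R^{\mathrm{ev}}$ obtained by deleting the indices $\pm n$ (with the same associated form parameter on $R_{|1|}$); this turns the hypothesis $\Lambda\mathrm{sr}(\eta_1; R, \Delta) \le n - 2$ into the statement that the positive part of any unimodular column over that corner can be made unimodular by an element of the corresponding $\unit^-$, which acts trivially on the $\pm n$ coordinates.

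Given unimodular $x = (x_{-n}, \dots, x_{-1}, x_1, \dots, x_n)$ with $\sum_i y_i x_i = e_1$, I would reduce to the rank-$(n - 1)$ situation by a two-step argument in the spirit of \cite{UnitStabBak1}. First, working in the quotient of $R_1$ by the left ideal $\sum_{i < 0} R_{1, -1} x_i$ and using $\mathrm{sr}(R_1) \le n - 1$, one finds an elementary matrix in $\gl(n, R_1) \subseteq \unit^-(R^{\mathrm{ev}}, \Delta^{\mathrm{ev}})$ --- adding $R_1$-multiples of $x_n$ to $x_1, \dots, x_{n-1}$ --- after which $x_1, \dots, x_{n-1}$ together with $x_{-1}, \dots, x_{-n}$ already span $e_1$. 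Second, one applies a form-parameter transvection from $\unit^-(R^{\mathrm{ev}}, \Delta^{\mathrm{ev}})$ involving the last hyperbolic plane --- adding suitable $R_{1,-1}$-multiples of $x_{-n}$ to $x_1, \dots, x_{n-1}$ and the forced anti-hermitian compensations to $x_n$ --- to absorb the contribution of $x_{-n}$; here $x_n$ is merely slack, since only $x_1, \dots, x_{n-1}$ need to come out unimodular. Making this second step possible requires the relevant coefficient tuple in $R_1$ to be right unimodular, which one arranges by a further use of $\mathrm{sr}(R_1) \le n - 1$. After these modifications $(x_{-(n-1)}, \dots, x_{-1}, x_1, \dots, x_{n-1})$ is unimodular in the sense required for the rank-$(n-1)$ condition; applying that condition yields an element of the smaller $\unit^-$ making $(x_1, \dots, x_{n-1})$ unimodular in $R_1$, and appending $x_n$ shows that $e_+ g x$ is unimodular for the composite $g$.

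I expect the main obstacle to be the hermitian bookkeeping in the second step. The coefficients that get added must assemble into a genuine element of $\unit^-(R^{\mathrm{ev}}, \Delta^{\mathrm{ev}})$: the matrix of added coefficients has to be anti-hermitian, and its diagonal entries must lie in $\Delta^{\mathrm{ev}}$, which through the correspondence between form parameters on $R_{|1|}$ and on $\mat(n, R_{|1|})$ recalled before Lemma \ref{EquivalentLSR} is a form-parameter constraint on the corresponding entries of $R_{1, -1}$. In Bak's original treatment one has an honest split hyperbolic plane --- elements $e_{-1, 1}$ and $e_{1, -1}$ --- which lets one transport the negative coordinates into $R_1$ and makes this step routine; in our generality $x_{-n}$ genuinely lies in $R_{-1, 1}$, so the reduction and the verification that the constructed transvection actually lies in $\unit^-(R^{\mathrm{ev}}, \Delta^{\mathrm{ev}})$ must be carried out without that transport. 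The ordinary stable-range input $\mathrm{sr}(R_1) \le n - 1$ from Lemma \ref{IncreasingSR} and the reformulation of Lemma \ref{EquivalentLSR} are exactly what make the first step and the final descent go through.
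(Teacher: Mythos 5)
Your plan is correct in substance but organizes the key step differently from the paper. Both arguments share the same ingredients: \(\mathrm{sr}(R_1)\le n-1\) via Lemma \ref{IncreasingSR}, transport of the negative coordinates into \(R_1\) by a left inverse \(y\) of \(x\) so that the ordinary stable range can absorb the \(\pm n\) entries, and the reformulation of Lemma \ref{EquivalentLSR}. The difference is in how the coordinate \(x_{-n}\) is disposed of. You absorb it by an explicit anti-hermitian transvection with entries of the form \(z'_j\,y e_{-n}\) (zero diagonal, compensations dumped into the slack coordinate \(x_n\)) and then invoke the rank-\((n-1)\) \(\Lambda\)-condition \emph{once} on the corner \((R_{|n|'},\Delta_{|n|'}^{|n|'})\). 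The paper never turns those stable-range corrections into a group element at all: it uses them only to certify that the window \(\pm1,\dots,\pm(n-1)\) is unimodular, applies the \(\Lambda\)-condition there to get a first matrix \(a\), observes that the resulting sequence \((e_{-n}x, X_1,\dots,X_{n-1},e_nx)\) is unimodular, absorbs \(X_1\) by a \(\gl(n,R_1)\)-operation, and applies the \(\Lambda\)-condition a \emph{second} time on the shifted window \(\pm2,\dots,\pm n\). The price of your single application is exactly the bookkeeping you flag: the \(\gl(n,R_1)\)-correction (adding multiples of \(x_n\)) and the transvection involving \(x_{-n}\) do not commute, so if you fix all coefficients against the original column and only then compose the group elements, extra cross terms appear in the positive coordinates. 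These cross terms are left multiples of \(x_{-1},\dots,x_{-(n-1)}\), which remain in the window, so the window of the \emph{transformed} column generates the same left ideal and is still unimodular; the fix is to apply the corner version of Lemma \ref{EquivalentLSR} to the actually transformed column rather than to the formal target sequence, which is indeed the natural reading of your last step. With that reading your argument closes. The paper's window shift buys this commutativity for free (each \(\Lambda\)-application sees unchanged negative coordinates in its window) at the cost of a second use of the hypothesis.
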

\begin{proof}
Suppose that \(\Lambda\mathrm{sr}(\eta_1; R, \Delta) \leq n - 2\) and \(x \in R^{\mathrm{ev}}_{* 1}\) is unimodular, so there is \(y \in R^{\mathrm{ev}}_{1 *}\) such that \(yx = e_1\). In particular, the sequence \(y e_{-n} x, \ldots, y e_{-1} x, e_1 x, \ldots, e_{1 n} x\) in \(R_1\) is unimodular. By lemma \ref{IncreasingSR} there are \(z_1, \ldots, z_{n - 1}, z'_1, \ldots, z'_{n - 1} \in R_1\) such that \(e_{-1, 1 - n} x, \ldots, e_{-1} x, e_1 x + z_1 e_{1 n} x + z'_1 y e_{-n} x, \ldots, e_{1, n - 1} x + z_{n - 1} e_{1 n} x + z'_{n - 1} y e_{-n} x\) is unimodular. But now there is \(a \in (e_+ - e_n) \Delta^{\mathrm{ev}} (e_- - e_{-n})\) such that \(e_{-n} x, e_1 x + \sum_{i = -1}^{-n - 1} e_1 a e_i x, \ldots, e_{1, n - 1} x + \sum_{i = -1}^{-n - 1} e_{1, n - 1} a e_i x, e_n x\) is also unimodular. It follows that \(e_{-1, -n} x, \ldots, e_{-1, -2} x, e_{1 2} x + \sum_{i = -1}^{-n - 1} e_{12} a e_i x + w_2 X_1, \ldots, e_{1, n - 1} x + \sum_{i = -1}^{-n - 1} e_{1, n - 1} a e_i x + w_{n - 1} X_1, e_{1 n} x + w_n X_1\) is unimodular for suitable \(w_2, \ldots, w_n \in R_1\), where \(X_1 = e_1 x + \sum_{i = -1}^{-n - 1} e_1 a e_i x\). Applying our \(\Lambda\)-stable rank condition once again, we obtain the desired result.
\end{proof}

\begin{lemma}\label{FactorLSR}
If \((I, \Gamma) \leqt (R, \Delta)\) is an odd form ideal, then \(\mathrm{sr}((R / I)_1) \leq \mathrm{sr}(R_1)\) and \(\Lambda\mathrm{sr}(\eta_1; R / I, \Delta / \Gamma) \leq \Lambda\mathrm{sr}(\eta_1; R, \Delta)\).
\end{lemma}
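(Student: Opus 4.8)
The first inequality is the classical fact that stable rank does not increase under surjections (Bass, \cite{Bass}); I recall the mechanism because the second inequality imitates it. Here $I_1 = e_1 I e_1 \leqt R_1$ and $(R/I)_1 \cong R_1/I_1$. Given a right unimodular sequence of length $\mathrm{sr}(R_1)+1$ over $R_1/I_1$, lift it together with a witnessing row of coefficients to $R_1$, so that $\sum y_i x_i = e_1 - s$ with $s \in I_1$; then $x_1,\dots,x_{\mathrm{sr}(R_1)+1},s$ is right unimodular over $R_1$, hence can be shortened twice using $\mathrm{sr}(R_1)$, and reducing the result modulo $I_1$ makes the appended coordinate $s$ vanish, leaving the desired shortening over $R_1/I_1$. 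Thus $\mathrm{sr}((R/I)_1) \le \mathrm{sr}(R_1)$.

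For the second inequality it suffices to show that $\Lambda\mathrm{sr}(\eta_1; R, \Delta) \le k-1$ implies $\Lambda\mathrm{sr}(\eta_1; R/I, \Delta/\Gamma) \le k-1$, and then to take $k = \Lambda\mathrm{sr}(\eta_1; R, \Delta)+1$. The clause on $\mathrm{sr}$ built into the left side gives the corresponding clause on the right by the first part, so only the unimodularity clause remains. As this clause depends only on $\eta_1$, I may first enlarge the free orthogonal hyperbolic families on both sides (the factor-morphism preserves freeness, and free families can be enlarged) so that their rank is at least $k+1$, and then invoke the reformulation of Lemma \ref{EquivalentLSR}: the goal becomes that every right unimodular $\bar x \in (R/I)^{\mathrm{ev}}_{*1}$ for a rank-$k$ subfamily can be moved by some $\bar g \in \unit^-((R/I)^{\mathrm{ev}}, (\Delta/\Gamma)^{\mathrm{ev}})$ so that $e_+ \bar g \bar x$ is unimodular over $(R/I)_1$. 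Lift $\bar x$ to $x \in R^{\mathrm{ev}}_{*1}$; unimodularity of $\bar x$ gives a row $y$ with $y x = e_1 - s$ for some $s \in I_1$. Adjoining one more hyperbolic pair $\eta_{k+1}$, extend $x$ to a rank-$(k+1)$ column $\widetilde x$ whose last plane coordinate has zero negative part and positive part $s$; the coefficient $e_1$ on that new positive coordinate absorbs $s$, so $\widetilde x$ is unimodular over $\widetilde R^{\mathrm{ev}}$, and $\widetilde x$ reduces modulo $I$ to $\bar x$ with an appended zero pair.

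By monotonicity (Lemma \ref{IncreasingLSR}) the hypothesis holds at rank $k+1$, producing $\widetilde g \in \unit^-(\widetilde R^{\mathrm{ev}}, \widetilde \Delta^{\mathrm{ev}})$ with $e_+ \widetilde g \widetilde x$ unimodular over $R_1$. Reducing modulo $I$ yields $\overline{\widetilde g} \in \unit^-$ at rank $k+1$ over $R/I$ with $e_+ \overline{\widetilde g}\,\overline{\widetilde x}$ unimodular over $(R/I)_1$, where $\overline{\widetilde x}$ is $\bar x$ with a trailing zero pair; in particular the rank-$k$ part of the correction matrix produced by $\overline{\widetilde g}$ is already a genuine $\Lambda$-correction of $\bar x$ (its antisymmetry $a_{ij} = -\inv{a_{-j,-i}}$ and the form-parameter condition on its diagonal restrict from rank $k+1$), but it comes bundled with one surplus unimodular coordinate coming from the new plane's row. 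The remaining task is to discard this surplus coordinate while keeping the correction antisymmetric with diagonal in $(\Delta/\Gamma)^{\mathrm{ev}}$; this is carried out using $\mathrm{sr}((R/I)_1) \le k-1$, by the same type of corner/plane reduction that drives Lemma \ref{IncreasingLSR}: one transports the surplus coordinate into the span of the others by a $\gl$-move and transvections linking the new plane to the first $k$ planes — operations that preserve unimodularity of the positive part — thereby replacing $\overline{\widetilde g}$ by an element of $\unit^-$ fixing the $(k+1)$-st pair, after which restriction to rank $k$ gives the desired $\bar g$. This descent from rank $k+1$ to rank $k$, the only step that is not pure lifting and reduction modulo $I$, is the main obstacle, and I would organize the write-up so that it reuses the bookkeeping already developed for Lemma \ref{IncreasingLSR} rather than repeating it.
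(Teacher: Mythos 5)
Your first inequality is fine and is the standard lifting argument. The gap is in the second part, in the very step you flag as ``the main obstacle'': the descent from rank \(k+1\) back to rank \(k\) is not carried out, and as described it does not work. After reducing \(\widetilde g\) modulo \(I\) you hold a right unimodular sequence of length \(k+1\) whose first \(k\) entries have the required shape \(\bar x_i + \sum_{j}\bar a_{ij}\bar x_j\) and whose last entry \(\sum_j \bar a_{k+1,j}\bar x_j\) is surplus. Absorbing that surplus entry into the first \(k\) by a \(\gl\)-move replaces \(\bar a_{ij}\) by \(\bar a_{ij}+c_i\bar a_{k+1,j}\), and there is no reason for the new matrix to satisfy \(a'_{ij}=-\inv{a'_{-j,-i}}\) or \(a'_{i,-i}\in(\Delta/\Gamma)^{\mathrm{ev}}\): the rank-\((k+1)\) antisymmetry ties \(\bar a_{k+1,j}\) to the entries \(\bar a_{-j,-(k+1)}\), which multiply the zero coordinate \(\bar x_{-(k+1)}\) and are therefore invisible in the sequence, so they cannot be used to repair the constraint. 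Lemma \ref{IncreasingLSR} goes in the opposite direction (from shorter sequences to longer ones), so ``reusing its bookkeeping'' does not supply the descent; repairing this step would require a genuine argument that you have not given.

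The idea you are missing, which is the paper's entire proof, is that the defect \(s=e_1-yx\in I_1\) can be absorbed into the lift \emph{before} the \(\Lambda\)-stable-rank condition is invoked, so that no change of rank is ever needed. The \((2k+1)\)-term sequence \(ye_{-k}x,\ldots,ye_{-1}x,\,e_1x,\ldots,e_kx,\,s\) is right unimodular in \(R_1\), so Lemma \ref{IncreasingSR} (applicable since \(\mathrm{sr}(R_1)\leq k-1\) is part of the hypothesis) allows one to add right multiples of the last entry \(s\) to the positive components \(e_ix\) only and then discard \(s\); since \(s\in I_1\), the modified column \(\widetilde x'\) is still a lift of \(\bar x\) and is now genuinely right unimodular in \(R^{\mathrm{ev}}_{*1}\). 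One then applies the \(\Lambda\mathrm{sr}\) condition for \((R,\Delta)\) to \(\widetilde x'\) at the original rank and reduces the resulting correction matrix modulo \(I\); its antisymmetry and the condition \(a_{i,-i}\in\Delta^{\mathrm{ev}}\) pass to the quotient because \(\pi\) and \(\rho\) commute with the factor map. In short: the lemma reduces to ``unimodular columns lift to unimodular columns,'' and that lifting is achieved with the ordinary stable rank alone.
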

\begin{proof}
It suffices to show that any unimodular sequence in the factor-ring may be lifted into \(R_{|1|}\). If \(x \in (R / I)^{\mathrm{ev}}_{* 1}\) is unimodular, then in certainly may be lifted to \(\widetilde x \in R^{\mathrm{ev}}_{* 1}\) such that there is \(\widetilde y \in R^{\mathrm{ev}}_{1 *}\) with the property \(e_1 - \widetilde y \widetilde x \in I_1\). Let \(z = 1 - \widetilde y \widetilde x \in I_1\), then \(\widetilde y e_{-n} \widetilde x, \ldots, \widetilde y e_{-1} \widetilde x, e_1 \widetilde x, \ldots, e_n \widetilde x, z\) is unimodular sequence. By lemma \ref{IncreasingSR} we can modify \(e_+ \widetilde x\) in such a way that the new sequence \(\widetilde x'\) is still a lifting of \(x\) and is unimodular.
\end{proof}

\begin{lemma}\label{SemilocalLSR}
If \((R, \Lambda)\) is semi-local, then \(\Lambda \mathrm{sr}(\eta_1; R, \Delta) \leq 1\) (it equals \(0\) if and only if \(R_1 = 0\)).
\end{lemma}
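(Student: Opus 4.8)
The plan is to reduce to the classical unitary stable rank of the semi-local ring underlying the even part $(R^{\mathrm{ev}}, \Delta^{\mathrm{ev}})$, which is a ring with an ordinary form parameter, and then to quote the corresponding result of Bak. First, $R_1 = e_1 R e_1$ is a corner of the unital semi-local ring $R$, hence itself semi-local (indeed $R_1/e_1\Jac(R)e_1 = R_1/\Jac(R_1)$ is a corner of a semi-simple Artinian ring), so $\mathrm{sr}(R_1) \leq 1$ by Bass, see \cite{Bass}. The remaining assertion $\Lambda\mathrm{sr}(\eta_1; R, \Delta) \leq 1$ involves only the hermitian corner $R_{|1|}$, the idempotents $e_1, e_{-1}$, and the form parameter $\Lambda = \Delta^{\mathrm{ev}} \cap R_{|1|} \leq R_{|1|}$ attached to $\eta_1$: the elements $x_i$, $a_{ij}$ in the definition all live in $R_{|1|}$, and $a_{i,-i} \in \Delta^{\mathrm{ev}}$ is equivalent to $a_{i,-i} \in \Lambda$. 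In particular this condition does not change if we replace $(R, \Delta)$ by any odd form algebra with a free orthogonal hyperbolic family of rank $2$ inducing the same $\eta_1$ (for instance the one obtained from the hyperbolic module over $(R_{|1|}, \Lambda)$), which is again semi-local. So I may assume $n = 2$.

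With $n = 2$ we have $R^{\mathrm{ev}} \cong \mat(2, R_{|1|})$, a semi-local ring, carrying the induced form parameter $\Delta^{\mathrm{ev}}$, and by Lemma~\ref{EquivalentLSR} it suffices to show that for every unimodular $x \in R^{\mathrm{ev}}_{* 1}$ there is $g \in \unit^-(R^{\mathrm{ev}}, \Delta^{\mathrm{ev}})$ with $e_+ g x$ unimodular. This is exactly the statement that the classical $\Lambda$-stable rank of $(R^{\mathrm{ev}}, \Delta^{\mathrm{ev}})$ is at most $1$, proved for semi-local rings in \cite{UnitStabBak1}. For a self-contained argument one reduces further: passing to $(R/\Jac(R), \Delta/\Gamma_{\mathrm{max}})$, which is semi-simple and inherits the hyperbolic family, an element of a corner is unimodular iff its image modulo $\Jac(R)$ is (because $1 + \Jac(R)$ consists of units), and a suitable $g$ lifts back since $\unit^-(R^{\mathrm{ev}}, \Delta^{\mathrm{ev}})$ is an extension of $\gl(2, R_1)$ by the abelian group $\Delta^{\mathrm{ev}}_{+,-}$, both of which lift; so one is reduced to $(R, \Delta)$ semi-simple, hence (since the data $x$ and $g$ decompose coordinatewise) to a simple Artinian factor, i.e. to matrices over a division ring equipped with a form parameter, where the claim is elementary.

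Finally, $\Lambda\mathrm{sr}(\eta_1; R, \Delta) = 0$ forces $\mathrm{sr}(R_1) \leq 0$, hence $R_1 = 0$; conversely if $R_1 = 0$ then $e_1 = e_1^2 \in R_1 = 0$, so $\eta_1$ is trivial, all sequences occurring in the definition are empty, and $\Lambda\mathrm{sr}(\eta_1; R, \Delta) = 0$. I expect the only real obstacle to be the simple Artinian base case in the second paragraph, where the constraint $a_{i,-i} \in \Delta^{\mathrm{ev}}$ genuinely uses the structure of the form parameter and cannot be bypassed by invoking ordinary stable rank alone; everything else — the passage to $n = 2$, the reduction modulo the radical, the splitting into simple factors, and the equality condition — is formal bookkeeping.
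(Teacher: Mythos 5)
Your reduction steps are sound and in fact mirror the paper's own proof: the condition \(\Lambda\mathrm{sr}(\eta_1; R, \Delta) \leq 1\) is intrinsic to \((R_{|1|}, e_{\pm 1}, \Delta^{\mathrm{ev}} \cap R_{1,-1})\), so one may pass to a family of rank \(\geq 2\), then reduce modulo the Jacobson radical and split into simple factors. The gap is that the simple Artinian base case, which you defer either to \cite{UnitStabBak1} or to ``the claim is elementary,'' is precisely the entire content of the paper's proof and is not supplied. Quoting \cite{UnitStabBak1} is not automatic here: the paper's \(\Lambda\)-stable rank is defined without assuming that \(e_1\) and \(e_{-1}\) are linked by elements \(e_{1,-1}, e_{-1,1}\) (that is exactly the situation in which, as Section~9 says, ``we may use the standard definition''), and the paper explicitly warns that Bak's properties ``still have to be modified using our definition.'' Your rank-\(2\) model built from \((R_{|1|}, \Lambda)\) carries over the corner \(R_{|1|}\) as is and does not create such linking elements, so the classical statement does not directly apply.

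Moreover, your diagnosis of where the difficulty lies points in the wrong direction. After Lemma~\ref{EquivalentLSR} the form-parameter constraint is subsumed in the requirement \(g \in \unit^-(R^{\mathrm{ev}}, \Delta^{\mathrm{ev}})\), and the paper produces \(g\) inside the subgroup \(\langle T_{ij}(R_{ij}) \mid ij > 0 \text{ or } i > 0 > j,\ i \neq \pm j\rangle\) of that group, so the condition \(a_{i,-i} \in \Delta^{\mathrm{ev}}\) never has to be checked by hand; the form parameter is bypassed rather than being the obstacle. The real work is a rank argument over the division ring \(D\) with \(R_1 \cong \mat(k, D)\): choose \(g\) maximizing the \(D\)-rank of \(e_+ g x\); if that rank is less than \(k\), Gaussian elimination concentrates \(e_+ hgx\) into the \(e_1\)-component, a further product of transvections \(T_{i2}\) makes \((e_{-2} + e_+)h'hgx\) of strictly larger rank, and then a single \(T_{1,-2}(ts)\), with \(t \neq 0\) an idempotent annihilating \(e_1 h'hgx\) and \(s \in R_{1,-2}\) chosen so that \(ts\, e_{-2} h'hgx \notin R_1 e_1 h'hgx\), increases the rank of the \(e_+\)-part --- a contradiction. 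You would need to supply this (or an equivalent) argument for the proof to be complete; the remaining bookkeeping, including the equivalence with \(R_1 = 0\), is fine.
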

\begin{proof}
We may assume that \((R, \Delta)\) has a free orthogonal hyperbolic family of rank \(n \geq 2\) (actually, we will prove that \(\Lambda\mathrm{sr}(\eta_1; R, \Delta) \leq n - 1\) for all such \(n\)). Let \(x \in R^{\mathrm{ev}}_{* 1}\) be a unimodular sequence. We will prove that there is \(g \in \langle T_{i j}(R_{i j}) \mid ij > 0 \text{ or } i > 0 > j; i \neq \pm j \rangle\) such that \(e_+ gx\) is unimodular, and this clearly may be checked modulo the Jacobson radical. Hence we can assume that \(R = R^{\mathrm{ev}}\), \(\Delta = \Delta^{\mathrm{ev}}\), and \(R\) is a semi-simple ring. Since our property may be checked on each simple factor of \(R\) separately, without loss of generality \(R\) is simple, \(R_1 \cong \mat(k, D)\) for \(k > 0\) and some division ring \(D\).

Clearly, there is such \(g\) that \(e_+ gx\) has the maximal possible rank as a matrix over \(D\). If this rank equals \(k\), there is nothing to prove. Suppose that this is not the case. By Gauss's elimination, there is \(h \in \langle T_{i j}(R_{i j}) \mid i, j > 0 \rangle\) such that \(e_1 hgx = e_+ hgx\). By the same argument, there is \(h' \in \langle T_{i 2}(R_{i 2}) \mid 0 < i \neq 2 \rangle\) such that \((e_{-2} + e_+) h'hgx\) has rank larger than the one of \(e_+ h'hgx = e_+ hgx\). Since \(e_1 h'hgx\) is not invertible, there is an idempotent \(0 \neq t \in R_1\) such that \(t e_1 h'hgx = 0\). Also, there is \(s \in R_{1, -2}\) such that \(ts e_{-2} h'hgx \notin R_1 e_1 h'hgx\). Hence \(e_+ T_{1, -2}(ts) h'hgx\) has the rank larger than the one of \(e_+ x\), a contradiction.
\end{proof}

\begin{lemma}\label{LimitLSR}
If \((R, \Delta) = \varinjlim_i (R_i, \Delta_i)\) is a direct limit of odd form rings with a common free orthogonal hyperbolic family, then \(\mathrm{sr}(R_1) \leq \liminf_i \mathrm{sr}((R_i)_1)\) and \(\Lambda\mathrm{sr}(\eta_1; R, \Delta) \leq \liminf_i \Lambda\mathrm{sr}(\eta_1; R_i, \Delta_i)\).
\end{lemma}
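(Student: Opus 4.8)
The plan is the standard direct-limit (``finiteness'') argument: any single instance of a stable rank condition over $R$ mentions only finitely many elements and relations, hence descends to one of the $(R_i,\Delta_i)$, is resolved there, and the resolution is transported back along $R_i\rar R$. For the ordinary stable rank the bound $\mathrm{sr}(R_1)\leq\liminf_i\mathrm{sr}((R_i)_1)$ is Bass's classical result (\cite{Bass}), which I would simply quote. Since $\mathrm{sr}((R_i)_1)\leq\Lambda\mathrm{sr}(\eta_1;R_i,\Delta_i)$ holds by definition for each $i$, this already gives $\mathrm{sr}(R_1)\leq d$ where $d:=\liminf_i\Lambda\mathrm{sr}(\eta_1;R_i,\Delta_i)$; we may assume $d<\infty$, so it remains to verify the defining condition of $\Lambda\mathrm{sr}(\eta_1;R,\Delta)\leq d$. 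I would also record the elementary index bookkeeping: as $\Lambda\mathrm{sr}$ is $\{0,1,\dots,\infty\}$-valued and $I$ is directed, $\liminf_i\Lambda\mathrm{sr}(\eta_1;R_i,\Delta_i)=d$ means that for every $i\in I$ there is $j\geq i$ with $\Lambda\mathrm{sr}(\eta_1;R_j,\Delta_j)\leq d$ (an infimum of integers bounded by $d$ is attained).

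Next I would note that everything occurring in the $\Lambda$-stable rank criterion commutes with the filtered colimit, because the family $\eta_1,\dots,\eta_n$ is the common one and the transition maps preserve it: the corners $e_1Re_1$, $e_{\pm 1}Re_1$, $R^{\mathrm{ev}}=e_{0'}Re_{0'}$ commute with $\varinjlim$ since the idempotents $e_i$ lie in every $R_i$; the even odd form parameter $\Delta^{\mathrm{ev}}=\rho(\Ker(\pi|_{\Delta_{0'}}))$ does because $\pi,\rho$ are preserved by the transition maps and filtered colimits are exact; and the groups $\unit(R^{\mathrm{ev}},\Delta^{\mathrm{ev}})$ and $\unit^-(R^{\mathrm{ev}},\Delta^{\mathrm{ev}})$ do because membership is cut out by finitely many equations ($\alpha^{-1}=\inv{\alpha}$, $\pi(\gamma)=\beta$, $\rho(\gamma)=\inv{\beta}$, and vanishing of the appropriate $\alpha_{ij}$). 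Now use Lemma \ref{EquivalentLSR}: let $x\in R^{\mathrm{ev}}_{*1}$ be unimodular, say $yx=e_1$ with $y\in R^{\mathrm{ev}}_{1*}$. Both $x$ and $y$ arise from a common index $i_0$, the relation $yx=e_1$ already holds in $R_{j'}$ for some $j'\geq i_0$, and by the bookkeeping fact we may pick $j\geq j'$ with $\Lambda\mathrm{sr}(\eta_1;R_j,\Delta_j)\leq d$. Then the image of $x$ is unimodular in $(R_j)^{\mathrm{ev}}_{*1}$, so Lemma \ref{EquivalentLSR} over $(R_j,\Delta_j)$ yields $g_j\in\unit^-((R_j)^{\mathrm{ev}},(\Delta_j)^{\mathrm{ev}})$ with $e_+g_jx$ unimodular; transporting $g_j$ along $R_j\rar R$ gives, by the colimit-compatibility just noted, $g\in\unit^-(R^{\mathrm{ev}},\Delta^{\mathrm{ev}})$, and $e_+gx$ stays unimodular because a homomorphism fixing $e_1$ carries unimodular sequences to unimodular sequences. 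By Lemma \ref{EquivalentLSR} again, $\Lambda\mathrm{sr}(\eta_1;R,\Delta)\leq d$; running the same argument with all form data suppressed re-proves the $\mathrm{sr}$ statement directly if one does not wish to invoke \cite{Bass}.

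I do not anticipate a real obstacle here: the proof is carried entirely by the two soft inputs above — the colimit-commutation of corners, unitary groups, and the even odd form parameter, and the order-theoretic behaviour of $\liminf$ over a directed set for $\{0,1,\dots,\infty\}$-valued quantities. The one point deserving care is that Lemma \ref{EquivalentLSR} is an equivalence only \emph{modulo} the hypothesis $\mathrm{sr}(R_1)\leq d$, which is why the ordinary stable rank bound for $R$ must be established (via \cite{Bass}, or via the suppressed-data version of the above) before the criterion can be applied.
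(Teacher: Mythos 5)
Your proposal is correct and is essentially the paper's own (two-sentence) proof, expanded: the paper likewise picks a cofinal subfamily realizing the liminf and observes that every unimodular sequence, being finite data, descends to some member of that subfamily, where the condition is resolved and then pushed forward along the structure maps. The only cosmetic caveat is that Lemma \ref{EquivalentLSR} is stated for the specific bound \(n-1\) tied to the rank of the orthogonal hyperbolic family, so for a general value \(d\) of the liminf you should run the identical descent argument directly on the raw definition of \(\Lambda\mathrm{sr}\) (the finite sequence \(x_{-k},\ldots,x_k\) and the matrix \(\{a_{ij}\}\), all of which come from and return to a finite level just as readily), which is what the paper implicitly does.
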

\begin{proof}
Let \((R_{i_s}, \Delta_{i_s})\) be a cofinal family such that
\[\Lambda\mathrm{sr}(\eta_1; R_{i_s}, \Delta_{i_s}) \leq \liminf_i \Lambda\mathrm{sr}(\eta_1; R_i, \Delta_i)\]
(if the lower limit is finite, then we may choose such a family that equalities hold). Now every unimodular sequence in \(R_{* 1}\) comes from some \((R_{i_s})_{* 1}\), hence \(\Lambda \mathrm{sr}(\eta_1; R_{i_s}, \Delta_{i_s}) \leq \liminf_i \Lambda\mathrm{sr}(\eta_1; R_i, \Delta_i)\) implies the required condition on this sequence.
\end{proof}

\section{Stable rank and dimension}

We say that a commutative ring \(K\) has Bass --- Serre dimension at most \(d\) and write \(\mathrm{BS}(K) \leq d\) if the space \(\mathrm{Max}(K)\) of maximal ideals of \(K\) with the Zariski topology may be decomposed into a finite union of irreducible Noetherian subspaces of dimension at most \(d\) (for example, it is true if \(\dim(K) \leq d\), where \(\dim(K)\) is the Krull dimension). Condition \(\mathrm{BS}(K) \leq 0\) is equivalent to the semi-locality of \(K\). We also set \(\mathrm{BS}(0) = -\infty\).

In this section we are going to bound \(\Lambda\mathrm{sr}(\eta_1; R, \Delta)\) by the Bass --- Serre dimension of \(K\) if \((R, \Delta)\) is quasi-finite. The letter \(d\) also means the bound on the Bass --- Serre dimension from the above, it takes values in \(\{-\infty, 0, 1, \ldots, +\infty\}\). Also, \(d + 1\) and \(d + 2\) mean ordinary sums if \(d \geq 0\), but they mean \(0\) and \(1\) correspondingly if \(d = -\infty\) (in other words, the dimension \(-\infty\) behaves like \(-1\) under the increment). Let us start with a couple of algebraic lemmas.

\begin{lemma}\label{UnimodularityLocus}
Let \((R, \Delta)\) be a finite odd form \(K\)-algebra with an orthogonal hyperbolic family of rank \(n \geq 1\) and \(x \in R_{0' 1}\). Then there is a Zariski open subset \(U \subseteq \mathrm{Spec}(K)\) such that for every commutative \(K\)-algebra \(K'\) the element \(x\) is right unimodular in \(R \otimes K'\) if and only if the scheme morphism \(\mathrm{Spec}(K') \rar \mathrm{Spec}(K)\) factors through \(U\).
\end{lemma}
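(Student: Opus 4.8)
The plan is to exhibit $U$ as a principal open set (or a finite union of such) cut out by the determinantal data attached to the element $x$, and then to verify that unimodularity of $x$ is a Zariski-local condition that is detected by membership in $U$. Since $(R,\Delta)$ is a finite odd form $K$-algebra, the ring $R$ is a finitely generated $K$-module, and the same is true of the unital $K$-algebra $R \rtimes K$; the idempotents $e_i$ and the identities $e_{|i|} R e_{|j|} \cong \mat(1, R_{|1|})$-type data furnish, after choosing a free orthogonal hyperbolic family if necessary, a description of $R_{0'1} = e_{0'} R e_1$ as a finitely generated $K$-module. First I would reduce to a generating-set statement: fix a finite generating set $r_1, \dots, r_m$ of the left $(R\rtimes K)$-module $R_{0'1}$ over $K$ (for instance the images $e_{0'} \rho(q_j \cdot e_1)$ together with a finite $K$-module basis of the relevant homogeneous components). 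Right unimodularity of $x \in R_{0'1}$ means there exists $y \in R_{1 0'}$ with $yx = e_1$; equivalently, the left ideal $R_{1 0'} \cdot x \subseteq R_1$ is all of $R_1$, which — because $R_1$ is a finite $K$-module with identity $e_1$ — is equivalent to $e_1 \in (R_{1 0'} x + \mathfrak m R_1)$ failing for no maximal ideal, i.e. to surjectivity of the $K$-linear map $R_{1 0'} \to R_1,\ z \mapsto zx$.

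Next I would make the key observation that this last map is a map of finitely generated $K$-modules whose formation commutes with base change $K \to K'$: indeed $(R \otimes_K K')_{1 0'} = R_{1 0'} \otimes_K K'$ and likewise for $R_1$, because $e_i$ are idempotents and tensoring is right exact, and the multiplication-by-$x$ map base-changes to multiplication-by-$(x\otimes 1)$. So $x$ is right unimodular in $R \otimes_K K'$ if and only if the base-changed map $\mu_x \otimes_K K'$ is surjective. By Nakayama and the standard openness of the surjective-locus of a morphism between finitely generated modules (the cokernel $\operatorname{coker}(\mu_x)$ is a finite $K$-module, and its support is Zariski closed in $\mathrm{Spec}(K)$), the set
\[
U = \mathrm{Spec}(K) \setminus \operatorname{Supp}\bigl(\operatorname{coker}(\mu_x)\bigr)
\]
is open, and for any $K$-algebra $K'$ one has $\operatorname{coker}(\mu_x) \otimes_K K' = 0$ if and only if $\mathrm{Spec}(K') \to \mathrm{Spec}(K)$ lands in $U$ — this is exactly the assertion that the support of a finitely generated module pulls back to the support of its base change, which holds for finitely generated modules. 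Spelling this out: $\operatorname{coker}(\mu_x)\otimes_K K' = 0$ iff for every prime $\mathfrak q$ of $K'$ lying over $\mathfrak p$ the localization $\operatorname{coker}(\mu_x)_{\mathfrak p} \otimes \kappa(\mathfrak q) = 0$, which by Nakayama over the local ring is equivalent to $\mathfrak p \in U$.

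The one point that needs care — and which I expect to be the main (minor) obstacle — is matching ``right unimodular in $R \otimes K'$'' with surjectivity of $\mu_x \otimes K'$ rather than with the a priori weaker pointwise condition: one must check that $e_1 \otimes 1$ lies in the image of $\mu_x \otimes K'$ globally, not just after further localization. But this is immediate because $\operatorname{coker}(\mu_x \otimes K')$ is the cokernel of a map of $K'$-modules and it vanishes precisely when $\mu_x \otimes K'$ is onto, with no local-global gap. A secondary bookkeeping issue is that the statement is phrased for an orthogonal hyperbolic family of rank $n \geq 1$ that need not be free; but $R_{0'1} = e_{0'}Re_1$ and $R_1 = e_1 R e_1$ make sense regardless of freeness, the idempotents $e_{0'}$ and $e_1$ are genuine idempotents in $R \rtimes K$, and finiteness of $(R,\Delta)$ gives finiteness of these $K$-modules, so the argument above goes through verbatim. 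Thus $U$ as defined is the required open subset, and the ``if and only if'' is exactly the base-change compatibility of the support of $\operatorname{coker}(\mu_x)$.
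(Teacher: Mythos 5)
Your proposal is correct and follows essentially the same route as the paper: both reduce right unimodularity of \(x\) to surjectivity of the \(K\)-module map \(R_{1 0'} \rar R_1,\ y \mapsto yx\), and then appeal to the openness and base-change compatibility of the surjectivity locus of a morphism onto a finite-type module. The only difference is that the paper cites this last fact as a general scheme-theoretic result, whereas you prove it directly via the support of \(\operatorname{coker}(\mu_x)\), which is a perfectly adequate substitute.
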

\begin{proof}
Note that \(x\) is unimodular in \(R\) if and only if the homomorphism of \(K\)-modules \(R_{1 0'} \rar R_1, y \mapsto yx\) is surjective. Now the lemma follows from the general scheme-theoretical result: if \(S\) is any scheme, \(\mathcal F\) and \(\mathcal G\) are quasi-coherent sheaves of \(\mathcal O_S\)-modules, \(f \colon \mathcal F \rar \mathcal G\) is a module sheaf morphism, and \(\mathcal G\) is of finite type, then there is a Zariski open subset \(U \subseteq S\) such that for every scheme morphism \(\varphi \colon T \rar S\) the module sheaf morphism \(\varphi^*(f) \colon \varphi^*(\mathcal F) \rar \varphi^*(\mathcal G)\) is an epimorphism if and only if \(\varphi\) factors through \(U\).
\end{proof}

\begin{lemma}\label{AdvancedLifting}
Let \((R, \Delta)\) be a finite odd form \(K\)-algebra with an orthogonal hyperbolic family of rank \(n \geq 1\), \(x \in R_{0' 1}\), \(\eps = \sum_{i \in I} e_i\) for some \(I \subseteq \{-n, \ldots, -1, 1, \ldots, n\}\), \(\mathfrak p \leqt K\) be a prime ideal, \(\mathfrak a \not \leq \mathfrak p\) be an ideal of \(K\). Suppose that \(\eps g x\) is right unimodular in \(R \otimes \kappa(\mathfrak p)\) for some \(g \in \eunit(R^{\mathrm{ev}} \otimes \kappa(\mathfrak p), \Delta^{\mathrm{ev}} \otimes \kappa(\mathfrak p))\). Then there is \(\widetilde g \in \eunit(R^{\mathrm{ev}}, \Delta^{\mathrm{ev}})\) such that \(\widetilde g\) and \(g\) are products of elementary transvections with the same indices in the same order, \(\beta(\widetilde g) \in R \mathfrak a\), and \(\eps \widetilde g x\) is right unimodular in \(R \otimes \kappa(\mathfrak p)\).
\end{lemma}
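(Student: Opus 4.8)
The plan is to exploit that right unimodularity of $\eps g x$ in $R\otimes_K\kappa(\mathfrak p)$ is a \emph{Zariski-open} condition on the parameters of the elementary transvections occurring in $g$, and that replacing those parameters by elements of $R\mathfrak a$ does not destroy unimodularity modulo $\mathfrak p$ because an element of $\mathfrak a\setminus\mathfrak p$ is invertible in $\kappa(\mathfrak p)$, so the $\mathfrak a$-scaled parameter modules still reduce \emph{densely} into the residue-field parameter spaces. First I would fix $a\in\mathfrak a\setminus\mathfrak p$ (possible since $\mathfrak a\not\leq\mathfrak p$), whose image $\bar a\in\kappa(\mathfrak p)$ is a unit, and write $g$ as a word $\tau_m(\bar c_m)\cdots\tau_1(\bar c_1)$ in the standard elementary transvection maps of $(R^{\mathrm{ev}},\Delta^{\mathrm{ev}})\otimes_K\kappa(\mathfrak p)$ (short, long, ultrashort), with index data $i_s,j_s$ or $k_s$ and parameters $\bar c_s$ ranging over the finite-dimensional $\kappa(\mathfrak p)$-vector spaces $V_s$ (namely $R^{\mathrm{ev}}_{i_sj_s}\otimes_K\kappa(\mathfrak p)$ for a short or long slot, and the relevant piece of $\Delta^{\mathrm{ev}}\otimes_K\kappa(\mathfrak p)$ for an ultrashort slot); these are finite-dimensional because $(R,\Delta)$ is a finite odd form $K$-algebra.

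Next I would regard $w(c)=\tau_m(c_m)\cdots\tau_1(c_1)\in\eunit\bigl((R^{\mathrm{ev}},\Delta^{\mathrm{ev}})\otimes\kappa(\mathfrak p)\bigr)$ as a function of a parameter tuple $c=(c_s)\in\prod_s V_s$. By Lemma \ref{AlphaBetaGamma} applied to each $\tau_s$ and to the product (using $\beta(gh)=\beta(g)\beta(h)+\beta(g)+\beta(h)$ and the affine-linear formulas for $\beta$ of a single transvection), the entries of $\eps w(c)x$ are fixed polynomial functions of $c$ with coefficients in $\kappa(\mathfrak p)$. Hence
\[U=\bigl\{c\in\textstyle\prod_s V_s \;\bigm|\; \eps w(c)x \text{ is right unimodular in } R\otimes_K\kappa(\mathfrak p)\bigr\}\]
is Zariski-open in the affine space $\prod_s V_s$: surjectivity of the multiplication map $R_{1,\eps}\otimes\kappa(\mathfrak p)\rar R_1\otimes\kappa(\mathfrak p)$, $\eta\mapsto\eta\cdot\eps w(c)x$, is the nonvanishing of a maximal minor, exactly as in Lemma \ref{UnimodularityLocus}. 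Moreover $U$ is nonempty, since $(\bar c_s)_s\in U$ by hypothesis; as $\prod_s V_s$ is irreducible, $U$ is dense, so it suffices to exhibit a point of $U$ of the form $(\overline{\widetilde c_s})_s$ with each $\widetilde c_s$ a parameter over $K$ lying in the $\mathfrak a$-part of its $K$-parameter module.

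The key step is to show that the reduction map carries $\prod_s(\mathfrak a\text{-part of the }K\text{-parameter module})$ onto a dense subset of $\prod_s V_s$. For a short or long slot the $K$-parameter module is the finitely generated $K$-module $P_s=R^{\mathrm{ev}}_{i_sj_s}$; its image in $V_s=P_s\otimes_K\kappa(\mathfrak p)$ equals the image of the finitely generated $K/\mathfrak p$-module $P_s/\mathfrak pP_s$ inside $(P_s/\mathfrak pP_s)\otimes_{K/\mathfrak p}\kappa(\mathfrak p)$, hence (being a finitely generated torsion-free module over the domain $K/\mathfrak p$ spanning $V_s$) contains a full-rank free $K/\mathfrak p$-submodule; if $\kappa(\mathfrak p)$ is infinite, a polynomial over $\kappa(\mathfrak p)$ vanishing on such a submodule is zero, so the image of $P_s$ is dense, and multiplying by the unit $\bar a$ keeps it dense, where $\bar a\cdot(\text{image of }P_s)$ is the image of $aP_s\subseteq P_s\mathfrak a$; if $\kappa(\mathfrak p)$ is finite, then $\mathfrak p$ is maximal and $K\twoheadrightarrow\kappa(\mathfrak p)$, so already $aP_s$ maps \emph{onto} $V_s$. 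For an ultrashort slot the argument is identical with $P_s$ replaced by the finitely generated parameter piece of $\Delta^{\mathrm{ev}}$ and $a$ replaced by $a^2\in\mathfrak a\setminus\mathfrak p$, to absorb the quadratic scaling $u\cdot a\mapsto a^2u$ of the $R^\bullet$-action. Since a product of dense subsets of affine spaces over a field is dense, the claim follows, so I may pick $\widetilde c_s$ in the $\mathfrak a$-part of each $K$-parameter module with $(\overline{\widetilde c_s})_s\in U$ and set $\widetilde g=\tau_m(\widetilde c_m)\cdots\tau_1(\widetilde c_1)\in\eunit(R^{\mathrm{ev}},\Delta^{\mathrm{ev}})$. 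Then $\widetilde g$ and $g$ are products of elementary transvections with the same indices in the same order, $\eps\widetilde gx$ is right unimodular in $R\otimes_K\kappa(\mathfrak p)$ by choice of the $\widetilde c_s$, and $\beta(\widetilde g)\in R\mathfrak a$ since $R\mathfrak a$ is a two-sided ideal and, by the formulas $\beta(T_{ij}(x))=x-\inv x$, $\beta(T_k(u))=\rho(u)+\pi(u)-\inv{\pi(u)}$, $\beta(gh)=\beta(g)\beta(h)+\beta(g)+\beta(h)$ of Lemma \ref{AlphaBetaGamma}, $\beta$ of a product of transvections with parameters in $R\mathfrak a$ again lies in $R\mathfrak a$.

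The main obstacle is the density step: one must split into the finite- and infinite-residue-field cases and verify that, after scaling by an element of $\mathfrak a\setminus\mathfrak p$, the image of the $\mathfrak a$-part of the merely finitely generated (and possibly non-free) $K$-parameter modules is dense in the residue-field parameter space; the openness of $U$ and the bookkeeping that "same indices, same order'' and $\beta(\widetilde g)\in R\mathfrak a$ are preserved are routine given Lemmas \ref{AlphaBetaGamma} and \ref{UnimodularityLocus}.
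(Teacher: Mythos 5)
Your proposal is correct in substance and follows the same basic strategy as the paper (decompose \(g\) into transvections, treat unimodularity of \(\eps(\cdot)x\) as a Zariski-open condition, split into the finite and infinite residue field cases, and lift with parameters in the \(\mathfrak a\)-part), but it implements the perturbation step differently. The paper does not vary all transvection parameters independently over the full parameter space; it introduces a \emph{one-parameter} family \(g(k)=\prod_s g_s(k)\) with \(g_s(k)=T_{ij}(yk)\) or \(T_i(u\cdot k)\), so that \(g(1)=g\). Nonemptiness of the good locus is then automatic, the bad locus in \(k\in\kappa(\mathfrak p)\) is finite by Lemma \ref{UnimodularityLocus}, and one only has to find a single scalar \(k\) in the (infinite) image of \(\mathfrak a d\), where \(d\in K\setminus\mathfrak p\) is chosen once to clear the denominators of all the parameters of the \(g_s\) so that \(g_s(k)\) lifts to \(\eunit(R^{\mathrm{ev}},\Delta^{\mathrm{ev}};R^{\mathrm{ev}}\mathfrak a,\Delta^{\mathrm{ev}}\cdot\mathfrak a)\). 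Your multi-parameter version buys a more uniform "dense meets open" picture, but it is exactly where you pay: you must prove that the image of the \(\mathfrak a\)-part of each finitely generated \(K\)-parameter module is Zariski-dense in its fibre at \(\mathfrak p\) (which you do handle), and, more delicately, the ultrashort parameter sets \((\Delta^{\mathrm{ev}}\otimes\kappa(\mathfrak p))^0_k\) are only subgroups closed under the \emph{quadratic} scalar action, not \(\kappa(\mathfrak p)\)-subspaces, so calling \(\prod_s V_s\) an irreducible affine space is not literally accurate there; one has to rerun the argument inside the Zariski closure of the actual parameter set (showing that set is contained in the closure of the liftable locus), which your sketch only gestures at. This is patchable rather than fatal, but the paper's line-through-\(g\) trick avoids the issue entirely, since the only parameter being varied is a genuine scalar.
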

\begin{proof}
Let \(g = \prod_{s = 1}^N g_s\) be the decomposition into a product of elementary transvections. If the residue field \(\kappa(\mathfrak p)\) is finite (or, more generally, if \(\mathfrak p\) is maximal), then each \(g_s\) is the image of some elementary transvection \(\widetilde g_s \in \eunit(R^{\mathrm{ev}}, \Delta^{\mathrm{ev}}; R^{\mathrm{ev}} \mathfrak a, \Delta^{\mathrm{ev}} \cdot \mathfrak a)\) with the same indices, hence we are done. Hence we may assume that the residue field is infinite.

We define new element \(g(k)\) that depends polynomially on \(k \in \kappa(\mathfrak p)\) (i.e. \(\alpha(g(T)) \in R \otimes \kappa(\mathfrak p)[T]\) for an indeterminate \(T\)) with the property \(g(1) = g\). Let \(g(k) = \prod_{s = 1}^N g_s(k)\) and for every \(s\) let \(g_s(k) = T_{i j}(y k)\) if \(g_s = T_{i j}(y)\) and \(g_s(k) = T_i(u \cdot k)\) if \(g_s = T_i(u)\). Note that by lemma \ref{UnimodularityLocus} the element \(\eps g(k) x\) is unimodular for all but a finite number of \(k\). There is \(d \in K \setminus \mathfrak p\) such that for every \(k\) in the image of the ideal \((d)\) in \(\kappa(\mathfrak p)\) all transvections \(g_s(k)\) may be lifted into \(\eunit(R^{\mathrm{ev}}, \Delta^{\mathrm{ev}})\). Now the image of \(\mathfrak ad\) in \(\kappa(\mathfrak p)\) is infinite, hence there is \(k\) in this image such that \(\eps g(k) x\) is unimodular. Let \(\widetilde g_s\) be the liftings of \(g_s(k)\) in \(\eunit(R^{\mathrm{ev}}, \Delta^{\mathrm{ev}}; R^{\mathrm{ev}} \mathfrak a, \Delta^{\mathrm{ev}} \cdot \mathfrak a)\) that are elementary transvections with the same indices. It follows that \(\widetilde g = \prod_{s = 1}^N \widetilde g_s\) satisfies all conditions.
\end{proof}

We will also need the following technical lemma.

\begin{lemma}\label{DimensionSR}
Let \((R, \Delta)\) be a finite odd form \(K\)-algebra with a free orthogonal hyperbolic family of rank \(n \geq d + 2\), \(x \in R_{0' 1}\). Suppose that \(\mathfrak p_1, \ldots, \mathfrak p_N \leqt K\) are prime ideals and \(e_{1 n} x\) is invertible in \(R_1 \otimes \kappa(\mathfrak p_s)\) for all \(s\). Let also \(\mathfrak a \leqt K\) be such that \(\mathrm{BS}(K / \mathfrak a) \leq d\), \(\mathfrak a \not \leq \mathfrak p_s\) for all \(s\), and \(x\) is right unimodular in \(R \otimes K / \mathfrak a\). Then there is \(g \in \langle T_{i, -n}(R_{i, -n}), T_{i j}(R_{i j}) \mid 1 \leq i, j \leq d + 1; i \neq j \rangle\) such that \(e_{1 n} gx\) is still invertible in \(R_1 \otimes \kappa(\mathfrak p_s)\) for all \(s\) and \(e_{(-n)'} gx\) is right unimodular in \(R \otimes K / \mathfrak a\).
\end{lemma}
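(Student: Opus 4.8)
The plan is to induct on $d$, the given bound on $\mathrm{BS}(K/\mathfrak a)$. The base case $\mathfrak a = K$ ($d = -\infty$) is vacuous, and $d = 0$ is the semilocal case, handled by the Gauss--elimination argument from the proof of lemma \ref{SemilocalLSR} run over the finitely many maximal ideals of $K/\mathfrak a$ together with the Chinese remainder theorem. For the inductive step I would first work modulo $\mathfrak a$: by lemma \ref{UnimodularityLocus} the ``bad locus'' $B \subseteq V(\mathfrak a)$ where $e_{(-n)'}x$ fails to be right unimodular is closed, and on $B$ the coordinate $e_{-n}x$ is automatically right unimodular because the whole row is. Let $\eta_1,\dots,\eta_t$ be the generic points of the $d$-dimensional irreducible components of $V(\mathfrak a)$ (finitely many, with infinite residue fields when $d\geq1$). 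Using the transvections $T_{d+1,-n}(*)$, $T_{d+1,j}(*)$, $T_{j,d+1}(*)$ with $j\leq d$ — available in our subgroup but not in the level-$(d-1)$ subgroup $\langle T_{i,-n},T_{ij}\mid i,j\leq d\rangle$ — I would perform a Bass-type prime avoidance localized at the $\eta_l$, carried out as honest Gauss elimination over the matrix coordinate rings as in lemma \ref{SemilocalLSR}, to obtain $\bar g_1$ in our subgroup such that, after $\bar g_1$, at every $\eta_l$ the row $e_{(-n)'}\bar g_1 x$ is right unimodular with its ``witness'' either parked in the coordinates $e_{d+1},e_{-(d+1)}$ (which the level-$(d-1)$ transvections leave alone, so it will survive everything done afterwards) or provided by $e_{1n}\bar g_1 x$ being invertible there, while $e_{1n}\bar g_1 x$ stays invertible wherever $e_{1n}x$ was.

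Next I would lift $\bar g_1$ to $g_1$ over $K$. Any lifting of the parameters yields a product of the same elementary transvections, so $e_{(-n)'}g_1x\equiv e_{(-n)'}\bar g_1x\pmod{\mathfrak a}$ automatically; only the conditions at the $\mathfrak p_s$ require care. One may assume the $\mathfrak p_s$ maximal (replace each by a maximal ideal in $V(\mathfrak p_s)$ still lying in the open locus where $e_{1n}x$ is invertible; invertibility of $e_{1n}(\cdot)$ modulo a larger prime implies it modulo a smaller one, the non-invertibility locus being closed). Then $\mathfrak a$ is comaximal with each $\mathfrak p_s$, hence with $\bigcap_s\mathfrak p_s$, so the Chinese remainder theorem lets us choose the lift with parameters in the prescribed $R\mathfrak a$-cosets and simultaneously in $\bigcap_s R\mathfrak p_s$; then $g_1\equiv1\pmod{\mathfrak p_s}$ and $e_{1n}g_1x\equiv e_{1n}x$ is invertible modulo each $\mathfrak p_s$ (finite residue fields are absorbed by the bookkeeping of lemma \ref{AdvancedLifting}). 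Now choose $\mathfrak b\supseteq\mathfrak a$ with $V(\mathfrak b)$ the union of the bad locus of $g_1x$ and the components of $V(\mathfrak a)$ of dimension $<d$; by the arrangement of the previous paragraph $\mathrm{BS}(K/\mathfrak b)\leq d-1$, and $\mathfrak b\not\leq\mathfrak p_s$ since $\mathfrak a\subseteq\mathfrak b$.

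Finally, apply the inductive hypothesis to $x':=g_1x$, the ideal $\mathfrak b$, and the primes $\mathfrak p_s$ enlarged by those $\eta_l\notin V(\mathfrak b)$ at which $e_{1n}x'$ is invertible (for the remaining such $\eta_l$ the coordinates $e_{d+1},e_{-(d+1)}$ carry the unimodularity): the hypotheses hold since $x'$ is right unimodular modulo $\mathfrak b$ (it already is modulo $\mathfrak a\subseteq\mathfrak b$), $\mathrm{BS}(K/\mathfrak b)\leq d-1$, and $n\geq d+2>(d-1)+2$. This yields $g_2\in\langle T_{i,-n},T_{ij}\mid i,j\leq d\rangle$ with $e_{1n}g_2x'$ invertible modulo all those primes and $e_{(-n)'}g_2x'$ right unimodular modulo $\mathfrak b$. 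Put $g=g_2g_1$, which lies in our subgroup. Then $e_{1n}gx=e_{1n}g_2x'$ is invertible modulo each $\mathfrak p_s$; and $e_{(-n)'}gx$ is right unimodular on $V(\mathfrak b)$ by the inductive conclusion and at each $\eta_l\notin V(\mathfrak b)$ either because $e_{d+1}$ or $e_{-(d+1)}$ is a unit there and $g_2$ leaves those coordinates alone, or because $g_2$ preserved invertibility of $e_{1n}(\cdot)$ there — hence right unimodular at every prime of $V(\mathfrak a)$, i.e. in $R\otimes K/\mathfrak a$.

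The main obstacle is the construction of $\bar g_1$ in the first step: one must push the bad locus down exactly one dimension over $K/\mathfrak a$ by genuine Gauss elimination over the (matrix) coordinate rings — not mere coordinate swaps — while simultaneously (i) staying inside the prescribed transvection subgroup, (ii) parking the resulting unimodularity in the coordinates $e_{d+1},e_{-(d+1)}$ that the inductive call cannot touch (coordinate $n$ can serve as a witness only at primes one is willing to promote to distinguished primes, since among the ``workspace'' coordinates it is the one the smaller index set still modifies), (iii) lifting to $K$ without spoiling invertibility at the finitely many $\mathfrak p_s$, and (iv) keeping residue-field-finiteness under control in the prime avoidance. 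Verifying that the required reduction at each top-dimensional generic point is always achievable uses the Morita-completeness and freeness of the hyperbolic family together with $n\geq d+2$, and the tension between $\mathfrak a$, where right unimodularity is demanded everywhere, and the $\mathfrak p_s$, where a fixed coordinate must remain invertible, is where the bulk of the work lies.
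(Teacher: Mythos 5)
Your overall architecture coincides with the paper's: induct on \(d\), pass to the generic points \(\mathfrak q_t\) of the irreducible components of \(\mathrm{Max}(K/\mathfrak a)\), use transvections with index \(d+1\) to make the row unimodular there with the witness concentrated in coordinates that the level-\(d\) subgroup will not disturb, cut out a closed set \(V(\mathfrak b)\) with \(\mathrm{BS}(K/\mathfrak b)<d\) off which this is achieved, and recurse. The genuine gap is in how you protect invertibility of \(e_{1n}(\cdot)\) at the distinguished primes \(\mathfrak p_s\) while correcting at the \(\mathfrak q_t\). You propose to replace each \(\mathfrak p_s\) by a maximal ideal above it and then use the Chinese remainder theorem to choose the lift with \(g_1\equiv 1\) modulo each \(\mathfrak p_s\) and \(\equiv\bar g_1\) modulo \(\mathfrak a\). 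This fails in exactly the case the lemma must handle: the hypothesis \(\mathfrak a\not\leq\mathfrak p_s\) does not prevent \(\mathfrak p_s\subseteq\mathfrak q_t\) for a component prime \(\mathfrak q_t\supseteq\mathfrak a\) (take \(\mathfrak p_s=(x)\), \(\mathfrak a=(y)\), \(\mathfrak q_t=(x,y)\) in \(k[x,y]\)), and then \(\mathfrak a+\mathfrak p_s\neq K\); moreover a maximal ideal above \(\mathfrak p_s\) lying in the invertibility locus and avoiding \(V(\mathfrak a)\) need not exist for a general (non-Jacobson) \(K\), and your reduction silently drops the condition \(\mathfrak m_s\not\supseteq\mathfrak a\) without which CRT gives nothing. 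The paper resolves this case not by a congruence trick but by an interleaved correction: the transvection \(h_{s_0}\in T_{d+1,-n}(*)\) fixing unimodularity at \(\mathfrak q_t\) is taken with parameter in \(R\mathfrak q_1\cdots\mathfrak q_{t-1}\mathfrak p_1\cdots\mathfrak p_{s_0}\) (protecting the \(\mathfrak p_s\) not contained in \(\mathfrak q_t\)), and then further transvections \(h_{s_0+1},\dots,h_N\) with parameters in \(R\mathfrak q_1\cdots\mathfrak q_{M'}\mathfrak p_1\cdots\mathfrak p_{s-1}\) restore invertibility at the remaining \(\mathfrak p_s\) one at a time, Lemma \ref{AdvancedLifting} being applied to lift \((h_{s-1}\cdots h_{s_0})^{-1}\) so that the uncorrected, still invertible element can serve as the target.

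A second, smaller point: you explicitly defer ``the bulk of the work'', namely the construction of \(\bar g_1\), so the proposal is a plan rather than a proof. In the paper this is a concrete two-stage elimination --- first \(T_{d+1,i}(R_{d+1,i})\), \(i\leq d\), to make \(e_{[1d]'}x\) unimodular at each \(\kappa(\mathfrak q_t)\), then \(T_{d+1,-n}(R_{d+1,-n})\) to remove the dependence on the coordinate \(-n\) --- performed successively over the \(\mathfrak q_t\) with parameters in \(R\mathfrak q_1\cdots\mathfrak q_{t-1}\) so as not to undo earlier steps. Your alternative branch of promoting certain \(\eta_l\) to distinguished primes of the recursive call does not appear in the paper and would need its own verification (that the promoted primes do not contain \(\mathfrak b\), and that invertibility of \(e_{1n}g_1x\) at them can actually be arranged rather than merely hoped for).
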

\begin{proof}
The proof is by induction on \(d\), the case \(d = -\infty\) is obvious. Let \(\mathrm{Max}(K / \mathfrak a) = \bigcup_{t = 1}^M X_t\) be the decomposition into a finite union of irreducible Noetherian subspaces of dimensions at most \(d\). The set \(\{\bigcap_{\mathfrak m \in X_t} \mathfrak m \mid 1 \leq t \leq M\}\) consists of prime ideals of \(K / \mathfrak a\) (or, equivalently, of prime ideals of \(K\) containing \(\mathfrak a\)). Let \(\mathfrak q_1, \ldots, \mathfrak q_{M'}\) be all elements of this set without repetitions is such an order that \(\mathfrak q_{t_1} \not \leq \mathfrak q_{t_2}\) for \(t_1 < t_2\) (it is possible that \(M' < M\)). Let also \(e_{[1d]} = e_1 + \ldots + e_d\) and \(e_{[1d]'} = 1 - e_{[1d]}\). By lemmas \ref{SemilocalLSR} and \ref{AdvancedLifting} there are \(g_1, \ldots, g_{M'} \in \langle T_{d + 1, i}(R_{d + 1, i}) \mid 1 \leq i \leq d \rangle\) such that \(e_{[1d]'} g_t \cdots g_1 x\) is unimodular in \(R \otimes \kappa(\mathfrak q_t)\) and \(\beta(g_t) \in R \mathfrak q_1 \cdots \mathfrak q_{t - 1}\) for all \(t\). Hence without loss of generality we may assume that \(e_{[1d]'} x\) is unimodular in \(R \otimes \kappa(\mathfrak q_t)\) for all \(t\). Note that \(\mathfrak q_t \not \leq \mathfrak p_s\) for all \(t\) and \(s\), because \(\mathfrak a \leq \mathfrak q_t\).

We are going to find elements \(g_1, \ldots, g_{M'} \in T_{d + 1, -n}(R_{d + 1, -n})\) such that \(\beta(g_t) \in R \mathfrak q_1 \cdots \mathfrak q_{t - 1}\), \(e_{(-n)'} e_{[1d]'} g_t \cdots g_1 x\) are unimodular in \(R \otimes \kappa(\mathfrak q_t)\), and \(e_{1 n} g_t \cdots g_1 x\) are still invertible in \(R_1 \otimes \kappa(\mathfrak p_s)\) for all \(s, t\). Suppose that we already have \(g_1, \ldots, g_{t - 1}\). Without loss of generality \(\mathfrak p_s\) are distinct, \(\mathfrak p_{s_1} \not \leq \mathfrak p_{s_2}\) for all \(s_1 < s_2\), and \(\mathfrak p_s \leq \mathfrak q_t\) if and only if \(s > s_0\). By lemmas \ref{SemilocalLSR} and \ref{AdvancedLifting} there is \(h_{s_0} \in T_{d + 1, -n}(R_{d + 1, -n})\) such that \(e_{(-n)'} e_{[1d]'} h_{s_0} g_{t - 1} \cdots g_1 x\) is unimodular in \(R \otimes \kappa(\mathfrak q_t)\) and \(\beta(h_{s_0}) \in R \mathfrak q_1 \cdots \mathfrak q_{t - 1} \mathfrak p_1 \cdots \mathfrak p_{s_0}\). But now \(e_{1 n} h_{s_0} g_{t - 1} \cdots g_1 x\) is not necessarily invertible in \(R_1 \otimes \kappa(\mathfrak p_s)\) for \(s > s_0\) (though it is for \(s \leq s_0\)). By lemmas \ref{SemilocalLSR} and \ref{AdvancedLifting} there are \(h_{s_0 + 1}, \ldots, h_N \in T_{d + 1, -n}(R_{d + 1, -n})\) such that \(\beta(h_s) \in R\mathfrak q_1 \cdots \mathfrak q_{M'} \mathfrak p_1 \cdots \mathfrak p_{s - 1}\) and \(e_{1 n} h_s \cdots h_{s_0} g_{t - 1} \cdots g_1 x\) are invertible in \(R_1 \otimes \kappa(\mathfrak p_s)\) for all \(s > s_0\) (note that the elements \(e_{1 n} (h_{s - 1} \cdots h_{s_0})^{-1} h_{s - 1} \cdots h_{s_0} g_{t - 1} \cdots g_1 x\) are invertible in these algebras, hence we apply lemma \ref{AdvancedLifting} in order to lift \((h_{s - 1} \cdots h_{s_0})^{-1}\)). Finally, let \(g_t = h_N \cdots h_{s_0}\).

Without loss of generality we may assume that \(e_{(-n)'} e_{[1d]'} x\) is unimodular in all \(R \otimes \kappa(\mathfrak q_t)\). By lemma \ref{UnimodularityLocus} there is an ideal \(\mathfrak a \leq \mathfrak b \leq K\) such that \(\mathfrak b \not \leq \mathfrak q_t\) for all \(t\) and \(e_{(-n)'} e_{[1d]'} x\) is unimodular in \(R \otimes K_b\) for all \(b \in \mathfrak b\). Note that \(\mathrm{BS}(K / \mathfrak b) < d\) since the space \(\mathrm{Max}(K / \mathfrak b)\) is a closed subspace of \(\mathrm{Max}(K / \mathfrak a)\) not containing any \(X_t\). By the induction assumption, there is \(g \in \langle T_{i, -n}(R_{i, -n}), T_{i j}(R_{i j}) \mid 1 \leq i, j \leq d; i \neq j \rangle\) such that \(e_{1 n} gx\) is invertible in \(R_1 \otimes \kappa(\mathfrak p_s)\) for all \(s\) and \(e_{(-n)'} gx\) is unimodular in \(R \otimes K / \mathfrak b\). The same lemma implies that \(e_{(-n)'} gx\) is unimodular in \(R \otimes K / \mathfrak a\).
\end{proof}

Now let us prove the main result of this section.

\begin{theorem}\label{DimensionLSR}
Let \((R, \Delta)\) be a quasi-finite odd form \(K\)-algebra with a free orthogonal hyperbolic family of rank \(n \geq 1\). Suppose that \(\mathrm{BS}(K) \leq d\). Then \(\Lambda\mathrm{sr}(\eta_1; R, \Delta) \leq d + 1\).
\end{theorem}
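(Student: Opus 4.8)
The plan is to peel off the usual finiteness reductions, put ourselves in the range where the technical lemmas of this section apply, and then invoke Lemma \ref{DimensionSR} together with the ordinary stable rank of $R_1$.

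If $d = \infty$ there is nothing to prove, so assume $d$ is finite. By Lemma \ref{QuasiFinite}(2) we may write $(R, \Delta) = \varinjlim_i (R_i, \Delta_i)$ with each $(R_i, \Delta_i)$ a finite odd form $K$-algebra; since the orthogonal hyperbolic family is finitely many elements together with its coherent isomorphisms, after passing to a cofinal subsystem we may assume every $(R_i, \Delta_i)$ contains it as a free orthogonal hyperbolic family. By Lemma \ref{LimitLSR} it then suffices to bound $\Lambda\mathrm{sr}(\eta_1; R_i, \Delta_i)$ for every $i$, so we may assume $(R, \Delta)$ itself is a finite odd form $K$-algebra. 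Since the family is free, it is contained in, resp.\ contains, a free orthogonal hyperbolic family of rank exactly $n = d + 2$ inside a finite odd form $K$-overalgebra, resp.\ subalgebra, as in the beginning of this section; this changes neither $\mathrm{sr}(R_1)$ nor $\Lambda\mathrm{sr}(\eta_1; R, \Delta)$, since the latter depends only on $\eta_1$, on the unital ring $R_{|1|}$ with its form parameter, and on unimodular sequences over $R_1$ and $R_{-1, 1}$, all unaffected by this operation. So from now on $(R, \Delta)$ is finite over $K$ and the family has rank $n = d + 2$. Finally, $R_1 = e_1 R e_1$ is a module-finite $K$-algebra, hence $\mathrm{sr}(R_1) \leq \mathrm{asr}(R_1) \leq \mathrm{BS}(K) + 1 \leq d + 1 = n - 1$.

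By Lemma \ref{EquivalentLSR} it remains to show that for every unimodular $x \in R^{\mathrm{ev}}_{* 1}$ there is $g \in \unit^-(R^{\mathrm{ev}}, \Delta^{\mathrm{ev}})$ with $e_+ g x$ unimodular. Apply Lemma \ref{DimensionSR} with the ideal $\mathfrak a = 0$ and no auxiliary primes: the hypothesis "$x$ unimodular in $R \otimes K / \mathfrak a$" is our hypothesis, $\mathrm{BS}(K / \mathfrak a) = \mathrm{BS}(K) \leq d$, and the lemma produces $g_0$ in the subgroup $\langle T_{i, -n}(R_{i, -n}), T_{i j}(R_{i j}) \mid 1 \leq i, j \leq d + 1; i \neq j \rangle$, which is contained in $\unit^-(R^{\mathrm{ev}}, \Delta^{\mathrm{ev}})$, with $e_{(-n)'} g_0 x$ unimodular. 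Since $g_0 x$ has no $e_0$-component, this says exactly that the $n$ "positive" components $e_1 g_0 x, \ldots, e_n g_0 x$ together with the $n - 1$ remaining "negative" components $e_{-1} g_0 x, \ldots, e_{-(n-1)} g_0 x$ form a unimodular family. It remains to absorb these $d + 1$ negative components into the positive ones by an element of $\unit^-(R^{\mathrm{ev}}, \Delta^{\mathrm{ev}})$. Using $\mathrm{sr}(R_1) \leq n - 1$ together with Lemma \ref{IncreasingSR} one removes them one at a time, at each step adding suitable right multiples of one negative component to several of the remaining components; adding a multiple of a negative component to another negative component is a move of the $\gl(n, R_1)$-part of $\unit^-(R^{\mathrm{ev}}, \Delta^{\mathrm{ev}})$, while adding a multiple of a negative component to a positive one is a move of the $\Delta^{\mathrm{ev}}_{+, -}$-part, and the latter is forced by anti-symmetry of the form parameter to come with a mirror transvection which, however, only modifies a component by an element of the span of the others and so does not destroy unimodularity. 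After all $d + 1$ absorptions we obtain $g \in \unit^-(R^{\mathrm{ev}}, \Delta^{\mathrm{ev}})$ with $e_+ g x$ unimodular, and Lemma \ref{EquivalentLSR} gives $\Lambda\mathrm{sr}(\eta_1; R, \Delta) \leq n - 1 = d + 1$.

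The main obstacle is split between the last step and the lemma invoked. The absorption step must be carried out strictly inside $\unit^-(R^{\mathrm{ev}}, \Delta^{\mathrm{ev}})$, so the mirror transvections forced by the form parameter have to be tracked carefully; but the genuine difficulty is Lemma \ref{DimensionSR} itself, whose proof is the real dimension induction — decomposing $\mathrm{Max}(K)$ into irreducible Noetherian pieces, fixing unimodularity over their generic points using the semi-local case (Lemma \ref{SemilocalLSR}) and Lemma \ref{AdvancedLifting}, and descending to the strictly lower-dimensional bad locus by Lemma \ref{UnimodularityLocus}, with the auxiliary ideal and finite set of primes in its statement exactly the bookkeeping that makes the descent non-disturbing.
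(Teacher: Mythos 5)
Your reductions to the finite case and to rank \(n = d + 2\) are fine, and the application of Lemma \ref{DimensionSR} with \(\mathfrak a = 0\) and an empty set of auxiliary primes is formally legitimate: it produces \(g_0\) with \(e_{(-n)'} g_0 x\) unimodular in \(R\). But at that point you have removed only \emph{one} of the \(n\) negative components, and the final ``absorption'' of the remaining \(d + 1\) of them is a genuine gap. Those components live in \(R_{-1,1}\), not in \(R_1\), so \(\mathrm{sr}(R_1) \leq n - 1\) and Lemma \ref{IncreasingSR} --- which concern unimodular sequences with all entries in \(R_1\) --- say nothing about them. The only moves available in \(\unit^-(R^{\mathrm{ev}}, \Delta^{\mathrm{ev}})\) for transferring negative components into the positive block are the corner entries \(a_{ij} \in R_{1,-1}\) subject to \(a_{ij} = -\inv{a_{-j,-i}}\) and \(a_{i,-i} \in \Delta^{\mathrm{ev}}\); producing such a matrix that makes \(e_+ g x\) unimodular is, word for word, the condition \(\Lambda\mathrm{sr}(\eta_1; R, \Delta) \leq n - 1\) you are trying to establish (with one negative component already gone, which does not make it essentially easier). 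The symmetry constraint and the requirement \(a_{i,-i} \in \Delta^{\mathrm{ev}}\) are precisely why \(\Lambda\mathrm{sr}\) is not controlled by the ordinary stable rank of \(R_1\); your appeal to ``tracking the mirror transvections'' does not engage with this.

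The paper closes exactly this gap by running an induction on \(d\) at the level of the theorem itself, which your argument omits. It applies Lemma \ref{DimensionSR} not with \(\mathfrak a = 0\) but with a proper ideal \(\mathfrak a\) chosen via Lemma \ref{UnimodularityLocus} so that a single entry \(e_{1 n} x\) remains invertible on the localizations \(K_a\), \(a \in \mathfrak a\), while \(\mathrm{BS}(K / \mathfrak a) < d\). The induction hypothesis then gives \(\Lambda\mathrm{sr}(\eta_1; R \otimes K/\mathfrak a, \ldots) \leq d\), which (after Lemma \ref{IncreasingLSR}) absorbs \emph{all} negative components over \(K / \mathfrak a\) by an element \(g\) supported on indices of absolute value at most \(d + 1\); since such \(g\) does not disturb \(e_{1 n} x\), the element \(e_+ g x\) is unimodular both over \(K/\mathfrak a\) and over each \(K_a\), hence over \(K\) by Lemma \ref{UnimodularityLocus}. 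To repair your proof you must either reinstate this induction or replace the last step by an argument through the absolute stable rank of \(R_{|1|}\) in the style of Bak; the ordinary stable rank of \(R_1\) does not suffice. (A minor further point: the inequality \(\mathrm{sr}(R_1) \leq \mathrm{asr}(R_1) \leq \mathrm{BS}(K) + 1\) is only asserted in the introduction; the paper obtains \(\mathrm{sr}(R_1) \leq d + 1\) by the same induction rather than by citation.)
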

\begin{proof}
By lemma \ref{LimitLSR} we may assume that \((R, \Delta)\) is finite (with the same orthogonal hyperbolic family). The proof is by induction on \(d\). The case \(d = -1\) trivial and the case \(d = 0\) is lemma \ref{SemilocalLSR}, hence we may assume that \(d \geq 0\) and \(n = d + 2\). Of course, we already assume that \(\mathrm{sr}(R_1) \leq d + 1\), though this can be proved in the same way. Let \(x \in R_{0' 1}\) be unimodular and \(\mathrm{Max}(K) = \bigcup_{s = 1}^N X_s\) be the decomposition into a finite union of irredicuble Noetherian subspaces of dimensions at most \(d\). The set \(\{\bigcap_{\mathfrak m \in X_s} \mathfrak m \mid 1 \leq s \leq N\}\) consists of prime ideals of \(K\). Let \(\mathfrak p_1, \ldots, \mathfrak p_{N'}\) be all elements of this set without repetitions is such an order that \(\mathfrak p_{s_1} \not \leq \mathfrak p_{s_2}\) for \(s_1 < s_2\) (it is possible that \(N' < N\)). By lemmas \ref{SemilocalLSR} and \ref{AdvancedLifting} there are \(g_1, \ldots, g_{N'} \in \unit^-(R^{\mathrm{ev}}, \Delta^{\mathrm{ev}})\) such that \(e_{1 n} g_s \cdots g_1 x\) are invertible in \(R_1 \otimes \kappa(\mathfrak p_s)\) and \(\beta(g_s) \in R \mathfrak p_1 \cdots \mathfrak p_{s - 1}\) for all \(s\). Hence without loss of generality \(e_{1 n} x\) is invertible in all \(R \otimes \kappa(\mathfrak p_s)\).

Now by lemmas \ref{SemilocalLSR} and \ref{AdvancedLifting} there are \(g_1, \ldots, g_{N'} \in T_{n - 1, n}(R_{n - 1, n})\) such that \(e_{1, n - 1}g_s \cdots g_1 x\) is invertible in \(R \otimes \kappa(\mathfrak p_s)\) and \(\beta(g_s) \in R \mathfrak p_1 \cdots \mathfrak p_{s - 1}\). Hence we may assume that \(e_{1, n - 1} x\) is also invertible in all \(R_1 \otimes \kappa(\mathfrak p_s)\). By lemma \ref{UnimodularityLocus} there is an ideal \(\mathfrak a \leq K\) such that \(e_{1, n - 1} x\) is invertible in \(R \otimes K_a\) for all \(a \in \mathfrak a\) and \(\mathfrak a \not \leq \mathfrak p_s\) for all \(s\). Note that \(\mathrm{BS}(K / \mathfrak a) < d\). Applying lemma \ref{DimensionSR}, we obtain \(g \in \langle T_{i, -n}(R_{i, -n}), T_{i j}(R_{i j}) \mid 1 \leq i, j \leq d; i \neq j \rangle\) such that \(e_{1 n} g x\) is invertible in all \(R_1 \otimes \kappa(\mathfrak p_s)\) and \(e_{(-n)'} g x\) is unimodular in \(R \otimes K / \mathfrak a\). By lemma \ref{UnimodularityLocus} \(e_{(-n)'} g x\) is unimodular in \(R\).

Without loss of generality, we may assume that \(e_{1 n} x\) is invertible in all \(R_1 \otimes \kappa(\mathfrak p_s)\) and \(e_{(-n)'} x\) is unimodular in \(R\). By lemma \ref{UnimodularityLocus} there is \(\mathfrak a \leqt K\) such that \(\mathfrak a \not \leq \mathfrak p_s\) for all \(s\) and \(e_{1 n} x\) is invertible in \(R_1 \otimes K_a\) for all \(a \in \mathfrak a\). Since \(\mathrm{BS}(K / \mathfrak a) < d\), by the induction assumption there is \(g \in \langle T_{i j}(R_{i j}), T_j(\Delta^{\mathrm{ev}}_j) \mid 1 \leq i, -j \leq d + 1; i \neq j \rangle\) such that \(e_+ gx\) is unimodular in \(R \otimes K / \mathfrak a\). But then \(e_+ gx\) is unimodular in \(R\) by the same lemma.
\end{proof}

\section{Stability for \(\mathrm K_1\)-group}

In this section we will prove stability for the factor \(\unit(n; R, \Delta) / \eunit(n; R, \Delta)\) under the  increasing of \(n\). By default, \((R, \Delta)\) will mean an odd form ring with an orthogonal hyperbolic family of rank \(n \geq 1\).

Now let \(l \neq 0\) be an index (usually \(l = \pm n\)). The Heisenberg subgroup is
\begin{align*}
H_l(n; R, \Delta) &= \{g \in \unit(n; R, \Delta) \mid \beta_{(-l)', l'}(g) = \beta_{-l}(g) = \beta_l(g) = 0; \gamma^\circ_{l'}(g) = \dot 0\} = \\
&= \{g \in \unit(n; R, \Delta) \mid \beta_{(-l)', l'}(g) = \beta_{-l}(g) = \beta_l(g) = 0; \gamma^0_{l'}(g) = \dot 0\} = \\
&= \{g \in \unit(n; R, \Delta) \mid\\
&\qquad \beta_{(-l)', l'}(g) = \beta_{-l}(g) = \beta_l(g) = 0; \gamma_{l'}(g) = q_{-l} \cdot \beta(g) e_{l'}\}.
\end{align*}

Note that if \((R, \Delta)\) is the split symplectic algebra over \(K\) from example \ref{SymplecticOFA}, then \(R_l \cong K\) has trivial \((-1)\)-involution and \(H_l(n; R, \Delta)\) is isomorphic to the ordinary Heisenberg group \(\Heis(R_{|l|', l}, B)\) for a right hermitian module over \(K\) (the symplectic form is \(B(x, y) = e_{l, -l} \inv x y\)).

It is easy to see that \(H_{\pm n}(n; R, \Delta)\) for \(n = 1\) are actually the two groups of elementary transvections. By lemmas \ref{TransvectionRelations} and \ref{HyperbolicGluing}, \(H_n(n; R, \Delta)\) is normalized by \(\unit(n - 1; R, \Delta)\) and is generated by transvections \(T_{i n}(x)\), \(T_n(u)\) with the obvious relations, and similarly for \(H_{-n}(n; R, \Delta)\). It follows from lemma \ref{TransvectionRelations} that \(\eunit(n; R, \Delta) = \langle H_n(n; R, \Delta), H_{-n}(n; R, \Delta) \rangle\) for all \(n \geq 1\). In particular, \(\eunit(n; R, \Delta)\) is normalized by \(\unit(n - 1; R, \Delta)\).

As usual, \(\kunit(n; R, \Delta) = \unit(n; R, \Delta) / \eunit(n; R, \Delta)\) for an odd form ring \((R, \Delta)\) and \(\kunit(n; R, \Delta; I, \Gamma) = \unit(n; I, \Gamma) / \eunit(n; R, \Delta; I, \Gamma)\) for an odd form ideal \((I, \Gamma) \leqt (R, \Delta)\). In general these objects are just pointed sets. Note that there is a sequence
\[1 \rar \kunit(n; R, \Delta; I, \Gamma) \xrightarrow{(p_1^{-1})_*} \kunit(n; I \rtimes R, \Gamma \rtimes \Delta) \xrightarrow{(p_2)_*} \kunit(n; R, \Delta) \rar 1,\]
see lemma \ref{Relativization}. It follows from a simple diagram chasing that this sequence is short exact in the following sense: \((p_1^{-1})_*\) is injective, \((p_2)_*\) is surjective, the image of \((p_1^{-1})_*\) is exactly the preimage of the distinguished point under \((p_2)_*\). Moreover, there is a \(\unit(n; R, \Delta)\)-equivariant section \(d_* \colon \kunit(n; R, \Delta) \rar \kunit(n; I \rtimes R, \Gamma \rtimes \Delta)\) of \((p_2)_*\) and the action of \(\unit(n; R, \Delta)\) on \(\kunit(n; I \rtimes R, \Gamma \rtimes \Delta)\) induce a transitive action on the fibers of \((p_2)_*\), hence there is a bijection \(\kunit(n; I \rtimes R, \Gamma \rtimes \Delta) \cong \kunit(n; R, \Delta; I, \Gamma) \times \kunit(n; R, \Delta)\). The bijection is not canonical in general, though if all elementary groups are normal in the corresponding unitary groups, then the sequence is split short exact with the splitting \(d_*\).

Since \(\unit(n - 1; R, \Delta)\) normalizes \(\eunit(n; R, \Delta)\), the next theorem shows that \(\eunit(n; R, \Delta; I, \Gamma) \leqt \unit(n; R, \Delta)\) under an assumption on the \(\Lambda\)-stable rank (in the relative case by the short exact sequence for elementary groups). Of course, the normality holds under much weaker assumptions.

\begin{theorem}\label{SurjectiveKU}
Suppose that \((R, \Delta)\) is an odd form ring with a free orthogonal hyperbolic family of rank \(n\) and that \(\Lambda\mathrm{sr}(\eta_1; R, \Delta) \leq n - 1\). Then the natural map \(\kunit(n - 1; R, \Delta) \rar \kunit(n; R, \Delta)\) is surjective. Also, if \((I, \Gamma) \leqt (R, \Delta)\) is an odd form ideal and \(\Lambda\mathrm{sr}(\eta_1; I \rtimes R, \Gamma \rtimes \Delta) \leq n - 1\) (with no restriction on \(\Lambda\mathrm{sr}(\eta_1; R, \Delta)\)), then the natural map \(\kunit(n - 1; R, \Delta; I, \Gamma) \rar \kunit(n; R, \Delta; I, \Gamma)\) is surjective.
\end{theorem}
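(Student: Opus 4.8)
The plan is to prove surjectivity of $\kunit(n-1;R,\Delta) \rar \kunit(n;R,\Delta)$ by showing that every $g \in \unit(n;R,\Delta)$ can be moved into $\unit(n-1;R,\Delta)$ after multiplying by an element of $\eunit(n;R,\Delta)$. The starting observation is that the column $\beta_{0',n}(g) + e_n \in R_{0',n}$ (equivalently, thinking of $\alpha(g)e_n$) is right unimodular, since $\alpha(g)$ is invertible; more precisely, $e_n \alpha(g)$ gives a unimodular row and $\alpha(g) e_n$ a unimodular column, and the relevant piece lives in the ``linear'' part $R_{0',1} \cong R_{0',n}$ after identifying indices via the freeness isomorphisms $e_{1n}$. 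The goal is to use the hypothesis $\Lambda\mathrm{sr}(\eta_1;R,\Delta) \leq n-1$ (via Lemma~\ref{EquivalentLSR}) to find $h \in \eunit(n;R,\Delta)$ with $e_+ \cdot (\alpha(hg)e_n)$ unimodular — indeed, with $e_n \alpha(hg) e_n$ invertible after a further Gauss step.

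The key steps, in order, are: (1) Reduce to the ``split'' description of $\unit(n;R,\Delta)$ in terms of $\alpha(g)$ and $\gamma^\circ(g)$ from Section~5, so that elementary transvections act by the explicit formulas of Lemma~\ref{TransvectionRelations}. (2) Using $\Lambda\mathrm{sr}(\eta_1;R,\Delta)\le n-1$ and Lemma~\ref{EquivalentLSR}, applied to the unimodular element corresponding to the last column of $\alpha(g)$, multiply $g$ on the left by short-root and ultrashort-root transvections $T_{in}(\ast)$, $T_{-i,n}(\ast)$, $T_n(\ast)$ (which together generate the Heisenberg group $H_n(n;R,\Delta)$) so that the $(n,n)$-entry $\alpha_{nn}(g')$ becomes invertible in $R_n$. (3) Conjugate/multiply by a dilation $D_n(\ast)$ and further short-root transvections $T_{ni}(\ast)$, $T_{in}(\ast)$ to clear the rest of the $n$-th row and column of $\alpha$; this is the unitary analogue of row/column reduction, using that once $\alpha_{nn}$ is invertible the off-diagonal entries in row/column $n$ can be killed by elementary transvections, exactly as in the classical Bass argument, while tracking the effect on $\gamma^\circ$. (4) Check that after these reductions the remaining element satisfies $\beta_{\ast,\pm n} = \beta_{\pm n,\ast} = 0$ and $\gamma^\circ_{\pm n} = \dot 0$, i.e.\ lies in $\unit(n-1;R,\Delta)$ — here one uses the unitary relations (Lemma~\ref{AlphaBetaGamma}) forcing the block structure to be compatible, plus the hyperbolic-pair identities to see that $\gamma^\circ$ automatically restricts. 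For the relative statement, I would run the same argument inside the double $(I\rtimes R,\Gamma\rtimes\Delta)$, where by Lemma~\ref{Relativization} the elementary group splits as a semidirect product; an element of $\kunit(n;R,\Delta;I,\Gamma)$ lifts via $p_1^{-1}$ to an element of $\unit(n;I\rtimes R,\Gamma\rtimes\Delta)$ mapping trivially under $p_2$, and $\Lambda\mathrm{sr}(\eta_1;I\rtimes R,\Gamma\rtimes\Delta)\le n-1$ lets us carry out the same stabilization there; then project back via $p_1$.

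The main obstacle is step~(3)–(4): making the clearing of the last row and column fully compatible with the odd form structure, i.e.\ verifying that the ultrashort-root transvections needed to kill $\gamma^\circ_{\pm n}$ exist with the right parameters and that the process terminates in $\unit(n-1;R,\Delta)$ rather than merely in the stabilizer of $e_n$. Concretely, after $\alpha_{nn}$ is made invertible and the $\alpha$-part of row/column $n$ is cleared, there may be a residual $\gamma$-contribution supported on index $n$ of ``ultrashort'' type; one must show it is killed by a suitable $T_n(u)$, which requires the identity $\Delta^0_i = \Delta^0_j \cdot R_{ji} \dotplus \Delta^0_{-j}\cdot R_{-j,i} \dotplus \phi(R_{-i,i})$ (used in Lemma~\ref{Perfect}) together with the explicit $\gamma^\circ$-formulas. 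I expect that, as in the classical unitary stability proofs, this amounts to a careful but routine bookkeeping once the right sequence of elementary moves is fixed, with the $\Lambda$-stable rank hypothesis entering exactly once, at step~(2).
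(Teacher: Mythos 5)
Your overall plan --- use the $\Lambda$-stable rank hypothesis to make the last column of $\alpha(g)$ tractable, then clear the $n$-th row and column by elements of $H_{\pm n}(n;R,\Delta)$, and handle the relative case by diagram chasing in the double --- is the same as the paper's. But two of your steps have genuine gaps. First, your ``starting observation'' that the column $\beta_{0',n}(g)+e_n\in R_{0',n}$ is right unimodular is not justified: invertibility of $\alpha(g)$ only gives $\sum_{i}\alpha_{ni}(g^{-1})\alpha_{in}(g)=e_n$ with the sum running over \emph{all} indices including $i=0$, so the full column $\alpha_{*,n}(g)$ is unimodular but its restriction to $R_{0',n}$ need not be. Since the $\Lambda$-stable rank condition (and Lemma~\ref{EquivalentLSR}) only speaks about unimodular elements of $R^{\mathrm{ev}}_{*1}$, you must first absorb the component $\alpha_{0n}(g)\in R_{0,n}$. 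The paper does this as a separate first reduction: using $\mathrm{sr}(R_1)\leq n-1$ and Lemma~\ref{IncreasingSR} it finds $y\in R_{+,n}$ so that the modified sequence $\{\alpha_{in}(g)\}_{i<0}\sqcup\{\alpha_{in}(g)+e_iy\alpha_{n0}(g^{-1})\alpha_{0n}(g)\}_{i>0}$ is unimodular, and realizes the modification by elementary transvections; only then is the $\Lambda$-stable rank condition applied.

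Second, your step (3) normalizes the corner by a dilation $D_n(a)$ with $a\in R_n^*$. This element is neither in $\eunit(n;R,\Delta)$ nor in $\unit(n-1;R,\Delta)$ (its $\beta$ is supported on $e_{\pm n}Re_{\pm n}$), so after clearing the row and column your procedure terminates with $g\in\eunit(n;R,\Delta)\,\unit(n-1;R,\Delta)\,D_n(a)$, and an extra Whitehead-type argument ($D_n(a)D_{n-1}(a'^{-1})\in\eunit$ for the corresponding $a'\in R_{n-1}^*$, which needs $n\geq 2$) would be required to absorb the residual dilation --- you never supply it. The paper avoids dilations entirely: it uses $\mathrm{sr}(R_1)\leq n-1$ a second time to make $e_{n'}\alpha_{+,n}$ unimodular among the indices $1,\dots,n-1$, after which transvections $T_{ni}(*)$ force $\alpha_n(g)=e_n$ \emph{exactly}; the clearing of the column and then the row is then done by the unique elements of $H_n$ and $H_{-n}$ with prescribed $\beta_{*n}$ and $\gamma^\circ_n$ (resp.\ $\beta_{*,-n}$, $\gamma^\circ_{-n}$), the vanishing of the opposite row being automatic because $(g')^{-1}$ satisfies the same triangularity equations. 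Your step (4) and the identity $\Delta^0_i=\Delta^0_j\cdot R_{ji}\dotplus\Delta^0_{-j}\cdot R_{-j,i}\dotplus\phi(R_{-i,i})$ are not actually needed once the clearing is organized this way, and your treatment of the relative case agrees with the paper's.
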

\begin{proof}
We have to prove that every \(g \in \unit(n; R, \Delta)\) actually lies in \(\unit(n - 1; R, \Delta) \eunit(n; R, \Delta)\). First of all, suppose that \(\alpha_n(g) = e_n\). In this case there is unique \(h \in H_n(n; R, \Delta)\) such that \(\beta_{* n}(g') = 0\) and \(\gamma^\circ_n(g') = \dot 0\) for \(g' = h^{-1} g\). Indeed, we have to take \(\beta_{* n}(h) = \beta_{* n}(g)\) and \(\gamma^\circ_n(h) = \gamma^\circ_n(g)\), these conditions determine unique \(h \in H_n(n; R, \Delta)\). But then \((g')^{-1}\) satisfies the same equations (the equations say essentially that \(g'\) is block-triangular matrix and one of the diagonal blocks is trivial), hence \(\beta_{-n, *}(g') = 0\). Similarly, there is \(h' \in H_{-n}(n; R, \Delta)\) such that \(\beta_{n *}(g'') = 0\) and \(\gamma^\circ_{-n}(g'') = \dot 0\) for \(g'' = g' h'\) (i.e. \(\beta_{*, -n}(h') = \beta_{*, -n}((g')^{-1})\) and \(\gamma^\circ_{-n}(h') = \gamma^\circ_{-n}((g')^{-1})\)). This means that \(g'' \in \unit(n - 1; R, \Delta)\).

In the general case we will multiply \(g\) from the left by elementary transvections until \(\alpha_n(g)\) will be equal to \(e_n\). Note that the sequence \(\{\alpha_{n i}(g^{-1}) \alpha_{i n}(g)\}_{i = -n}^0 \sqcup \{\alpha_{i n}(g)\}_{i = 1}^n\) is unimodular. By \(\mathrm{sr}(R_1) \leq n - 1\) and lemma \ref{IncreasingSR} there is \(y \in R_{+, n}\) such that \(\{\alpha_{i n}(g)\}_{i = -n}^{-1} \sqcup \{\alpha_{i n}(g) + e_i y \alpha_{n 0}(g^{-1}) \alpha_{0 n}(g)\}_{i = 1}^n\) is unimodular. Hence there is \(h \in \eunit(n; R, \Delta)\) such that \(\alpha_{0' n}(g')\) is unimodular for \(g' = hg\). Now recall that \(\Lambda \mathrm{sr}(\eta_1; R, \Delta) \leq n - 1\), so there is \(h' \in \eunit(n; R, \Delta)\) such that \(\alpha_{+, n}(g'')\) is unimodular for \(g'' = h'g'\). The condition \(\mathrm{sr}(R_1) \leq n - 1\) also implies that there is \(h'' \in \eunit(n; R, \Delta)\) such that \(e_{n'} \alpha_{+, n}(g''')\) is unimodular for \(g''' = h'' g''\), and then clearly exists \(h''' \in \eunit(n; R, \Delta)\) such that \(\alpha_n(h''' g''') = e_n\).

The relative variant follows from the diagram chasing using the short exact sequences.
\end{proof}

In order to prove the injective stability we show that the elementary group decomposes as a product of certain subgroups. Let
\begin{align*}
A_1 &= \eunit(n - 1; R, \Delta) H_n(n; R, \Delta),\\
A_2 &= \langle T_{i, -n}(R_{i, -n}), T_{-n}(\Delta^0_{-n}) \mid i \geq -1 \rangle \leq H_{-n}(n; R, \Delta),\\
A_3 &= \langle T_{i j}(R_{i j}) \mid i, j \geq 2 \rangle,\\
A_4 &= \langle T_{i j}(R_{i j}), T_j(\Delta^0_j) \mid i \leq 1 \text{ and } j \geq 3 \rangle,\\
A_5 &= \langle T_{i j}(R_{i j}), T_j(\Delta^0_j) \mid j \geq 2 \rangle.
\end{align*}
Informally, \(A_1\), \(A_2\), \(A_3\), \(A_4\), and \(A_5\) are the groups of elementary matrices with the following \(\alpha\)-s for \(n = 3\) (we use the numeration \(-n, \ldots, n\) for rows and columns):
\[
\left(\begin{smallmatrix}
1 & * & * & * & * & * & * \\
0 & * & * & * & * & * & * \\
0 & * & * & * & * & * & * \\
0 & * & * & * & * & * & * \\
0 & * & * & * & * & * & * \\
0 & * & * & * & * & * & * \\
0 & 0 & 0 & 0 & 0 & 0 & 1
\end{smallmatrix}\right),
\left(\begin{smallmatrix}
1 & 0 & 0 & 0 & 0 & 0 & 0 \\
0 & 1 & 0 & 0 & 0 & 0 & 0 \\
* & 0 & 1 & 0 & 0 & 0 & 0 \\
* & 0 & 0 & 1 & 0 & 0 & 0 \\
* & 0 & 0 & 0 & 1 & 0 & 0 \\
* & 0 & 0 & 0 & 0 & 1 & 0 \\
* & * & * & * & * & 0 & 1
\end{smallmatrix}\right),
\left(\begin{smallmatrix}
* & * & 0 & 0 & 0 & 0 & 0 \\
* & * & 0 & 0 & 0 & 0 & 0 \\
0 & 0 & 1 & 0 & 0 & 0 & 0 \\
0 & 0 & 0 & 1 & 0 & 0 & 0 \\
0 & 0 & 0 & 0 & 1 & 0 & 0 \\
0 & 0 & 0 & 0 & 0 & * & * \\
0 & 0 & 0 & 0 & 0 & * & *
\end{smallmatrix}\right),
\left(\begin{smallmatrix}
1 & 0 & * & * & * & * & * \\
0 & 1 & 0 & 0 & 0 & 0 & * \\
0 & 0 & 1 & 0 & 0 & 0 & * \\
0 & 0 & 0 & 1 & 0 & 0 & * \\
0 & 0 & 0 & 0 & 1 & 0 & * \\
0 & 0 & 0 & 0 & 0 & 1 & 0 \\
0 & 0 & 0 & 0 & 0 & 0 & 1
\end{smallmatrix}\right),
\left(\begin{smallmatrix}
* & * & * & * & * & * & * \\
* & * & * & * & * & * & * \\
0 & 0 & 1 & 0 & 0 & * & * \\
0 & 0 & 0 & 1 & 0 & * & * \\
0 & 0 & 0 & 0 & 1 & * & * \\
0 & 0 & 0 & 0 & 0 & * & * \\
0 & 0 & 0 & 0 & 0 & * & *
\end{smallmatrix}\right).
\]

We are going to prove that \(\eunit(n; R, \Delta) = A_1 A_4 A_5\) under a suitable assumption on the stable rank. In the next two lemmas let \(T'_-(u)\), \(T'_+(u)\), and \(D'_+(a)\) be the elementary transvections and dilations with respect to the orthogonal hyperbolic family \(\eta_2 + \ldots + \eta_n\) of rank \(1\), see lemma \ref{HyperbolicGluing} for their properties. Let also \(T'_-(*)\), \(T'_+(*)\), and \(D'_+(*) = A_3\) be the groups of these elements. Note that \(A_5 = T'_+(*) \rtimes D'_+(*)\) and \(D'_-(*) \subseteq A_1 A_2\).

\begin{lemma}\label{ReducedA}
Let \((R, \Delta)\) be an odd form ring with a free orthogonal hyperbolic family of rank \(n\) and suppose that \(\mathrm{sr}(R_1) \leq n - 2\). Then \(A_1 A_2 A_5 \subseteq A_1 A_2 A_3 A_4\).
\end{lemma}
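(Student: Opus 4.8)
The plan is to peel off everything except the ``index-\(2\) content'' by formal group‑theoretic manipulations and then to clear that content by a single Gauss‑elimination step governed by the stable rank hypothesis. Recall the facts recorded just before the lemma: \(A_5 = T'_+(*) \rtimes D'_+(*) = T'_+(*)\,A_3\) with \(A_3 A_3 = A_3\), \(D'_-(*) \subseteq A_1 A_2\), and \(\eunit(n - 1; R, \Delta) \subseteq A_1\) (from the definition of \(A_1\)). Using lemma \ref{HyperbolicGluing} on the glued rank-one pair \(\eta_2 + \dots + \eta_n\) and the commutator identities of lemma \ref{TransvectionRelations}, the unipotent group factors as \(T'_+(*) = A_4 X\), where \(X = \langle T_{i 2}(R_{i 2}),\, T_2(\Delta^0_2) \mid i \leq 1,\ i \neq 0, -2 \rangle\) collects the elementary transvections with target index \(2\), and \(X\) normalises \(A_4\) (every commutator \([X, A_4]\) lands back in \(A_4\)). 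Each generator of \(X\) involves, besides indices \(0, \pm 1, \pm 2\), the index \(n\) only through \(T_{-n, 2} = T_{-2, n} \in H_n(n)\); hence \(X \subseteq \eunit(n - 1; R, \Delta) \cdot H_n(n) = A_1\). Since \(A_3\) normalises \(T'_+(*) = A_4 X\), one gets \(A_5 = A_3 A_4 X\) and therefore \(A_1 A_2 A_5 = A_1 A_2 A_3 A_4 X\). So the lemma reduces to
\[A_1 A_2 A_3 A_4\, X \subseteq A_1 A_2 A_3 A_4,\]
i.e. a factor of \(X\), sitting on the far right, must be pushed all the way left into \(A_1 A_2\).

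The heart of the matter is this absorption, and it is exactly here that \(\mathrm{sr}(R_1) \leq n - 2\) is used. Viewing an element \(g\) of the left-hand side as a matrix, it differs from one already in \(A_1 A_2 A_3 A_4\) only in the entries \(\beta_{i 2}(g)\) for \(i\) of index \(\leq 1\) (column \(2\), ``bad'' rows). The available row operations are: elementary row operations among the indices \(-2, \dots, 2\) from \(\eunit(n - 1; R, \Delta) \subseteq A_1\), the row-\((-n)\) operations of \(H_n(n) \subseteq A_1\), the operations \(D'_-(*) \subseteq A_1 A_2\) on the negative indices, and the elementary row operations among the \(n - 1\) indices \(2, \dots, n\) coming from \(A_3 = D'_+(*)\). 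The point is that the \(e_+\)-part of column \(2\) of \(g\), namely the sequence \((\beta_{j 2}(g) + \delta_{j 2})_{j = 2}^{n}\) in \(R_1\), is unimodular of length \(n - 1\) (it is a column of the invertible \((e_+ \times e_+)\)-block). By \(\mathrm{sr}(R_1) \leq n - 2\) and lemma \ref{IncreasingSR} this sequence can be shortened, and translating this back into row operations among \(2, \dots, n\) lets us arrange that column \(2\) interacts only with indices \(3, \dots, n\). After that its bad content is swept into \(A_4\) (whose transvections use precisely target indices \(3, \dots, n\)) and into \(A_1 A_2\), and re-sorting the resulting word finishes the step.

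The re-sorting — bringing factors back into the order \(A_1 A_2 A_3 A_4\) after each elementary move, since these subgroups do not normalise one another — is routine but lengthy: one uses lemma \ref{TransvectionRelations} and lemma \ref{HyperbolicGluing} repeatedly, checking that every correction term lies in one of \(A_1, A_2, A_3, A_4\); most commutators vanish, the survivors being long-root transvections and ultrashort transvections of index \(\pm n\) or of an index in \(\{2, \dots, n\}\), all absorbed by \(H_n(n), D'_-(*) \subseteq A_1 A_2\) and by \(A_3, A_4\). I expect the genuine obstacle to be precisely the combination of this bookkeeping with making the stable-rank step airtight: one must choose the right unimodular datum and keep careful track of which indices each transvection is allowed to touch so that the shortening really yields a factorisation of the prescribed shape. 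Finally, the degenerate cases \(n \leq 2\) are handled directly — for \(n = 1\) all of \(A_3, A_4, A_5\) are trivial, and for \(n = 2\) one has \(A_5 = H_2(2) \subseteq A_1\) and \(\mathrm{sr}(R_1) \leq 0\), so the absorption argument applies in its simplest form.
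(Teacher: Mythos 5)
Your first paragraph does not actually reduce the problem. Since \(A_5 = A_3\,T'_+(*)\) and (granting your factorisation) \(T'_+(*) = A_4X\), the set \(A_1A_2A_3A_4X\) is just \(A_1A_2A_5\) again, so ``absorbing a factor of \(X\) sitting on the far right'' is a restatement of the lemma, not a reduction of it. The observation \(X \subseteq A_1\) is correct but by itself useless: \(X\) stands to the right of \(A_3 = D'_+(*)\), and conjugating an index-\(2\) transvection across \(D'_+(b)\) replaces \(u\) by \(u \cdot b^{\pm 1}\), smearing the column-\(2\) content over all of the columns \(2, \dots, n\) — in particular over column \(n\) — so the conjugate is no longer in \(\eunit(n - 1; R, \Delta)\). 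All the content is therefore deferred to your second paragraph, and there the argument has a genuine gap. The unimodularity you invoke is false as stated: for \(g \in A_1A_2A_5\) the sequence \((\beta_{j2}(g) + \delta_{j2})_{j = 2}^{n}\) need not be unimodular, because \(a_1\) ranges over all of \(\eunit(n - 1; R, \Delta)\,H_n(n; R, \Delta)\); already for the symplectic odd form algebra a Weyl-type element of \(\eunit(n - 1; R, \Delta)\) interchanging \(\eta_2\) and \(-\eta_2\) makes that whole column zero. The stable rank hypothesis has to be applied to column \(2\) of the Levi factor \(b\) of \(a_5 = D'_+(b)\,T'_+(u)\) \emph{alone} (there the column \((b_{22}, \dots, b_{n2})\) really is unimodular of length \(n - 1\)), and the correcting element must be taken specifically in \(\langle T_{2n}(R_{2n}), \dots, T_{n - 1, n}(R_{n - 1, n})\rangle \subseteq H_n(n; R, \Delta) \subseteq A_1\), because only then does it conjugate \(a_2\) into \(T'_-(*) \subseteq A_1A_2\) and get absorbed on the left. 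Your list of ``available row operations'' glosses over exactly this constraint.

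More importantly, the step that actually clears the index-\(2\) content is missing. Once \((b'_{22}, \dots, b'_{n - 1, 2})\) is unimodular one chooses \(c \in e_2 R (e_2 + \dots + e_{n - 1})\) with \(c b' e_2 = e_2\), sets \(v = u \cdot c\), and rewrites \(a'_2\, D'_+(b')\, T'_+(u) = T'_+(v)\; \up{T'_+(\dotminus v)}{a'_2}\; D'_+(b')\; T'_+(u \dotminus u \cdot c b')\). The entire point of shortening the column by one — i.e.\ of requiring \(\mathrm{sr}(R_1) \leq n - 2\) rather than \(n - 1\) — is that \(c\) can be chosen to kill \(e_n\), so that \(v \cdot e_n = \dot 0\) and the extracted piece \(T'_+(v)\) lands in \(\eunit(n - 1; R, \Delta) \subseteq A_1\); meanwhile \(u \dotminus u \cdot c b'\) has trivial \(e_2\)-component, so \(T'_+(u \dotminus u \cdot c b') \in A_4\), and \(\up{T'_+(\dotminus v)}{a'_2} \in H_{-n}(n; R, \Delta) \subseteq A_2 A_3\). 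Your sketch gestures at ``sweeping the bad content into \(A_4\) and into \(A_1 A_2\),'' but it identifies neither this splitting of \(T'_+(u)\) nor the reason the extracted factor ends up in \(A_1\); commutator bookkeeping and ``re-sorting the word'' alone will not produce it. (The separate treatment of \(n \leq 2\) is unnecessary once the argument is run correctly.)
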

\begin{proof}
Suppose that \(g = a_1 a_2 a_5\), where \(a_i \in A_i\). Clearly, \(a_5 = D'_+(b) T'_+(u)\) for uniquely determined \(u\) and \(b\). By the stable rank condition there is \(h \in H = \langle T_{2 n}(R_{2 n}), \ldots, T_{n - 1, n}(R_{n - 1, n}) \rangle\) such that \(h D'_+(b) = D'_+(b')\) with the sequence \(b'_{22}, \ldots, b'_{n - 1, 2}\) being unimodular. Note that \(a_1 h^{-1} \in A_1\) and \(\up h {a_2} \in T'_-(*)\), hence \(g = (a_1 h^{-1})\, \up h {a_2} D'_+(b') T'_+(u) = a'_1 a'_2 D'_+(b') T'_+(u)\) for some \(a'_i \in A_i\).

Since \(b'_{22}, \ldots, b'_{n - 1, 2}\) is unimodular, there is \(c \in e_2 R (e_2 + \ldots + e_{n - 1})\) such that \(c b' e_2 = e_2\). Let \(v = u \cdot c\), then \(v \cdot e_n = \dot 0\) and
\[g = a'_1 a'_2 D'_+(b') T'_+(u) = a'_1 T'_+(v)\, \up{T'_+(\dotminus v)}{a'_2} D'_+(b') T'_+(u \dotminus u \cdot cb').\]
Note that \(a'_1 T'_+(v) \in A_1\), \(\up{T'_+(\dotminus v)}{a'_2} \in H_{-n}(n; R, \Delta) \subseteq A_2 A_3\), \(D'_+(b') \in A_3\), and \(T'_+(u \dotminus u \cdot cb') \in A_4\). In other words, \(g \in A_1 A_2 A_3 A_4\).
\end{proof}

\begin{lemma}\label{DecompositionA}
Let \((R, \Delta)\) be an odd form ring with a free orthogonal hyperbolic family of rank \(n\) and suppose that \(\mathrm{sr}(R_1) \leq n - 2\). Then \(\eunit(n; R, \Delta) = A_1 A_2 A_5\).
\end{lemma}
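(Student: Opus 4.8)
The goal is to show that every element of $\eunit(n; R, \Delta)$ lies in the product set $A_1 A_2 A_5$. Since $\eunit(n; R, \Delta) = \langle H_n(n; R, \Delta), H_{-n}(n; R, \Delta)\rangle$ by the remarks preceding Theorem \ref{SurjectiveKU}, and since $H_n(n; R, \Delta) \subseteq A_1$ (it is one of the two factors defining $A_1$), while $H_{-n}(n; R, \Delta) \subseteq A_2 A_3 \subseteq A_2 A_5$ (the matrix picture shows $H_{-n}$ is generated by the transvections $T_{i,-n}$ together with $T_{-n}$, which split into the ``$i \geq -1$'' part lying in $A_2$ and the ``$i \leq -2$'' part, whose images under the isomorphisms of the $\eta_2 + \dots + \eta_n$ family lie in $A_3 \subseteq A_5$), the set $A_1 A_2 A_5$ already contains both generating subgroups. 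So it suffices to prove that $A_1 A_2 A_5$ is closed under multiplication, i.e. $(A_1 A_2 A_5)(A_1 A_2 A_5) \subseteq A_1 A_2 A_5$, and under inversion; closure under multiplication is the substantive claim.

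The plan is a ``swallowing'' argument in the style of Bass's proof of surjective stability, moving factors past one another using the commutator relations of Lemma \ref{TransvectionRelations} and the hyperbolic-gluing relations of Lemma \ref{HyperbolicGluing}. First I would record the key normalization facts: $A_1 \supseteq \eunit(n-1; R, \Delta)$ normalizes everything built from indices $\neq \pm n$, and $A_1 \supseteq H_n(n; R, \Delta)$; also $A_5 = T'_+(*) \rtimes D'_+(*)$ with $D'_+(*) = A_3 \subseteq A_1 A_2$ as noted before the lemma. Then, to absorb a product $(a_1 a_2 a_5)(a_1' a_2' a_5')$, I push the middle $a_5$ to the right past $a_1'$ and $a_2'$: conjugating a short-root transvection $T_{ij}(x)$ or an ultrashort $T_j(u)$ with $j \geq 2$ by an element of $H_n(n; R, \Delta)$ or of $\eunit(n-1; R, \Delta)$ produces (by the commutator relations) a product of transvections still supported on columns $\geq 2$, i.e. still in $A_5$, together possibly with correction terms landing in $A_1$ or in $A_2$. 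The delicate point is controlling the new factors so they stay in the prescribed pieces. Crucially, once I have reduced to a product of the form $a_1 a_2 (a_1' a_2') a_5''$, I still need to collapse $a_2 a_1'$ and the inner $A_2 A_5$-type blocks; here Lemmas \ref{ReducedA} is designed exactly for this: it gives $A_1 A_2 A_5 \subseteq A_1 A_2 A_3 A_4$, and since $A_3, A_4 \subseteq A_5$ (both are generated by column-$\geq 2$ transvections) we can re-fold $A_3 A_4$ back into $A_5$, so $A_1 A_2 A_5 = A_1 A_2 A_3 A_4 = A_1 A_2 A_5$ as sets — the point being that Lemma \ref{ReducedA} lets us rewrite an arbitrary $A_5$-tail into the more rigid $A_3 A_4$ form whenever we need the finer decomposition to perform a swallow.

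Concretely I would proceed in steps. \textbf{Step 1:} verify $A_1 A_2 A_5$ contains the generators $H_{\pm n}(n; R, \Delta)$, as above. \textbf{Step 2:} show $A_2 A_5 \subseteq A_5 A_2$ up to an $A_1$-factor, by commuting $T_{i,-n}$ (for $i \geq -1$) and $T_{-n}$ past column-$\geq 2$ transvections: most commute outright, and the exceptional brackets $[T_j(u), T_{j,-n}(*)]$, $[T_{-n,j}(*), T_{j,-n}(*)]$ produce long/ultrashort transvections in index $-n$, which lie in $A_2$ (the $T_{-n}$ part) or in $H_{-n}(n;R,\Delta) \subseteq A_2 A_3$. \textbf{Step 3:} show $A_5 A_1 \subseteq A_1 A_5$ similarly, using that $\eunit(n-1;R,\Delta)$ normalizes $A_3$, $A_4$, $A_5$ and $H_n(n;R,\Delta)$, and that a bracket of a column-$\geq 2$ transvection with something in $H_n$ lands in $H_n \subseteq A_1$ times something still in $A_5$. \textbf{Step 4:} combine: $(a_1a_2a_5)(a_1'a_2'a_5') \in a_1 a_2 (A_5 A_1') (a_2' a_5') \subseteq a_1 a_2 A_1 A_5 a_2' a_5' \subseteq A_1 (A_2 A_1)(A_5 A_2') A_5$; since $A_2 \subseteq \unit(n-1;R,\Delta)\,\text{-}$normalized pieces and using Step 2 repeatedly together with Lemma \ref{ReducedA} to re-rigidify $A_5$-tails when a swallow requires the $A_3A_4$-form, everything collapses into $A_1 A_2 A_5$. \textbf{Step 5:} inversion: $(a_1a_2a_5)^{-1} = a_5^{-1} a_2^{-1} a_1^{-1}$, and apply Steps 2–4 to bring it back to standard order.

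\textbf{Main obstacle.} The genuine difficulty is bookkeeping the correction terms in Step 2 and Step 4: when we commute an ultrashort transvection $T_j(u)$ past $T_{i,-n}(x)$, the relation $[T_i(u), T_{ij}(x)] = T_{-i,j}(\rho(u)x)\,T_j(\dotminus u \cdot(-x))$ and its relatives generate \emph{new} ultrashort transvections whose column index we must track carefully to be sure they remain $\geq 2$ (so stay in $A_5$) or are of index $-n$ (so stay in $A_2$), rather than leaking into columns we cannot control. This is where the hypothesis $\mathrm{sr}(R_1) \leq n - 2$ enters through Lemma \ref{ReducedA}: the stable-rank condition is precisely what allows the re-rigidification $A_5 \rightsquigarrow A_3 A_4$ that makes the commutators land where we need them. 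I expect no single step to be deep, but the combinatorial orchestration of which factor swallows which, and in what order, is the crux; the cleanest route is to isolate it entirely into Lemma \ref{ReducedA} and then argue as above that closure of $A_1A_2A_5$ under products follows formally.
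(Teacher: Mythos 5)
Your high-level architecture --- show that \(A_1A_2A_5\) contains a generating set of \(\eunit(n;R,\Delta)\) and is closed under multiplication by pushing factors past one another, using Lemma \ref{ReducedA} to ``re-rigidify'' an \(A_5\)-tail into the form \(A_3A_4\) when needed --- is the right shape, and the last point is indeed exactly how Lemma \ref{ReducedA} is meant to be used. But two of the commutation facts your plan rests on are false, and they are not repairable bookkeeping issues. First, in Step 3 you assert that \(\eunit(n-1;R,\Delta)\) normalizes \(A_3\), \(A_4\) and \(A_5\); it does not. For instance \([T_{32}(y),T_{21}(x)]=T_{31}(yx)\), so \(\up{T_{21}(-x)}{T_{32}(y)}=T_{31}(yx)\,T_{32}(y)\), and \(T_{31}(yx)=T_{-1,-3}(-\inv{yx})\) has column index \(1\) (resp.\ \(-3\)) and lies in none of \(A_3\), \(A_4\), \(A_5\) (its support sits in a zero block of all three matrix pictures). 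Second, in Step 2 the bracket \([T_{-n,j}(y),T_{j,-n}(x)]\) with \(2\leq j\leq n-1\), where \(T_{j,-n}(x)\in A_2\) and \(T_{-n,j}(y)\in A_5\), is a commutator of \emph{opposite} root subgroups: it is not covered by Lemma \ref{TransvectionRelations}, and it is not a product of elementary transvections of controlled support (already in \(\gl_2\) the commutator \([1+xe_{12},\,1+ye_{21}]\) has all four entries nontrivial). So it cannot be filed into ``\(T_{-n}\)-part of \(A_2\)'' or \(H_{-n}\subseteq A_2A_3\) as you propose, and the swallowing in Steps 2--4 does not close up.

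These collisions are precisely what the paper's proof is engineered to avoid by choosing a different generating set. Instead of \(H_n\cup H_{-n}\), it generates \(\eunit(n;R,\Delta)\) by \(A_5\) together with the small group \(\widetilde T=\langle T_{-2}(\Delta^0_{-2}),\,T_{-1,-2}(R_{-1,-2}),\,T_{1,-2}(R_{1,-2})\rangle\), whose generators avoid the indices \(\pm n\) altogether. Closure of \(A_1A_2A_5\) under right multiplication by \(A_5\) is then trivial, and for \(t\in\widetilde T\) one first applies Lemma \ref{ReducedA} to pass to \(A_1A_2A_3A_4\) and then verifies only two manageable inclusions, \(\up{t^{-1}}{A_4}\leq A_5\) and \(A_2A_3t\leq T'_-(*)A_3\subseteq A_1A_2A_3\); no commutator of opposite root subgroups ever arises. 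Unless you replace your generating set (or supply a genuinely different mechanism for the two failing commutations above), your argument does not go through.
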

\begin{proof}
Note that \(\eunit(n; R, \Delta)\) is generated by \(A_5\) and \(\widetilde T = \langle T_{-2}(\Delta^0_{-2}), T_{-1, -2}(R_{-1, -2}), T_{1, -2}(R_{1, -2}) \rangle\). Clearly, \(1 \in A_1 A_2 A_5\) and the set \(A_1 A_2 A_5\) is closed under multiplications by elements of \(A_5\) from the right. By lemma \ref{ReducedA} it remains to prove that \(A_1 A_2 A_3 A_4 t \subseteq A_1 A_2 A_5\) for any \(t \in \widetilde T\). Note that \(\up {t^{-1}}{A_4} \leq A_5\) and \(A_2 A_3 t \leq T'_-(*) A_3\). Hence \(A_1 A_2 A_3 A_4 t = A_1 (A_2 A_3 t) (\up {t^{-1}}{A_2}) \subseteq A_1 (A_1 A_2 A_3) A_5 = A_1 A_2 A_5\).
\end{proof}

Finally, let us prove the injective stability.

\begin{theorem}\label{InjectiveKU}
Let \((R, \Delta)\) be an odd form ring with a free orthogonal family of rank \(n\) and suppose that \(\mathrm{sr}(R_1) \leq n - 2\). Then the map \(\kunit(n - 1; R, \Delta) \rar \kunit(n; R, \Delta)\) is injective. Also, if \((I, \Gamma) \leqt (R, \Delta)\) and \(\mathrm{sr}((I \rtimes R)_1) \leq n - 2\) (with no restriction on \(\mathrm{sr}(R_1)\)), then the map \(\kunit(n - 1; R, \Delta; I, \Gamma) \rar \kunit(n; R, \Delta; I, \Gamma)\) is also injective.
\end{theorem}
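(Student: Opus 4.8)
The plan is to reduce to the absolute inclusion $\unit(n - 1; R, \Delta) \cap \eunit(n; R, \Delta) \subseteq \eunit(n - 1; R, \Delta)$, since injectivity of $\kunit(n-1; R, \Delta) \rar \kunit(n; R, \Delta)$ is equivalent to it (and the relative version to the analogous inclusion with $(I,\Gamma)$-coefficients). We may assume $n \geq 3$: for $n \leq 2$ the hypothesis $\mathrm{sr}(R_1) \leq n - 2$ forces $R_1 = 0$ by lemma \ref{SemilocalLSR}, hence $R_{|n|'}$, $R_+$, etc.\ vanish and the statement is vacuous. Given $g \in \unit(n - 1; R, \Delta) \cap \eunit(n; R, \Delta)$, apply lemma \ref{DecompositionA} (which is exactly where $\mathrm{sr}(R_1) \leq n - 2$ enters) to write $g = a_1 a_2 a_5$ with $a_i \in A_i$. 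Since $A_1 = \eunit(n - 1; R, \Delta)\, H_n(n; R, \Delta)$, factor $a_1 = b h$ with $b \in \eunit(n - 1; R, \Delta)$ and $h \in H_n(n; R, \Delta)$ and replace $g$ by $b^{-1} g$, which still lies in $\unit(n - 1; R, \Delta) \cap \eunit(n; R, \Delta)$. So it suffices to treat $g = h a_2 a_5$ with $h \in H_n(n; R, \Delta)$, $a_2 \in A_2$, $a_5 \in A_5$, and to show $g \in \eunit(n - 1; R, \Delta)$.

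The argument now exploits that $g$ fixes the $n$-th hyperbolic pair, i.e.\ $\beta_{\pm n, *}(g) = \beta_{*, \pm n}(g) = 0$ and $\gamma_{\pm n}(g) = \dot 0$. I would first compare row $n$: as $\beta_{n *}(h) = 0$, the equation $\beta_{n *}(g) = 0$ reads $e_n \alpha(a_2) = e_n \alpha(a_5^{-1})$; the nonzero columns of $\beta_{n *}(a_2)$ (lying among indices $-n, \dots, 1$) and those of $\beta_{n *}(a_5^{-1})$ (lying among $n-1, n$) are disjoint once $n \geq 3$, so both sides are block-trivial, giving $\beta_{n *}(a_2) = 0$. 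Feeding $\beta_{n *}(a_2) = 0$ together with $\beta_{n}(a_2) = \beta_{-n}(a_2) = 0$ into the unitary relations leaves $a_2 \in A_2 \subseteq H_{-n}(n; R, \Delta)$ supported only in the long-root position $(n,-n)$; then the condition $\gamma_{-n}(g) = \dot 0$ (equivalently $\beta_{*,-n}(g) = 0$) annihilates that remaining term, so $a_2 = 1$. Repeating the same column/row dichotomy for $g = h a_5$ in indices $\pm n$ forces $h$ into $H_n(n; R, \Delta) \cap \unit(n - 1; R, \Delta)$, which is trivial (the only element with vanishing $\beta$), and then that $a_5$ has trivial components outside the indices $0, \pm 1, \dots, \pm(n-1)$, i.e.\ $a_5 \in A_5 \cap \eunit(n-1; R, \Delta)$. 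Hence $g \in \eunit(n - 1; R, \Delta)$.

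For the relative statement, apply the absolute result just proved to the odd form ring $(I \rtimes R, \Gamma \rtimes \Delta)$, which carries the same free orthogonal hyperbolic family of rank $n$ and satisfies $\mathrm{sr}((I \rtimes R)_1) \leq n - 2$ by hypothesis, obtaining $\unit(n - 1; I \rtimes R, \Gamma \rtimes \Delta) \cap \eunit(n; I \rtimes R, \Gamma \rtimes \Delta) \subseteq \eunit(n - 1; I \rtimes R, \Gamma \rtimes \Delta)$. Given $g \in \unit(n - 1; I, \Gamma) \cap \eunit(n; R, \Delta; I, \Gamma)$, its image $p_1^{-1}(g)$ lies in $\eunit(n; I \rtimes R, \Gamma \rtimes \Delta) \cap \unit(n - 1; I \rtimes R, \Gamma \rtimes \Delta)$, hence in $\eunit(n - 1; I \rtimes R, \Gamma \rtimes \Delta)$; by lemma \ref{Relativization} at rank $n - 1$, the part of this group lying in $\Ker(p_2)$ is exactly $p_1^{-1}(\eunit(n - 1; R, \Delta; I, \Gamma))$, so injectivity of $p_1^{-1}$ gives $g \in \eunit(n - 1; R, \Delta; I, \Gamma)$.

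The main obstacle is the bookkeeping in the middle step: the subgroups $H_{\pm n}(n; R, \Delta)$ are genuinely non-abelian (Heisenberg-type), so the block-triangular heuristic only controls the $\beta$-components, and one must simultaneously track the $\gamma^0$- and $\gamma^\circ$-components through the commutator identities of lemma \ref{TransvectionRelations} --- in particular the formulas for $[T_i(u), T_{ij}(x)]$ and $[T_i(u), T_j(v)]$ --- both to justify the ``unique-factorization'' and trivial-intersection claims and to check that after each peeling step the residual product is still of the form $h a_2 a_5$ (or already in $\eunit(n - 1; R, \Delta)$). The case distinction between long, short, and ultrashort root contributions is where the computation is most delicate; everything else is the standard Bass--Vaserstein dévissage.
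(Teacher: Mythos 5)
Your overall strategy is the paper's: reduce to the inclusion \(\unit(n-1; R, \Delta) \cap \eunit(n; R, \Delta) \subseteq \eunit(n-1; R, \Delta)\), invoke Lemma \ref{DecompositionA} to write \(g = a_1 a_2 a_5\), peel off \(a_2\) and the \(H_n\)-factor of \(a_1\) by block bookkeeping, and handle the relative case by passing to \((I \rtimes R, \Gamma \rtimes \Delta)\) and Lemma \ref{Relativization}. The peeling steps and the relative reduction are fine (the paper is just as terse about \(a_2 = 1\) as you are, and your use of the splitting \(p_1^{-1}, p_2, d\) is a correct unpacking of its ``diagram chasing'').

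However, there is a genuine gap at the last and most substantive step of the absolute case. You conclude that \(a_5\) ``has trivial components outside the indices \(0, \pm 1, \ldots, \pm(n-1)\), i.e.\ \(a_5 \in A_5 \cap \eunit(n-1; R, \Delta)\).'' Trivial \(\pm n\)-components only give \(a_5 \in A_5 \cap \unit(n-1; R, \Delta)\); the passage from there to \(\eunit(n-1; R, \Delta)\) is not a matter of components at all, since \(a_5\) is a priori a word in generators \(T_{ij}(x)\), \(T_j(u)\) with \(j \geq 2\) that individually do involve the index \(n\), and nothing forces the word to simplify. Asserting this inclusion is essentially asserting the injective stability you are trying to prove, restricted to \(A_5\), so the argument is circular at this point. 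The paper closes the gap as follows: after left-multiplying by elements of \(\eunit(n-1; R, \Delta)\) one may assume \(a_5 \in \unit(n-1; R, \Delta) \cap \langle T_{ij}(R_{ij}) \mid 2 \leq i, j \leq n\rangle\), i.e.\ \(a_5\) lies in the purely ``positive linear'' elementary block; then each factor \(T_{i_s n}(x_s)\) is replaced by \(T_{i_s 1}(x_s e_{n1})\) and each \(T_{n j_s}(x_s)\) by \(T_{1 j_s}(e_{1n} x_s)\) using the coherence elements \(e_{1n}, e_{n1}\) of the \emph{free} orthogonal hyperbolic family. The resulting product is the conjugate of \(a_5\) by the \(1 \leftrightarrow n\) permutation, which fixes \(a_5\) because \(a_5\) is trivial in rows and columns \(1\) and \(n\); hence \(a_5\) equals a product of transvections avoiding the index \(n\) and lies in \(\eunit(n-1; R, \Delta)\). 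This is the one place where freeness of the family (beyond Morita equivalence) is actually used, and your proof cannot be completed without some version of it.
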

\begin{proof}
In the absolute case we need to prove that every \(g \in \eunit(n; R, \Delta) \cap \unit(n - 1; R, \Delta)\) lies in \(\eunit(n - 1; R, \Delta)\). Lemma \ref{DecompositionA} implies that \(g = a_1 a_2 a_5\) for \(a_i \in A_i\). But then \(a_1 a_2 a_5 \in \unit(n - 1; R, \Delta)\) implies that \(a_2 = 1\), hence we may assume that \(a_1 \in \eunit(n - 1; R, \Delta)\) (the factor from \(H_n(n; R, \Delta)\) may be pushed into \(a_5\)) and therefore \(a_5 \in \unit(n - 1; R, \Delta)\). We have to prove that \(a_5 \in \eunit(n - 1; R, \Delta)\).

Multiplying by elementary transvections from \(\eunit(n - 1; R, \Delta)\) we may even assume that \(a_5 \in \unit(n - 1; R, \Delta) \cap \langle T_{i j}(R_{i j}) \mid 2 \leq i, j \leq n \rangle\). Write \(a_5 = \prod_s t_s\), where \(t_s = T_{i_s j_s}(x_s)\) for \(2 \leq i_s, j_s \leq n\). Now let \(t'_s = t_s\) if \(i_s, j_s \neq n\), \(t'_s = T_{i_s 1}(x_s e_{n 1})\) if \(j_s = n\), and \(t'_s = T_{1 j_s}(e_{1 n} x_s)\) if \(i_s = n\). It is easy to see that \(\prod_s t'_s = a_1\), i.e. \(a_5 \in \eunit(n - 1; R, \Delta)\).

The relative case follows from a simple diagram chasing.
\end{proof}

\section{Stability for form morphisms}

In this section \((R, \Delta)\) and \((S, \Theta)\) are odd form rings with free orthogonal families of ranks \(n \geq 1\) and \(m \geq 1\). Let \(\Hom(m; S, \Theta; n; R, \Delta) = \Hom(S, \Theta; R, \Delta)\) and \(\Hom(m - 1; S, \Theta; n - 1; R, \Delta) = \Hom(S_{|m|'}, \Theta_{|m|'}^{|m|'}; R_{|n|'}, \Delta_{|n|'}^{|n|'})\). There is a natural left action of \(\unit(n; R, \Delta)\) on \(\Hom(m; S, \Theta; n; R, \Delta)\), let \(\unit(n; R, \Delta) \backslash {\Hom(m; S, \Theta; n; R, \Delta)}\) be the set of orbits under this action. Clearly, if two morphisms lie in the same orbit under the right action of \(\unit(m; S, \Theta)\), they also lie in the same orbit under the left action of \(\unit(n; R, \Delta)\).

Stability for \(\unit(n; R, \Delta) \backslash {\Hom(m; S, \Theta; n; R, \Delta)}\) or \(\unit(n; R, \Delta) \backslash {\punit(n; R, \Delta)}\) does not holds in general even for one-dimensional base commutative rings, see example \ref{Twisting}. Instead of \(\Hom(m; S, \Theta; n; R, \Delta)\) we consider the set
\begin{align*}
\Hom^*(m; S, \Theta; n; R, \Delta) = \{&f \in \Hom(m; S, \Theta; n; R, \Delta) \mid \unit(\up f{e_{|m|}}, e_{|n|}; R, \Delta) \neq \varnothing\}.
\end{align*}
It is easy to see that this set is closed under the action of \(\unit(n; R, \Delta)\). Also, this set is independent on the order of hyperbolic pairs. We will prove the stabilization for \(\unit(n; R, \Delta) \backslash {\Hom^*(m; S, \Theta; n; R, \Delta)}\).

Let also
\[\Hom_{i j}(m; S, \Theta; n; R, \Delta) = \{f \in \Hom(m; S, \Theta; n; R, \Delta) \mid \up f {e_{|i|}} = e_{|j|}\}\]
for any \(1 \leq i \leq m\) and \(1 \leq j \leq n\), this set is closed under the action of \(\unit(n - 1; R, \Delta)\). Obviously, \(\Hom_{m n}(m; S, \Theta; n; R, \Delta) \subseteq \Hom^*(m; S, \Theta; n; R, \Delta)\). Moreover, there is a canonical map \(\Hom_{m n}(m; S, \Theta; n; R, \Delta) \rar \Hom^*(m - 1; S, \Theta; n - 1; R, \Delta)\) if \(m, n \geq 2\) (and into the non-starred right hand side for all \(m, n\)).

Similarly, we may define corresponding projective unitary groups. Let
\begin{align*}
\punit^*(n; R, \Delta) &= \Hom^*(n; R, \Delta; n; R, \Delta) \cap \punit(n; R, \Delta),\\
\punit_n(n; R, \Delta) &= \Hom_{nn}(n; R, \Delta; n; R, \Delta) \cap \punit(n; R, \Delta),\\
\punit^*(n; R, \Delta; I, \Gamma) &= \punit^*(n; I \rtimes R, \Gamma \rtimes \Delta) \cap \punit(n; R, \Delta; I, \Gamma),\\
\punit_n(n; R, \Delta; I, \Gamma) &= \punit_n(n; I \rtimes R, \Gamma \rtimes \Delta) \cap \punit(n; R, \Delta; I, \Gamma).
\end{align*}
All these objects are actually groups. Finally, the groups \(\gunit^*(n; T, \Xi; R, \Delta)\), \(\gunit_n(n; T, \Xi; R, \Delta)\), \(\gunit^*(n; T, \Xi; R, \Delta; I, \Gamma)\), and \(\gunit_n(n; T, \Xi; R, \Delta; I, \Gamma)\) are the preimages of the corresponding projective unitary groups under the canonical map \(\gunit(n; T, \Xi; R, \Delta) \rar \punit(n; R, \Delta)\). Note that 
\[\gunit_n(n; T, \Xi; R, \Delta) = \{g \in \gunit(n; T, \Xi; R, \Delta) \mid \beta_{|n|, |n|'}(g) = \beta_{|n|', |n|}(g) = 0\}.\]

Let us now show that in certain situations the stabilization still can be proved for \({\unit(n; R, \Delta)} \backslash {\Hom(m; S, \Theta; n; R, \Delta)}\). Recall that the Picard group \(\mathrm{Pic}_K(R)\) is the group of isomorphism classes of invertible \(R\)-\(R\)-bimodules that are also \(K\)-modules, where \(R\) is arbitrary unital \(K\)-algebra. Also there is an exact sequence \(R^* \rar \Aut_K(R) \rar \mathrm{Pic}_K(R)\), the right map is given by \(\alpha \mapsto R_\alpha\), where \(R_\alpha = \{r_\alpha \mid r \in R\}\) has the obvious \(K\)-module structure and the bimodule structure \(xm_\alpha y = (xm\, \up \alpha y)_\alpha\). See \cite{Bass}, chapter II for details. We denote the outer automorphism group of \(R\) by \(\mathrm{Out}_K(R)\), it is isomorphic to a subgroup of \(\mathrm{Pic}_K(R)\). Now let \((R, \Delta)\) and \((S, \Theta)\) be odd form \(K\)-algebras with free orthogonal hyperbolic families of ranks \(n = m \geq 1\). Suppose that the family of \((R, \Delta)\) is Morita complete, \(R_1 \cong R_{-1}\) as \(K\)-algebra, and for every \(\sigma \in \Hom(S, \Theta; R, \Delta)\) (or \(\sigma \in \punit(R, \Delta)\)) either \(\up \sigma {e_1}\) is Morita equivalent to \(e_1\) or \(\up \sigma {e_{-1}}\) is Morita equivalent to \(e_1\) (this holds for all classical odd form algebras over a commutative ring with connected spectrum, see the examples). In general \(e_1 R\) may not be isomorphic to \(\up \sigma {e_1} R\) or \(\up \sigma {e_{-1}} R\) as an \(R\)-module, but under our assumption the only obstacle is described in terms of the class \([e_1 R\, \up \sigma {e_{\pm 1}}] \in \mathrm{Pic}_K(R_1) / \mathrm{Out}_K(R_1)\) (note that \(\Hom_R(e_1 R, \up \sigma {e_{\pm 1}} R) \cong \up \sigma {e_{\pm 1}} R e_1\) canonically). If \(K\) is local or a PID and \(R_1 \cong K\), then \(\mathrm{Pic}_K(R_1)\) is trivial, hence \(\up \sigma {\eta_1}\) and \(\eta_1\) are isomorphic for all \(\sigma\), i.e. \(\Hom^*(S, \Theta; R, \Delta) = \Hom(S, \Theta; R, \Delta)\) (or \(\punit^*(n; R, \Delta) = \punit(R, \Delta)\)) in this case.

Let us begin the proof of the stabilization with a preliminary lemma.

\begin{lemma}\label{UnitaryExtension}
Let \((R, \Delta)\) be an odd form ring with a free orthogonal family of rank \(n \geq 1\). Suppose that \(\Lambda\mathrm{sr}(\eta_1; R, \Delta) \leq n - 1\). Then for every hermitian idempotent \(\eps \in R \rtimes K\) and for every \(g \in \unit(\eps, e_{|n|}; R, \Delta)\) there is \(\widetilde g \in \eunit(R, \Delta)\) such that \(\alpha(g) = \alpha(\widetilde g) e_{|n|}\), \(\gamma(g) = \gamma(\widetilde g) \cdot e_{|n|}\), and \(\eps = \up{\widetilde g}{e_{|n|}}\).
\end{lemma}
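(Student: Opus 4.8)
The plan is to reduce $g$, by left multiplication with elements of $\eunit(R, \Delta)$, to the canonical inclusion $g_0$ of the $n$-th hyperbolic summand into itself, i.e.\ to the element of $\unit(e_{|n|}, e_{|n|}; R, \Delta)$ with $\alpha(g_0) = e_{|n|}$ and $\gamma(g_0) = \dot 0$. Once $h\, g = g_0$ for some $h \in \eunit(R, \Delta)$, I claim that $\widetilde g := h^{-1}$ is the required element. Indeed, from $g = h^{-1} g_0$ and the formulas of Lemma \ref{AlphaBetaGamma} (together with $\alpha(h^{-1}) = \inv{\alpha(h)}$) one reads off $\alpha(\widetilde g)\, e_{|n|} = \alpha(h^{-1})\, e_{|n|} = \alpha(h^{-1} g_0) = \alpha(g)$, likewise $\gamma(\widetilde g) \cdot e_{|n|} = \gamma(h^{-1}) \cdot e_{|n|} \dotplus \gamma(g_0) = \gamma(h^{-1} g_0) = \gamma(g)$, and $\up{\widetilde g}{e_{|n|}} = \inv{\alpha(h)}\, e_{|n|}\, \alpha(h) = \alpha(g)\, \inv{\alpha(g)} = \eps$, where the last equality uses $\inv{\alpha(g)}\, \alpha(g) = e_{|n|}$. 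So the whole problem is the reduction of $g$ to $g_0$.

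For that reduction I would follow the proof of Theorem \ref{SurjectiveKU} almost verbatim, applied now to the ``partial isometry'' $g$ rather than to a full unitary element. The $e_n$-column $\alpha(g)\, e_n = \beta(g)\, e_n + e_n$ is left unimodular because $\inv{\alpha(g)}\, (\alpha(g)\, e_n) = e_{|n|}\, e_n = e_n$. Using $\mathrm{sr}(R_1) \leq n - 1$ with Lemma \ref{IncreasingSR}, then $\Lambda\mathrm{sr}(\eta_1; R, \Delta) \leq n - 1$, then $\mathrm{sr}(R_1) \leq n - 1$ once more --- the same three successive steps as in that proof, with $\inv{\alpha(g)}$ playing the role of $\alpha(g^{-1})$ --- I obtain $h_1 \in \eunit(R, \Delta)$ with $\alpha(h_1 g)\, e_n = e_n$; a further left multiplication by a suitable element of the Heisenberg subgroup $H_n(n; R, \Delta)$, which fixes the $e_n$-column by Lemma \ref{TransvectionRelations}, also trivializes the ultrashort component $\gamma^\circ_n$. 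At that point the isometry relation $\inv{\alpha(h_1 g)}\, \alpha(h_1 g) = e_{|n|}$ forces, exactly as in the first paragraph of the proof of Theorem \ref{SurjectiveKU} (``a block-triangular unitary with a trivial diagonal block''), that the $e_n$-row is standard as well, i.e.\ $e_n\, \alpha(h_1 g) = e_n$.

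I then repeat the argument on the $e_{-n}$-column, now working inside the subgroup generated by $H_{-n}(n; R, \Delta)$ and the transvections with all indices $\neq \pm n$, and keeping track of the interaction of $H_n$ and $H_{-n}$ through Lemmas \ref{TransvectionRelations} and \ref{HyperbolicGluing}, precisely as Theorem \ref{SurjectiveKU} passes from ``$\alpha_n(g) = e_n$'' to an element of $\unit(n - 1; R, \Delta)$. Since $\alpha(h g) = \alpha(h g)\, e_{|n|}$ is preserved throughout and both the $\pm n$-rows and $\pm n$-columns become standard, we end up with $\alpha(h g) = e_{|n|}$, hence $\beta(hg) = 0$. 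The residual $\gamma(h g)$ then lies in $\Delta \cdot e_{|n|}$ with $\pi(\gamma(h g)) = 0$ and $\rho(\gamma(h g)) = 0$, so it decomposes as an element of $\Delta^0_n \dotplus \Delta^0_{-n}$ with vanishing $\pi$ and $\rho$, and one last left multiplication by $T_n(\cdot)\, T_{-n}(\cdot)$ with these data (for which $\beta$ is zero and $\gamma \cdot e_{|n|}$ is exactly the prescribed element) kills it. This yields $h g = g_0$.

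The main obstacle is the usual one in this circle of ideas: preventing the two Heisenberg subgroups $H_{\pm n}$ from undoing each other's normalizations, and making sure that the odd-form component $\gamma$, and not only the ring component $\beta$, is actually trivialized. The first point is handled exactly as in Theorem \ref{SurjectiveKU} --- normalize the $e_n$-column, deduce the $e_n$-row for free from the isometry relation, and only then touch the $e_{-n}$-ray; the second by the explicit bookkeeping with $\gamma^\circ$ and with the ultrashort transvections indicated above. If one prefers, the step ``$\alpha(h g) = e_{|n|} \Rightarrow h g = g_0$'' may instead be reduced to the free, hence special, case via Proposition \ref{FreeOFA}, where an element is determined by its $\alpha$-part, and then transferred back; but the direct argument via $\pi$, $\rho$ and the $T_{\pm n}$ makes this detour unnecessary.
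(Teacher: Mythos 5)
Your proposal is essentially the paper's own proof: the same reduction (find \(h \in \eunit(R, \Delta)\) trivializing \(g\) from the left and set \(\widetilde g = h^{-1}\)), the same normalization of the \(e_n\)-column via the stable-rank steps from the proof of Theorem~\ref{SurjectiveKU} followed by an element of \(H_n(n; R, \Delta)\), the same use of the isometry relation to get the complementary row for free, and the same concluding \(H_{-n}(n; R, \Delta)\)-multiplication. One index should be corrected: from \(\alpha(h_1 g)\, e_n = e_n\) and \(\inv{\alpha(h_1 g)}\, \alpha(h_1 g) = e_{|n|}\) what follows for free is \(e_{-n}\, \alpha(h_1 g) = e_{-n}\) (the \(e_{-n}\)-row, as in the paper's ``obviously \(e_{-n} \alpha(h_2 h_1) \alpha(g) = e_{-n}\)''), not \(e_n\, \alpha(h_1 g) = e_n\); the entry \(e_n\, \alpha(h_1 g)\, e_{-n}\) need not vanish at that stage and is precisely what the subsequent \(H_{-n}\)-step clears, so the rest of your argument is unaffected.
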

\begin{proof}
It suffices to find \(h \in \eunit(R, \Delta)\) such that \(\alpha(h) \alpha(g) = e_{|n|}\) and \(\gamma(h) \cdot \alpha(g) \dotplus \gamma(g) = \dot 0\), because then we may take \(\widetilde g = h^{-1}\). Note that \(\alpha(g) e_n\) is unimodular (more precisely, \((e_n \alpha(g^{-1})) (\alpha(g) e_n) = e_n\)), \(e_n \alpha(g^{-1}) e_0 \in \inv{\pi(\Delta^0_n)}\), and \(\Lambda\mathrm{sr}(\eta_1; R, \Delta) \leq n - 1\), hence there is \(h_1 \in \eunit(n; R, \Delta)\) such that \(e_n \alpha(h_1) \alpha(g) e_n = e_n\) (see the proof of theorem \ref{SurjectiveKU}). There is \(h_2 \in H_n(n; R, \Delta)\) such that \(\alpha_{*n}(h_2^{-1}) = \alpha(h_1) \alpha(g) e_n\) and \(\gamma_n(h_2^{-1}) = \gamma(h_1) \cdot \alpha(g) e_n \dotplus \gamma(g) \cdot e_n\). It follows that \(\alpha(h_2 h_1) \alpha(g) e_n = e_n\) and \(\gamma(h_2 h_1) \cdot \alpha(g) e_n \dotplus \gamma(g) \cdot e_n = \dot 0\).

But now obviously \(e_{-n} \alpha(h_2h_1) \alpha(g) = e_{-n}\). Similarly, there is \(h_3 \in H_{-n}(n; R, \Delta)\) such that \(\alpha_{*, -n}(h_3^{-1}) = \alpha(h_2 h_1) \alpha(g) e_{-n}\) and \(\gamma_{-n}(h_3^{-1}) = \gamma(h_2 h_1) \cdot \alpha(g) e_{-n} \dotplus \gamma(g) \cdot e_{-n}\). Hence we may take \(h = h_3 h_2 h_1\).
\end{proof}

\begin{prop}\label{HomDecomposition}
Let \((S, \Theta)\) and \((R, \Delta)\) be odd form rings with free orthogonal hyperbolic families of ranks \(m \geq 1\) and \(n \geq 1\). Suppose that \(\Lambda\mathrm{sr}(\eta_1; R, \Delta) \leq n - 1\). Then
\[\Hom^*(m; S, \Theta; n; R, \Delta) = \eunit(n; R, \Delta) \Hom_{m n}(m; S, \Theta; n; R, \Delta).\]
\end{prop}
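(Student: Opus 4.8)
The plan is to prove the two inclusions separately; the inclusion from right to left is formal, while the one from left to right is the substance of the statement and rests entirely on lemma~\ref{UnitaryExtension}.

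For $\Hom^*(m; S, \Theta; n; R, \Delta) \subseteq \eunit(n; R, \Delta)\, \Hom_{m n}(m; S, \Theta; n; R, \Delta)$, I would take $f$ in the left-hand side and set $\eps = \up f{e_{|m|}}$. Since $e_{|m|} = e_{-m} + e_m$ is a hermitian idempotent of $S$ and $f$ preserves the involution and the multiplication, $\eps$ is a hermitian idempotent of $R$, hence of $R \rtimes K$. By the definition of $\Hom^*$ there exists $g \in \unit(\eps, e_{|n|}; R, \Delta)$. Now apply lemma~\ref{UnitaryExtension} to the idempotent $\eps$ and this $g$ — this is the only place the hypothesis $\Lambda\mathrm{sr}(\eta_1; R, \Delta) \leq n - 1$ enters — to obtain $\widetilde g \in \eunit(R, \Delta)$, which coincides with $\eunit(n; R, \Delta)$ since the hyperbolic family of $(R, \Delta)$ has rank $n$, such that $\up{\widetilde g}{e_{|n|}} = \eps$. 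Put $f' = \widetilde g^{-1} f$, where $\unit(n; R, \Delta)$ acts on $\Hom$-sets through the homomorphism $\unit(n; R, \Delta) \rar \punit(R, \Delta)$ of lemma~\ref{ProjectiveUnitaryGroups}(5), so that $\up{(hf)}x = \up h{\up f x}$ and $\up h y = \up{\alpha(h)}y$. Then $\up{f'}{e_{|m|}} = \up{\widetilde g^{-1}}{\up f{e_{|m|}}} = \up{\widetilde g^{-1}}{\up{\widetilde g}{e_{|n|}}} = e_{|n|}$, so $f' \in \Hom_{m n}(m; S, \Theta; n; R, \Delta)$ and $f = \widetilde g f'$ belongs to $\eunit(n; R, \Delta)\, \Hom_{m n}(m; S, \Theta; n; R, \Delta)$.

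For the reverse inclusion, I would note that $\Hom_{m n}(m; S, \Theta; n; R, \Delta) \subseteq \Hom^*(m; S, \Theta; n; R, \Delta)$, because when $\up f{e_{|m|}} = e_{|n|}$ the set $\unit(e_{|n|}, e_{|n|}; R, \Delta)$ is nonempty: the sets $\unit(e, e'; R, \Delta)$ form a groupoid, so it contains the identity morphism at $e_{|n|}$. Since $\Hom^*(m; S, \Theta; n; R, \Delta)$ is stable under the $\unit(n; R, \Delta)$-action (as remarked right after its definition) and $\eunit(n; R, \Delta) \leq \unit(n; R, \Delta)$, one concludes $\eunit(n; R, \Delta)\, \Hom_{m n}(m; S, \Theta; n; R, \Delta) \subseteq \Hom^*(m; S, \Theta; n; R, \Delta)$. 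There is essentially no obstacle beyond lemma~\ref{UnitaryExtension} — indeed $\Hom^*$ is defined precisely so that its hypotheses become available — and the only point needing a little care is to match the abstract left action of $\unit(n; R, \Delta)$ with the explicit formula $\up h y = \up{\alpha(h)}y$ and the identities $\alpha(g) = \alpha(\widetilde g) e_{|n|}$, $\gamma(g) = \gamma(\widetilde g) \cdot e_{|n|}$ supplied by that lemma, so that $\up{(-)}{e_{|m|}}$ transforms as claimed.
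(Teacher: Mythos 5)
Your argument is correct and is essentially the paper's own proof: both take \(f \in \Hom^*\), use the nonempty set \(\unit(\up f{e_{|m|}}, e_{|n|}; R, \Delta)\) to invoke lemma~\ref{UnitaryExtension}, and translate \(f\) by the resulting \(\widetilde g \in \eunit(n; R, \Delta)\) into \(\Hom_{mn}\). The only difference is that you also spell out the easy reverse inclusion, which the paper leaves implicit.
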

\begin{proof}
Let \(f \in \Hom^*(m; S, \Theta; n; R, \Delta)\) and \(g \in \unit(\up f{e_{|m|}}, e_{|n|}; R, \Delta)\). By lemma \ref{UnitaryExtension} there is \(\widetilde g \in \eunit(n; R, \Delta)\) such that \(\up f {e_{|m|}} = \up{\widetilde g}{e_{|n|}}\). In other words, \(\widetilde g^{-1} f \in \Hom_{mn}(m; S, \Theta; n; R, \Delta)\).
\end{proof}

\begin{theorem}\label{HomStability}
Let \((S, \Theta)\) and \((R, \Delta)\) be odd form rings with free orthogonal hyperbolic families of ranks \(m \geq 1\) and \(n \geq 1\), \((R, \Delta) \subseteq (T, \Xi)\) be an odd form overring, and \((I, \Gamma) \leqt (R, \Delta)\). Let also
\begin{align*}
\unit'_{|n|} &= \unit(R_{|n|}, \Delta_{|n|}^{|n|}) \times \unit(n - 1; R, \Delta) = \gunit_n(n; R, \Delta; R, \Delta),\\
\unit'_{|n|}(I, \Gamma) &= \unit(I_{|n|}, \Gamma_{|n|}^{|n|}) \times \unit(n - 1; I, \Gamma).
\end{align*}
Then the following holds:
\begin{enumerate}
\item If \(\Lambda\mathrm{sr}(\eta_1; R, \Delta) \leq n - 1\), then the canonical maps
\begin{align*}
{\unit'_{|n|}} \backslash {\Hom_{m n}(m; S, \Theta; n; R, \Delta)} &\rar \unit(n; R, \Delta) \backslash {\Hom^*(m; S, \Theta; n; R, \Delta)},\\
\punit_n(n; R, \Delta) / \unit'_{|n|} &\rar \punit^*(n; R, \Delta) / \unit(n; R, \Delta),\\
\gunit_n(n; T, \Xi; R, \Delta) / \unit'_{|n|} &\rar \gunit^*(n; T, \Xi; R, \Delta) / \unit(n; R, \Delta)
\end{align*}
are bijective (they are always injective).
\item If \(m, n \geq 2\) and \(\Lambda\mathrm{sr}(\eta_1; R, \Delta) \leq n - 2\), then the canonical maps
\begin{align*}
{\unit'_{|n|}} \backslash {\Hom_{m n}(m; S, \Theta; n; R, \Delta)} &\rar \unit(n - 1; R, \Delta) \backslash {\Hom^*(m - 1; S, \Theta; n - 1; R, \Delta)},\\
\punit_n(n; R, \Delta) / \unit'_{|n|} &\rar \punit^*(n - 1; R, \Delta) / \unit(n - 1; R, \Delta),\\
\gunit_n(n; T, \Xi; R, \Delta) / \unit'_{|n|} &\rar \gunit^*(n - 1; T, \Xi; R, \Delta) / \unit(n - 1; R, \Delta)
\end{align*}
are also bijective (they are injective for \(m, n \geq 2\) without restriction on the \(\Lambda\)-stable rank).
\item If \(\Lambda\mathrm{sr}(\eta_1; I \rtimes R, \Gamma \rtimes \Delta) \leq n - 1\), then the relative maps
\begin{align*}
\punit_n(n; R, \Delta; I, \Gamma) / \unit'_{|n|}(I, \Gamma) &\rar \punit^*(n; R, \Delta; I, \Gamma) / \unit(n; I, \Gamma),\\
\gunit_n(n; T, \Xi; R, \Delta; I, \Gamma) / \unit'_{|n|}(I, \Gamma) &\rar \gunit^*(n; T, \Xi; R, \Delta; I, \Gamma) / \unit(n; I, \Gamma)
\end{align*}
are bijective (they are always injective).
\item If \(n \geq 2\) and \(\Lambda\mathrm{sr}(\eta_1; I \rtimes R, \Gamma \rtimes \Delta) \leq n - 2\), then the relative maps
\begin{align*}
\punit_n(n; R, \Delta; I, \Gamma) / \unit'_{|n|}(I, \Gamma) &\rar \punit^*(n - 1; R, \Delta; I, \Gamma) / \unit(n - 1; I, \Gamma),\\
\gunit_n(n; T, \Xi; R, \Delta; I, \Gamma) / \unit'_{|n|}(I, \Gamma) &\rar \gunit^*(n - 1; T, \Xi; R, \Delta; I, \Gamma) / \unit(n - 1; I, \Gamma)
\end{align*}
are also bijective (they are injective for \(n \geq 2\) without restriction of the \(\Lambda\)-stable rank).
\end{enumerate}
\end{theorem}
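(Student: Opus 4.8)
The backbone is Proposition \ref{HomDecomposition}: under \(\Lambda\mathrm{sr}(\eta_1;R,\Delta)\leq n-1\) every \(f\in\Hom^*(m;S,\Theta;n;R,\Delta)\) factors as \(f=\widetilde g\,f_0\) with \(\widetilde g\in\eunit(n;R,\Delta)\) and \(f_0\in\Hom_{mn}(m;S,\Theta;n;R,\Delta)\). I would first dispose of part (1). The decomposition gives surjectivity of \({\unit'_{|n|}}\backslash\Hom_{mn}\rar\unit(n;R,\Delta)\backslash\Hom^*\), and for injectivity note that if \(f_0,f_0'\in\Hom_{mn}\) satisfy \(f_0'=g\,f_0\) with \(g\in\unit(n;R,\Delta)\), then \(\up g{e_{|n|}}=\up{f_0'}{e_{|m|}}=e_{|n|}\), so \(g\in\gunit_n(n;R,\Delta;R,\Delta)=\unit'_{|n|}\); this needs no stable‑rank hypothesis, which matches the ``always injective'' claim. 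The projective statement is the case \(S=R\) intersected with \(\punit(n;R,\Delta)\) (one checks \(f_0=\widetilde g^{-1}f\) is again an automorphism, and that \(\unit(n;R,\Delta)\) acts through \(\punit(n;R,\Delta)\) by Lemma \ref{ProjectiveUnitaryGroups}(5)), and the general‑unitary statement is its pull‑back along \(\gunit(n;T,\Xi;R,\Delta)\rar\punit(n;R,\Delta)\), using that Lemma \ref{ProjectiveUnitaryGroups} holds for \(\gunit\) as well.

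For part (2) the relevant map is ``restrict the morphism to the sub‑odd‑form‑ring carried by \(\eta_1,\dots,\eta_{m-1}\) (resp.\ \(\eta_1,\dots,\eta_{n-1}\))''; by the remark preceding the theorem this sends \(\Hom_{mn}(m;S,\Theta;n;R,\Delta)\) into \(\Hom^*(m-1;S,\Theta;n-1;R,\Delta)\) when \(m,n\geq2\). It is \(\unit(n-1;R,\Delta)\)-equivariant and annihilates the factor \(\unit(R_{|n|},\Delta_{|n|}^{|n|})\) of \(\unit'_{|n|}\) (that factor is supported on \(e_{|n|}\)), hence descends to the asserted map on orbit sets. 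For surjectivity I take \(\bar h\in\Hom^*(m-1;S,\Theta;n-1;R,\Delta)\), observe \(\Lambda\mathrm{sr}(\eta_1;R_{|n|'},\Delta_{|n|'}^{|n|'})\leq n-2\), apply part (1) at level \(n-1\) to replace \(\bar h\) modulo \(\unit(n-1;R,\Delta)\) by some \(\bar f\in\Hom_{m-1,n-1}\), and then extend \(\bar f\) to \(f\in\Hom_{mn}(m;S,\Theta;n;R,\Delta)\): since the families are free, \(\eta_{n-1}\cong\eta_n\), so the hyperbolic pair \(\up{\bar f}{\eta_{m-1}}\), which is supported on \(\up{\bar f}{e_{|m-1|}}=e_{|n-1|}\), transports through the canonical isomorphism \(e_{|n-1|}Re_{|n-1|}\cong e_{|n|}Re_{|n|}\) to a lagrangian decomposition of \(e_{|n|}Re_{|n|}\); mapping \(\eta_m\) and \(e_{m-1,m}\) by that data yields \(f\), and \(f\) restricts to \(\bar f\). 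For injectivity (no stable rank) I take \(f_1,f_2\in\Hom_{mn}\) with \(\unit(n-1;R,\Delta)\)-equivalent restrictions, translate by an element of \(\unit(n-1;R,\Delta)\) to assume the restrictions equal some \(\bar f\), and observe that \(\up{f_1}{e_{m-1,m}}\) and \(\up{f_2}{e_{m-1,m}}\) both trivialise \(\up{\bar f}{\eta_{m-1}}\); their ``difference'' is an element \(v'\in\unit(e_{|n|},e_{|n|};R,\Delta)\cong\unit(R_{|n|},\Delta_{|n|}^{|n|})\subseteq\unit'_{|n|}\) with \(v'f_1=f_2\) on all generators. The projective and general‑unitary variants follow exactly as in part (1), using \(\punit_n(n;R,\Delta)=\Hom_{nn}\cap\punit(n;R,\Delta)\) and that restriction carries automorphisms to automorphisms.

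The relative statements (3) and (4) I would obtain by applying (1) and (2) to the double \((I\rtimes R,\Gamma\rtimes\Delta)\), whose \(\Lambda\)-stable rank is bounded by hypothesis: by definition \(\punit^*(n;R,\Delta;I,\Gamma)=\punit^*(n;I\rtimes R,\Gamma\rtimes\Delta)\cap\punit(n;R,\Delta;I,\Gamma)\) and similarly for \(\gunit^*\) and for the subscript‑\(n\) groups, while every map in play commutes with \(p_1,p_2,d\) (Lemma \ref{ProjectiveUnitaryGroups}(4) and its \(\gunit\) analogue); intersecting the absolute bijections for the double with the relative conditions, and using that \(\unit'_{|n|}(I,\Gamma)\) is the intersection of the analogous group for the double with \(\unit(n;I,\Gamma)\), produces the relative bijections. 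The step I expect to be delicate is the ``extension'' inside the surjectivity of part (2): realising \(\up{\bar f}{\eta_{m-1}}\) as an honest lagrangian summand of \(e_{|n|}Re_{|n|}\) together with a compatible ultrashort datum, and tracking the \(\phi\)-ambiguities living in \(\Delta^0\) (controlled via Lemma \ref{ProjectiveEquations}) while doing so; everything else reduces to Propositions \ref{HomDecomposition} and \ref{UnitaryExtension} together with bookkeeping of the decomposition \(\unit'_{|n|}=\unit(R_{|n|},\Delta_{|n|}^{|n|})\times\unit(n-1;R,\Delta)\).
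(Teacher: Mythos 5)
Your proposal is correct and follows essentially the same route as the paper: part (1) via Proposition \ref{HomDecomposition} plus the observation that the stabilizer of the condition \(\up f{e_{|m|}}=e_{|n|}\) is exactly \(\unit'_{|n|}\), part (2) by combining part (1) at level \(n-1\) with an extension map \(\Hom_{m-1,n-1}\rar\Hom_{mn}\) built from the free-family isomorphism \(e_{|n-1|,|n|}\) (the paper writes this map explicitly on \(\gunit_{n-1}(n-1;T,\Xi;R,\Delta)\) and checks the triangle commutes), and parts (3)--(4) by diagram chasing through the double. The only presentational difference is that you describe the extension step geometrically (transporting \(\up{\bar f}{\eta_{m-1}}\)) where the paper gives the closed formula \(\alpha(\widetilde g)=\alpha(g)+e_{|n|,|n-1|}\,\alpha_{|n-1|}(g)\,e_{|n-1|,|n|}\), \(\gamma^\circ(\widetilde g)=\gamma^\circ(g)\dotplus\gamma^\circ_{|n-1|}(g)\cdot e_{|n-1|,|n|}\); these are the same construction.
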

\begin{proof}
The first statement follows from proposition \ref{HomStability} and the following observation: if \(f\), \(f'\) are two elements of \(\Hom_{m n}(m; S, \Theta; n; R, \Delta)\), \(f' = gf\), and \(g \in \unit(n; R, \Delta)\), then \(g \in \unit'_{|n|}\). In the general unitary case recall that \(\unit'_{|n|} \leqt \gunit_n(n; T, \Xi; R, \Delta)\) and \(\unit(n; R, \Delta) \leqt \gunit^*(n; T, \Xi; R, \Delta)\) are normal subgroups.

In \((2)\) note that this map is injective: indeed, if \(f\) and \(f'\) maps to the same element of \(\Hom^*(m - 1; S, \Theta; n - 1; R, \Delta)\), then there is \(g \in \unit(R_{|n|}, \Delta^{|n|}_{|n|}) \leq \unit'_{|n|}\) such that \(f\) and \(g f'\) induce the same morphism \((S_{|m|}, \Theta^{|m|}_{|m|}) \rar (R_{|n|}, \Delta^{|n|}_{|n|})\), hence \(f = g f'\). In the case of general linear group the proof is even simpler: if \(g \in \gunit_n(n; T, \Xi; R, \Delta)\) maps to an element of \(\unit(n - 1; R, \Delta)\), then the definition of the general unitary group shows that \(g\) actually lies in \(\unit'_{|n|}\).

It remains to prove the surjectivity in \((2)\). We will do it in the general unitary case, the other two cases are similar. Under the assumptions the map
\[\gunit_{n - 1}(n - 1; T, \Xi; R, \Delta) / \unit'_{|n - 1|} \rar \gunit^*(n - 1; T, \Xi; R, \Delta) / \unit(n - 1; R, \Delta)\]
is bijective. There is also an obvious map
\[\gunit_{n - 1}(n - 1; T, \Xi; R, \Delta) / \unit'_{|n - 1|} \rar \gunit_n(n; T, \Xi; R, \Delta) / \unit'_{|n|}\]
that maps \(g\) to \(\widetilde g\) with \(\alpha(\widetilde g) = \alpha(g) + e_{|n|, |n - 1|}\, \alpha_{|n - 1|}(g)\, e_{|n - 1|, |n|}\) and \(\gamma^\circ(\widetilde g) = \gamma^\circ(g) \dotplus \gamma^\circ_{|n - 1|}(g) \cdot e_{|n - 1|, |n|}\). This map makes the diagram commutative, hence the required surjectivity follows.

The relative versions follow from the absolute ones using the description of \(\punit(n; R, \Delta; I, \Gamma)\) from lemma \ref{ProjectiveUnitaryGroups} and a simple diagram chasing.
\end{proof}

\bibliographystyle{plain}  
\bibliography{references}

\begin{thebibliography}{10}

\bibitem{LinNilpBak}
A.~Bak.
\newblock Nonabelian ${K}$-theory: the nilpotent class of ${K}_1$ and general
  stability.
\newblock {\em K-Theory}, 4:363--397, 1991.

\bibitem{UnitStabBak1}
A.~Bak and T.~Guoping.
\newblock Stability for hermitian ${K}_1$.
\newblock {\em J. Pure Appl. Algebra}, 150:107--121, 2000.

\bibitem{UnitNilpVav}
A.~Bak, R.~Hazrat, and N.~Vavilov.
\newblock Localisation-completion strikes again: relative ${K}_1$ is nilpotent
  by abelian.
\newblock {\em J. Pure Appl. Algebra}, 213:1075--1085, 2009.

\bibitem{UnitStabBak2}
A.~Bak, V.~Petrov, and T.~Guoping.
\newblock Stability for quadratic ${K}_1$.
\newblock {\em K-Theory}, 30:1--11, 2003.

\bibitem{UnitElemVav}
A.~Bak and N.~Vavilov.
\newblock Structure of hyperbolic unitary groups {I}: elementary subgroups.
\newblock {\em Algebra Colloq.}, 7(2):159--196, 2000.

\bibitem{LinStabBass}
H.~Bass.
\newblock ${K}$-theory and stable algebra.
\newblock {\em Publ. Math. Inst. Hautes Etudes Sci.}, 22:5--60, 1964.

\bibitem{Bass}
H.~Bass.
\newblock {\em K-Theory}.
\newblock W.A. Benjamin, New York, 1968.

\bibitem{UnitNilpHazrat}
R.~Hazrat.
\newblock Dimension theory and nonstable ${K}_1$ of quadratic modules.
\newblock {\em K-Theory}, 27:293--328, 2002.

\bibitem{OddDefPetrov}
V.~Petrov.
\newblock Odd unitary groups.
\newblock {\em J. Math. Sci.}, 130(3):4752--4766, 2005.

\bibitem{UnitStabSaliani}
M.~Saliani.
\newblock On the stability of the unitary group.
\newblock Unpublished.

\bibitem{UnitStabSin}
S.~Sinchuk.
\newblock Injective stability for unitary ${K}_1$, revisited.
\newblock {\em J. K-Theory}, 11:233--242, 2013.

\bibitem{Stein}
M.R. Stein.
\newblock Relativizing functors on rings and algebraic ${K}$-theory.
\newblock {\em J. Algebra}, 19:140--152, 1971.

\bibitem{ChevStabStein}
M.R. Stein.
\newblock Stability theorems for ${K}_1$, ${K}_2$ and related functors modeled
  on {C}hevalley groups.
\newblock {\em Japan J. Math.}, 4(1):77--108, 1978.

\bibitem{UnitStabVaser}
L.N. Vaserstein.
\newblock Stabilization of unitary and orthogonal groups over a ring with
  involution.
\newblock {\em Math. {USSR} Sb.}, 10(3):307--326, 1970.

\bibitem{ClassicStabVaser}
L.N. Vaserstein.
\newblock Stabilization for classical groups over rings.
\newblock {\em Math. {USSR} Sb.}, 22(2):271--303, 1974.

\bibitem{OddStrucVor}
E.~Voronetsky.
\newblock Groups normalized by the odd unitary group.
\newblock Preprint, 2019.

\bibitem{OddStabWeibo}
Yu~Weibo.
\newblock Stability for odd unitary ${K}_1$ under the ${\Lambda}$-stable range
  condition.
\newblock {\em J. Pure Appl. Algebra}, 217:886--891, 2013.

\bibitem{OddNilpWeibo}
Yu~Weibo and T.~Guoping.
\newblock Nilpotency of odd unitary ${K}_1$-functor.
\newblock {\em Comm. Algebra}, 44(8):3422--3453, 2016.

\end{thebibliography}


\begin{thebibliography}{99}
\bibitem{StabilityHermitian} A. Bak, T. Guoping, {\em Stability for hermitian \(K_1\)}, J. Pure Appl. Algebra, \textbf{150} (2000), 107--121.
\bibitem{StabilityQuadratic} A. Bak, V. Petrov, T. Guoping, {\em Stability for quadratic \(K_1\)}, K-theory, \textbf{30} (2003), 1--11.
\bibitem{HyperbolicI} A. Bak, N. Vavilov, {\em Structure of hyperbolic unitary groups I: elementary subgroups}, Algebra Colloq., \textbf{7}(2) (2000), 159--196.
\bibitem{Bass} H. Bass, {\em K-Theory}, W.\,A. Benjamin, New York, 1968.
\bibitem{StableRank} B.\,A. Magurn, W. Van der Kallen, L.\,N. Vaserstein, {\em Absolute stable rank and Witt cancelation for noncommutative rings}, Invent. Math., \textbf{91} (1988), 525--542.
\bibitem{Petrov} V. Petrov, {\em Odd unitary groups}, J. Math. Sci., \textbf{130}(3) (2005), 4752--4766.
\bibitem{Stein} M.\,R. Stein, {\em Relativizing functors on rings and algebraic \(K\)-theory}, J. Algebra, \textbf{19} (1971), 140--152.
\bibitem{Quadratic} E. Voronetsky, {\em Groups normalized by the odd unitary group}, preprint, 2019.
\end{thebibliography}

\end{document}